\DeclareMathOperator*{\argmin}{arg\!\min}
\DeclareMathOperator*{\argmax}{arg\!\max}
\numberwithin{equation}{section}
\theoremstyle{plain}
\newtheorem{theorem}{Theorem}[section]
\newtheorem{lemma}[theorem]{Lemma}
\newtheorem{proposition}[theorem]{Proposition}
\newtheorem{corollary}[theorem]{Corollary}
\theoremstyle{remark}
\newtheorem{example}[theorem]{Example}
\newtheorem{remark}[theorem]{Remark}
\begin{document}

\begin{frontmatter}
\title{Early stopping for \( L^{2} \)-boosting in high-dimensional
       linear models}
\runtitle{Early stopping for \( L^{2} \)-boosting}

\begin{aug}
\author[A]{\fnms{Bernhard} \snm{Stankewitz}\ead[label=e1]{stankebe@math.hu-berlin.de}},
\address[A]{Department of Mathematics, Humboldt-University of Berlin, \printead{e1}}
\end{aug}

\begin{abstract}
  Increasingly high-dimensional data sets require that estimation methods do
  not only satisfy statistical guarantees but also remain computationally
  feasible.
  In this context, we consider \( L^{2} \)-boosting via orthogonal matching
  pursuit in a high-dimensional linear model and analyze a data-driven early
  stopping time \( \tau \) of the algorithm, which is sequential in
  the sense that its computation is based on the first \( \tau \)
  iterations only.
  This approach is much less costly than established model selection criteria,
  that require the computation of the full boosting path.
  We prove that sequential early stopping preserves statistical optimality in
  this setting in terms of a fully general oracle inequality for the empirical 
  risk and recently established optimal convergence rates for the population 
  risk.
  Finally, an extensive simulation study shows that at an immensely reduced
  computational cost, the performance of these type of methods is on par with
  other state of the art algorithms such as the cross-validated Lasso or model
  selection via a high dimensional Akaike criterion based on the full boosting
  path.
\end{abstract}

\begin{keyword}[class=MSC]
\kwd[Primary ]{62G05}
\kwd{62J07}
\kwd[; secondary ]{62F35}
\end{keyword}

\begin{keyword}
\kwd{Early stopping}
\kwd{Discrepancy principle}
\kwd{Adaptive estimation}
\kwd{Oracle inequalities}
\kwd{L2-Boosting}
\kwd{Orthogonal matching pursuit}
\end{keyword}

\end{frontmatter}


\section{Introduction}
\label{sec_Introduction}

Iterative estimation procedures typically have to be combined with a data-driven
choice \( \widehat{m} \) of the effectively selected iteration in order to avoid
under- as well as over-fitting.
In the context of increasingly high-dimensional data sets, which require that
estimation methods do not only provide statistical guarantees but also ensure
computational feasibility, established model selection criteria for \(
\widehat{m} \) such as \emph{cross-validation}, \emph{unbiased risk estimation},
\emph{Akaike's information criterion} or \emph{Lepski's balancing principle}
suffer from a disadvantage:
They involve computing the full iteration path up to some large 
\( m_{ \max } \), which is computationally costly, even if the final choice 
\( \widehat{m} \) is much smaller than \( m_{ \max } \).
In comparison, \emph{sequential early stopping}, i.e., halting the procedure at
an iteration \( \widehat{m} \) depending only on the iterates 
\( m \le \widehat{m} \), can substantially reduce computational complexity while
maintaining guarantees in terms of adaptivity.
For inverse problems, results were established in Blanchard and Mathé
\cite{BlanchardMathe2012}, Blanchard et al.
\cite{BlanchardEtal2018a,BlanchardEtal2018b}, Stankewitz
\cite{Stankewitz2020Smoothed} and Jahn \cite{Jahn2021Discrepancy}. 
A Poisson inverse problem was treated in Mika and Szkutnik
\cite{MikaSzkutnik2021DiscrepancyPrinciple} and general kernel learning in
Celisse and Wahl \cite{CelisseWahl2020Discrepancy}. 

In this work, we analyze sequential early stopping for an iterative boosting
algorithm applied to data \( 
  Y = ( Y_{i} )_{ i \le n }
\) from a high-dimensional linear model
\begin{align}
  \label{eq_1_HDLinearModel}
  Y_{i} =   f^{*}( X_{i} ) + \varepsilon_{i} 
        =   \sum_{ j = 1 }^{p} \beta_{j}^{*} X_{i}^{ (j) } 
          +
            \varepsilon_{i}, 
  \qquad i = 1, \dots, n,
\end{align}
where \( 
  f^{*}(x) = \sum_{ j = 1 }^{p} \beta^{*}_{j} x^{ (j) }, 
             x \in \mathbb{R}^{p}
\), is a linear function of the columns of the design matrix, \( 
  \varepsilon: = ( \varepsilon_{i} )_{ i \le n }
\) is the vector of centered noise terms in our observations and the parameter
size \( p \) is potentially much larger than the sample size \( n \). 
A large body of research has focused on developing methods that, given
reasonable assumptions on the design \( 
  \mathbf{X}: = ( X_{i}^{ (j) } )_{ i \le n, j \le p }
\) and the sparsity of the coefficients \( \beta^{*} \), consistently estimate
\( f^{*} \) despite the fact that \( p \gg n \).

\begin{minipage}{0.5\textwidth}
    \centering
    \begin{figure}[H]
      \includegraphics[width=0.99\linewidth]{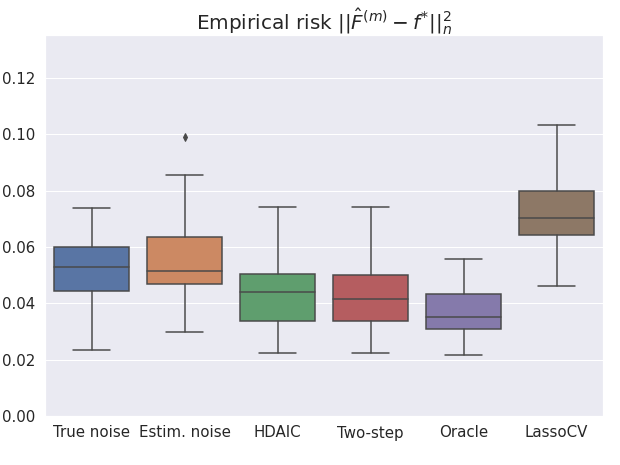}
      \\[-2.0ex]
      \caption{Empirical risk for different methods.}
      \label{fig_MethodComparison}
    \end{figure}
\end{minipage}
\begin{minipage}{0.5\textwidth}
  \centering
  \begin{table}[H]
    \centering
    \begin{tabular}{lr}
      \\
      \\
      \\
      \\
      \\
      \\[-1.8ex]\hline
      \\[-1.8ex]\hline\\[-1.8ex]
      True noise      &  19.8 sec \\
      Estimated noise &  32.0 sec \\ 
      Two-step        &  49.6 sec \\ 
      HDAIC           & 411.6 sec \\
      Lasso CV        & 164.3 sec \\
      \\[-1.8ex]\hline
      \hline\\[-1.8ex]
      \\
      \\
      \\
      \\
    \end{tabular}
    \caption{Computation times for different methods.}
    \label{tab_MethodComparisonComputationTimes} 
  \end{table}
\end{minipage}

\noindent Typically, approaches rely either on penalized least squares
estimation such as the Lasso, see e.g., B\"uhlmann and van der Geer
\cite{BuehlmannVDGeer2011HDData}, or on boosting type algorithms which
iteratively aggregate ``weak'' estimators with low accuracy to ``strong''
estimators with high accuracy, see Schapire and Freund
\cite{SchapireFreund2012Boosting} and B\"uhlmann
\cite{Buehlmann2006BoostingForHDLinearModels}.
Here, we focus on \( L^{2} \)-boosting based on \emph{orthogonal matching
pursuit} (OMP), which is one of the standard algorithms, particularly in signal
processing, see e.g., Tropp and Gilbert
\cite{TroppGilbert2007OrthogonalMatchingPursuit} or Needell and Vershynin
\cite{NeedellVershynin2010RegularizedOMP}. 
Temlyakov \cite{Temlyakov2000WeakGreedyAlgorithms} provided one of the first
deterministic analyses of OMP under the term \emph{orthogonal greedy algorithm}
(OGA).
In a statistical setting, where the non-linearity of OMP further complicates the
analysis, optimal convergence rates based on a high-dimensional Akaike criterion
have only been derived recently in Ing and Lai
\cite{IngLai2011ConsistentModelSelection} and Ing \cite{Ing2020ModelSelection}.

We sequentially stop OMP at \( \widehat{m} = \tau \) with
\begin{align}
  \label{eq_1_SequentialEarlyStoppingTime}
  \tau: = \inf \{ m \ge 0: r_{m}^{2} \le \kappa \} 
  \qquad \text{ and } \qquad 
  r_{m}^{2}: = \| Y - \widehat{F}^{ (m) } \|_{n}^{2}, 
  \quad m \ge 0,
\end{align}
where, at iteration \( m \), \( \widehat{F}^{ (m) } \) is the
OMP-estimator of \( f^{*} \) and \( r_{m}^{2} \) is the squared
\emph{empirical residual norm}.
\( \kappa > 0 \) is a critical value chosen by the user.
We consciously switch the notation from \( \widehat{m} \), for a general
data-driven selection criterion, to \( \tau \), indicating that the sequential
early stopping time is in fact a \emph{stopping time} in the sense of stochastic
process theory.
It is closely related to the \emph{discrepancy principle} which has been studied
in the analysis of inverse problems, see Engl et al.
\cite{EnglEtal1996InverseProblems}.
Most important to our analysis are the ideas developed in Blanchard et al.
\cite{BlanchardEtal2018a} and Ing \cite{Ing2020ModelSelection}. 

As an initial impression, Figure \ref{fig_MethodComparison} displays boxplots of
the empirical risk for five methods of stopping OMP:

\begin{itemize}

  \item[(i)]
    Early stopping with the choice \( \kappa \) equal to the true empirical
    noise level \( 
      \| \varepsilon \|_{n}^{2} 
      =
      n^{-1} \sum_{ i = 1 }^{n} \varepsilon_{i}^{2},
    \) which will be justified later;

  \item[(ii)]
    Early stopping with \( \kappa \) equal to an estimated noise level 
    \( \widehat{ \sigma }^{2} \), which approximates
    \( \| \varepsilon \|_{n}^{2} \); 

  \item[(iii)]
    OMP based on the full high-dimensional Akaike selection (HDAIC) from Ing
    \cite{Ing2020ModelSelection};

  \item[(iv)]
    A two-step procedure that combines early stopping based on an estimated
    noise level with an additional Akaike model selection step performed only
    over the iterations \( m \le \tau \). 

\end{itemize}

\noindent The plots are based on Monte Carlo simulations from model
\eqref{eq_1_HDLinearModel} with \( p = n = 1000 \) and a signal \( f^{*} \), the
sparsity of which is unknown to the methods.
As benchmarks, we additionally provide the values of the risk at the classical
oracle iteration \(
    m^{ \mathfrak{o} }:
  =
    \argmin_{ m \ge 0 } \| \widehat{F}^{ (m) } - f^{*} \|_{n}^{2}
\) and the default method {\tt LassoCV} from the python library 
{\tt scikit-learn} \cite{PedregosaEtal2011ScikitLearn} based on 5-fold
cross-validation.
The exact specifications of the simulation are in Section
\ref{sec_NumericalSimulationsAndATwoStepProcedure}. 
Table \ref{tab_MethodComparisonComputationTimes} contains the computation times
for the different methods.
The results suggest that sequential early stopping performs as well as
established exhaustive model selection criteria at an immensely reduced
computational cost, requiring only the computation of \( \tau \) iterations of
OMP.

The contribution of this paper is to provide rigorous theoretical guarantees
that justify this statement.
In the remainder of Section \ref{sec_Introduction}, we present our main results,
which are a fully general oracle inequality for the empirical risk at \( \tau \)
and an optimal adaptation guarantee for the population risk in terms of the
rates from Ing \cite{Ing2020ModelSelection}.
In Section \ref{sec_EmpiricalRisk}, we study the stopped empirical risk in
detail and provide precise bounds for important elementary quantities, which are
used to extend our results to the population risk in Section
\ref{sec_PopulationRisk}.
The analysis, which is conducted \( \omega \)-pointwise on the underlying
probability space, is able to avoid some of the saturation phenomena which
occurred in previous works, see Blanchard et al. \cite{BlanchardEtal2018a} and
Celisse and Wahl \cite{CelisseWahl2020Discrepancy} in particular.
Both of our main theorems require access to a rate-optimal estimator 
\( \widehat{ \sigma }^{2} \) of \( \| \varepsilon \|_{n}^{2} \).
Section \ref{sec_EstimationOfTheEmpiricalNoiseLevel} presents a noise estimation
result which shows that such estimators do exist and can be computed
efficiently.
Section \ref{sec_NumericalSimulationsAndATwoStepProcedure} provides a simulation
study, which illustrates our main findings numerically.
Finally, in the two-step procedure from (iv), we combine early stopping with a
second model selection step over the iterations \( m \le \tau \).
This procedure, which empirically outperforms the others, inherits the
guarantees for early stopping from our main results, while robustifying the
methodology against deviations in the stopping time. 

\subsection{A general oracle inequality for the empirical risk}
\label{ssec_AGeneralOracleInequalityForTheEmpiricalRisk}

In order to state results for sequential early stopping of OMP in model
\eqref{eq_1_HDLinearModel}, as minimal assumptions, we require that the rows 
\( ( X_{i} )_{ i \le n } \) of the design matrix \( 
  \mathbf{X} = ( X_{i}^{ (j) } )_{ i \le n, j \le p } 
\) are independently and identically distributed such that \( \mathbf{X} \)
has full rank \( n \) almost surely.
We also require that the noise terms \( ( \varepsilon_{i} )_{ i \le n } \) are
independently and identically distributed and assume that, conditional on the
design, a joint subgaussian parameter for the noise terms exists.

\begin{enumerate}

  \item [{\color{blue} (A1)}] \textbf{{\color{blue} (SubGE)}:} 
    \label{ass_SubGaussianErrors} 
    Conditional on the design, the noise terms are centered subgaussians with a
    joint parameter \( \overline{ \sigma }^{2} > 0 \), i.e., for all 
    \( i \le n \) and \( u \in \mathbb{R} \), 
    \begin{align*}
              \mathbb{E} ( e^{ u \varepsilon_{i} } | X_{i} ) 
      & \le
              e^{ \frac{ u^{2} \overline{ \sigma }^{2} }{2} }
      \qquad \text{almost surely.}
    \end{align*}
    Complementary to \( \overline{ \sigma }^{2} \), we set \( 
      \underline{ \sigma }^{2}: = \text{Var}( \varepsilon_{1} )
    \).

\end{enumerate}

\noindent By conditioning, we have \( 
      \underline{ \sigma }^{2} 
  =
      \mathbb{E} ( \mathbb{E} ( \varepsilon_{1}^{2} | X_{1} ) )
  \le
      \overline{ \sigma }^{2}
\).
Assumption
\hyperref[ass_SubGaussianErrors]{\normalfont \textbf{{\color{blue} (SubGE)}}}
permits heteroscedastic error terms \( ( \varepsilon_{i} )_{ i \le n } \),
allowing us to treat both regression and classification.

\begin{example}
  \label{expl_RegressionAndClassification}
  \
  \begin{enumerate}

    \item [(a)] \textbf{(Gaussian Regression):} 
      For \( 
        \varepsilon_{1}, \dots \varepsilon_{n} \sim N( 0, \sigma^{2} )
      \) i.i.d., we have \(
        \underline{ \sigma }^{2} = \sigma^{2} = \overline{ \sigma }^{2}  
      \).

    \item [(b)] \textbf{(Classification):} 
      For classification, we consider i.i.d. observations
      \begin{align}
        Y_{i} \sim \text{Ber}( f^{*}( X_{i} ) ), \qquad i = 1, \dots, n. 
      \end{align}
      Then, the noise terms are given by \( 
        \varepsilon_{i} = Y_{i} - f^{*}( X_{i} )
      \) with 
      \begin{align}
              \mathbb{E} ( \varepsilon_{i} | X_{i} ) 
        & = 
              f^{*}( X_{i} )         ( 1 - f^{*}( X_{i} ) ) 
            +
              ( 1 - f^{*}( X_{i} ) ) ( - f^{*}( X_{i} ) ) 
        =
            0.
      \end{align}
      Conditional on the design, the noise is bounded by one.
      This implies that \( \overline{ \sigma }^{2} \le 1 \). 

  \end{enumerate}
\end{example}

\noindent For the asymptotic analysis, we assume that the observations stem from
a sequence of models of the form \eqref{eq_1_HDLinearModel}, where 
\( p = p^{ (n) } \to \infty \) and \( \log ( p^{ (n) } ) / n \to 0 \) for 
\( n \to \infty \). 
We allow the quantities \(
               \mathbf{X} = \mathbf{X}^{ (n) }, \beta^{*} = ( \beta^{*} )^{ (n) } 
  \text{ and } \varepsilon = \varepsilon^{ (n) } 
\) to vary in \( n \). 
For notational convenience, we keep this dependence implicit.

In this setting, \( L^{2} \)-boosting based on OMP is used to estimate 
\( f^{*} \) and perform variable selection at the same time.
Empirical correlations between data vectors are measured via the \emph{empirical
inner product} \( 
  \langle a, b \rangle_{n}: = n^{-1} \sum_{ i = 1 }^{n} a_{i} b_{i}
\) with norm \( 
  \| a \|_{n}: = \langle a, a \rangle_{n}^{ 1 / 2 }, 
\) for \( a, b \in \mathbb{R}^{n} \).
By \( 
  \widehat{ \Pi }_{J}: \mathbb{R}^{n} \to \mathbb{R}^{n}, 
\) we denote the orthogonal projection with respect to 
\( \langle \cdot, \cdot \rangle_{n} \) onto the span of the columns 
\( \{ X^{ (j) }: j \in J \} \) of the design matrix.
OMP is initialized at \( \widehat{F}^{ (0) }: = 0 \) and then iteratively
selects the covariates \( X^{ (j) }, j \le p \), which maximize the empirical
correlation with the residuals \( Y - \widehat{F}^{ (m) } \) at the current
iteration \( m \).  
The estimator is updated by projecting onto the subspace spanned by the selected
covariates.
Explicitly, the procedure is given by the following algorithm:

\begin{center}
\begin{minipage}{.8\linewidth}
  \begin{algorithm}[H]
    \caption{Orthogonal matching pursuit (OMP)}
    \label{alg_OMP}
    \begin{algorithmic}[1]
      \vspace{2px}
      \State \( 
               \widehat{F}^{ (0) } \gets 0,
               \widehat{J}_{0} \gets \emptyset 
             \) 
      \vspace{2px}
      \For{ \( m = 0, 1, 2, \dots \) }
        \State \( 
                        \widehat{j}_{m + 1}
                 \gets 
                        \argmax_{ j \le p } 
                        \Big| 
                          \Big\langle 
                            Y - \widehat{F}^{ (m) }, 
                            \frac{ X^{ (j) } }
                                 { \| X^{ (j) } \|_{n} }
                          \Big\rangle_{n}
                        \Big| 
              \) 
        \vspace{2px}
        \State \( 
                       \widehat{J}_{ m + 1 } 
                 \gets
                       \widehat{J}_{m} \cup \big\{ \widehat{j}_{ m + 1 } \big\}
               \)
        \vspace{2px}
        \State \( 
                        \widehat{F}^{ ( m + 1 ) }
                  \gets
                        \widehat{ \Pi }_{ \widehat{J}_{ m + 1 } } Y
               \) 
        \vspace{2px}
        \vspace{2px}
      \EndFor
      \vspace{2px}
  \end{algorithmic}
  \end{algorithm}
\end{minipage}
\end{center}

\noindent Maximizing the empirical correlation between the residuals 
\( Y - \widehat{F}^{ (m) } \) and \( X^{ ( \widehat{j}_{ m + 1 } ) } \) at
iteration \( m \) is equivalent to minimizing \( 
  \| Y - \widehat{F}^{ ( m + 1 ) } \|_{n}^{2}
\), i.e., OMP performs \emph{greedy optimization} for the residual norm.
It is therefore natural to stop this procedure at \( \tau \) from Equation
\eqref{eq_1_SequentialEarlyStoppingTime} when the residual norm reaches a
critical value.

From a statistical perspective, we are interested in the risk of the estimators
\( \widehat{F}^{ (m) }, m \ge 0 \).
Initially, we consider the \emph{empirical risk}
\begin{align}
  \label{eq_EmipircalBiasVarianceDecomposition}
          \| \widehat{F}^{ (m) } - f^{*} \|_{n}^{2} 
  & = 
          \| ( I - \widehat{ \Pi }_{m} ) f^{*} \|_{n}^{2} 
        + 
          \| \widehat{ \Pi }_{m} \varepsilon \|_{n}^{2}  
  =
          b_{m}^{2} + s_{m}, 
\end{align}
where we introduce the notation \( 
  \widehat{ \Pi }_{m}: = \widehat{ \Pi }_{ \widehat{J}_{m} },
  b_{m}^{2}: = \| ( I - \widehat{ \Pi }_{m} ) f^{*} \|_{n}^{2} 
\) for the squared \emph{empirical bias} and \( 
  s_{m}: = \| \widehat{ \Pi }_{m} \varepsilon \|_{n}^{2}
\) for the \emph{empirical stochastic error}.\footnote{
  In the term \( b_{m}^{2} \), we use the standard overloading of notation,
  letting \( f^{*} \) denote \( ( f^{*}( X_{i} ) )_{ i \le n } \), 
  see Section \ref{ssec_FurtherNotation}.
}
Note that at this point, we cannot simply take expectations, due to the
non-linear, stochastic choice of \( \widehat{ J }_{ m } \).
The definition of the orthogonal projections 
\( ( \widehat{ \Pi }_{m} )_{ m \ge 0 } \) with respect to 
\( \langle \cdot, \cdot \rangle_{n} \) guarantees that the mappings 
\( m \mapsto b_{m}^{2} \) and \( m \mapsto s_{m} \) are monotonously decreasing
and increasing, respectively.

This reveals the fundamental problem of selecting an iteration of the procedure
in Algorithm \ref{alg_OMP}.  
We need to iterate far enough to sufficiently reduce the bias, yet not too far
as to blow up the stochastic error.
For \( m \ge n \), we have \(
  b_{m}^{2} = 0 \text{ but also } 
  s_{m}     = \| \varepsilon \|_{n}^{2}
\), which converges to \( \underline{ \sigma }^{2} \) by the law of large
numbers.
In particular, this means iterating Algorithm \ref{alg_OMP} indefinitely will
not produce a consistent estimator of the unknown signal \( f^{*} \).
Since the decay of the bias depends on \( f^{*} \), no a priori, i.e., data
independent, choice of the iteration will perform well in terms of the risk
uniformly over different realizations of \( f^{*} \). 
Therefore, Algorithm \ref{alg_OMP} needs to be combined with a data-driven
choice \( \widehat{m} \) of the effectively selected iteration, which is
\emph{adaptive}.
This means either, without prior knowledge of \( f^{*} \), the choice 
\( \widehat{m} \) satisfies an \emph{oracle inequality} relating its performance
to that of the ideal oracle iteration
\begin{align}
  \label{eq_1_ClassicalOracle}
      m^{ \mathfrak{o} } 
  =
      m^{ \mathfrak{o} }( f^{*} ):
  =
      \argmin_{ m \ge 0 } \| \widehat{F}^{ (m) } - f^{*} \|_{n}^{2} 
\end{align}
or, in terms of convergence rates for the risk, \( \widehat{m} \) performs
optimally for multiple classes of signals without prior knowledge of the class
to which the true signal \( f^{*} \) belongs. 

Our analysis in Section \ref{sec_EmpiricalRisk} shows that in order to derive
such an adaptation result for the sequential early stopping time \( \tau \) in
Equation \eqref{eq_1_SequentialEarlyStoppingTime}, ideally, the critical value
\( \kappa \) should be chosen depending on the iteration as
\begin{align}
      \kappa 
  = 
      \kappa_{m}
  = 
      \| \varepsilon \|_{n}^{2} + \frac{ C_{ \tau } m \log p }{n},
  \qquad m \ge 0,
\end{align}
where \( C_{ \tau } \ge  0 \) is a non-negative constant.
Since the \emph{empirical noise level} \( \| \varepsilon \|_{n}^{2} \) is
unknown, it has to be replaced by an estimator \( \widehat{ \sigma }^{2} \) and
we redefine 
\begin{align}
  \label{eq_1_SequentialEarlyStoppingTimeKappaM}
    \tau: 
  =
    \inf \{ m \ge 0: r_{m}^{2} \le \kappa_{m} \} 
  \qquad \text{ with } \qquad 
    \kappa_{m}: 
  = 
    \widehat{ \sigma }^{2} +  \frac{ C_{ \tau } m \log p }{n},
  \qquad m \ge 0.
\end{align}
Our first main result is an oracle inequality for the stopped empirical risk at
\( \tau \).

\begin{theorem}[Oracle inequality for the empirical risk]
  \label{thm_OracleInequalityForTheEmpiricalRisk}
  Under Assumption
  \hyperref[ass_SubGaussianErrors]{\normalfont \textbf{{\color{blue} (SubGE)}}},
  the empirical risk at the stopping time \( \tau \) in Equation
  \eqref{eq_1_SequentialEarlyStoppingTimeKappaM} with 
  \(
    C_{ \tau } \ge 8 \overline{ \sigma }^{2}
  \)
  satisfies
  \begin{align*}
            \| \widehat{F}^{ ( \tau ) } - f^{*} \|_{n}^{2} 
    & \le 
            \min_{ m \ge 0 } 
            \Big( 
              7 \| \widehat{F}^{ (m) } - f^{*} \|_{n}^{2} 
            + 
              \frac{ ( 8 \overline{ \sigma }^{2} + C_{ \tau } ) m \log p }{n}
            \Big) 
          + 
            | \widehat{ \sigma }^{2} - \| \varepsilon \|_{n}^{2} |
    \\
    & \le 
            7 \| \widehat{F}^{ ( m^{ \mathfrak{o} } ) } - f^{*} \|_{n}^{2} 
          + 
            \frac{
                   ( 8 \overline{ \sigma }^{2} + C_{ \tau } ) 
                   m^{ \mathfrak{o} } \log p 
                 }{n}
          + 
            | \widehat{ \sigma }^{2} - \| \varepsilon \|_{n}^{2} |
  \end{align*}
  with probability converging to one.
\end{theorem}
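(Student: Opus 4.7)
The plan is to combine a pointwise deterministic analysis in terms of the elementary quantities \( r_{m}^{2}, b_{m}^{2}, s_{m} \) with a high-probability uniform bound on the stochastic error over all iteration indices. The starting point is the identity
\begin{equation*}
r_{m}^{2} = b_{m}^{2} + \| \varepsilon \|_{n}^{2} - s_{m} + 2 \langle ( I - \widehat{ \Pi }_{m} ) f^{*}, \varepsilon \rangle_{n},
\end{equation*}
obtained by expanding \( Y - \widehat{F}^{ (m) } = ( I - \widehat{ \Pi }_{m} )( f^{*} + \varepsilon ) \) and using the self-adjointness and idempotence of \( I - \widehat{ \Pi }_{m} \). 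Setting \( D := \widehat{ \sigma }^{2} - \| \varepsilon \|_{n}^{2} \), this reformulates the stopping rule \( r_{ \tau }^{2} \le \kappa_{ \tau } \) as a condition on the monotone quantity \( b_{m}^{2} - s_{m} \) (modulo the cross-term and \( D \)) versus the increasing threshold \( C_{ \tau } m \log p / n \), so that \( \tau \) becomes a genuine first-crossing time.

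The main technical input, which I would defer to Section~\ref{sec_EmpiricalRisk}, is the construction of a high-probability event \( \Omega_{0} \) on which, uniformly for \( k \le n \),
\begin{equation*}
s_{k} \le 8 \overline{ \sigma }^{2} \, k \log p / n,
\end{equation*}
together with an analogous uniform bound on the cross-term. Because \( \widehat{ \Pi }_{k} \) projects onto a data-driven subspace selected among \( \binom{p}{k} \le p^{k} \) possibilities, establishing this requires sharp subgaussian concentration for \( \| \widehat{ \Pi }_{J} \varepsilon \|_{n}^{2} \) at fixed \( J \subseteq \{ 1, \dots, p \} \), combined with a combinatorial union bound that preserves the factor \( k \log p \). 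Compared with the linear inverse-problem analyses of Blanchard et al.~\cite{BlanchardEtal2018a} and the kernel-learning setting of Celisse and Wahl~\cite{CelisseWahl2020Discrepancy}, this is where the non-linear, data-adaptive nature of OMP enters the argument.

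On \( \Omega_{0} \) and for an arbitrary fixed \( m \), I would split into the cases \( \tau \le m \) and \( \tau > m \). In the first case, applying the reformulated stopping criterion at \( \tau \) together with the monotonicity \( s_{ \tau } \le s_{m} \) directly bounds \( b_{ \tau }^{2} + s_{ \tau } \) by at most a constant multiple of \( \| \widehat{F}^{ (m) } - f^{*} \|_{n}^{2} \) plus \( C_{ \tau } m \log p / n + |D| \). In the second case, the failure of the criterion at \( m \) forces \( b_{m}^{2} \) to exceed \( C_{ \tau } m \log p / n \) up to remainder terms, so that \( \| \widehat{F}^{ (m) } - f^{*} \|_{n}^{2} \ge b_{m}^{2} \) already dominates the complexity penalty. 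The bias part \( b_{ \tau }^{2} \) is then controlled via \( b_{ \tau }^{2} \le b_{m}^{2} \) by monotonicity, and the stochastic part \( s_{ \tau } \) via the uniform bound, with \( \tau \) itself being bounded by the iteration at which \( b_{k}^{2} \) falls below the uniform bound on \( s_{k} \), which must occur since \( b_{k}^{2} = 0 \) for \( k \ge n \). The constants \( 7 \) and \( 8 \overline{ \sigma }^{2} + C_{ \tau } \) reflect the slack accumulated across these steps, in particular from absorbing the cross-term via Cauchy--Schwarz with a weight parameter.

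The principal obstacle is the uniform concentration in Section~\ref{sec_EmpiricalRisk}: the adaptive selection performed by OMP forces a supremum of \( \| \widehat{ \Pi }_{J} \varepsilon \|_{n}^{2} \) over all \( k \)-subsets \( J \subseteq \{ 1, \dots, p \} \), and retaining the sharp rate \( k \log p / n \) with an explicit constant as small as \( 8 \overline{ \sigma }^{2} \) requires a careful subgaussian maximal inequality rather than a generic chaining bound. A secondary difficulty is controlling the cross-term \( 2 \langle ( I - \widehat{ \Pi }_{m} ) f^{*}, \varepsilon \rangle_{n} \) uniformly in \( m \) in a way that does not contaminate the oracle inequality with any term beyond the noise-estimation discrepancy \( | \widehat{ \sigma }^{2} - \| \varepsilon \|_{n}^{2} | \) already present on the right-hand side.
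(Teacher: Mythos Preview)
Your case split and residual decomposition match the paper, but the routes diverge at the key step. The paper never bounds $s_\tau$ via a uniform bound on $s_k$ together with a bound on $\tau$. Instead, because the OMP subspaces are nested, one has the exact identity
\[
\| \widehat F^{(\tau)} - \widehat F^{(m)} \|_n^2 \;=\; | r_\tau^2 - r_m^2 |,
\]
which is Lemma~\ref{lem_EmpiricalNormComparison}. On $\{\tau > m\}$ this gives $r_m^2 - r_\tau^2 \le r_m^2 - \kappa_\tau + \Delta(r_\tau^2)$, where $\Delta(r_\tau^2)$ is the one-step drop of the residual norm. The high-probability input is not a uniform bound on $s_k$ but Lemma~\ref{lem_BoundsForTheCrossTermAndTheDiscretizationError}: a bound $|c_m| \le b_m \sqrt{4\overline{\sigma}^2 (m+1)\log p / n}$ on the cross term and a bound $\Delta(r_\tau^2) \le 2 b_{\tau-1}^2 + 8\overline{\sigma}^2 \tau \log p / n$. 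The $C_\tau \tau \log p / n$ inside $\kappa_\tau$ is designed precisely to cancel the $\tau$-dependent part of the latter, which is the origin of the condition $C_\tau \ge 8\overline{\sigma}^2$. What remains is $2 b_{\tau-1}^2 \le 2 b_m^2$ (since $\tau - 1 \ge m$) and the cross term at the \emph{fixed} index $m$, both absorbed into the oracle risk at $m$; a final triangle inequality $\|\widehat F^{(\tau)} - f^*\|_n^2 \le 2\|\widehat F^{(\tau)} - \widehat F^{(m)}\|_n^2 + 2\|\widehat F^{(m)} - f^*\|_n^2$ closes the argument. No bound on $\tau$ itself is ever required.

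Your $\tau > m$ branch has a gap. You propose to bound $\tau$ by ``the iteration at which $b_k^2$ falls below the uniform bound on $s_k$'', i.e.\ a balanced-oracle index $m^\star$ with $b_{m^\star}^2 \le 8\overline{\sigma}^2 m^\star \log p / n$. But the stopping criterion also contains $D = \widehat\sigma^2 - \|\varepsilon\|_n^2$: when $\widehat\sigma^2$ substantially underestimates $\|\varepsilon\|_n^2$, the threshold $\kappa_m$ is small and $\tau$ can exceed $m^\star$ by an amount governed by $|D|$, not by any bias--variance balance. The $|D|$ in the theorem is there precisely to absorb this, yet your sketch does not explain how it enters the control of $s_\tau$. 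A repair along your lines is possible---use the failure of the criterion at $\tau - 1$ directly to bound $C_\tau (\tau - 1)\log p / n$ in terms of $b_m^2$, the cross term, and $|D|$---but the paper's norm-comparison route sidesteps the issue entirely and makes the role of the threshold $C_\tau \ge 8\overline{\sigma}^2$ transparent.
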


\noindent The oracle inequality is completely general in the sense that no
assumption on \( f^{*} \) is required.
In particular, the result also holds for non-sparse \( f^{*} \).
The first term on the right-hand side involving the iteration \( m^{
  \mathfrak{o} } \) from Equation \eqref{eq_1_ClassicalOracle} is of optimal
order and the second term matches the upper bound for the empirical stochastic
error at iteration \( m^{ \mathfrak{o} } \) we derive in Lemma
\ref{lem_BoundForTheEmpiricalStochasticError}.
The last term is the  absolute estimation error of \( \widehat{ \sigma }^{2} \)
for the empirical noise level.
The result is closely related to Theorem 3.3 in Blanchard et al.
\cite{BlanchardEtal2018a}.
Whereas they state their oracle inequality in expectation, ours is formulated \(
\omega \)-pointwise on the underlying probability space, which is slightly
stronger. 
In particular, this leads to the term \( 
  | \widehat{ \sigma }^{2} - \| \varepsilon \|_{n}^{2} |
\)
in the inequality, which will be essential for the noise estimation problem,
see Section \ref{sec_EstimationOfTheEmpiricalNoiseLevel}.


\subsection{Optimal adaptation for the population risk}
\label{ssec_OptimalAdaptationForThePopulationRisk}

The \emph{population} counterpart of the empirical inner product is \( 
  \langle f, g \rangle_{ L^2 }: = \mathbb{E} ( f( X_{1} ) g( X_{1} ) )
\) with norm \( 
  \| f \|_{ L^2 }: = \langle f, f \rangle_{ L^2 }^{ 1 / 2 }
\) for functions \( 
  f, g \in L^{2}( \mathbb{P}^{ X_{1} } ),
\) where \( \mathbb{P}^{ X_{1} } \) denotes the distribution of one observation
of the covariates.
Identifying \( \widehat{F}^{ (m) } \) with its corresponding function in the
covariates, the \emph{population risk} of the estimators is given by \( 
  \| \widehat{F}^{ (m) } - f^{*} \|_{ L^{2} }^{2},
  m \ge 0
\).
Assuming that all of the covariates are square-integrable, for \(
  J \subset \{ 1, \dots, p \}
\), let \( 
  \Pi_{J}: L^{2}( \mathbb{P}^{ X_{1} } ) \to  L^{2}( \mathbb{P}^{ X_{1} } )
\) denote the orthogonal projection with respect to \(
  \langle \cdot, \cdot \rangle_{ L^{2} }
\) onto the span of the covariates \(
  \{ X_{1}^{ (j) }: j \in J \} 
\).
Setting \(
  \Pi_{m}: = \Pi_{ \widehat{J}_{m} } 
\), the population risk decomposes into
\begin{align}
  \label{eq_PopulationBiasVarianceDecomposition}
          \| \widehat{F}^{ (m) } - f^{*} \|_{ L^{2} }^{2} 
  & =  
          \| ( I - \Pi_{m} ) f^{*} \|_{ L^{2} }^{2} 
        + 
          \| \widehat{F}^{ (m) } - \Pi_{m} f^{*} \|_{ L^{2} }^{2}
  = 
          B_{m}^{2} + S_{m},
\end{align}
where \( 
    B_{m}^{2}:a
  =
    \| ( I - \Pi_{m} ) f^{*} \|_{ L^{2} }^{2}
\) is the squared \emph{population bias} and \( 
    S_{m}:
  =
    \| \widehat{F}^{ (m) } - \Pi_{m} f^{*} \|_{ L^{2} }^{2}
\) is the \emph{population stochastic error}.

Note that \( B_{m}^{2} \) and \( S_{m} \) are not the exact population
counterparts of the empirical quantities \( b_{m}^{2} \) and \( s_{m} \), since
we have to account for the difference between \( \widehat{ \Pi }_{m} \) and \(
\Pi_{m} \).
The challenge of selecting the iteration in Algorithm \ref{alg_OMP} discussed in
the previous section is the same for the population risk.
The mapping \( m \mapsto B_{m}^{2} \) is monotonously decreasing, \( S_{0} = 0
\) and \( S_{m} \) approaches \( \text{Var}( \varepsilon_{1} ) \) for \( m \to
\infty \) assuming that the difference between \( \widehat{ \Pi }_{m} \) and \(
\Pi_{m} \) becomes negligible.
Due to this difference, however, the mapping \( m \mapsto S_{m} \) is no longer
guaranteed to be monotonous.
Both \( B_{m} \) and \( S_{m} \) are still random quantities due to the
randomness of \( \widehat{J}_{m} \).

In order to derive guarantees for the population risk, additional assumptions
are required.
We quantify the sparsity of the coefficients \( \beta^{*} \) of \( f^{*} \):

\begin{enumerate}

  \item [{\color{blue} (A2)}] \textbf{{\color{blue} (Sparse)}:} 
    \label{ass_Sparse} 
    We assume one of the two following assumptions holds.
    \begin{enumerate}

      \item[(i)] 
        \( \beta^{*} \) is \( s \)-sparse for some \( s \in \mathbb{N}_{0} \),
        i.e., \(
          \| \beta^{*} \|_{0} \le s 
        \), where \( \| \beta^{*} \|_{0} \) is the cardinality of the support \( 
          S: = \{ j \le p: | \beta^{*}_{j} | \ne 0 \}
        \). 
        Additionally, we require that
        \begin{align*}
              s \| \beta^{*} \|_{1}^{2}
          = 
              s \Big( \sum_{ j = 1 }^{p} | \beta^{*}_{j} | \Big)^{2} 
          = 
              o \Big( \frac{n}{ \log p } \Big),
          \quad 
          \| f^{*} \|_{ L^{2} }^{2} \le C_{ f^{*} } 
          \quad \text{ and } \quad 
              \min_{ j \in S } | \beta^{*}_{j} | 
          \ge 
              \underline{ \beta },
        \end{align*}
        where \( C_{ f^{*} }, \underline{ \beta } > 0 \) are numerical
        constants.

      \item[(ii)]
        \( \beta^{*} \) is $ \gamma $-sparse for some \(
          \gamma \in [ 1, \infty ) 
        \), i.e.,  \( 
          \| \beta^{*} \|_{2} \le C_{ \ell^{2} }
        \) and 
        \begin{align*}
                  \sum_{ j \in J } | \beta_{j}^{*} | 
          & \le
                  C_{ \gamma } 
                  \Big( 
                    \sum_{ j \in J } | \beta_{j}^{*} |^{2}
                  \Big)^{ \frac{ \gamma - 1 }{ 2 \gamma - 1 } } 
          \qquad \text{ for all } J \subset \{ 1, \dots, p \},
        \end{align*}
        where $ C_{ \ell^{2} }, C_{ \gamma } > 0 $ are numerical constants.

    \end{enumerate}

\end{enumerate}

\noindent Assumptions like \hyperref[ass_Sparse]{\normalfont
\textbf{{\color{blue} (Sparse)}}} (i) are standard in the literature on high
dimensional models, see e.g., Bühlmann and van de Geer
\cite{BuehlmannVDGeer2011HDData}.  
Note that the conditions in (i) imply that \( 
  s = o( ( n / \log p )^{ 1 / 3 } ) 
\). 
\hyperref[ass_Sparse]{\normalfont \textbf{{\color{blue} (Sparse)}}} (ii) encodes
a decay of the coefficients \( \beta^{*} \). 
It includes several well known settings as special cases.

\begin{example}
  \label{expl_Sparsity}
  \
  \begin{enumerate}

    \item[(a)] \textbf{(\( \ell^{ 1 / \gamma } \)-boundedness):}
      For \(
        \gamma \in [ 1, \infty ) 
      \) and some \(
        C_{ \ell^{ 1 / \gamma } } >
      0 
      \), let the coefficients satisfy \(
            \sum_{ j = 1 }^{p} | \beta^{*}_{j} |^{ 1 / \gamma } 
        \le
            C_{ \ell^{ 1 / \gamma } }
      \).
      Then, Hoelder's inequality yields
      \begin{align}
                \sum_{ j \in J } 
                | \beta^{*}_{j} | 
        & = 
                \sum_{ j \in J } 
                | \beta^{*}_{j} |^{ \frac{1}{ 2 \gamma - 1 } } 
                | \beta^{*}_{j} |^{ \frac{ 2 \gamma - 2 }{ 2 \gamma - 1 } } 
        \le 
                \Big( 
                  \sum_{ j \in J } 
                  | \beta^{*}_{j} |^{ \frac{1}{ \gamma } } 
                \Big)^{ \frac{ \gamma }{ 2 \gamma - 1 }  } 
                \Big( 
                  \sum_{ j \in J } 
                  | \beta^{*}_{j} |^{2} 
                \Big)^{ \frac{ \gamma - 1 }{ 2 \gamma - 1 } } 
        \\
        & \le 
                ( C_{ \ell^{ 1 / \gamma } } )^{ \frac{ \gamma }{ 2 \gamma - 1 } } 
                \Big( 
                  \sum_{ j \in J } 
                  | \beta^{*}_{j} |^{2} 
                \Big)^{ \frac{ \gamma - 1 }{ 2 \gamma - 1 } } 
        \qquad \text{ for all } J \subset \{ 1, \dots, p \},
        \notag
      \end{align}
      i.e., Assumption
      \hyperref[ass_Sparse]{\normalfont \textbf{{\color{blue} (Sparse)}}} (ii)
      is satisfied with \(
          C_{ \gamma }
        =  
          ( C_{ \ell^{ 1 / \gamma } } )^{ \frac{ \gamma }{ 2 \gamma - 1 } } 
      \). 
      For \( \gamma \to \infty \), this approaches the setting in (i), in which
      the support \( S \) is finite.

    \item[(b)] \textbf{(Polynomial decay):} 
      For \(
        \gamma \in ( 1, \infty ) 
      \) and \( 
        C_{ \gamma }' \ge c_{ \gamma }' > 0 
      \), let the coefficients satisfy
      \begin{align}
        \label{eq_1_PolyDecay}
            c_{ \gamma }' j^{ - \gamma } 
        \le 
            | \beta^{*}_{ (j) } | 
        \le 
            C_{ \gamma }' j^{ - \gamma } 
        \qquad 
        \qquad \text{ for all } j \le p,
      \end{align}
      where \(
        ( \beta^{*}_{ (j) } )_{ j \le p } 
      \) is a reordering of the \(
        ( \beta^{*}_{j} )_{ j \le p } 
      \) with decreasing absolute values.
      Then, Assumption
      \hyperref[ass_Sparse]{\normalfont \textbf{{\color{blue} (Sparse)}}} (ii)
      is satisfied with \( C_{ \gamma } \) proportional to \( 
        C_{ \gamma }' 
        ( c_{ \gamma }' )^{ - ( 2 \gamma - 2 ) / ( 2 \gamma - 1 ) }, 
      \) see Lemma A1.2 of Ing \cite{Ing2020ModelSelection}. 

  \end{enumerate}

\end{example}

For the covariance structure of the design, we assume subgaussianity and some
additional boundedness conditions.

\begin{enumerate}

  \item [{\color{blue} (A3)}] \textbf{{\color{blue} (SubGD)}:} 
    \label{ass_SubGaussianDesign} 
    The design variables are centered subgaussians in \( \mathbb{R}^{p} \) with
    unit variance, i.e., there exists some \( \rho > 0 \) such that for all \(
      x \in \mathbb{R}^{p} \text{ with } \| x \| = 1,
    \)
    \begin{align*}
              \mathbb{E} e^{ u \langle x, X_{1} \rangle }
      & \le 
              e^{ \frac{ u^{2} \rho^{2} }{2} }, 
              \quad u \in \mathbb{R} 
      \qquad \text{ and } \qquad 
              \text{Var} ( X_{1}^{ (j) } ) = 1
              \quad \text{ for all }  j \le p.
    \end{align*}
\end{enumerate}

\begin{remark}[Inclusion of an Intercept]
  \label{rem_InclusionOfAnInterceptercept}
  Assumption
  \hyperref[ass_SubGaussianDesign]{\normalfont \textbf{{\color{blue} (SubGD)}}}
  still allows to include an intercept additional to the design variables.
  If the intercept is selected, we have just applied Algorithm \ref{alg_OMP} to
  the data \( Y \) centered at their empirical mean for which
  \hyperref[ass_SubGaussianDesign]{\normalfont \textbf{{\color{blue} (SubGD)}}}
  is satisfied up to a negligible term.
  If it is not selected, the result is identical to applying the Algorithm
  without an intercept.
\end{remark}

\begin{enumerate}

  \item [{\color{blue} (A4)}] \textbf{{\color{blue} (CovB)}:} 
    \label{ass_CovB} 
    The complete covariance matrix \( \Gamma: = \text{Cov}( X_{1} ) \) of one
    design observation is bounded from below, i.e., there exists some 
    \( c_{ \lambda } > 0 \) such that the smallest eigenvalue of \( \Gamma \)
    satisfies
    \begin{align}
      \lambda_{ \min }( \Gamma ) \ge c_{ \lambda } > 0. 
    \end{align}
    Further, we assume that there exists \(
      C_{ \text{Cov} } > 0 
    \) such that the partial population covariance matrices \( 
      \Gamma_{J}: = ( \Gamma_{ j k } )_{ j, k \in J }, 
    \) for \(
      J \subset \{ 1, \dots, p \}, 
    \) satisfy
    \begin{align}
      \label{eq_1_CovB_2}
          \sup_{ | J | \le M_{n}, k \not \in J } 
          \| \Gamma^{-1}_{J} v_{k} \|_{1} 
      <
          C_{ \text{Cov} }
    \end{align}
    with \( 
        M_{n}: 
      =
        \sqrt{ n / ( ( \bar \sigma^{2} + \rho^{4} ) \log p ) } 
    \), where \(
      v_{k}: =   ( \text{Cov}( X_{1}^{ (k) }, X_{1}^{ (j) } ))_{ j \in J } 
             \in \mathbb{R}^{ | J | } 
    \) is the vector of covariances between the \( k \)-th covariate and the
    covariates from the set \( J \).

\end{enumerate}
\(
  ( \Gamma_{J}^{-1} v_{k} )_{ j \in J } 
\) is the vector of coefficients for the \(
  X_{1}^{ (j) }, j \in J 
\), in the conditional expectation \(
  \mathbb{E} ( X_{1}^{ (k) } | X_{1}^{ (j) },\\ j \in J ) 
\).
\( M_{ n } \) will be the largest iteration of Algorithm \ref{alg_OMP} for which
we need control over the covariance structure.
Condition \eqref{eq_1_CovB_2} imposes a restriction on the correlation between
the covariates.

\begin{example}[]
  \label{expl_BoundedCovariance}
  \ 
  \begin{enumerate}

    \item[(a)] \textbf{(Uncorrelated design):} 
      For \( \Gamma = I_{p} \), condition \eqref{eq_1_CovB_2} is satisfied for
      any choice \( C_{ \text{Cov} } > 0 \), since the left-hand side of the
      condition is zero.

    \item[(b)] \textbf{(Bounded cumulative coherence):} 
      For \( m \ge 0 \) and \( J \subset \{ 1, \dots, p \} \), let
      \begin{align}
            \mu_{1}(m): 
        = 
            \max_{ k \le p } 
            \max_{ | J | \le m, k \not \in J } 
            \sum_{ j \in J } \text{Cov}( X_{1}^{ (k) }, X_{1}^{ (j) } ) 
      \end{align}
      be the \emph{cumulative coherence function}. 
      Then, 
      \begin{align}
            \sup_{ | J | \le M_{ n },  k \not \in J } 
            \| \Gamma^{-1}_{J} v_{k} \|_{1} 
      & \le 
            \sup_{ | J | \le M_{ n }, k \not \in J } 
            \| \Gamma_{J}^{-1} \|_{1} 
            \mu_{1}( M_{ n } ),
      \end{align}
      where \( \| \Gamma_{J}^{-1} \|_{1} \) denotes the column sum norm.
      Under the assumption that both quantities on the right-hand side are
      bounded, condition \eqref{eq_1_CovB_2} is satisfied.
      Under the stronger assumption that \( \mu_{1}( M_{ n } ) < 1 / 2, \) it
      can be shown that condition \eqref{eq_1_CovB_2} is satisfied with 
      \( C_{ \text{Cov} } = 1 \). 
      This is the \emph{exact recovery condition} in Theorem 3.5 of Tropp
      \cite{Tropp2004Greed}.
      
  \end{enumerate}

\end{example}

\noindent As in Ing \cite{Ing2020ModelSelection}, condition \eqref{eq_1_CovB_2}
guarantees that the coefficients \(
  \beta( ( I - \Pi_{J} ) f^{*} ) 
\) of the population residual term \(
  ( I - \Pi_{J} ) f^{*} 
\) satisfy
\begin{align}
  \label{eq_1_UniformBaxtersInequality}
       \| \beta( ( I - \Pi_{J} ) f^{*} ) \|_{1} 
    = 
       \| \beta^{*} - \beta( \Pi_{J} f^{*} ) \|_{1} 
  \le 
        ( C_{ \text{Cov} } + 1 ) 
        \sum_{ j \not \in J } | \beta^{*}_{j} |
  \\
  \qquad \qquad \qquad \qquad \qquad \qquad \qquad 
  \text{ for all } | J | \le M_{ n }, 
  \notag 
\end{align}
where \( J \) ranges over all subsets of \(  \{ 1, \dots, p \} \).
A derivation is stated in Lemma \ref{lem_UniformBaxtersInequality}.
Equation \eqref{eq_1_UniformBaxtersInequality} provides a uniform bound on the
vector difference of the finite time predictor coefficients 
\( \beta( \Pi_{J} f^{*} ) \) of \( \Pi_{J} f^{*} \) and the infinite time
predictor coefficients \( \beta^{*} \).
In the literature on autoregressive modeling, such an inequality is referred to
as a \emph{uniform Baxter's inequality}, see Ing \cite{Ing2020ModelSelection}
and the references therein, Baxter \cite{Baxter1962FinitePredictor} and Meyer et
al. \cite{MeyerEtal2015Baxter}.  

Under Assumptions 
\hyperref[ass_Sparse]{\normalfont \textbf{{\color{blue} (A1)}}} -
\hyperref[ass_CovB]{\normalfont \textbf{{\color{blue} (A4)}}},
explicit bounds for the population bias and the stochastic error are available.
In the formulation of the results, the postpositioned ``with probability
converging to one'' always refers to the whole statement including
quantification over \( m \ge 0 \), see also Section \ref{ssec_FurtherNotation}.

\begin{lemma}[Bound for the population stochastic error, Ing \cite{Ing2020ModelSelection}]
  \label{lem_BoundForThePopulationStochasticError}
  Under Assumptions
  \hyperref[ass_SubGaussianErrors]{\normalfont \textbf{{\color{blue} (SubGE)}}},
  \hyperref[ass_Sparse]{\normalfont \textbf{{\color{blue} (Sparse)}}},
  \hyperref[ass_SubGaussianDesign]{\normalfont \textbf{{\color{blue} (SubGD)}}} 
  and
  \hyperref[ass_CovB]{\normalfont \textbf{{\color{blue} (CovB)}}},
  there is a constant \( C_{ \text{Stoch} } > 0 \) such that 
  \begin{align*}
            S_{m}
    \le 
            C_{ \text{Stoch} } 
            \begin{dcases}
              \frac{ 
                (
                  \overline{ \sigma }^{2} 
                +
                  \| \beta^{*} \|_{1}^{2} \rho^{4} \mathbf{1} \{ m \le \tilde m \}
                )
                m \log p 
              }{n},
              & \beta^{*} \ s \text{-sparse}, \\
              \frac{ ( \overline{ \sigma }^{2} + \rho^{4} ) m \log p }{n},
              & \beta^{*} \ \gamma \text{-sparse}
            \end{dcases}
    \\
    \qquad \text{ for all } m \ge 0 
  \end{align*}
  with probability converging to one, where \(
    \tilde m = \inf \{ m \ge 0: S \subset \widehat{J}_{m} \} 
  \).
\end{lemma}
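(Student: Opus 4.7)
The plan is to decompose $S_m$ into a genuine noise term and an approximation term capturing the discrepancy between the empirical projection $\widehat{\Pi}_m$ and its population analogue $\Pi_m$, and to control both uniformly in the selected set $\widehat{J}_m$ over all subsets $J \subset \{1,\dots,p\}$ of cardinality at most $M_n$. Writing $\widehat{F}^{(m)}(x) = \sum_{j \in \widehat{J}_m} \widehat{\beta}_j^{(m)} x^{(j)}$ with $\widehat{\beta}^{(m)} = \widehat{\Gamma}_{\widehat{J}_m}^{-1}(\mathbf{X}_{\widehat{J}_m}^\top Y/n)$ and $\Pi_m f^*(x) = \sum_{j \in \widehat{J}_m} \tilde{\beta}_j^{(m)} x^{(j)}$ with $\tilde{\beta}^{(m)} = \Gamma_{\widehat{J}_m}^{-1}\mathbb{E}[X_{1,\widehat{J}_m} f^*(X_1)]$, plugging in $Y = \mathbf{X}\beta^* + \varepsilon$ gives
\[
  \widehat{\beta}^{(m)} - \tilde{\beta}^{(m)}
  = \underbrace{\widehat{\Gamma}_{\widehat{J}_m}^{-1}\frac{\mathbf{X}_{\widehat{J}_m}^\top \mathbf{X}_{\widehat{J}_m^c}}{n}\beta^*_{\widehat{J}_m^c} - \Gamma_{\widehat{J}_m}^{-1}\Gamma_{\widehat{J}_m,\widehat{J}_m^c}\beta^*_{\widehat{J}_m^c}}_{\text{approximation part}}
  + \underbrace{\widehat{\Gamma}_{\widehat{J}_m}^{-1}\frac{\mathbf{X}_{\widehat{J}_m}^\top \varepsilon}{n}}_{\text{noise part}},
\]
and $S_m = (\widehat{\beta}^{(m)} - \tilde{\beta}^{(m)})^\top \Gamma_{\widehat{J}_m}(\widehat{\beta}^{(m)} - \tilde{\beta}^{(m)})$ is then bounded separately on the two pieces.

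The key step is to establish a uniform high-probability event on which, for every $J \subset \{1,\dots,p\}$ with $|J| \le M_n$, one has $\|\widehat{\Gamma}_J - \Gamma_J\|_{\mathrm{op}} \lesssim \rho^2 \sqrt{|J|\log p/n}$, $\|\mathbf{X}_J^\top \varepsilon/n\|^2 \lesssim \overline{\sigma}^2 |J|\log p/n$, and analogous control for the cross-term $(\widehat{\Gamma}_{J,J^c} - \Gamma_{J,J^c})\beta^*_{J^c}$. These estimates follow from subgaussian concentration for quadratic and bilinear forms under \hyperref[ass_SubGaussianErrors]{(SubGE)} and \hyperref[ass_SubGaussianDesign]{(SubGD)}, combined with a union bound over the $\binom{p}{|J|}$ possible index sets of given size; this union bound is exactly what produces the multiplicative $|J|\log p$ factor. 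The cutoff $|J| \le M_n = \sqrt{n/((\overline{\sigma}^2+\rho^4)\log p)}$ is precisely the regime in which these perturbations remain a constant fraction of $\lambda_{\min}(\Gamma)$, so that \hyperref[ass_CovB]{(CovB)} guarantees that $\widehat{\Gamma}_J$ and $\Gamma_J$ stay invertible with bounded condition number.

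On this event the noise part has squared $\Gamma_{\widehat{J}_m}$-norm of order $(\overline{\sigma}^2 + \rho^4) m\log p/n$, obtained from $\|\Gamma_{\widehat{J}_m}\|_{\mathrm{op}}, \|\widehat{\Gamma}_{\widehat{J}_m}^{-1}\|_{\mathrm{op}} \lesssim 1$ together with the concentration bound on $\mathbf{X}_{\widehat{J}_m}^\top\varepsilon/n$. The approximation part is analyzed using the uniform Baxter inequality \eqref{eq_1_UniformBaxtersInequality}, which bounds its squared $\Gamma_{\widehat{J}_m}$-norm by a constant multiple of $\|\beta^*_{\widehat{J}_m^c}\|_1^2 \cdot \rho^4 m\log p/n$. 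In the $s$-sparse case of \hyperref[ass_Sparse]{(Sparse)}(i), the tail $\|\beta^*_{\widehat{J}_m^c}\|_1$ vanishes as soon as $\widehat{J}_m \supset S$, i.e.\ for $m \ge \tilde{m}$, and is bounded by $\|\beta^*\|_1$ otherwise, producing the indicator-weighted term $\|\beta^*\|_1^2 \rho^4 \mathbf{1}\{m \le \tilde{m}\}$. In the $\gamma$-sparse case of \hyperref[ass_Sparse]{(Sparse)}(ii), condition (Sparse)(ii) applied to $J = \widehat{J}_m^c$ together with $\|\beta^*\|_2 \le C_{\ell^2}$ shows that $\|\beta^*_{\widehat{J}_m^c}\|_1$ is bounded by a constant depending only on $\gamma, C_\gamma, C_{\ell^2}$, which is absorbed into the $\rho^4$ prefactor and explains the absence of an indicator.

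The main obstacle is the uniform concentration over exponentially many candidate subsets $J$ while still retaining the sharp linear-in-$m$ rate; this requires careful chaining estimates for the empirical process indexed by sparse linear combinations of the covariates. Once these uniform bounds are in hand, the remainder of the argument reduces to linear algebra on restricted Gram matrices and direct application of \eqref{eq_1_UniformBaxtersInequality}. Since the lemma is attributed to Ing \cite{Ing2020ModelSelection}, I would follow the uniform concentration framework developed there and assemble the pieces through the decomposition above.
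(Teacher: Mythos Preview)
Your proposal is correct and follows essentially the same route as the paper. The paper works with the vector $Q_m := \langle Y - \Pi_m f^*, g_{\widehat{J}_m}\rangle_n = \langle \varepsilon, g_{\widehat{J}_m}\rangle_n + \langle (I-\Pi_m)f^*, g_{\widehat{J}_m}\rangle_n$, so that $\widehat{\beta}^{(m)} - \tilde{\beta}^{(m)} = \widehat{\Gamma}_{\widehat{J}_m}^{-1} Q_m$ is precisely your noise-plus-approximation decomposition, and then bounds $S_m = Q_m^\top \widehat{\Gamma}_{\widehat{J}_m}^{-1}\Gamma_{\widehat{J}_m}\widehat{\Gamma}_{\widehat{J}_m}^{-1}Q_m$ via operator norms; the approximation part is handled through Corollary~\ref{cor_ReappearingTerms}~(i), which is the same consequence of \textbf{(CovB)} that underlies the Baxter inequality you invoke.

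One small correction: no chaining is needed. The paper obtains the uniform control in Lemma~\ref{lem_UniformBoundsInHighProbability} via elementary union bounds over the $\binom{p}{|J|}$ subsets combined with Bernstein/Hoeffding, and the entrywise bound $\sup_{j,k}|\langle g_j,g_k\rangle_n - \langle g_j,g_k\rangle_{L^2}| \lesssim \sqrt{\rho^4\log p/n}$ together with $\|\widehat{\Gamma}_J^{-1}\|_{\mathrm{op}} \le C$ for $|J|\le c\,n/\log p$ is already enough to produce the linear-in-$m$ rate.
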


\noindent The stochastic error grows linearly in \( m \), whereas, up to lower
order terms, the bias decays exponentially when \( \beta^{*} \) is 
\( s \)-sparse and with a rate \( m^{ 1 - 2 \gamma } \) when \( \beta^{*} \) is
\( \gamma \)-sparse.

\begin{proposition}[Bound for the population bias, Ing \cite{Ing2020ModelSelection}]
  \label{prp_BoundForThePopulationBias}
  Under Assumptions 
  \hyperref[ass_SubGaussianErrors]{\normalfont \textbf{{\color{blue} (SubGE)}}},
  \hyperref[ass_Sparse]{\normalfont \textbf{{\color{blue} (Sparse)}}},
  \hyperref[ass_SubGaussianDesign]{\normalfont \textbf{{\color{blue} (SubGD)}}}
  and
  \hyperref[ass_CovB]{\normalfont \textbf{{\color{blue} (CovB)}}},
  there are constants \(
    c_{ \text{Bias} }, C_{ \text{Bias} } > 0 
  \) such that 
  \begin{align*}
            B_{m}^{2}
    & \le  
            C_{ \text{Bias} }
            \begin{dcases}
                \| f^{*} \|_{ L^{2} }^{2}
                \exp \Big( \frac{ - c_{ \text{Bias} } m }{s} \Big) 
              + 
                \| \beta^{*} \|_{1}^{2}  
                \frac{ s \log p }{n}, 
              & \beta^{*} \ s\text{-sparse}, \\
                m^{ 1 - 2 \gamma } 
              + 
                \Big(
                  \frac{ ( \overline{ \sigma }^{2} + \rho^{4} ) \log p }{n}
                \Big)^{ 1 - \frac{1}{ 2 \gamma } },
              & \beta^{*} \ \gamma \text{-sparse} \\
            \end{dcases}
    \\
    & \qquad \qquad \qquad \qquad \qquad \qquad 
      \qquad \qquad \qquad \qquad \ \ \ \text{ for all } m \ge 0 
  \end{align*}
  with probability converging to one.
  When \( \beta^{*} \) is \( s \)-sparse, on the corresponding event, there is a
  constant \( C_{ \text{supp} } > 0 \) such that \( 
    S \subset \widehat{J}_{ C_{ \text{supp} } s } 
  \).
\end{proposition}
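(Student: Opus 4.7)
The plan is to follow the argument of Ing \cite{Ing2020ModelSelection}, which derives a recursive inequality for $B_m^2$ from the greedy selection rule of Algorithm~\ref{alg_OMP} and combines it with the uniform Baxter's inequality \eqref{eq_1_UniformBaxtersInequality}. The proof splits into a recursive-reduction part (which drives the leading decay) and an empirical-to-population concentration part (which produces the noise-floor residuals and the ``with probability converging to one'' qualifier).

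\textbf{Step 1 (Greedy recursion).} Write $r_m := f^* - \Pi_m f^*$ and let $\beta(r_m)$ denote its coefficient vector in the dictionary $\{X_1^{(j)}\}_{j\le p}$. Although OMP operates on empirical inner products, under Assumptions \hyperref[ass_SubGaussianErrors]{\normalfont\textbf{{\color{blue}(SubGE)}}} and \hyperref[ass_SubGaussianDesign]{\normalfont\textbf{{\color{blue}(SubGD)}}} one has the uniform concentration
$\sup_{j,k\le p}|\langle X^{(j)},X^{(k)}\rangle_n - \Gamma_{jk}| + \sup_{j\le p}|\langle X^{(j)},\varepsilon\rangle_n| = O(\sqrt{\log p/n})$
with probability tending to one. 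A standard Temlyakov/Barron-Cohen-DeVore-Temlyakov greedy-reduction argument then yields, for all $m\le M_n$,
\[
B_{m+1}^2 \;\le\; B_m^2\Bigl(1 - \frac{c\, B_m^2}{\|\beta(r_m)\|_1^{2}\rho^{2}}\Bigr) + R_n,
\]
where $R_n$ aggregates the uniform concentration remainders and is of the noise-floor order appearing in the conclusion.

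\textbf{Step 2 (Bounding $\|\beta(r_m)\|_1$ via Baxter).} Equation \eqref{eq_1_UniformBaxtersInequality} gives $\|\beta(r_m)\|_1 \le (C_{\text{Cov}}+1)\sum_{j\not\in\widehat{J}_m}|\beta_j^*|$ for all $|\widehat{J}_m|\le M_n$. In the $s$-sparse case this is bounded by the deterministic constant $(C_{\text{Cov}}+1)\|\beta^*\|_1$. In the $\gamma$-sparse case, Assumption \hyperref[ass_Sparse]{\normalfont\textbf{{\color{blue}(Sparse)}}}(ii) yields $\sum_{j\not\in\widehat{J}_m}|\beta_j^*|\le C_\gamma\bigl(\sum_{j\not\in\widehat{J}_m}|\beta_j^*|^2\bigr)^{(\gamma-1)/(2\gamma-1)}$, and the lower eigenvalue bound in \hyperref[ass_CovB]{\normalfont\textbf{{\color{blue}(CovB)}}} converts this into a control by a power of $B_m^2$.

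\textbf{Step 3 (Iterating).} Inserting the bound from Step~2 into the recursion of Step~1 gives two different regimes. In the $s$-sparse case one gets $B_{m+1}^2 \le (1 - c/s)B_m^2 + R_n$, which iterates to the claimed exponential decay $\|f^*\|_{L^2}^2\exp(-c_{\text{Bias}} m/s)$ together with the accumulated residual $\|\beta^*\|_1^2 s\log p/n$. In the $\gamma$-sparse case, the recursion becomes of Bernoulli-type $B_{m+1}^2 \le B_m^2 - c\, B_m^{2 + 2/(2\gamma-1)} + R_n$, which solves to $B_m^2 \lesssim m^{1-2\gamma}$ until saturation at the noise floor $((\overline{\sigma}^2+\rho^4)\log p/n)^{1-1/(2\gamma)}$. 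The support-recovery claim for the $s$-sparse case follows by contradiction: while $S\not\subset\widehat{J}_m$, the minimum signal condition $\min_{j\in S}|\beta_j^*|\ge\underline{\beta}$ combined with $\lambda_{\min}(\Gamma)\ge c_\lambda$ forces $B_m^2\ge c_\lambda\underline{\beta}^2$, which is incompatible with Step~3 once $m \ge C_{\text{supp}} s$ for $C_{\text{supp}}$ large enough.

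The main obstacle is Step~1: one must justify that the empirical greedy choice $\widehat{j}_{m+1}$ approximately maximizes the \emph{population} correlation with $r_m$, uniformly in $m\le M_n$. This hinges on the simultaneous concentration statements above, which is where the interplay of \hyperref[ass_SubGaussianErrors]{\normalfont\textbf{{\color{blue}(SubGE)}}}, \hyperref[ass_SubGaussianDesign]{\normalfont\textbf{{\color{blue}(SubGD)}}} and the sharp cutoff $M_n=\sqrt{n/((\overline{\sigma}^2+\rho^4)\log p)}$ in \hyperref[ass_CovB]{\normalfont\textbf{{\color{blue}(CovB)}}} becomes essential; beyond $M_n$ the uniform Baxter bound \eqref{eq_1_UniformBaxtersInequality} is no longer available, so one must additionally verify that the algorithm does not need to be iterated past $M_n$ to achieve the stated bounds, which is exactly the point at which the rates in the conclusion are calibrated.
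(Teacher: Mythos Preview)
Your outline follows the same overall strategy as the paper---greedy recursion plus Baxter's inequality plus uniform concentration---but there are two points where it diverges from or falls short of the paper's actual argument.

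First, and most concretely, your Step~2 for the $s$-sparse case is too loose to yield the $(1-c/s)$ contraction you invoke in Step~3. Bounding $\|\beta(r_m)\|_1 \le (C_{\text{Cov}}+1)\|\beta^*\|_1$ and plugging into your recursion only gives $B_{m+1}^2 \le B_m^2\bigl(1 - cB_m^2/\|\beta^*\|_1^2\bigr) + R_n$, which is not a fixed-rate geometric contraction. The paper instead uses Cauchy--Schwarz on the at-most-$s$ nonzero terms, $\sum_{j\notin\widehat{J}_m}|\beta^*_j| \le \sqrt{s}\,\bigl(\sum_{j\notin\widehat{J}_m}|\beta^*_j|^2\bigr)^{1/2}$, together with the lower eigenvalue bound $\sum_{j\notin\widehat{J}_m}|\beta^*_j|^2 \le c_\lambda^{-1}B_m^2$; only this makes the ratio $B_m^2/\|\beta(r_m)\|_1^2$ bounded below by $c/s$.

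Second, the paper avoids your additive-error formulation altogether. Rather than writing $B_{m+1}^2 \le B_m^2(1-\cdots) + R_n$ and tracking how the $R_n$'s accumulate under iteration, it introduces two events: $A_n(m)$, on which $\sup_{|J|\le m,\,j}|\widehat\mu_{J,j}-\mu_{J,j}|$ is uniformly small, and $B_n(m)$, on which $\inf_{l\le m}\sup_j|\mu_{\widehat J_l,j}|$ stays above a fixed multiple of $\sqrt{(\overline\sigma^2+\rho^4)\log p/n}$. On $A_n(m)\cap B_n(m)$ the empirical greedy choice is a \emph{weak greedy} choice for the population problem, i.e.\ $|\mu_{\widehat J_l,\widehat j_{l+1}}|\ge\xi\sup_j|\mu_{\widehat J_l,j}|$ for some fixed $\xi\in(0,1)$, and one runs the clean, noise-free recursion (solved via Lemma~1 of Gao et al.\ in the $\gamma$-sparse case). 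On $B_n(m)^c$, the smallness of $\sup_j|\mu_{\widehat J_l,j}|$ plugs directly into the orthogonality estimate $B_l^2\le\sup_j|\mu_{\widehat J_l,j}|\cdot\sum_{j\notin\widehat J_l}|\beta^*_j|$ and, via the sparsity assumption, immediately delivers the noise-floor bound. This event split is exactly what your final paragraph gestures at but does not carry out; it buys a recursion with no additive remainder, so no accumulation bookkeeping is needed and the two terms in the conclusion arise from the two events rather than from summing a remainder.
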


\noindent Lemma \ref{lem_BoundForThePopulationStochasticError} and Proposition
\ref{prp_BoundForThePopulationBias} are essentially proven in Ing
\cite{Ing2020ModelSelection} but not stated explicitly.
We include derivations in Appendix \ref{sec_ProofsForSupplementaryResults} to
keep this paper self-contained.

Under \( s \)-sparsity, the definition of the population bias guarantees that
\begin{align}
  B_{m}^{2} = 0 \qquad \text{ for all } m \ge C_{ \text{supp} } s
\end{align}
with probability converging to one and under \( \gamma \)-sparsity, the upper
bounds from Lemma \ref{lem_BoundForThePopulationStochasticError} and Proposition
\ref{prp_BoundForThePopulationBias} balance at an iteration of size \( 
  (
    n / ( ( \overline{ \sigma }^{2} + \rho^{4} ) \log p ) 
  )^{ 1 / ( 2 \gamma ) }
\). 
We obtain that for
\begin{align}
  \label{eq_1_BalancedBoundOracleIndex}
      m^{*}_{ s, \gamma }: 
  = 
      \begin{dcases}
        C_{ \text{supp} } s,
        & \beta^{*} \ s \text{-sparse}, \\ 
        \Big(
          \frac{n}{ ( \overline{ \sigma }^{2} + \rho^{4} ) \log p }
        \Big)^{ \frac{1}{ 2 \gamma } }, 
        & \beta^{*} \ \gamma \text{-sparse}, \\ 
      \end{dcases}
\end{align}
there exists a constant \( C_{ \text{Risk} } > 0 \) such that with probability
converging to one, the population risk satisfies
\begin{align}
  \label{eq_1_RatesPopulationRisk}
          \| \widehat{F}^{ ( m^{*}_{ s, \gamma } ) } - f^{*} \|_{ L^2 }^{2} 
  & \le 
          C_{ \text{Risk} } 
          \mathcal{R}( s, \gamma )
\end{align}
with the rates
\begin{align}
  \label{eq_1_MinimaxRates}
      \mathcal{R}( s, \gamma ):
  = 
      \begin{dcases}
        \frac{ \overline{ \sigma }^{2} s \log p }{n},
        & \beta^{*} \ s \text{-sparse}, \\ 
          \Big(
            \frac{ ( \overline{ \sigma }^{2} + \rho^{4} ) \log p }{n}
          \Big)^{ 1 - \frac{1}{ 2 \gamma } },
        & \beta^{*} \ \gamma \text{-sparse}.
      \end{dcases}
\end{align}
In Lemmas \ref{lem_BoundForTheEmpiricalBias} and
\ref{lem_BoundForTheEmpiricalStochasticError}, we show that the empirical
quantities \( b_{m}^{2} \) and \( s_{m} \) satisfy  bounds analogous to those
stated in Proposition \ref{prp_BoundForThePopulationBias} and Lemma
\ref{lem_BoundForThePopulationStochasticError}, such that also 
\begin{align}
  \label{eq_1_RatesEmpiricalRisk}
          \| \widehat{F}^{ ( m^{*}_{ s, \gamma } ) } - f^{*} \|_{n}^{2} 
  & \le 
          C_{ \text{Risk} } 
          \mathcal{R}( s, \gamma )
\end{align}
under the respective assumptions.

In general, we cannot expect to improve the rates \( \mathcal{R}( s, \gamma ) \)
neither for the population nor for the empirical risk, see Ing
\cite{Ing2020ModelSelection} and our discussion in Section
\ref{ssec_ExplicitBoundsForEmpiricalQuantities}. 
Consequently, under \( s \)-sparsity, we call a data-driven selection criterion
\( \widehat{m} \) adaptive to a parameter set \( T \subset \mathbb{N}_{0} \) for
one of the two risks, if the choice \( \widehat{m} \) attains the rate 
\( \mathcal{R}( s, \gamma ) \) simultaneously over all \( s \)-sparse signals 
\( f^{*} \) with \( s \in T \), without any prior knowledge of \( s \).
We call \( \widehat{m} \) optimally adaptive, if the above holds for 
\( T = \mathbb{N}_{0} \).
Under \( \gamma \)-sparsity, we define adaptivity analogously with parametersets
\( T \subset [ 1, \infty ) \) instead.
Ideally, \( \widehat{m} \) would be optimally adaptive both under \( s \)- and
\( \gamma \)-sparsity even without any prior knowledge about what class of
sparsity assumption is true for a given signal.
Ing \cite{Ing2020ModelSelection} proposes to determine \( \widehat{m} \) via a
high-dimensional Akaike criterion, which is in fact optimally adaptive for the
population risk under both sparsity assumptions.
In order to compute \( \widehat{m} \), however, the full iteration path of
Algorithm \ref{alg_OMP} has to be computed as well.

Our second main result states that optimal adaptation is also achievable by a
computationally efficient procedure, given by the early stopping rule in
Equation \eqref{eq_1_SequentialEarlyStoppingTimeKappaM}.
The proof of Theorem \ref{thm_OptimalAdaptationForThePopulationRisk} is
developed in Section \ref{sec_PopulationRisk}.

\begin{theorem}[Optimal adaptation for the population risk]
  \label{thm_OptimalAdaptationForThePopulationRisk}
  Under Assumptions 
  \hyperref[ass_SubGaussianErrors]{\normalfont \textbf{{\color{blue} (SubGE)}}},
  \hyperref[ass_Sparse]{\normalfont \textbf{{\color{blue} (Sparse)}}},
  \hyperref[ass_SubGaussianDesign]{\normalfont \textbf{{\color{blue} (SubGD)}}}
  and
  \hyperref[ass_CovB]{\normalfont \textbf{{\color{blue} (CovB)}}},
  choose \( \widehat{ \sigma }^{2} \) in Equation
  \eqref{eq_1_SequentialEarlyStoppingTimeKappaM} such that there is a constant
  \( C_{ \text{Noise} } > 0 \) for which
  \begin{align*}
          | \widehat{ \sigma }^{2} - \| \varepsilon \|_{n}^{2} | 
    & \le 
          C_{ \text{Noise} } \mathcal{R}( s, \gamma ) 
  \end{align*}
  with probability converging to one.
  Then, the population risk at the stopping time in Equation
  \eqref{eq_1_SequentialEarlyStoppingTimeKappaM} with 
  \(
    C_{ \tau } = c ( \overline{ \sigma }^{2} + \rho^{4} ) 
  \)
  for any \( c > 0 \) satisfies
  \begin{align*}
          \| \widehat{F}^{ ( \tau ) } - f^{*} \|_{ L^2 }^{2} 
    & \le 
          C_{ \text{PopRisk} } \mathcal{R}( s, \gamma ) 
  \end{align*}
  with probability converging to one for a constant 
  \( C_{ \text{PopRisk} } > 0 \).
\end{theorem}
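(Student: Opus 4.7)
The plan is to transfer the empirical risk guarantee of Theorem \ref{thm_OracleInequalityForTheEmpiricalRisk} to a population risk guarantee by sandwiching the stopping time $\tau$ between constant multiples of the target iteration $m^{*}_{s,\gamma}$, and then combining the resulting two-sided bound with Lemma \ref{lem_BoundForThePopulationStochasticError} and Proposition \ref{prp_BoundForThePopulationBias}.

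The first step is to control $\tau$ from above. Starting from the decomposition
$r_m^2 = b_m^2 + \|\varepsilon\|_n^2 - s_m + 2 \langle (I - \widehat{\Pi}_m) f^*, \varepsilon \rangle_n$,
at $m = K m^{*}_{s,\gamma}$ for a sufficiently large constant $K$, the empirical analog of Proposition \ref{prp_BoundForThePopulationBias} (developed in Section \ref{sec_EmpiricalRisk}, cf.\ \eqref{eq_1_RatesEmpiricalRisk}) yields $b_m^2 \le C\,\mathcal{R}(s,\gamma)$, while the linear term $C_{\tau} m \log p / n$ inside $\kappa_m$ dominates $\mathcal{R}(s,\gamma)$ by an arbitrarily large factor in $K$. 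A subgaussian concentration bound on the cross term, combined with the noise estimation assumption $|\widehat{\sigma}^2 - \|\varepsilon\|_n^2| \le C_{\text{Noise}}\,\mathcal{R}(s,\gamma)$, then forces $r_m^2 \le \kappa_m$ at $m = K m^{*}_{s,\gamma}$, so $\tau \le K m^{*}_{s,\gamma}$ with probability converging to one. Invoking Lemma \ref{lem_BoundForThePopulationStochasticError} on this event immediately gives $S_{\tau} \le C'\, \mathcal{R}(s,\gamma)$.

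The second step controls the population bias $B_{\tau}^2$ via a matching lower bound on $\tau$. Under $s$-sparsity, I show $\tau \ge C_{\text{supp}}\, s$ with high probability, so that $S \subset \widehat{J}_{\tau}$ and $B_{\tau}^2 = 0$ by the support-recovery statement of Proposition \ref{prp_BoundForThePopulationBias}. Under $\gamma$-sparsity, I show $\tau \ge c\, m^{*}_{s,\gamma}$; together with the monotonicity of $m \mapsto B_m^2$ and Proposition \ref{prp_BoundForThePopulationBias}, this yields $B_{\tau}^2 \le C\, (m^{*}_{s,\gamma})^{1 - 2 \gamma} \le C'\, \mathcal{R}(s,\gamma)$. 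Both lower bounds follow from the same mechanism: were $\tau$ strictly smaller than the claimed threshold, a lower bound on $b_m^2$ at early iterations --- controlled via the uniform Baxter-type inequality \eqref{eq_1_UniformBaxtersInequality} --- would force $r_{\tau}^2 > \kappa_{\tau}$, contradicting the definition of $\tau$.

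Combining the two steps through the decomposition $\|\widehat{F}^{(\tau)} - f^*\|_{L^2}^2 = B_{\tau}^2 + S_{\tau}$ from \eqref{eq_PopulationBiasVarianceDecomposition} yields the claim with $C_{\text{PopRisk}}$ equal to the sum of the constants above. The main obstacle is the lower bound on $\tau$: since the stopping rule is defined through empirical quantities, any conclusion about the population bias requires propagating concentration between the empirical and $L^2$ inner products on the random subspace indexed by $\widehat{J}_{\tau}$. This propagation is handled by uniform concentration over subsets $J \subset \{1,\dots,p\}$ with $|J| \le M_n$, where Assumption \textbf{(CovB)} enters decisively via the uniform Baxter-type inequality \eqref{eq_1_UniformBaxtersInequality}.
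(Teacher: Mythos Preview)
Your two-step structure---upper-bound $\tau$ to control $S_\tau$, lower-bound $\tau$ to control $B_\tau^2$, then add via \eqref{eq_PopulationBiasVarianceDecomposition}---is precisely the paper's route (Propositions~\ref{prp_NoStoppingTooLate} and~\ref{prp_NoStoppingTooEarly}). Step~1 is correct and matches Proposition~\ref{prp_NoStoppingTooLate} essentially verbatim.

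Step~2 has a genuine gap: the deterministic lower bounds $\tau \ge c\,m^*_{s,\gamma}$ (under $\gamma$-sparsity) and $\tau \ge C_{\text{supp}}\,s$ (under $s$-sparsity) are not provable under the stated assumptions. Your contradiction mechanism needs a lower bound on $b_m^2$ for every $m$ below the threshold, which in turn requires a lower bound on $B_m^2$. But Assumption~\textbf{(Sparse)} only controls the bias from above; it places no restriction on how fast it may decay. Concretely, $f^* = 0$ is $\gamma$-sparse for every $\gamma$, yet $B_m^2 \equiv 0$ and $\tau = 0$ with high probability, so $\tau \ge c\,m^*_{s,\gamma}$ fails for every $c > 0$. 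Under $s$-sparsity the same issue arises once the support is recovered: if $S \subset \widehat J_{s}$ already, then $b_m^2 = 0$ for all $m \ge s$ and one cannot force $\tau$ past $C_{\text{supp}} s$ when $C_{\text{supp}} > 1$.

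The fix, implemented in Proposition~\ref{prp_NoStoppingTooEarly}, is to replace the deterministic threshold by the \emph{random} one $\tilde m_{s,\gamma,G}$ of \eqref{eq_3_TildeMG}: the first iteration at which $B_m^2$ drops below $G\,\mathcal{R}(s,\gamma)$ (respectively, at which $S \subset \widehat J_m$). For $m < \tilde m_{s,\gamma,G}$ one has $B_m^2 > G\,\mathcal{R}(s,\gamma)$ \emph{by definition}, and this lower bound does transfer to $b_m^2$ via the norm-change Proposition~\ref{prp_FastNormChangeForTheBias} (where the Baxter inequality \eqref{eq_1_UniformBaxtersInequality} enters, exactly as you anticipated). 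Your contradiction argument then yields $\tau \ge \tilde m_{s,\gamma,G}$, and the bias bound $B_\tau^2 \le G\,\mathcal{R}(s,\gamma)$ follows immediately from the definition of the threshold---no detour through monotonicity at $m^*_{s,\gamma}$ is needed.
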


Under the additional Assumptions
\hyperref[ass_Sparse]{\normalfont \textbf{{\color{blue} (Sparse)}}},
\hyperref[ass_SubGaussianDesign]{\normalfont \textbf{{\color{blue} (SubGD)}}}
and
\hyperref[ass_CovB]{\normalfont \textbf{{\color{blue} (CovB)}}},
the bounds in Lemmas \ref{lem_BoundForTheEmpiricalBias} and
\ref{lem_BoundForTheEmpiricalStochasticError} also allow to translate Theorem
\ref{thm_OracleInequalityForTheEmpiricalRisk} into optimal convergence rates by
setting \( m = m^{*}_{ s, \gamma } \) from Equation
\eqref{eq_1_BalancedBoundOracleIndex}:

\begin{corollary}[Optimal adaptation for the empirical risk]
  \label{cor_OptimalAdaptationForTheEmpiricalRisk}
  Under Assumptions 
  \hyperref[ass_SubGaussianErrors]{\normalfont \textbf{{\color{blue} (SubGE)}}},
  \hyperref[ass_Sparse]{\normalfont \textbf{{\color{blue} (Sparse)}}},
  \hyperref[ass_SubGaussianDesign]{\normalfont \textbf{{\color{blue} (SubGD)}}}
  and
  \hyperref[ass_CovB]{\normalfont \textbf{{\color{blue} (CovB)}}},
  the empirical risk at the stopping time in Equation
  \eqref{eq_1_SequentialEarlyStoppingTimeKappaM} with 
  \( C_{ \tau } = C ( \overline{ \sigma }^{2} + \rho^{4} ) \)
  with \( C \ge 12 \) satisfies
  \begin{align*}
    \label{cor_RatesForTheEmpiricalRisk}
              \| \widehat{F}^{ ( \tau ) } - f^{*} \|_{n}^{2} 
    & \le 
              C_{ \text{EmpRisk} } \mathcal{R}( s, \gamma ) 
            + 
              | \widehat{ \sigma }^{2} - \| \varepsilon \|_{n}^{2} |,
  \end{align*}
  with probability converging to one for a constant 
  \( C_{ \text{EmpRisk} } > 0 \).
\end{corollary}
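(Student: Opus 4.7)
My plan is to specialize the fully general oracle inequality of Theorem~\ref{thm_OracleInequalityForTheEmpiricalRisk} at the balanced index $m = m^{*}_{s,\gamma}$ from Equation~\eqref{eq_1_BalancedBoundOracleIndex} and then invoke the empirical risk bound~\eqref{eq_1_RatesEmpiricalRisk}. The heavy analytic content, namely Theorem~\ref{thm_OracleInequalityForTheEmpiricalRisk} itself together with the empirical bias and stochastic error bounds promised in Section~\ref{sec_EmpiricalRisk} (Lemmas~\ref{lem_BoundForTheEmpiricalBias} and \ref{lem_BoundForTheEmpiricalStochasticError}), is assumed, so the corollary reduces to a hypothesis check and a short calculation.

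First I would verify that the prescribed $C_{\tau} = C(\overline{\sigma}^{2} + \rho^{4})$ with $C \ge 12$ satisfies the constraint $C_{\tau} \ge 8\overline{\sigma}^{2}$ required by Theorem~\ref{thm_OracleInequalityForTheEmpiricalRisk}: this is immediate since $\rho^{4} \ge 0$ and $12 \ge 8$. The oracle inequality then gives, with probability tending to one,
\begin{align*}
  \|\widehat{F}^{(\tau)} - f^{*}\|_{n}^{2}
  \le
    7\,\|\widehat{F}^{(m^{*}_{s,\gamma})} - f^{*}\|_{n}^{2}
  + \frac{(8\overline{\sigma}^{2} + C_{\tau})\, m^{*}_{s,\gamma} \log p}{n}
  + |\widehat{\sigma}^{2} - \|\varepsilon\|_{n}^{2}|.
\end{align*}
The first term on the right is bounded by $7 C_{\text{Risk}}\,\mathcal{R}(s,\gamma)$ on an event of probability converging to one, thanks to Equation~\eqref{eq_1_RatesEmpiricalRisk}.

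Next I would handle the middle term by direct substitution. In the $s$-sparse case $m^{*}_{s,\gamma} = C_{\text{supp}} s$ yields a bound of order $(\overline{\sigma}^{2} + \rho^{4})\, s \log p / n$, which equals $\mathcal{R}(s,\gamma) = \overline{\sigma}^{2} s \log p / n$ up to the constant factor $1 + \rho^{4}/\overline{\sigma}^{2}$; this factor is absorbed into $C_{\text{EmpRisk}}$. In the $\gamma$-sparse case, inserting $m^{*}_{s,\gamma} = (n/((\overline{\sigma}^{2} + \rho^{4})\log p))^{1/(2\gamma)}$ gives
\begin{align*}
  \frac{(8\overline{\sigma}^{2} + C_{\tau})\, m^{*}_{s,\gamma} \log p}{n}
  \lesssim
    \Big(\frac{(\overline{\sigma}^{2} + \rho^{4}) \log p}{n}\Big)^{1 - \frac{1}{2\gamma}}
  = \mathcal{R}(s,\gamma).
\end{align*}
Combining the two estimates with the oracle inequality produces the claimed bound for some $C_{\text{EmpRisk}} > 0$.

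I do not anticipate any real obstacle in the corollary itself; everything non-trivial has been front-loaded into Theorem~\ref{thm_OracleInequalityForTheEmpiricalRisk} (whose $\omega$-pointwise formulation is precisely what makes the residual $|\widehat{\sigma}^{2} - \|\varepsilon\|_{n}^{2}|$ appear cleanly) and into the empirical bias/stochastic error lemmas of Section~\ref{sec_EmpiricalRisk}. The only minor subtlety is the mild mismatch between the $s$-sparse rate (with only $\overline{\sigma}^{2}$) and the $\gamma$-sparse rate (with $\overline{\sigma}^{2} + \rho^{4}$), but since both $\overline{\sigma}^{2}$ and $\rho$ are model-level constants this is resolved simply by inflating $C_{\text{EmpRisk}}$ to absorb the ratio $\rho^{4}/\overline{\sigma}^{2}$.
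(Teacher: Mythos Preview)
Your proposal is correct and matches the paper's approach exactly: the paper explicitly states (just before the corollary and again at the end of Section~\ref{ssec_ExplicitBoundsForEmpiricalQuantities}) that the result follows by specializing Theorem~\ref{thm_OracleInequalityForTheEmpiricalRisk} at $m = m^{*}_{s,\gamma}$ and invoking the empirical bounds from Lemmas~\ref{lem_BoundForTheEmpiricalStochasticError} and~\ref{lem_BoundForTheEmpiricalBias} via Equation~\eqref{eq_1_RatesEmpiricalRisk}. Your observation about the $\rho^{4}/\overline{\sigma}^{2}$ mismatch in the $s$-sparse case is a genuine wrinkle the paper also glosses over; it is handled by allowing $C_{\text{EmpRisk}}$ to depend on the model parameters (cf.\ the modified rate $\tilde{\mathcal{R}}(s,\gamma)$ the paper introduces in the proof of Theorem~\ref{thm_TwoStepProcedure}).
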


In order for sequential early stopping to be adaptive over a parameter subset 
\( T \) from \( \mathbb{N}_{0} \) or \( [ 1, \infty ) \), all of our results
above require an estimator of the empirical noise level that attains the rates
\( \mathcal{R}( s, \gamma ) \) for the absolute loss.
In Proposition \ref{prp_FastNoiseEstimation}, we show that such an estimator
does in fact exist, even for \( T \) equal to \( \mathbb{N}_{0} \) and 
\( [ 1, \infty ) \). 
Together, this establishes that an optimally adaptive, fully sequential choice
of the iteration in Algorithm \ref{alg_OMP} is possible.
This is a strong positive result, given the fact that in previous settings
adaptations has only been possible for restricted subsets of parameters, see
Blanchard et al. \cite{BlanchardEtal2018a} and Celisse and Wahl
\cite{CelisseWahl2020Discrepancy}.
The two-step procedure, which we analyze in detail in Section
\ref{sec_NumericalSimulationsAndATwoStepProcedure}, further robustifies this
method against deviations in the stopping time and reduces the assumptions
necessary for the noise estimation.


\subsection{Further notation}
\label{ssec_FurtherNotation}

We overload both the notation of the empirical and the population inner products
with functions and vectors respectively, i.e., for \( 
  f, g: \mathbb{R}^{p} \to \mathbb{R}, 
\) we set
\begin{align}
    \| f \|_{n}^{2}:
  =
    \frac{1}{n} \sum_{i = 1}^{n} f(X_{i})^{2}  
  \qquad \text{ and } \qquad 
    \langle f, g \rangle_{n}: 
  = 
    \frac{1}{n} \sum_{i = 1}^{n} f(X_{i}) g(X_{i})
\end{align}
and also, e.g., 
\begin{align}
    \langle \varepsilon, f \rangle_{n}: 
  = 
    \frac{1}{n} \sum_{ i = 1 }^{n} \varepsilon_{i} f( X_{i} ) 
  \qquad
  \text{ or }
  \qquad 
    \| Y \|_{ L^2 }^{2}: 
  =
    \mathbb{E} Y_{1}^{2}. 
\end{align}
Further, as in B\"uhlmann \cite{Buehlmann2006BoostingForHDLinearModels}, for 
$ j \le p $, we denote the $ j $-th coordinate projection as \( 
  g_{j}(x): = x^{(j)}, x \in \mathbb{R}^{p},
\) and vectors of dot products via
\begin{align}
    \langle \cdot, g_{J} \rangle_{n}: 
  = 
    ( \langle \cdot, g_{j} \rangle_{n} )_{ j \in J } 
    \in \mathbb{R}^{ | J | } 
  \qquad
  \text{ and }
  \qquad 
    \langle \cdot, g_{J} \rangle_{ L^2 }: 
  = 
    ( \langle \cdot, g_{j} \rangle_{ L^2 } )_{ j \in J } 
    \in \mathbb{R}^{ | J | } 
\end{align}
for \( J \subset \{ 1, \dots, p \} \).
This way, Equation \eqref{eq_1_CovB_2} in Assumption 
\hyperref[ass_CovB]{\normalfont \textbf{{\color{blue} (CovB)}}}
can be restated as 
\begin{align}
      \sup_{ | J | \le M_{n}, k \not \in J } 
      \| \Gamma_{J}^{-1} \langle g_{k}, g_{J} \rangle_{ L^{2} } \|_{1} 
  \le 
      C_{ \text{Cov} }.
\end{align}

Analogously to the population covariance matrix \( 
  \Gamma = ( \langle g_{j}, g_{k} \rangle_{ L^2 } )_{ j, k \le p } 
\), we define the empirical covariance matrix \( 
    \widehat{ \Gamma }: 
  = 
    ( \langle g_{j}, g_{k} \rangle_{n} )_{ j, k \le p }
\).
Using the same notation for partial matrices as in Assumption
\hyperref[ass_CovB]{\normalfont \textbf{{\color{blue} (CovB)}}} and \( 
      X^{ (J) } 
  =
      ( X_{i}^{ (j) } )_{ i \le n, j \in J } 
  \in
      \mathbb{R}^{ n \times | J | }
\), the projections \( \widehat{ \Pi }_{J} \) and \( \Pi_{J} \) can be written
as
\begin{align}
  \label{eq_1_Projections}
  \widehat{ \Pi }_{J}:   \mathbb{R}^{n} \to \mathbb{R}^{n},
  \qquad \qquad \qquad \ \ \
      \widehat{ \Pi }_{J} y:
  & =
      ( X^{ (J) } )^{ \top } 
      \widehat{ \Gamma }_{J}^{-1} 
      \langle y, g_{J} \rangle_{n},
  \\
  \Pi_{J}: L^{2}( \mathbb{P}^{ X_{1} } ) \to L^{2}( \mathbb{P}^{ X_{1} } ),
  \qquad 
      \Pi_{J} h:
  & =
      g_{J}^{ \top } \Gamma_{J}^{-1} \langle h, g_{J} \rangle_{ L^2 }
  \notag 
\end{align}
for \( J \subset \{ 1, \dots, p \} \).

At points where we switch between a linear combination \( f \) of the columns
of the design and its coefficients, we introduce the notation \( \beta(f) \). 
We use this, e.g., for the coefficients of the population residual function \( 
  \beta( ( I - \Pi_{m} ) f^{*} )
\) as in Equation \eqref{eq_1_UniformBaxtersInequality}.
For coefficients \( \beta \in \mathbb{R}^{p} \), we also use the general set
notation \( 
  \beta_{J}: = ( \beta_{j} )_{ j \in J } \in \mathbb{R}^{ | J | }
\) for \(
  J \subset \{ 1, \dots, p \}
\).

Throughout the paper, variables \( c > 0 \) and \( C > 0 \) denote small and
large constants respectively.
They may change from line to line and can depend on constants defined in our
assumptions. 
They are, however, independent of \( n, \overline{ \sigma }^{2} \) and 
\( \rho^{2} \). 

Many statements in our results are formulated with a postpositioned ``with
probability converging to one''.
This always refers to the whole statement including quantifiers.
E.g. in Lemma \ref{lem_BoundForTheEmpiricalStochasticError}, the result is to be
read as:
There exists an event with probability converging to one on which for all
iterations \( m \ge 0 \), the inequality \( 
  s_{m} \le C \overline{ \sigma }^{2} m \log(p) / n
\) is satisfied.



\section{Empirical risk analysis}
\label{sec_EmpiricalRisk}

Since the stopping time \( \tau \) in Equation
\eqref{eq_1_SequentialEarlyStoppingTime} is defined in terms of the squared
empirical residual norm \(
  r_{m}^{2}  m \ge 0 
\), its functioning principles are initially best explained by analyzing the
stopped empirical risk \(
  \| \widehat{F}^{ ( \tau ) } - f^{*} \|_{n}^{2}
\). 
We begin by formulating an intuition for why \( \tau \) is adaptive.

\subsection{An intuition for sequential early stopping}
\label{ssec_AnIntuitionForSequentialEarlyStopping}

Ideally, an adaptive choice \( \widehat{m} \) of the iteration in Algorithm
\ref{alg_OMP} would approximate the classical oracle iteration 
\( m^{ \mathfrak{o} } \) from Equation \eqref{eq_1_ClassicalOracle}, which
minimizes the empirical risk.
The sequential stopping time \( \tau \), however, does not have a direct
connection to \( m^{ \mathfrak{o} } \). 
In fact, its sequential definition guarantees that \( \tau \) does
not incorporate information about the squared bias \( b_{m}^{2} \)
for iterations \( m > \tau \).
Instead, \( \tau \) mimics the \emph{balanced oracle iteration}
\begin{align}
      m^{ \mathfrak{b} }
  = 
      m^{ \mathfrak{b} }(f^{*}): 
  = 
      \inf \{ m \ge 0: b_{m}^{2} \le s_{m} \}.
\end{align}
Fortunately, the empirical risk at \( m^{ \mathfrak{b} } \) is essentially
optimal up to a small discretization error, which opens up the possibility of 
sequential adaptation in the first place.

\begin{lemma}[Optimality of the balanced oracle]
  \label{lem_OptimalityOfTheBalancedOracle}
  The empirical risk at the balanced oracle iteration \( m^{ \mathfrak{b} } \) 
  satisfies
  \begin{align*}
            \| \widehat{F}^{ ( m^{ \mathfrak{b} } ) } - f^{*} \|_{n}^{2} 
    & \le 
            2 \| \widehat{F}^{ ( m^{ \mathfrak{o} } ) } - f^{*} \|_{n}^{2} 
          + 
            \Delta( s_{ m^{ \mathfrak{b} } } ) 
    = 
            2 \min_{ m \ge 0 } 
            \| \widehat{F}^{ (m) } - f^{*} \|_{n}^{2} 
            + 
            \Delta( s_{ m^{ \mathfrak{b} } } ),
  \end{align*}
  where \( 
      \Delta( s_{m} ): = s_{m} - s_{ m - 1 }
  \) is the discretization error of the empirical stochastic error at \( m \).
\end{lemma}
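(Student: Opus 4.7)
The plan is to exploit the bias–variance decomposition
$\|\widehat{F}^{(m)} - f^*\|_n^2 = b_m^2 + s_m$ together with the two monotonicities already noted in the paper, namely that $m \mapsto b_m^2$ is non-increasing and $m \mapsto s_m$ is non-decreasing, and then distinguish whether the classical oracle $m^{\mathfrak{o}}$ lies to the right or to the left of the balanced oracle $m^{\mathfrak{b}}$.

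First I would record the defining inequalities of $m^{\mathfrak{b}}$: at $m^{\mathfrak{b}}$ one has $b_{m^{\mathfrak{b}}}^2 \le s_{m^{\mathfrak{b}}}$, and for $m^{\mathfrak{b}} \ge 1$ one has the strict reverse $b_{m^{\mathfrak{b}}-1}^2 > s_{m^{\mathfrak{b}}-1}$. (The degenerate case $m^{\mathfrak{b}}=0$ forces $b_0^2 \le s_0 = 0$, which trivializes the lemma, so I would dispose of it separately in one line.) Using the first of these,
\[
\|\widehat{F}^{(m^{\mathfrak{b}})} - f^*\|_n^2 = b_{m^{\mathfrak{b}}}^2 + s_{m^{\mathfrak{b}}} \le 2\, s_{m^{\mathfrak{b}}}.
\]

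In the case $m^{\mathfrak{o}} \ge m^{\mathfrak{b}}$, monotonicity of $s_m$ gives $s_{m^{\mathfrak{b}}} \le s_{m^{\mathfrak{o}}} \le \|\widehat{F}^{(m^{\mathfrak{o}})} - f^*\|_n^2$, and plugging this into the display above yields the target bound (even without the $\Delta$ term). In the case $m^{\mathfrak{o}} < m^{\mathfrak{b}}$, i.e. $m^{\mathfrak{o}} \le m^{\mathfrak{b}} - 1$, monotonicity of the bias gives $b_{m^{\mathfrak{o}}}^2 \ge b_{m^{\mathfrak{b}}-1}^2$, which together with the strict reverse above yields $s_{m^{\mathfrak{b}}-1} < b_{m^{\mathfrak{b}}-1}^2 \le b_{m^{\mathfrak{o}}}^2$. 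Writing $s_{m^{\mathfrak{b}}} = s_{m^{\mathfrak{b}}-1} + \Delta(s_{m^{\mathfrak{b}}})$ and using monotonicity of $b^2$ once more,
\[
b_{m^{\mathfrak{b}}}^2 + s_{m^{\mathfrak{b}}} \le b_{m^{\mathfrak{b}}-1}^2 + s_{m^{\mathfrak{b}}-1} + \Delta(s_{m^{\mathfrak{b}}}) \le 2\, b_{m^{\mathfrak{o}}}^2 + \Delta(s_{m^{\mathfrak{b}}}) \le 2\, \|\widehat{F}^{(m^{\mathfrak{o}})} - f^*\|_n^2 + \Delta(s_{m^{\mathfrak{b}}}),
\]
which is exactly the claimed inequality. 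Taking the maximum of the two cases and using that $m^{\mathfrak{o}}$ minimizes $\|\widehat{F}^{(m)} - f^*\|_n^2$ closes the argument.

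There is no real obstacle here beyond getting the case split right; the only minor subtlety is that the $\Delta$ correction is genuinely needed only in the second case because in the first case the balanced iteration is dominated by the variance at $m^{\mathfrak{o}}$, whereas in the second case we pay one discretization step when sliding from $m^{\mathfrak{b}}-1$ to $m^{\mathfrak{b}}$. Writing the argument in this form also makes it transparent that the lemma is purely deterministic and requires no probabilistic input.
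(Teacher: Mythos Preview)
Your proof is correct and follows essentially the same approach as the paper: split into the two cases $m^{\mathfrak{b}} > m^{\mathfrak{o}}$ and $m^{\mathfrak{b}} \le m^{\mathfrak{o}}$, in the former bound via $b_{m^{\mathfrak{b}}-1}^2$ picking up the discretization term $\Delta(s_{m^{\mathfrak{b}}})$, and in the latter bound via $2 s_{m^{\mathfrak{b}}} \le 2 s_{m^{\mathfrak{o}}}$. Your explicit treatment of the degenerate case $m^{\mathfrak{b}}=0$ is a nice touch that the paper leaves implicit.
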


\begin{proof}[Proof]
  If \(
    m^{ \mathfrak{b} } > m^{ \mathfrak{o} } 
  \), then the definition of \( m^{ \mathfrak{b} } \) and the monotonicity of 
  \( m \mapsto b_{m}^{2} \) yield
  \begin{align}
            \| \widehat{F}^{ ( m^{ \mathfrak{b} } ) } - f^{*} \|_{n}^{2} 
    & = 
            b_{ m^{ \mathfrak{b} } }^{2}
          + 
            s_{ m^{ \mathfrak{b} } } 
    \le 
            2 b_{ m^{ \mathfrak{b} } }^{2}
          + 
            \Delta( s_{ m^{ \mathfrak{b} } } ) 
    \le 
            2 b_{ m^{ \mathfrak{o} } }^{2} + \Delta( s_{ m^{ \mathfrak{b} } } ) 
    \\
    & \le 
            2 \| \widehat{F}^{ ( m^{ \mathfrak{o} } ) } - f^{*} \|_{n}^{2} 
          + 
            \Delta( s_{ m^{ \mathfrak{b} } } ).
    \notag
  \end{align}
  Otherwise, if \(
    m^{ \mathfrak{b} } \le m^{ \mathfrak{o} } 
  \), then analogously, the monotonicity of \( m \mapsto s_{m} \) yields \( 
        \| \widehat{F}^{ ( m^{ \mathfrak{b} } ) } - f^{*} \|_{n}^{2} 
    \le 2 s_{ m^{ \mathfrak{b} } } 
    \le 2 \| \widehat{F}^{ ( m^{ \mathfrak{o} } ) } - f^{*} \|_{n}^{2} 
  \).
\end{proof}

The connection between \( \tau \) and \( m^{ \mathfrak{b} } \) can be seen by
decomposing the squared residual norm \( r_{m}^{2} \) into
\begin{align}
  \label{eq_2_DecompositionEmpiricalResiduals}
          r_{m}^{2} 
  & = 
          \| ( I - \widehat{ \Pi }_{m} ) f^{*} \|_{n}^{2}
        +
          2 \langle ( I - \widehat{ \Pi }_{m} ) f^{*}, \varepsilon \rangle_{n} 
        +
          \| ( I - \widehat{ \Pi }_{m} ) \varepsilon \|_{n}^{2} 
  \\
  & = 
          b_{m}^{2} + 2 c_{m} + \| \varepsilon \|_{n}^{2} - s_{m} 
  \notag
\end{align}
with the \emph{cross term} 
\begin{align}
  \label{eq_2_CrossTerm}
  c_{m}: = \langle ( I - \widehat{ \Pi }_{m} ) f^{*}, \varepsilon \rangle_{n}, 
  \qquad m \ge 0. 
\end{align}
Indeed, Equation \eqref{eq_2_DecompositionEmpiricalResiduals} yields that the
stopping condition \( r_{m}^{2} \le \kappa \) is equivalent to
\begin{align}
  \label{eq_2_ExplicitStoppingCondition}
          b_{m}^{2} + 2 c_{m}
  & \le 
          s_{m} + \kappa - \| \varepsilon \|_{n}^{2}. 
\end{align}
Assuming that \( c_{m} \) can be treated as a lower order term, this implies
that, up to the difference \( \kappa - \| \varepsilon \|_{n}^{2} \), 
\( \tau \) behaves like \( m^{ \mathfrak{b} } \).

The connection between a discrepancy-type stopping rule and a balanced oracle
was initially drawn in Blanchard et al.
\cite{BlanchardEtal2018a,BlanchardEtal2018b}.
Whereas their oracle quantities were defined in terms of non-random population
versions of bias and variance, ours have to be defined \( \omega \)-pointwise on
the underlying probability space.
This is owed to the fact that, even conditional on the design \( \mathbf{X} \),
the squared bias \( b_{m}^{2} \) is still a random quantity due to the random
selection of \( \widehat{J}_{m} \) in Algorithm \ref{alg_OMP}. 
This is a subtle but important distinction, which leads to a substantially
different analysis.


\subsection{A general oracle inequality}
\label{ssec_AGeneralOracleInequality}

In this section, we derive the first main result in Theorem
\ref{thm_OracleInequalityForTheEmpiricalRisk}.
As in Blanchard et al. \cite{BlanchardEtal2018a}, the key ingredient is that 
via the squared residual norm \( r_{m}^{2}, m \ge 0 \), the stopped estimator 
\( \widehat{F}^{ ( \tau ) } \) can be compared with any other estimator 
\( \widehat{F}^{ (m) } \) in empirical norm.
Note that the statement in Lemma \ref{lem_EmpiricalNormComparison} is
completely deterministic.

\begin{lemma}[Empirical norm comparison]
  \label{lem_EmpiricalNormComparison}
  For any \( m \ge 0 \), 
  the stopped estimator \( \widehat{F}^{ ( \tau ) } \) 
  with \( \tau \) from Equation \eqref{eq_1_SequentialEarlyStoppingTime}
  satisfies
  \begin{align*}
            \| \widehat{F}^{ ( \tau ) } - \widehat{F}^{ (m) } \|_{n}^{2} 
    & \le 
            \| \widehat{F}^{ (m) } - f^{*} \|_{n}^{2} 
          + 
            2 | c_{m} | 
            \\
    &     + 
            ( \kappa - \| \varepsilon \|_{n}^{2} )
            \mathbf{1} \{ \tau < m \} 
          + 
            ( \| \varepsilon \|_{n}^{2} + \Delta( r_{ \tau }^{2} ) - \kappa )
            \mathbf{1} \{ \tau > m \}, 
    \notag
  \end{align*}
  where 
  \( c_{m} \) is the cross term from Equation
  \eqref{eq_2_CrossTerm} 
  and
  \( 
    \Delta( r_{m}^{2} ): = r_{m}^{2} - r_{ m - 1 }^{2}
  \) 
  is the discretization error of the squared residual norm at \( m \).
\end{lemma}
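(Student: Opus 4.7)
The plan is to exploit the nested structure of the OMP index sets, \( \widehat{J}_m \subset \widehat{J}_{m'} \) for \( m \le m' \). For nested projections, the Pythagorean identity applies: if \( k \le \ell \), then \( \widehat{\Pi}_k \widehat{\Pi}_\ell = \widehat{\Pi}_k \), so \( \widehat{F}^{(\ell)} - \widehat{F}^{(k)} \perp_n Y - \widehat{F}^{(\ell)} \) and
\[
  \| Y - \widehat{F}^{(k)} \|_n^2 = \| Y - \widehat{F}^{(\ell)} \|_n^2 + \| \widehat{F}^{(\ell)} - \widehat{F}^{(k)} \|_n^2.
\]
Hence the target quantity \( \|\widehat{F}^{(\tau)} - \widehat{F}^{(m)}\|_n^2 \) equals either \( r_m^2 - r_\tau^2 \) or \( r_\tau^2 - r_m^2 \), depending on whether \( \tau > m \) or \( \tau < m \), and vanishes if \( \tau = m \). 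The statement is purely deterministic, and the proof reduces to algebra after this observation.

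I would then split into cases and apply the stopping rule. In the case \( \tau < m \), the definition of \( \tau \) yields \( r_\tau^2 \le \kappa \), so \( r_\tau^2 - r_m^2 \le \kappa - r_m^2 \). In the case \( \tau > m \), the stopping rule gives the converse \emph{just-before-stopping} bound \( r_{\tau-1}^2 > \kappa \), which, together with the definition of the increment \( \Delta(r_\tau^2) \), controls \( -r_\tau^2 \) in terms of \( -\kappa \) plus the single-step discretization error at \( \tau \). In the trivial case \( \tau = m \) the LHS is zero and both indicator terms vanish.

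Next, I would convert the residual-norm bounds into the stated form via the decomposition already derived in (2.3),
\[
  r_m^2 = b_m^2 + 2 c_m + \| \varepsilon \|_n^2 - s_m,
\]
combined with the Pythagorean identity \( \|\widehat{F}^{(m)} - f^*\|_n^2 = b_m^2 + s_m \) (since \( \widehat{\Pi}_m f^* \perp_n (I - \widehat{\Pi}_m) f^* \)). Subtracting gives the key substitution
\[
  r_m^2 = \| \widehat{F}^{(m)} - f^* \|_n^2 + 2 c_m + \| \varepsilon \|_n^2 - 2 s_m,
\]
which I would plug into each case. Bounding \( 2 c_m \le 2 |c_m| \) to handle the only sign-uncontrolled term and discarding \( -2 s_m \le 0 \) then yields the two case-dependent terms, which combine using the indicators \( \mathbf{1}\{\tau < m\} \) and \( \mathbf{1}\{\tau > m\} \) into the single inequality.

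The main obstacle is the bookkeeping around the single-step discretization \( \Delta(r_\tau^2) \) in the \( \tau > m \) case: the bound \( r_\tau^2 \le \kappa \) alone is too loose on the "correct" side, so one must pass through \( r_{\tau-1}^2 > \kappa \) and pay exactly one increment, which is where sign conventions for \( \Delta \) must be tracked carefully. Beyond this and some elementary algebra, the argument is purely deterministic, using neither probabilistic tools nor OMP-specific structure apart from the nesting of \( (\widehat{J}_m)_{m \ge 0} \).
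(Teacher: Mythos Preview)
Your proposal is correct and essentially identical to the paper's proof: both reduce \( \| \widehat{F}^{(\tau)} - \widehat{F}^{(m)} \|_n^2 \) to \( |r_m^2 - r_\tau^2| \) via the nested-projection Pythagorean identity (the paper writes this out through the cross term \( \langle (I-\widehat{\Pi}_\tau)Y,(I-\widehat{\Pi}_m)Y\rangle_n \)), then split into cases using \( r_\tau^2 \le \kappa \) and \( r_{\tau-1}^2 > \kappa \), and finally substitute the residual decomposition. One small bookkeeping note: in the case \( \tau < m \) you are plugging in \( -r_m^2 \), so the sign on \( s_m \) flips and what you actually discard there is \( -2b_m^2 \le 0 \) rather than \( -2s_m \le 0 \); the paper handles both cases symmetrically by keeping \( r_m^2 = b_m^2 + 2c_m + \|\varepsilon\|_n^2 - s_m \) and bounding \( b_m^2 \pm s_m \le b_m^2 + s_m \).
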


\begin{proof}[Proof]
  Fix \( m \ge 0 \).
  We have
  \begin{align}
    \label{eq_ComparingTauWithADeterministicIndex_Decomposition}
            \|
              \widehat{F}^{ ( \tau ) } 
            - \widehat{F}^{ (m) }
            \|_{n}^{2} 
    & = 
            \| 
              Y - \widehat{F}^{ ( \tau ) } 
            + \widehat{F}^{ (m) } - Y
            \|_{n}^{2} 
    = 
            r_{ \tau }^{2} 
          - 
            2 \langle 
              ( I - \widehat{ \Pi }_{ \tau } ) Y, 
              ( I - \widehat{ \Pi }_{ m    } ) Y 
            \rangle_{n} 
          + 
            r_{m}^{2} 
    \notag 
    \\
    & = 
            ( r_{m}^{2} - r_{ \tau }^{2} ) 
            \mathbf{1} \{ \tau > m \} 
          + 
            ( r_{ \tau }^{2} - r_{m}^{2} ) 
            \mathbf{1} \{ \tau < m \}.
  \end{align}
  On \( \{ \tau > m \} \), we use the definition of \( \tau \) in 
  Equation \eqref{eq_1_SequentialEarlyStoppingTime}
  to estimate 
  \begin{align}
            r_{m}^{2} - r_{ \tau }^{2} 
    & \le 
            r_{m}^{2} - \kappa + \Delta( r_{ \tau }^{2} ) 
    =  
            b_{m}^{2} + 2 c_{m} + \| \varepsilon \|_{n}^{2} - s_{m} 
          - \kappa + \Delta( r_{ \tau }^{2} ) 
    \\
    & \le 
            \| \widehat{F}^{ (m) } - f^{*} \|_{n}^{2} 
          + 
            2 c_{m} 
          + 
            \| \varepsilon \|_{n}^{2} - \kappa + \Delta( r_{ \tau }^{2} ).
    \notag
  \end{align}
  On \( \{ \tau \le m \} \), analogously, we obtain \( 
          r_{ \tau }^{2} - r_{m}^{2} 
    \le 
          \| \widehat{F}^{ (m) } - f^{*} \|_{n}^{2} 
        -  
          2 c_{m} 
        + 
          \kappa - \| \varepsilon \|_{n}^{2},
  \) which finishes the proof.
\end{proof}

In order to translate this norm comparison to an oracle inequality, it suffices
to control the cross term and the discretization error of the residual norm.
This is already possible under Assumption  
\hyperref[ass_SubGaussianErrors]{\normalfont \textbf{{\color{blue} (SubGE)}}}.
The proof of Lemma \ref{lem_BoundsForTheCrossTermAndTheDiscretizationError} is deferred to Appendix \ref{sec_ProofsForAuxiliaryResults}. 

\begin{lemma}[Bounds for the cross term and the discretization error]
  \label{lem_BoundsForTheCrossTermAndTheDiscretizationError}
  Under Assumption
  \hyperref[ass_SubGaussianErrors]{\normalfont \textbf{{\color{blue} (SubGE)}}}, 
  the following statements hold:
  \begin{enumerate}

    \item [(i)] With probability converging to one, the cross term satisfies
      \begin{align*}
              | c_{m} |
        & \le 
              b_{m}
              \sqrt{ \frac{ 4 \overline{ \sigma }^{2} ( m + 1 ) \log p }{n} }
        \qquad \text{ for all } m \ge 0.
      \end{align*}

    \item [(ii)] With probability converging to one, the discretization error
      of the squared residual norm satisfies
      \begin{align*}
        \Delta( r_{m}^{2} )
        & \le 
            2 b_{ m - 1 }^{2}
          + 
            \frac{ 8 \overline{ \sigma }^{2} m \log p }{n}
        \qquad \text{ for all } m \ge 1.
      \end{align*}

  \end{enumerate}
\end{lemma}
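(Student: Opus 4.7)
The plan is to handle both parts by a common strategy: identify a random but low-dimensional subspace of $\mathbb{R}^n$ into which the relevant quantity falls, then reduce the resulting data-dependent supremum to a union bound over deterministic subspaces indexed by subsets $J \subset \{1,\dots,p\}$, to each of which a standard sub-Gaussian concentration inequality applies conditional on $\mathbf{X}$.

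For (i), I would first note that $(I-\widehat{\Pi}_m)f^* = f^* - \widehat{\Pi}_m f^*$ lies in the $(m+1)$-dimensional subspace $V_{\widehat{J}_m} := \mathrm{span}\{f^*, X^{(j)} : j \in \widehat{J}_m\}$, since $\widehat{\Pi}_m f^*$ is a linear combination of the columns indexed by $\widehat{J}_m$. Writing $P_{V_J}$ for the $\langle\cdot,\cdot\rangle_n$-orthogonal projection onto $V_J$, Cauchy--Schwarz gives
\[
   |c_m| = |\langle (I-\widehat{\Pi}_m)f^*,\, P_{V_{\widehat{J}_m}}\varepsilon\rangle_n| \;\le\; b_m\,\|P_{V_{\widehat{J}_m}}\varepsilon\|_n \;\le\; b_m \max_{|J|=m}\|P_{V_J}\varepsilon\|_n.
\]
Conditional on $\mathbf{X}$ and for each fixed $J$, the quantity $\|P_{V_J}\varepsilon\|_n^2 = n^{-1}\varepsilon^\top P_{V_J}\varepsilon$ is a quadratic form in subgaussian coordinates with $\mathrm{tr}(P_{V_J}) \le m+1$, so a Hanson--Wright-type tail bound delivers $\mathbb{P}(\|P_{V_J}\varepsilon\|_n^2 > 4\overline{\sigma}^2(m+1)\log p / n \mid \mathbf{X}) \le p^{-2(m+1)}$ after sharpening constants. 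A union bound over the $\binom{p}{m} \le p^m$ subsets of size $m$, and then over $m \ge 0$, leaves a total failure probability of $\sum_{m\ge 0} p^{-m-2} = O(p^{-2}) = o(1)$, delivering the claim uniformly in $m$.

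For (ii), since $\widehat{J}_{m-1} \subsetneq \widehat{J}_m$ with $|\widehat{J}_m \setminus \widehat{J}_{m-1}| = 1$, the difference $\widehat{P}_m := \widehat{\Pi}_m - \widehat{\Pi}_{m-1}$ is an $\langle\cdot,\cdot\rangle_n$-orthogonal projection of rank $\le 1$ and (reading $\Delta(r_m^2)$ as $r_{m-1}^2 - r_m^2$, consistent with how $\Delta(r_\tau^2)$ enters the proof of Lemma~\ref{lem_EmpiricalNormComparison} where the inequality relies on $\kappa \le r_{\tau-1}^2$)
\[
   \Delta(r_m^2) = \|\widehat{P}_m Y\|_n^2 \;\le\; 2\|\widehat{P}_m f^*\|_n^2 + 2\|\widehat{P}_m \varepsilon\|_n^2.
\]
The Pythagorean identity $b_{m-1}^2 = b_m^2 + \|\widehat{P}_m f^*\|_n^2$ controls the first summand by $2b_{m-1}^2$, while $\|\widehat{P}_m\varepsilon\|_n^2 \le \|\widehat{\Pi}_m\varepsilon\|_n^2$ is the squared $\langle\cdot,\cdot\rangle_n$-projection of $\varepsilon$ onto the $m$-dimensional $U_{\widehat{J}_m} := \mathrm{span}\{X^{(j)}:j\in \widehat{J}_m\}$. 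The identical union-bound argument from (i), applied to the deterministic subspaces $U_J$ of dimension $m$, yields $\|\widehat{\Pi}_m\varepsilon\|_n^2 \le 4\overline{\sigma}^2 m\log p / n$ for all $m \ge 1$ on an event of probability converging to one, and combining gives the stated bound.

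The principal technical hurdle is calibrating the numerical constants so that the Hanson--Wright tail produces at least $p^{-2(m+1)}$ for a fixed $(m+1)$-dimensional projection, ensuring that the double union bound over $J$ and $m$ collapses geometrically. Since $\log p / n \to 0$, the subgaussian regime of the tail bound dominates the Bernstein regime with room to spare, and the factor $4$ in (i) (hence $8$ in (ii) after the $(a+b)^2 \le 2a^2 + 2b^2$ split) falls out directly from the exponent matching. All other steps are standard bookkeeping.
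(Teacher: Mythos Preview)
Your overall strategy---replace the random index set by a union over deterministic $J$, then apply a conditional sub-Gaussian tail bound---is correct and would yield the right order in both parts. However, the way you carry out the concentration step loses the explicit constants $4$ and $8$ that the lemma asserts, and your claim that ``the factor $4$ falls out directly from the exponent matching'' is not right.

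The issue is that you bound the projection of $\varepsilon$ onto an entire $(m+1)$-dimensional subspace $V_J$ and then invoke Hanson--Wright. Hanson--Wright carries an unspecified universal constant $c$ in the exponent, and for a rank-$(m+1)$ projection the deviation $t \approx 4\overline{\sigma}^2(m+1)\log p$ actually lands in the Bernstein (linear) regime $t/(\overline{\sigma}^2\|A\|_{\mathrm{op}})$, not the sub-Gaussian regime as you write; either way you obtain $p^{-4c(m+1)}$ with some $c<1/2$, which is not enough to beat the $p^m$ subsets and still close the geometric sum with the stated constant. In part~(ii) you compound this by passing from the rank-one increment $\widehat{P}_m$ to the rank-$m$ projection $\widehat{\Pi}_m$, an unnecessary further loss.

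The paper avoids all of this by observing that in each part only a \emph{single} design-measurable unit direction is needed. For~(i), the normalized vector $h=(I-\widehat{\Pi}_m)f^*/b_m$ itself satisfies $|c_m|=b_m|\langle h,\varepsilon\rangle_n|$, and one takes the supremum over the finite family $\mathcal{H}_m=\{(I-\Pi_J)f^*/\|(I-\Pi_J)f^*\|_n:|J|=m\}$ with $|\mathcal{H}_m|\le p^m$. For~(ii), the Gram--Schmidt direction $h=(I-\widehat{\Pi}_{m-1})g_{\widehat{j}_m}/\|(I-\widehat{\Pi}_{m-1})g_{\widehat{j}_m}\|_n$ gives $\Delta(r_m^2)=\langle h,Y\rangle_n^2\le 2b_{m-1}^2+2\langle h,\varepsilon\rangle_n^2$, and one enumerates the family $\tilde{\mathcal{H}}_{m-1}$ with $|\tilde{\mathcal{H}}_{m-1}|\le p^m$. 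In both cases each $h$ is a single unit vector, so Hoeffding's inequality applies with its sharp exponent $-nt^2/(2\overline{\sigma}^2)$; setting $t=\sqrt{4\overline{\sigma}^2(m+1)\log p/n}$ produces exactly $2p^{-2(m+1)}$ per element, and the double sum collapses. This is what pins down the constants $4$ and $8$ that feed into the condition $C_\tau\ge 8\overline{\sigma}^2$ in Theorem~\ref{thm_OracleInequalityForTheEmpiricalRisk}.
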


Together, Lemmas \ref{lem_EmpiricalNormComparison} and
\ref{lem_BoundsForTheCrossTermAndTheDiscretizationError} motivate the choice 
\( \kappa = \kappa_{m} \) in Equation
\eqref{eq_1_SequentialEarlyStoppingTimeKappaM}, where the additional term 
\( C_{ \tau } m \log(p) / n \) accounts for the discretization error of
the residuals norm.
With this choice of \( \kappa \), Lemma \ref{lem_EmpiricalNormComparison}
yields for any fixed \( m \ge 0 \) that
\begin{align}
          \| \widehat{F}^{ ( \tau ) } - f^{*} \|_{n}^{2} 
  & \le 
          2
          ( 
            \| \widehat{F}^{ ( \tau ) } - \widehat{F}^{ (m) } \|_{n}^{2} 
          + 
            \| \widehat{F}^{ (m) } - f^{*} \|_{n}^{2} 
          ) 
  \\
  & \le 
          2
          \big( 
            2 \| \widehat{F}^{ (m) } - f^{*} \|_{n}^{2} 
          + 
            2 | c_{m} | 
        + 
            ( \kappa_{ \tau } - \| \varepsilon \|_{n}^{2} )
            \mathbf{1} \{ \tau < m \}. 
  \notag
  \\
  & \qquad \ \ \quad \qquad \qquad \qquad \qquad \ \ + 
            (
              \| \varepsilon \|_{n}^{2} 
            + 
               \Delta( r_{ \tau }^{2} ) - \kappa_{ \tau } 
            )
            \mathbf{1} \{ \tau >  m \} 
          \big).
  \notag
\end{align}
Under Assumption 
\hyperref[ass_SubGaussianErrors]{\normalfont \textbf{{\color{blue} (SubGE)}}},
with probability converging to one, the estimates from Lemma
\ref{lem_BoundsForTheCrossTermAndTheDiscretizationError} then imply that
on \( \{ \tau < m \} \),
\begin{align}
            \| \widehat{F}^{ ( \tau ) } - f^{*} \|_{n}^{2} 
  & \le 
          6 \| \widehat{F}^{ (m) } - f^{*} \|_{n}^{2} 
        + 
          \frac{ ( 8 \overline{ \sigma }^{2} + C_{ \tau } )  m \log p }{n}
        + 
          \widehat{ \sigma }^{2} - \| \varepsilon \|_{n}^{2}, 
\end{align}
using that \( 4 ( m + 1 ) \le 8 m \).
Analogously, on \( \{ \tau > m \} \), we obtain
\begin{align}
          \| \widehat{F}^{ ( \tau ) } - f^{*} \|_{n}^{2} 
  & \le 
          7 \| \widehat{F}^{ (m) } - f^{*} \|_{n}^{2} 
        + 
          \frac{ 8 \overline{ \sigma }^{2} m \log p }{n} 
        + 
          \frac{ ( 8 \overline{ \sigma }^{2} - C_{ \tau } )  \tau \log p }{n} 
        + 
          \| \varepsilon \|_{n}^{2} - \widehat{ \sigma }^{2}, 
\end{align}
where we have used that \( b_{ \tau - 1 }^{2} \le b_{m}^{2} \). 
Combining the events and taking the infimum over \( m \ge 0 \) 
yields the result in Theorem \ref{thm_OracleInequalityForTheEmpiricalRisk}. 
We reiterate that here, it is the \( \omega \)-pointwise analysis that preserves the term \( 
  | \widehat{ \sigma }^{2} - \| \varepsilon \|_{n}^{2} |
\) in the result.


\subsection{Explicit bounds for empirical quantities}
\label{ssec_ExplicitBoundsForEmpiricalQuantities}

In order to derive a convergence rate from Theorem
\ref{thm_OracleInequalityForTheEmpiricalRisk}, we need explicit bounds for the
empirical quantities involved.
These will also be essential for the analysis of the stopped population risk.
We begin by establishing control over the most basic quantities.

\begin{lemma}[Uniform bounds in high probability]
  \label{lem_UniformBoundsInHighProbability}
  Under Assumptions
  \hyperref[ass_SubGaussianErrors]{\normalfont \textbf{{\color{blue} (SubGE)}}}
  and
  \hyperref[ass_SubGaussianDesign]{\normalfont \textbf{{\color{blue} (SubGD)}}},
  the following statements hold:
  \begin{enumerate}

    \item[(i)]
      There exists some $ C_{g} > 0 $ such that with probability converging to
      one,
      \begin{align*}
            \sup_{ j, k \le p } 
            | 
              \langle g_{j}, g_{k} \rangle_{n} 
            - \langle g_{j}, g_{k} \rangle_{ L^{2} } 
            | 
        \le 
            C_{g} 
            \sqrt{ \frac{ \rho^{4} \log p }{n} }. 
      \end{align*}

    \item[(ii)]
      There exists some $ C_{ \varepsilon } > 0 $ such that with probability
      converging to one,
      \begin{align*}
            \sup_{ j \le p } 
            | \langle \varepsilon, g_{j} \rangle_{n} | 
        \le
            C_{ \varepsilon } 
            \sqrt{ \frac{ \overline{ \sigma }^{2} \log p }{n} }. 
      \end{align*}

    \item[(iii)] 
      There exists some \( C_{ \Gamma } > 0 \) such that 
      for any fixed \( c_{ \text{Cov} } > 0 \), 
      \begin{align*}
            \sup_{ | J | \le c_{ \text{Cov} } n / \log p } 
            \frac{ \| \widehat{ \Gamma }_{J} - \Gamma_{J} \|_{ \text{op} } }
                 { \rho^{2} }
        \le
            c_{ \text{Cov} } C_{ \Gamma }
      \end{align*}
      with probability converging to one.

    \item[(iv)] 
      There exist \( c_{ \text{Cov} }, C_{ \Gamma^{-1} } > 0 \) such that with
      probability converging to one,
      \begin{align*}
            \sup_{ | J | \le c_{ \text{Cov} } n / \log p } 
            \| \widehat{ \Gamma }_{J}^{-1} \|_{ \text{op} }
        \le
            C_{ \Gamma^{-1} }.
      \end{align*}

  \end{enumerate}
\end{lemma}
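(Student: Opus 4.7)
The four statements follow a common template: reduce each bound to concentration for an i.i.d.\ sum (or a random matrix) and then absorb the union bound over at most $p$, $p^{2}$, or $\binom{p}{k}$ indices into the $\sqrt{\log p/n}$ factor. I sketch how I would carry each one out in turn.

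\textbf{Parts (i) and (ii).} For part (i), fix $j,k\le p$ and note that under \textbf{(SubGD)} (with $x=e_j$) each coordinate $X_1^{(j)}$ is subgaussian with parameter $\rho^2$, so the product $X_1^{(j)} X_1^{(k)}$ is subexponential with parameters proportional to $\rho^4$. Bernstein's inequality then gives
\begin{align*}
  \mathbb{P}\bigl(|\langle g_j,g_k\rangle_n - \langle g_j,g_k\rangle_{L^2}| \ge t\bigr)
  \le 2\exp\!\Bigl(-c n \min(t^2/\rho^4,\, t/\rho^2)\Bigr),
\end{align*}
and choosing $t = C_g\rho^2\sqrt{\log p/n}$ with $C_g$ large, a union bound over the $p^2$ pairs yields the claim (in the regime $\log p/n\to 0$ the quadratic term dominates). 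For part (ii), I would condition on $\mathbf{X}$. Under \textbf{(SubGE)} and independence, $\langle\varepsilon,g_j\rangle_n$ is conditionally subgaussian with parameter $\overline{\sigma}^2\|X^{(j)}\|_n^2/n$, so the Cram\'er--Chernoff bound and a union bound over $j\le p$ give, on the event where $\sup_j\|X^{(j)}\|_n^2\le 2$ (supplied by part~(i) applied to $j=k$ and the unit variance assumption),
\begin{align*}
  \sup_{j\le p}|\langle\varepsilon,g_j\rangle_n|
  \le C_\varepsilon\sqrt{\overline{\sigma}^2\log p/n}
\end{align*}
with probability converging to one.

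\textbf{Part (iii).} This is the main obstacle, because the supremum ranges over all subsets $J$ up to size $c_{\text{Cov}} n/\log p$, which can be much larger than $\sqrt{n/\log p}$, so the entrywise bound from (i) is insufficient. I would invoke a standard non-asymptotic Wishart-type concentration (Vershynin, Theorem~4.6.1): for a single $J$ with $|J|=k$, the $n\times k$ data matrix has i.i.d.\ rows that are isotropic after rescaling by $\Gamma_J^{1/2}$ and subgaussian with parameter proportional to $\rho^2$, so
\begin{align*}
  \|\widehat{\Gamma}_J-\Gamma_J\|_{\text{op}}
  \le C\rho^2\Bigl(\sqrt{(k+u)/n}+(k+u)/n\Bigr)
\end{align*}
with probability at least $1-2e^{-u}$. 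Setting $u=Ck\log p$ and union-bounding over the $\binom{p}{k}\le (ep/k)^k$ subsets of size $k$, and then over $k\le c_{\text{Cov}}n/\log p$, gives uniformly
\begin{align*}
  \sup_{|J|\le c_{\text{Cov}}n/\log p}\|\widehat{\Gamma}_J-\Gamma_J\|_{\text{op}}
  \le C'\rho^2\Bigl(\sqrt{c_{\text{Cov}}}+c_{\text{Cov}}\Bigr),
\end{align*}
which yields the claim with $C_\Gamma$ proportional to $C'$.

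\textbf{Part (iv).} This is a quick perturbation consequence of (iii) together with assumption \textbf{(CovB)}. By Weyl's inequality,
\begin{align*}
  \lambda_{\min}(\widehat{\Gamma}_J)\ge \lambda_{\min}(\Gamma_J)-\|\widehat{\Gamma}_J-\Gamma_J\|_{\text{op}}
  \ge c_\lambda - c_{\text{Cov}} C_\Gamma \rho^2
\end{align*}
uniformly over $|J|\le c_{\text{Cov}}n/\log p$ on the event of part (iii). Choosing $c_{\text{Cov}}$ small enough (depending only on $c_\lambda$, $C_\Gamma$, and $\rho^2$) that the right-hand side is at least $c_\lambda/2$ yields $\|\widehat{\Gamma}_J^{-1}\|_{\text{op}}\le 2/c_\lambda=:C_{\Gamma^{-1}}$. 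The only subtle point is that the threshold $c_{\text{Cov}}$ must be selected \emph{after} fixing the constant from (iii), but since $c_{\text{Cov}}$ is a free parameter in the statement this causes no circularity.
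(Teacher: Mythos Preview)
Your proposal is correct and follows essentially the same route as the paper: Bernstein for subexponential products in (i), conditional Hoeffding on the event from (i) in (ii), a net/Wishart concentration plus union bound over subsets in (iii), and a perturbation of the smallest eigenvalue in (iv). Two small remarks. First, your bound in (iii), $C'\rho^{2}(\sqrt{c_{\text{Cov}}}+c_{\text{Cov}})$, does not literally reproduce the linear-in-$c_{\text{Cov}}$ form stated in the lemma; the paper's own proof unpacks the $1/4$-net argument explicitly and also does not quite yield that linear dependence for arbitrarily small $c_{\text{Cov}}$, but this is immaterial since the only use is in (iv) where one simply needs the bound to be small for some choice of $c_{\text{Cov}}$. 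Second, in (iv) you invoke Weyl directly and conclude $\|\widehat{\Gamma}_J^{-1}\|_{\text{op}}\le 2/c_\lambda$, which is cleaner than the paper's detour through Banach's lemma; both you and the paper tacitly use the lower eigenvalue bound $c_\lambda$ from \textbf{(CovB)}, which is not listed among the lemma's hypotheses but is clearly needed.
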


\noindent Some version of this is needed in all results for \( L^{2} \)-boosting
in high-dimensional models, see Lemma 1 in Bühlmann
\cite{Buehlmann2006BoostingForHDLinearModels}, Lemma A.2 in Ing and Lai
\cite{IngLai2011ConsistentModelSelection} or Assumptions (A1) and (A2) in Ing
\cite{Ing2020ModelSelection}. 
A proof for our setting is  detailed in Appendix
\ref{sec_ProofsForAuxiliaryResults}.
Note that Lemma \ref{lem_UniformBoundsInHighProbability} (iii) and (iv) improve
the control to subsets \( J \) with cardinality of order up to \( n / \log p \)
from Lemma A.2 in Ing and Lai \cite{IngLai2011ConsistentModelSelection}, where
only subsets of order \( \sqrt{ n / \log p } \) could be handled.
For our results, we only need that \( 
  M_{ n } \le c_{ \text{Cov} } n / \log p
\) for \( n \) sufficiently large, however, this could open up further research
into the setting where \( \gamma \in ( 1 / 2, 1 ) \),
i.e., when \( m^{*}_{ s, \gamma } \) can be of order \( n / \log p \), see also
Barron et al. \cite{BarronEtal2008Greedy}. 

From Lemma \ref{lem_UniformBoundsInHighProbability}, we obtain that the
empirical stochastic error \( s_{m} \) satisfies a similar upper bound as its
population counterpart \( S_{m} \). 

\begin{lemma}[Bound for the empirical stochastic error]
  \label{lem_BoundForTheEmpiricalStochasticError}
  Under Assumptions 
  \hyperref[ass_SubGaussianErrors]{\normalfont \textbf{{\color{blue} (SubGE)}}}
  and
  \hyperref[ass_SubGaussianDesign]{\normalfont \textbf{{\color{blue} (SubGD)}}},
  the empirical stochastic error satisfies 
  \begin{align*}
    s_{m} & \le C \frac{ \overline{ \sigma }^{2} m \log p }{n} 
    \qquad \text{ for all } m \ge 0
  \end{align*}
  with probability converging to one.
\end{lemma}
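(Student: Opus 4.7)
The plan is to use the closed-form expression
$$s_{m} = \|\widehat{\Pi}_{m}\varepsilon\|_{n}^{2} = \langle\varepsilon,g_{\widehat{J}_{m}}\rangle_{n}^{\top}\,\widehat{\Gamma}_{\widehat{J}_{m}}^{-1}\,\langle\varepsilon,g_{\widehat{J}_{m}}\rangle_{n} \le \|\widehat{\Gamma}_{\widehat{J}_{m}}^{-1}\|_{\mathrm{op}}\sum_{j\in\widehat{J}_{m}}\langle\varepsilon,g_{j}\rangle_{n}^{2},$$
which follows from the projection formula in \eqref{eq_1_Projections}, and then to split the argument at the threshold $M:=\lfloor c_{\mathrm{Cov}}\, n/\log p\rfloor$ with $c_{\mathrm{Cov}}$ as in Lemma~\ref{lem_UniformBoundsInHighProbability}~(iv).

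For $m\le M$, note that because the residual $Y-\widehat{F}^{(m)}$ is orthogonal to $\mathrm{span}\{X^{(j)}:j\in\widehat{J}_{m}\}$, OMP always picks a fresh index, so $|\widehat{J}_{m}|=m\le M$. The uniform bound in Lemma~\ref{lem_UniformBoundsInHighProbability}~(iv) gives $\|\widehat{\Gamma}_{\widehat{J}_{m}}^{-1}\|_{\mathrm{op}}\le C_{\Gamma^{-1}}$, and Lemma~\ref{lem_UniformBoundsInHighProbability}~(ii) controls the quadratic sum via
$$\sum_{j\in\widehat{J}_{m}}\langle\varepsilon,g_{j}\rangle_{n}^{2}\le m\sup_{j\le p}\langle\varepsilon,g_{j}\rangle_{n}^{2}\le C_{\varepsilon}^{2}\,\frac{m\,\overline{\sigma}^{2}\log p}{n}.$$
Combining, $s_{m}\le C_{\Gamma^{-1}}C_{\varepsilon}^{2}\,\overline{\sigma}^{2}\,m\log p/n$ uniformly in $m\le M$, on an event of probability converging to one.

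For $m>M$, I use the trivial projection bound $s_{m}\le\|\varepsilon\|_{n}^{2}$. A standard concentration inequality for sums of independent squared subgaussians (Bernstein or Hanson--Wright applied conditionally on $\mathbf{X}$, together with $\mathbb{E}(\varepsilon_{i}^{2}\mid X_{i})\le\overline{\sigma}^{2}$) yields $\|\varepsilon\|_{n}^{2}\le C\overline{\sigma}^{2}$ with probability converging to one. Since $m>M$ implies $m\log p/n>c_{\mathrm{Cov}}$, we rearrange to $s_{m}\le (C/c_{\mathrm{Cov}})\,\overline{\sigma}^{2}\,m\log p/n$. Intersecting the three high-probability events and adjusting the constant gives the claim for all $m\ge 0$ on a single event of probability converging to one, as required by the quantifier convention of Section~\ref{ssec_FurtherNotation}.

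The only real obstacle is the data dependence of $\widehat{J}_{m}$ on $\varepsilon$, which prevents a direct use of subgaussian tail bounds for a fixed subspace. This is resolved cleanly because Lemma~\ref{lem_UniformBoundsInHighProbability}~(ii) and~(iv) are \emph{uniform} in $j\le p$ and in $J$ of cardinality at most $M$, so the random $\widehat{J}_{m}$ can be plugged in with no loss; past the threshold $M$, the target rate is already a constant multiple of $\overline{\sigma}^{2}$, so the trivial projection bound suffices.
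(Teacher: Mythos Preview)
Your proof is correct and follows essentially the same approach as the paper's: both split at the threshold $c_{\mathrm{Cov}}\,n/\log p$, apply Lemma~\ref{lem_UniformBoundsInHighProbability}~(ii) and~(iv) below the threshold, and use the trivial projection bound $s_m\le\|\varepsilon\|_n^2$ above it. Your direct use of $v^\top A v\le\|A\|_{\mathrm{op}}\|v\|_2^2$ is slightly more streamlined than the paper's chain of H\"older and Cauchy--Schwarz inequalities, but both arrive at the identical bound $\|\widehat{\Gamma}_{\widehat{J}_m}^{-1}\|_{\mathrm{op}}\,m\sup_{j\le p}|\langle\varepsilon,g_j\rangle_n|^2$.
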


\begin{proof}[Proof]
  For \( m \le c_{ \text{iter} } n / \log p \), using the notation from Equation 
  \eqref{eq_1_Projections}, we can write
  \begin{align}
          s_{m} 
    & = 
          \langle \varepsilon, \widehat{ \Pi }_{m} \varepsilon \rangle_{n} 
    = 
          \langle 
            \varepsilon, 
            g_{ \widehat{J}_{m} }^{ \top } 
            \widehat{ \Gamma }_{ \widehat{J}_{m} }^{-1} 
            \langle \varepsilon, g_{ \widehat{J}_{m} } \rangle_{n}
          \rangle_{n} 
    \le 
          \| 
            \widehat{ \Gamma }_{ \widehat{J}_{m} }^{-1} 
            \langle \varepsilon, g_{ \widehat{J}_{m} } \rangle_{n}
          \|_{1} 
          \sup_{ j \le p } 
          | \langle \varepsilon, g_{j} \rangle_{n} | 
    \\
    & \le 
          \sqrt{m} 
          \| 
            \widehat{ \Gamma }_{ \widehat{J}_{m} }^{-1} 
            \langle \varepsilon, g_{ \widehat{J}_{m} } \rangle_{n}
          \|_{2} 
          \sup_{ j \le p } 
          | \langle \varepsilon, g_{j} \rangle_{n} | 
    \le 
          \| \widehat{ \Gamma }_{ \widehat{J}_{m} }^{-1} \|_{ \text{op} } 
          m
          \sup_{ j \le p } 
          | \langle \varepsilon, g_{j} \rangle_{n} |^{2}. 
    \notag
  \end{align}
  This yields the result for \( m \le M_{n}^{2} \) by taking the supremum and
  applying the bounds from Lemma \ref{lem_UniformBoundsInHighProbability} 
  (ii) and (iv). 
  For \( m > c_{ \text{iter} } n / \log p \), the bound is also satisfied, since
  \( 
    s_{m} \le \| \varepsilon \|_{n}^{2} 
          \le C \text{Var}( \varepsilon_{1} ) 
          =   C \underline{ \sigma }^{2} 
          \le C \overline{  \sigma }^{2}
  \) with probability one.
\end{proof}

In order to relate the empirical bias to the population bias, we use a norm
change inequality from Ing \cite{Ing2020ModelSelection}, which we extend to the
\( s \)-sparse setting.
A complete derivation, which is based on the uniform Baxter inequality in
\eqref{eq_1_UniformBaxtersInequality}, is stated in Appendix
\ref{sec_ProofsForSupplementaryResults}. 

\begin{proposition}[Fast norm change for the bias]
  \label{prp_FastNormChangeForTheBias}
  Under Assumptions
  \hyperref[ass_Sparse]{\normalfont \textbf{{\color{blue} (Sparse)}}} and
  \hyperref[ass_CovB]{\normalfont \textbf{{\color{blue} (CovB)}}},
  the squared population bias \( 
    B_{m}^{2} = \| ( I - \Pi_{m} ) f^{*} \|_{ L^2 }^{2}
  \) satisfies the norm change inequality
  \begin{align*}
    & \ \ \ \
            | 
              \| ( I - \Pi_{m} ) f^{*} \|_{n}^{2} 
              - 
              \| ( I - \Pi_{m} ) f^{*} \|_{ L^2 }^{2} 
            | 
    \\
    & \le 
          C
          \begin{dcases}
            ( s + m )
            \| ( I - \Pi_{m} ) f^{*} \|_{ L^2 }^{2} 
            \sup_{ j, k \le p } 
            | 
              \langle g_{j}, g_{k} \rangle_{n} 
              - 
              \langle g_{j}, g_{k} \rangle_{ L^2 }
            |,
            & \beta^{*} \ s \text{-sparse}, \\
            \big( 
              \| ( I - \Pi_{m} ) f^{*} \|_{ L^2 }^{2} 
            \big)^{ \frac{ 2 \gamma - 2 }{ 2 \gamma - 1 } }
            \sup_{ j, k \le p } 
            | 
              \langle g_{j}, g_{k} \rangle_{n} 
              - 
              \langle g_{j}, g_{k} \rangle_{ L^2 }
            |,
            & \beta^{*} \ \gamma \text{-sparse} \\
          \end{dcases}
  \end{align*}
  for any \( m \le M_{n} \) and \( n \) large enough.
\end{proposition}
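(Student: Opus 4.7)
The plan is to reduce the statement to a bound on $\|\beta\|_1^2$ where $\beta := \beta((I-\Pi_m)f^*)\in\mathbb{R}^p$ is the coefficient vector of the population residual. Writing $(I-\Pi_m)f^* = \sum_{j=1}^p \beta_j g_j$, both norms become bilinear forms in $\beta$, yielding
\begin{align*}
  \|(I-\Pi_m)f^*\|_n^2 - \|(I-\Pi_m)f^*\|_{L^2}^2
  = \sum_{j,k\le p} \beta_j\beta_k\bigl(\langle g_j,g_k\rangle_n - \langle g_j,g_k\rangle_{L^2}\bigr),
\end{align*}
from which an elementary $\ell^1/\ell^\infty$-duality estimate gives
\begin{align*}
  \bigl|\|(I-\Pi_m)f^*\|_n^2 - \|(I-\Pi_m)f^*\|_{L^2}^2\bigr|
  \le \|\beta\|_1^2 \sup_{j,k\le p}\bigl|\langle g_j,g_k\rangle_n - \langle g_j,g_k\rangle_{L^2}\bigr|.
\end{align*}
It therefore suffices, in each sparsity regime, to bound $\|\beta\|_1^2$ by the appropriate power of $\|(I-\Pi_m)f^*\|_{L^2}^2$.

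For the $s$-sparse case, I will use that $\beta = \beta^* - \beta(\Pi_m f^*)$ has support contained in $S\cup\widehat{J}_m$, of cardinality at most $s+m$. Cauchy--Schwarz then gives $\|\beta\|_1^2 \le (s+m)\|\beta\|_2^2$, while Assumption \textbf{(CovB)} supplies
\begin{align*}
  \|(I-\Pi_m)f^*\|_{L^2}^2 = \beta^{\top}\Gamma\beta \ge c_\lambda\|\beta\|_2^2.
\end{align*}
Combining the two yields $\|\beta\|_1^2 \le c_\lambda^{-1}(s+m)\|(I-\Pi_m)f^*\|_{L^2}^2$, matching the first claimed bound.

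For the $\gamma$-sparse case, I will invoke the uniform Baxter inequality from Equation \eqref{eq_1_UniformBaxtersInequality}, whose hypothesis $|\widehat{J}_m|\le M_n$ holds since $m\le M_n$, to obtain $\|\beta\|_1 \le (C_{\text{Cov}}+1)\sum_{j\notin\widehat{J}_m}|\beta_j^*|$. Applying Assumption \textbf{(Sparse)(ii)} with $J = \{1,\dots,p\}\setminus\widehat{J}_m$ then gives $\|\beta\|_1 \le C\bigl(\sum_{j\notin\widehat{J}_m}|\beta_j^*|^2\bigr)^{(\gamma-1)/(2\gamma-1)}$. Since $\beta_j = \beta_j^*$ for $j\notin\widehat{J}_m$, the sum is dominated by $\|\beta\|_2^2\le c_\lambda^{-1}\|(I-\Pi_m)f^*\|_{L^2}^2$ by the same eigenvalue step as above. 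Squaring produces
\begin{align*}
  \|\beta\|_1^2 \le C\,\bigl(\|(I-\Pi_m)f^*\|_{L^2}^2\bigr)^{(2\gamma-2)/(2\gamma-1)},
\end{align*}
which gives the second claimed exponent.

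The key technical input is the uniform Baxter inequality \eqref{eq_1_UniformBaxtersInequality} in the $\gamma$-sparse case, which is the sole reason for the hypothesis $m\le M_n$; everything else is elementary duality and a single eigenvalue lower bound. I do not expect substantial obstacles beyond careful bookkeeping of the two exponents $(\gamma-1)/(2\gamma-1)$ and $(2\gamma-2)/(2\gamma-1) = 2(\gamma-1)/(2\gamma-1)$ and absorbing all constants $c_\lambda^{-1}$, $C_{\text{Cov}}+1$, $C_\gamma$ into a single generic $C$.
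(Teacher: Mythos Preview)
Your proposal is correct and follows essentially the same approach as the paper's proof: the bilinear-form identity plus $\ell^1/\ell^\infty$ duality, then bounding $\|\beta\|_1^2$ via the support argument and Cauchy--Schwarz in the $s$-sparse case, and via the uniform Baxter inequality combined with Assumption \textbf{(Sparse)(ii)} and the eigenvalue lower bound $\beta^\top\Gamma\beta\ge c_\lambda\|\beta\|_2^2$ in the $\gamma$-sparse case. The steps, the key inputs, and the constants involved are identical.
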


\noindent Proposition \ref{prp_FastNormChangeForTheBias} will appear again in
analyzing the stopping condition \eqref{eq_2_ExplicitStoppingCondition} in
Section \ref{sec_PopulationRisk}. 
Initially, it guarantees that the squared empirical bias \( b_{m}^{2} \)
satisfies the same bound as its population counterpart \( B_{m}^{2} \).

\begin{lemma}[Bound for the empirical bias]
  \label{lem_BoundForTheEmpiricalBias}
  Under assumptions 
  \hyperref[ass_SubGaussianErrors]{\normalfont \textbf{{\color{blue} (SubGE)}}},
  \hyperref[ass_Sparse]{\normalfont \textbf{{\color{blue} (Sparse)}}},
  \hyperref[ass_SubGaussianDesign]{\normalfont \textbf{{\color{blue} (SubGD)}}}
  and
  \hyperref[ass_CovB]{\normalfont \textbf{{\color{blue} (CovB)}}},
  the squared empirical bias satisfies
  \begin{align*}
            b_{m}^{2} 
    & \le
          C
          \begin{dcases}
              \| f^{*} \|_{ L^{2} }^{2}
              \exp \Big( \frac{ - c_{ \text{Bias} } m }{s} \Big) 
            + 
              \| \beta^{*} \|_{1}^{2}  
              \frac{ s \log p }{n}, 
            & \beta^{*} \ s \text{-sparse}, \\
              m^{ 1 - 2 \gamma } 
            + 
              \Big(
                \frac{ ( \overline{ \sigma }^{2} + \rho^{4} ) \log p }{n} 
              \Big)^{ 1 - 1 / ( 2 \gamma ) }, 
            & \beta^{*} \ \gamma \text{-sparse} \\
          \end{dcases}
    \\
    & \qquad \qquad \qquad \qquad \qquad \qquad \qquad \qquad 
      \qquad \qquad \text{ for all } m \ge 0 
  \end{align*}
  with probability converging to one.
\end{lemma}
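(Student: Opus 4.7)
The central idea is to transfer the population bias bound from Proposition \ref{prp_BoundForThePopulationBias} to the empirical bias via a two-step reduction: first passing from the empirical projection \( \widehat{\Pi}_m \) to the population projection \( \Pi_m \) by a best-approximation argument, then changing the norm on \( (I-\Pi_m)f^* \) via Proposition \ref{prp_FastNormChangeForTheBias}.

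\textbf{Step 1: best approximation.} By the definition of OMP in Algorithm \ref{alg_OMP}, \( \widehat{\Pi}_m \) is the \( \langle\cdot,\cdot\rangle_n \)-orthogonal projection onto \( V_m := \vspan\{X^{(j)} : j \in \widehat{J}_m\} \subset \mathbb{R}^n \). Viewing \( \Pi_m f^* \) as a function and evaluating at the design points produces a vector in \( V_m \) as well, by the form of the population projection in \eqref{eq_1_Projections}. Hence
\begin{align*}
  b_m^2 = \|(I-\widehat{\Pi}_m)f^*\|_n^2 \le \|f^* - \Pi_m f^*\|_n^2
  \qquad \text{ for all } m \ge 0.
\end{align*}

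\textbf{Step 2: norm change and population bound.} For \( m \le M_n \), I would combine Proposition \ref{prp_FastNormChangeForTheBias} with the uniform control \( \sup_{j,k}|\langle g_j,g_k\rangle_n - \langle g_j,g_k\rangle_{L^2}| \le C_g\sqrt{\rho^4\log p/n} \) from Lemma \ref{lem_UniformBoundsInHighProbability}(i) to obtain, with probability converging to one,
\begin{align*}
  \|f^* - \Pi_m f^*\|_n^2 \le B_m^2 + C \cdot R_m,
\end{align*}
where \( R_m = (s+m) B_m^2 \sqrt{\log p/n} \) in the \( s \)-sparse case and \( R_m = (B_m^2)^{(2\gamma-2)/(2\gamma-1)}\sqrt{\log p/n} \) in the \( \gamma \)-sparse case. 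Then Proposition \ref{prp_BoundForThePopulationBias} bounds \( B_m^2 \) by the target rate.

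\textbf{Step 3: absorbing the correction terms.} In the \( s \)-sparse case, I split at \( m = C_{\text{supp}} s \). For \( m \le C_{\text{supp}} s \) one has \( s+m \le Cs \), and since \( s = o((n/\log p)^{1/3}) \) implies \( s\sqrt{\log p/n} \to 0 \), the correction is bounded by \( \tfrac12 B_m^2 \), yielding \( b_m^2 \le 2B_m^2 \). For \( m > C_{\text{supp}} s \), Proposition \ref{prp_BoundForThePopulationBias} gives \( S \subset \widehat{J}_m \), hence \( \Pi_m f^* = f^* \) and \( b_m^2 = 0 \) trivially. In the \( \gamma \)-sparse case, inserting \( B_m^2 \le C(m^{1-2\gamma} + ((\log p)/n)^{1-1/(2\gamma)}) \) into the correction and using \( (\alpha+\beta)^\kappa \le \alpha^\kappa + \beta^\kappa \), a short exponent calculation with \( \alpha = (2\gamma-2)/(2\gamma-1) \) gives a correction bounded by \( C m^{2-2\gamma}\sqrt{\log p/n} + C(\log p/n)^{3/2 - 1/\gamma} \); the first summand is \( \le C m^{1-2\gamma} \) since \( m\sqrt{\log p/n} \le CM_n\sqrt{\log p/n} \le C \), and the second is \( \le C((\log p)/n)^{1-1/(2\gamma)} \) because \( 3/2 - 1/\gamma \ge 1 - 1/(2\gamma) \) for \( \gamma \ge 1 \).

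\textbf{Step 4: handling \( m > M_n \).} Since \( \widehat{J}_m \) is increasing, \( m \mapsto b_m^2 \) is monotonously decreasing, so for \( m > M_n \) we have \( b_m^2 \le b_{\lfloor M_n \rfloor}^2 \), which is already bounded by the target rate (in the \( \gamma \)-sparse case the constant term dominates \( m^{1-2\gamma} \) for all \( m > M_n \); in the \( s \)-sparse case the bias is in fact zero on the good event once \( m \ge C_{\text{supp}} s \)).

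The main obstacle is the careful exponent bookkeeping in Step 3 for the \( \gamma \)-sparse case, ensuring that the norm-change correction produced by Proposition \ref{prp_FastNormChangeForTheBias} is genuinely dominated by the rate \( \mathcal{R}(s,\gamma) \) uniformly over \( m \le M_n \); everything else is essentially a reduction to results already stated.
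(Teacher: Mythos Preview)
Your proposal is correct and follows essentially the same route as the paper: the best-approximation inequality $b_m^2 \le \|(I-\Pi_m)f^*\|_n^2$, the norm change via Proposition~\ref{prp_FastNormChangeForTheBias} combined with Lemma~\ref{lem_UniformBoundsInHighProbability}(i), then Proposition~\ref{prp_BoundForThePopulationBias} and monotonicity for $m>M_n$. Your Step~3 spells out the exponent bookkeeping that the paper leaves implicit; the only minor simplification the paper makes in the $s$-sparse case is to treat all $m\le M_n$ uniformly (since $(s+m)\sqrt{\log p/n}\le s\sqrt{\log p/n}+M_n\sqrt{\log p/n}=o(1)+O(1)$ is bounded) rather than splitting at $C_{\text{supp}}s$, but your split is equally valid.
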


\begin{proof}[Proof]
  For a fixed \( m \le M_{n} \) and \( n \) large enough,
  Proposition \ref{prp_FastNormChangeForTheBias} yields the estimate
  \begin{align}
    \label{eq_2_DerivationEmpiricalBiasBound}
          b_{m}^{2} 
    & = 
          \| ( I - \widehat{ \Pi }_{m} ) f^{*} \|_{n}^{2} 
    \le 
          \| ( I - \Pi_{m} ) f^{*} \|_{n}^{2} 
    \\
    & \le 
          \| ( I - \Pi_{m} ) f^{*} \|_{ L^{2} }^{2} 
    \notag 
    \\
    & + 
          C
          \begin{dcases}
            ( s + m ) 
            \| ( I - \Pi_{m} ) f^{*} \|_{ L^{2} }^{2}
            \sup_{ j, k \le p } 
            | 
              \langle g_{j}, g_{k} \rangle_{n} 
            - \langle g_{j}, g_{k} \rangle_{ L^2 }
            |, 
            & \beta^{*} \ s \text{-sparse}, \\
              \big(
                \| ( I - \Pi_{m} ) f^{*} \|_{ L^2 }^{2} 
              \big)^{ \frac{ 2 \gamma - 2 }{ 2 \gamma - 1 } } 
              \sup_{ j, k \le p } 
              | 
                \langle g_{j}, g_{k} \rangle_{n} 
              - \langle g_{j}, g_{k} \rangle_{ L^2 }
              |,
              & \beta^{*} \ \gamma \text{-sparse}. \\
          \end{dcases}
    \notag
  \end{align}
  Applying Lemma \ref{lem_UniformBoundsInHighProbability} (i)
  and Proposition \ref{prp_BoundForThePopulationBias} then yields
  the result for \( m \le M_{n} \). 
  The monotonicity of \( m \mapsto b_{m}^{2} \) implies that the
  claim is also true for any \( m > M_{n} \) under 
  \( \gamma \)-sparsity.
  Under \( s \)-sparsity, \( b_{m}^{2} = 0 \) for all \(
    m \ge C_{ \text{supp} } s 
  \)  with \( s = o( ( n / \log p )^{ 1 / 3 } ) \). 
  This finishes the proof.
\end{proof}

Analogous to  Equation \eqref{eq_1_RatesPopulationRisk}, Lemmas
\ref{lem_BoundForTheEmpiricalStochasticError} and
\ref{lem_BoundForTheEmpiricalBias} imply that at iteration 
\( m^{*}_{ s, \gamma } \) from Equation \eqref{eq_1_BalancedBoundOracleIndex},
the empirical risk satisfies the bound
\begin{align}
        \| \widehat{F}^{ ( m^{*}_{ s, \gamma } ) } - f^{*} \|_{n}^{2} 
  & \le 
        C \mathcal{R}( s, \gamma ) 
\end{align}
with probability converging to one.
This yields the result Corollary \ref{cor_RatesForTheEmpiricalRisk}.
For the empirical risk, we can also argue precisely that such a result cannot be
improved upon:

\begin{remark}[Optimality of the rates]
  \label{rem_OptimalityOfTheRates}
  For simplicity, we consider \( p = n \), a fixed, orthogonal
  (with respect to \( \langle \cdot, \cdot \rangle_{n} \) )
  design matrix \( \mathbf{X} \) and \(
    \varepsilon \sim N( 0, \sigma^{2} I_{n} ) 
  \).
  Conceptually, \( \rho^{ 2 } = 0 \) in this setting.
  When \( \beta^{*} \) is \( s \)-sparse, the squared empirical bias satisfies
  \begin{align}
        b_{m}^{2} 
    & = 
        \| ( I - \widehat{ \Pi }_{m} ) f^{*} \|_{n}^{2}
    \ge 
        \| \beta( \widehat{ \Pi }_{m} f^{*} ) - \beta^{*} \|_{2}^{2}  
    \ge 
        \underline{ \beta }^{2} 
  \end{align}
  for any \( m \le s \). 
  Similarly, when \( \beta^{*} \) is \( \gamma \)-sparse,
  \begin{align}
    \label{eq_2_MTermLowerBound}
        b_{m}^{2} 
    & = 
        \| ( I - \widehat{ \Pi }_{m} ) f^{*} \|_{n}^{2}
    \ge 
        \| \beta^{*}_{ m \text{-term} } - \beta^{*} \|_{2}^{2},  
  \end{align}
  where \( \beta^{*}_{ m \text{-term} } \) is the best \( m \)-term
  approximation of \( \beta^{*} \) with respect to the euclidean norm.
  For \( \beta^{*} \) with polynomial decay as in Equation
  \eqref{eq_1_PolyDecay}, the right-hand side in Equation
  \eqref{eq_2_MTermLowerBound} is larger than \(
    c m^{ 1 - 2 \gamma } 
  \), see e.g., Lemma A.3 in Ing \cite{Ing2020ModelSelection}.

  Conversely, for \( f^{*} = 0 \), the greedy procedure in Algorithm
  \ref{alg_OMP} guarantees that
  \begin{align}
        s_{m} 
    & = 
        \| \widehat{ \Pi }_{m} \varepsilon \|_{n}^{2} 
      = 
        \frac{1}{n} 
        \sum_{ j = 1 }^{m} 
        Z_{ ( n - j + 1 ) }^{2} 
    \ge 
        \frac{ m Z_{ ( p - m + 1 ) }^{2} }{n},
  \end{align}
  where \( Z_{ (j) } \) denotes the \( j \)-th order statistic of the \(
    Z_{j}: = \langle X^{ (j) }, \varepsilon \rangle_{n} 
  \), \( j \le p \), which are again independent, identically distributed
  Gaussians with variance \( \sigma^{2} \). 
  Noting that \(
    s = o( ( n / \log p )^{ 1 / 3 } ) 
  \), for both \( m = s \) and \(
    m = ( n / ( \sigma^{2} \log p ) )^{ 1 / ( 2 \gamma ) } 
  \), the order statistic \(
    Z_{ p - m + 1 } 
  \) is larger than \(
    c \sqrt{ \sigma^{2} \log p } 
  \) with probability converging to one, see Lemma
  \ref{lem_LowerBoundForOrderStatistics}.  
  Consequently, by distinguishing the cases where \( m \) is smaller or greater
  than \( s \)  under \( s \)-sparsity and the cases where \( m \) is smaller
  or greater than \( 
    ( n / ( \sigma^{2} \log p ) )^{ 1 / ( 2 \gamma ) }
  \) under \( \gamma \)-sparsity, we obtain
  \begin{align}
    \inf_{ m \ge 0 } 
    \sup_{ f^{*} } 
    \| \widehat{F}^{ (m) } - f^{*} \|_{n}^{2} 
    \ge 
    c
    \begin{dcases}
      \frac{ \sigma^{2} s \log p }{ n }, 
      & \beta^{*} \ s \text{-sparse}, \\
      \Big( \frac{ \sigma^{2} \log p }{ n } \Big)^{ 1 - \frac{1}{ 2 \gamma } }, 
      & \beta^{*} \ \gamma \text{-sparse}
    \end{dcases}
  \end{align}
  with probability converging to one, where the infimum is taken over 
  either all \( f^{*} \) satisfying 
  \hyperref[ass_Sparse]{\normalfont \textbf{{\color{blue} (Sparse)}}} (i)
  or over all \( f^{*} \) satisfying
  \hyperref[ass_Sparse]{\normalfont \textbf{{\color{blue} (Sparse)}}} (ii).
\end{remark}



\section{Population risk analysis}
\label{sec_PopulationRisk}

In this section, we analyze the stopped population risk \( 
  \| \widehat{F}^{ ( \tau ) } - f^{*} \|_{ L^2 }^{2}
\) with \( \tau \) from Equation \eqref{eq_1_SequentialEarlyStoppingTimeKappaM}. 
Unlike the empirical risk, the population risk cannot be expressed in terms of
the residuals straight away.
Instead, we examine the stopping condition \( r_{m}^{2} \le \kappa_{m} \), i.e., 
\begin{align}
  \label{eq_3_ExplicitStoppingCondition}
            b_{m}^{2} + 2 c_{m} 
  & \le 
            \widehat{ \sigma }^{2} - \| \varepsilon \|_{n}^{2} 
          + 
            \frac{ C_{ \tau } m \log p }{n} + s_{m}.
\end{align}
We show separately that for a suitable choice of \( \widehat{ \sigma }^{2} \),
condition \eqref{eq_3_ExplicitStoppingCondition} guarantees that \( \tau \)
stops neither too early nor too late.
In combination, this yields Theorem \ref{thm_OptimalAdaptationForThePopulationRisk}.
For the analysis, it becomes essential that we have access to the fast norm
change for the population residual term \(
  ( I - \Pi_{m} ) f^{*} 
\) from Proposition \ref{prp_FastNormChangeForTheBias}, which guarantees that
empirical and population norm remain of the same size until the squared
population bias reaches the optimal rate \( \mathcal{R}( s, \gamma ) \). 
This control is not already readily available by standard tools, e.g.,
Wainwright \cite{Wainwright2019HDStatistics}.

\subsection{No stopping too early}
\label{ssec_NoStoppingTooEarly}

The sequential procedure stops too early if the squared population bias 
\( 
  B_{m}^{2} = \| ( I - \Pi_{m} ) f^{*} \|_{ L^2 }^{2} 
\)
has not reached the optimal rate of convergence yet, i.e., 
\( \tau < \tilde m_{ s, \gamma, G } \), where
\begin{align}
  \label{eq_3_TildeMG}
      \tilde m_{ s, \gamma, G}:
  = 
      \begin{dcases}
        \inf \{ m \ge 0: S \subset \widehat{J}_{m} \},
        & \beta^{*} \ s\text{-sparse}, \\
      \inf \Big\{ 
        m \ge 0:     \| ( I - \Pi_{m} ) f^{*} \|_{ L^2 }^{2} 
                 \le 
                     G 
                     \Big( 
                       \frac{ ( \overline{ \sigma }^{2} + \rho^{4} ) \log p }{n}
                     \Big)^{ 1 - \frac{1}{ 2 \gamma } }
      \Big\},
        & \beta^{*} \ \gamma \text{-sparse}
      \end{dcases}
\end{align}
for any constant \( G > 0 \).
Note that under 
\hyperref[ass_CovB]{\normalfont \textbf{{\color{blue} (CovB)}}},
for \( s \)-sparse \( \beta^{*} \), 
\begin{align}
  B_{m}^{2} 
  & = 
  \| ( I - \Pi_{m} ) f^{*} \|_{ L^{2} }^{2} 
  \begin{cases}
    \ge c_{ \lambda } \| \tilde \beta \|_{2}^{2}
    \ge c_{ \lambda } \underline{ \beta }^{2},
    & m < \tilde m_{ s, \gamma, G }, \\ 
    = 0, & m \ge \tilde m_{ s, \gamma, G }.
  \end{cases}
\end{align}
Therefore, a condition for stopping too early is given by
\begin{align}
  \label{eq_3_StoppingTooEarly}
  \exists m < \tilde m_{ s, \gamma, G }: 
          b_{m}^{2} + 2 c_{m} 
  & \le 
          \widehat{ \sigma }^{2} - \| \varepsilon \|_{n}^{2} 
        + 
          \frac{ C_{ \tau } m \log p }{n} + s_{m},
\end{align}
where we may vary \( G > 0 \).

Using the norm change inequality for the bias in Proposition
\ref{prp_FastNormChangeForTheBias}, we can derive that the left-hand side  of
condition \eqref{eq_3_StoppingTooEarly} is of the same order as \( B_{m}^{2} \),
i.e., 
\begin{align}
          b_{m}^{2} + 2 c_{m} 
  & \ge 
          \begin{dcases}
            \frac{ c_{ \lambda } }{8} \underline{ \beta }^{2},
            & \beta^{*} \ s \text{-sparse}, \\
            \frac{G}{8} 
            \Big( 
              \frac{ ( \overline{ \sigma }^{2} + \rho^{4} ) \log p }{n}
            \Big)^{ 1 - \frac{1}{ 2 \gamma } },
            & \beta^{*} \ \gamma \text{-sparse}
          \end{dcases}
\end{align}
with probability converging to one.
At the same time, Proposition \ref{prp_BoundForThePopulationBias}
guarantees that 
\( 
  \tilde m_{ s, \gamma, G} \le m^{*}_{ s, \gamma } 
\) 
from Equation \eqref{eq_1_BalancedBoundOracleIndex} with probability converging
to one for \( G \) large enough.
Therefore, if \( \widehat{ \sigma }^{2} \) does not substantially overestimate
the empirical noise level and \( C_{ \tau } \) is chosen proportional to 
\( ( \overline{ \sigma }^{2} + \rho^{4} ) \), 
Lemma \ref{lem_BoundForTheEmpiricalStochasticError}
implies that the right-hand side of condition \eqref{eq_3_StoppingTooEarly}
satisfies
\begin{align}
      \widehat{ \sigma }^{2} - \| \varepsilon \|_{n}^{2} 
      + 
      \frac{ C_{ \tau } m \log p }{n} 
      + 
      s_{m}
  \le 
      C \mathcal{R}( s, y ) 
\end{align}
with probability converging to one
for a constant \( C > 0 \) independent of \( G \). 
For \( G \) large enough, condition \eqref{eq_3_StoppingTooEarly} can therefore
only be satisfied on an event with probability converging to zero.
Together, this yields the following result:

\begin{proposition}[No stopping too early]
  \label{prp_NoStoppingTooEarly}
  Under Assumptions 
  \hyperref[ass_SubGaussianErrors]{\normalfont \textbf{{\color{blue} (SubGE)}}},
  \hyperref[ass_Sparse]{\normalfont \textbf{{\color{blue} (Sparse)}}}, \\
  \hyperref[ass_SubGaussianDesign]{\normalfont \textbf{{\color{blue} (SubGD)}}}
  and
  \hyperref[ass_CovB]{\normalfont \textbf{{\color{blue} (CovB)}}},
  choose \( \widehat{ \sigma }^{2} \) in Equation
  \eqref{eq_1_SequentialEarlyStoppingTimeKappaM} such that 
  \begin{align*}
        \widehat{ \sigma }^{2}  
    \le 
        \| \varepsilon \|_{n}^{2} + C \mathcal{R}( s, \gamma ) 
  \end{align*}
  with probability converging to one.
  Then, for \( G > 0 \) large enough and any choice 
  \(
    C_{ \tau } = c ( \overline{ \sigma }^{2} + \rho^{4} ) 
  \) 
  in \eqref{eq_1_SequentialEarlyStoppingTimeKappaM}
  with \( c \ge 0 \), the sequential stopping time satisfies 
  \( \tilde m_{ s, \gamma, G} \le \tau < \infty \), with probability converging to one.
  On the corresponding event, it holds that
  \begin{align*}
    B_{ \tau }^{2} 
    = 
    \| ( I - \Pi_{ \tau } ) f^{*} \|_{ L^2 }^{2} 
    \le 
        \begin{dcases}
          0,
          & \beta^{*} \ s \text{-sparse}, \\
          G 
          \Big( 
            \frac{ ( \overline{ \sigma }^{2} + \rho^{4} ) \log p }{n}
          \Big)^{ 1 - \frac{1}{ 2 \gamma } }, 
          & \beta^{*} \ \gamma \text{-sparse}.
        \end{dcases}
  \end{align*}
\end{proposition}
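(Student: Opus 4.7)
The plan is a contradiction argument based on the explicit stopping condition \eqref{eq_3_ExplicitStoppingCondition}: on $\{\tau < \tilde m_{s,\gamma,G}\}$ there exists some $m < \tilde m_{s,\gamma,G}$ satisfying \eqref{eq_3_StoppingTooEarly}, and I would show that for $G$ chosen large enough this event has vanishing probability by producing a lower bound for the left-hand side and an upper bound for the right-hand side of \eqref{eq_3_StoppingTooEarly} that are incompatible uniformly in such $m$.

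For the lower bound of the LHS, Proposition \ref{prp_FastNormChangeForTheBias} combined with Lemma \ref{lem_UniformBoundsInHighProbability}(i) gives $b_m^2 = B_m^2 (1+o(1))$ uniformly in $m < \tilde m_{s,\gamma,G}$. In the $s$-sparse regime the norm-change correction $(s+m) B_m^2 \sqrt{\rho^4 \log p/n}$ is subleading because $s+m \lesssim s = o((n/\log p)^{1/3})$, while in the $\gamma$-sparse regime the correction $(B_m^2)^{(2\gamma-2)/(2\gamma-1)} \sqrt{\rho^4 \log p/n}$ is subleading precisely when $B_m^2 \gtrsim \mathcal{R}(s,\gamma)$, which is guaranteed by $m < \tilde m_{s,\gamma,G}$. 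Bounding the cross term by Lemma \ref{lem_BoundsForTheCrossTermAndTheDiscretizationError}(i) and applying Young's inequality $2|c_m| \le b_m^2/2 + 8\bar\sigma^2(m+1)\log p/n$ then yields $b_m^2 + 2 c_m \ge B_m^2/4 - C\bar\sigma^2 m \log p/n$ uniformly in $m$.

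For the upper bound of the RHS, the noise hypothesis gives $\widehat\sigma^2 - \|\varepsilon\|_n^2 \le C \mathcal{R}(s,\gamma)$ w.h.p., Lemma \ref{lem_BoundForTheEmpiricalStochasticError} gives $s_m \le C\bar\sigma^2 m \log p/n$, and Proposition \ref{prp_BoundForThePopulationBias} together with the definition of $\tilde m_{s,\gamma,G}$ shows $\tilde m_{s,\gamma,G} \le m^*_{s,\gamma}$ once $G$ exceeds the constant $C_{\text{Bias}}$. Consequently, any $m < \tilde m_{s,\gamma,G}$ satisfies $(\bar\sigma^2+\rho^4) m \log p/n \lesssim \mathcal{R}(s,\gamma)$ by \eqref{eq_1_BalancedBoundOracleIndex}, and with $C_\tau = c(\bar\sigma^2+\rho^4)$ the entire RHS of \eqref{eq_3_StoppingTooEarly} is dominated by a constant multiple of $\mathcal{R}(s,\gamma)$ that is independent of $G$.

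Combining both bounds, \eqref{eq_3_StoppingTooEarly} would force $B_m^2 \le C' \mathcal{R}(s,\gamma)$ for some $G$-independent $C'$. However $m < \tilde m_{s,\gamma,G}$ entails $B_m^2 \ge c_\lambda \underline\beta^2$ in the $s$-sparse case (a fixed constant, whereas $\mathcal{R}(s,\gamma) \to 0$) and $B_m^2 > G \mathcal{R}(s,\gamma)$ in the $\gamma$-sparse case, so taking $G > 4C'$ closes the contradiction and yields $\tau \ge \tilde m_{s,\gamma,G}$ w.h.p. Finiteness $\tau \le n$ is immediate, since $\mathbf{X}$ has rank $n$ a.s.\ implies $r_n^2 = 0 \le \kappa_n$; the claimed bound on $B_\tau^2$ then follows from monotonicity of $m \mapsto B_m^2$ and the definition of $\tilde m_{s,\gamma,G}$. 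The main technical obstacle is the $\gamma$-sparse lower bound of the LHS, as both the norm-change correction in Proposition \ref{prp_FastNormChangeForTheBias} and the cross term are only marginally dominated by $B_m^2$ at the threshold $B_m^2 \asymp \mathcal{R}(s,\gamma)$; this marginality is exactly what fixes the form of $\tilde m_{s,\gamma,G}$ and determines how large $G$ must be chosen.
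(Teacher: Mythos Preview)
Your overall architecture matches the paper's proof exactly: show that condition \eqref{eq_3_StoppingTooEarly} forces an inequality of the form $B_m^2 \le C'\mathcal{R}(s,\gamma)$ with $C'$ independent of $G$, and then take $G$ large. The RHS bound and the Young-inequality treatment of the cross term are both fine (the paper uses the multiplicative form $b_m^2(1-\sqrt{\dots})$ instead, which is equivalent).

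There is, however, one genuine gap in your LHS lower bound. You write that Proposition~\ref{prp_FastNormChangeForTheBias} together with Lemma~\ref{lem_UniformBoundsInHighProbability}(i) gives $b_m^2 = B_m^2(1+o(1))$. But Proposition~\ref{prp_FastNormChangeForTheBias} compares $\|(I-\Pi_m)f^*\|_n^2$ with $\|(I-\Pi_m)f^*\|_{L^2}^2 = B_m^2$, whereas $b_m^2 = \|(I-\widehat\Pi_m)f^*\|_n^2$ involves the \emph{empirical} projection $\widehat\Pi_m$. Since $\widehat\Pi_m$ is the $\|\cdot\|_n$-orthogonal projection onto $\operatorname{span}\{X^{(j)}:j\in\widehat J_m\}$, you only get the one-sided inequality $b_m^2 \le \|(I-\Pi_m)f^*\|_n^2$, which is useless for a lower bound. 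The paper closes this via the convexity estimate
\[
  2 b_m^2 \;\ge\; \|(I-\Pi_m)f^*\|_n^2 \;-\; 2\,\|(\widehat\Pi_m - \Pi_m)f^*\|_n^2,
\]
and then separately shows $\|(\widehat\Pi_m - \Pi_m)f^*\|_n^2 \le C\|\beta^*\|_1^2 \rho^4 m\log p/n$ by writing $(\widehat\Pi_m - \Pi_m)f^* = \widehat\Pi_m(I-\Pi_m)f^*$ and invoking Corollary~\ref{cor_ReappearingTerms}(i). For $m \le m^*_{s,\gamma}$ this correction is $o(1)$ in the $s$-sparse case and $O(\mathcal{R}(s,\gamma))$ in the $\gamma$-sparse case, so it can be absorbed as you intend---but the step is not automatic from Proposition~\ref{prp_FastNormChangeForTheBias} alone and must be carried out explicitly.
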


\noindent The technical details of the proof are provided in Appendix
\ref{sec_ProofsForTheMainResults}. 
Proposition \ref{prp_NoStoppingTooEarly} guarantees that \( \tau \)
controls the population bias on an event with probability converging to one.
It is noteworthy that in order to do so, it is only required that 
\( \widehat{ \sigma }^{2} \) is smaller than the empirical noise level up to
a lower order term and also the choice \( C_{ \tau } = 0 \) in 
Equation \eqref{eq_1_SequentialEarlyStoppingTimeKappaM} is allowed.
We will further discuss this in Section
\ref{sec_NumericalSimulationsAndATwoStepProcedure}. 


\subsection{No stopping too late}
\label{ssec_NoStoppingTooLate}

The sequential procedure potentially stops too late when the bound in Lemma
\ref{lem_BoundForThePopulationStochasticError} no longer guarantees that the
population stochastic error \( S_{ \tau } \) is of optimal order, i.e., when
there is no constant \( H > 0 \) such that \( \tau \) can be bounded by 
\( H m^{*}_{ s, \gamma } \) on a large event for \( m^{*}_{ s, \gamma } \) from
Equation \eqref{eq_1_BalancedBoundOracleIndex}.
For stopping too late, we therefore consider the condition 
\( r_{m}^{2} > \kappa_{m} \), i.e.,
\begin{align}
  \label{eq_3_ConditionForLateStopping}
      b_{m}^{2} + 2 c_{m} 
  > 
      \widehat{ \sigma }^{2} - \| \varepsilon \|_{n}^{2}
    + 
      \frac{ C_{ \tau } m \log p }{n} 
    + 
      s_{m} 
    \qquad \text{ for } \qquad 
  m = H m^{*}_{ s, \gamma }. 
\end{align}
For \( s \)-sparse \( \beta^{*} \) and \( H > 1 \), the left-hand side vanishes
with probability converging to one due to Proposition
\ref{prp_BoundForThePopulationBias}.
For \( \gamma \)-sparse \( \beta^{*} \), the results in Lemma
\ref{lem_BoundForTheEmpiricalBias} and Lemma
\ref{lem_BoundsForTheCrossTermAndTheDiscretizationError} (i) yield that the
left-hand side of condition \eqref{eq_3_ConditionForLateStopping} is at most of
order \( 
  \sqrt{H} (
               ( \overline{ \sigma }^{2} + \rho^{4} ) \log(p) /  n 
             )^{ 1 - 1 / ( 2 \gamma ) }
\)
on an event with probability converging to one.
At the same time, for \( \widehat{ \sigma }^{2} \) large enough
and a choice \( C_{ \tau } = c ( \overline{ \sigma }^{2} + \rho^{4} ) \) with 
\( c > 0 \), the right-hand
side is at least of order  \( H \mathcal{R}( s, \gamma ) \)
also on an event with probability converging to one.
Note that this requires a choice \( c > 0 \), since 
Lemma \ref{lem_BoundForTheEmpiricalStochasticError} only provides an upper bound
for \( s_{m} \).
For \( H > 0 \) sufficiently large, this yields that condition
\eqref{eq_3_ConditionForLateStopping} can only be satisfied on an event with
probability converging to zero.  We obtain the following result:

\begin{proposition}[No stopping too late]
  \label{prp_NoStoppingTooLate}
  Under Assumptions 
  \hyperref[ass_SubGaussianErrors]{\normalfont \textbf{{\color{blue} (SubGE)}}},
  \hyperref[ass_Sparse]{\normalfont \textbf{{\color{blue} (Sparse)}}},
  \hyperref[ass_SubGaussianDesign]{\normalfont \textbf{{\color{blue} (SubGD)}}}
  and
  \hyperref[ass_CovB]{\normalfont \textbf{{\color{blue} (CovB)}}},
  choose \( \widehat{ \sigma }^{2} \) in Equation
  \eqref{eq_1_SequentialEarlyStoppingTimeKappaM}  such that 
  \begin{align*}
        \widehat{ \sigma }^{2}  
    \ge 
        \| \varepsilon \|_{n}^{2} - C \mathcal{R}( s, \gamma ) 
  \end{align*}
  with probability converging to one.
  Then, for any choice 
  \(
    C_{ \tau } = c ( \overline{ \sigma }^{2} + \rho^{4} ) 
  \) 
  in \eqref{eq_1_SequentialEarlyStoppingTimeKappaM}
  with \( c > 0 \), the sequential stopping time satisfies
  \( 
    \tau \le H m^{*}_{ s, \gamma }
  \) 
  with probability converging to one for some \( H > 0 \) large enough.
  On the corresponding event, it holds that
  \begin{align*}
        S_{ \tau } 
    & = 
        \| \widehat{F}^{ ( \tau ) } - \Pi_{ \tau } f^{*} \|_{ L^{2} }^{2} 
    \le 
        C H \mathcal{R}( s, \gamma ).
  \end{align*}
\end{proposition}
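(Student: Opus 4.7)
The plan is to show that at iteration $m_{H} := H m^{*}_{s,\gamma}$, the stopping condition $r_{m}^{2} \le \kappa_{m}$ is satisfied with probability converging to one for some sufficiently large $H>0$. This is equivalent to showing that condition \eqref{eq_3_ConditionForLateStopping} \emph{fails} at $m_{H}$, i.e., that
\begin{align*}
  b_{m_{H}}^{2} + 2 c_{m_{H}}
  \le
  \widehat{\sigma}^{2} - \|\varepsilon\|_{n}^{2}
  + \frac{C_{\tau} m_{H} \log p}{n} + s_{m_{H}}
\end{align*}
holds on a large event. The monotonicity $\tau \le m_{H}$ will then follow directly from the definition of $\tau$ in \eqref{eq_1_SequentialEarlyStoppingTimeKappaM}, and the population stochastic error bound is an immediate consequence of Lemma \ref{lem_BoundForThePopulationStochasticError}, since $S_{\tau}$ is controlled on $\{\tau \le m_{H}\}$ via the monotonicity of its upper bound in $m$.

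In the $s$-sparse setting, the strategy is clean: Proposition \ref{prp_BoundForThePopulationBias} guarantees that $S \subset \widehat{J}_{C_{\text{supp}} s}$ with probability converging to one, so for any $H \ge C_{\text{supp}}$ one has $(I - \widehat{\Pi}_{m_{H}}) f^{*} = 0$, which makes both the empirical bias $b_{m_{H}}^{2}$ and the cross term $c_{m_{H}}$ vanish. The right-hand side of the stopping condition is at least $(c H - C) \mathcal{R}(s,\gamma)$ by the noise estimator hypothesis $\widehat{\sigma}^{2} \ge \|\varepsilon\|_{n}^{2} - C\mathcal{R}(s,\gamma)$ and the choice $C_{\tau} = c(\overline{\sigma}^{2}+\rho^{4})$ combined with the identity $m^{*}_{s,\gamma}\log p/n \asymp \mathcal{R}(s,\gamma)/(\overline{\sigma}^{2}+\rho^{4})$ in the corresponding regime. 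For $H$ large enough, the right-hand side is strictly positive while the left-hand side is zero.

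The $\gamma$-sparse case is more delicate and is where the main work lies. From Lemma \ref{lem_BoundForTheEmpiricalBias} at $m = m_{H}$, one has $b_{m_{H}}^{2} \le C(H^{1-2\gamma}+1)\mathcal{R}(s,\gamma) \le C'\mathcal{R}(s,\gamma)$, using the elementary identity $(m^{*}_{s,\gamma})^{1-2\gamma} = ((\overline{\sigma}^{2}+\rho^{4})\log p / n)^{1-1/(2\gamma)}$ and $H \ge 1$. Combining this with Lemma \ref{lem_BoundsForTheCrossTermAndTheDiscretizationError}(i) yields $|c_{m_{H}}| \le b_{m_{H}} \sqrt{4\overline{\sigma}^{2} (m_{H}+1)\log p/n} \le C''\sqrt{H}\,\mathcal{R}(s,\gamma)$, again via $m^{*}_{s,\gamma}\log p/n \asymp \mathcal{R}(s,\gamma)/(\overline{\sigma}^{2}+\rho^{4})$. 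On the right-hand side, the term $C_{\tau} m_{H}\log p/n = cH\mathcal{R}(s,\gamma)$ is linear in $H$, whereas the noise deviation contributes only $-C\mathcal{R}(s,\gamma)$ and $s_{m_{H}} \ge 0$. The main obstacle is precisely to extract a term growing faster in $H$ than the cross term on the left-hand side: since $cH$ dominates both the constant $C'$ and $\sqrt{H}$, choosing $H$ sufficiently large (depending on $c$ but not on $n$) ensures the inequality on an event with probability converging to one. Crucially this is where the hypothesis $c>0$ is essential---the weaker $c \ge 0$ sufficient for Proposition \ref{prp_NoStoppingTooEarly} would not permit this competition. Finally, on the event $\{\tau \le H m^{*}_{s,\gamma}\}$, Lemma \ref{lem_BoundForThePopulationStochasticError} gives $S_{\tau} \le C_{\text{Stoch}}(\overline{\sigma}^{2}+\rho^{4})\tau\log p/n \le CH\mathcal{R}(s,\gamma)$.
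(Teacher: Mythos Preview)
Your proof is correct and follows essentially the same route as the paper's own argument: both reduce to showing the stopping condition is met at $m = H m^{*}_{s,\gamma}$ by bounding $b_{m}^{2} + 2c_{m}$ from above via Lemmas \ref{lem_BoundForTheEmpiricalBias} and \ref{lem_BoundsForTheCrossTermAndTheDiscretizationError}(i) (vanishing in the $s$-sparse case, of order $\sqrt{H}\,\mathcal{R}(s,\gamma)$ in the $\gamma$-sparse case) and bounding the right-hand side from below by $cH\,\mathcal{R}(s,\gamma)$ using the hypothesis on $\widehat{\sigma}^{2}$ and the choice $C_{\tau} = c(\overline{\sigma}^{2}+\rho^{4})$, then taking $H$ large. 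Your observation that $c>0$ is precisely what allows the linear-in-$H$ term to dominate the $\sqrt{H}$ cross-term contribution mirrors the paper's reasoning exactly.
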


\noindent The details of the proof can be found in Appendix
\ref{sec_ProofsForTheMainResults}. 
Proposition \ref{prp_NoStoppingTooLate} complements Proposition
\ref{prp_NoStoppingTooEarly} in that it guarantees that \( \tau \) controls the
stochastic error on an event with probability converging to one. 
Together, the two results imply Theorem
\ref{thm_OptimalAdaptationForThePopulationRisk}. 
As in Theorem \ref{thm_OracleInequalityForTheEmpiricalRisk}, it is the 
\( \omega \)-pointwise analysis of the stopping condition preserves the term \( 
  | \widehat{ \sigma }^{2} - \| \varepsilon \|_{n}^{2} |
\) 
in the condition of the result.



\section{Estimation of the empirical noise level}
\label{sec_EstimationOfTheEmpiricalNoiseLevel}

For any real application, the results in Theorem
\ref{thm_OracleInequalityForTheEmpiricalRisk} and Theorem
\ref{thm_OptimalAdaptationForThePopulationRisk} require access to a suitable
estimator \( \widehat{ \sigma }^{2} \) of the empirical noise level 
\( \| \varepsilon \|_{n}^{2} \).
In this section, we demonstrate that under reasonable assumptions, such
estimators do in fact exist.
In particular, we analyze the \emph{Scaled Lasso} noise estimate
\( \widehat{ \sigma }^{2} \) from Sun and Zhang \cite{SunZhang2012ScaledLasso}
in our setting.
It is noteworthy that our estimation target is the empirical noise level 
\( \| \varepsilon \|_{n}^{2} \) rather than its almost sure limit
\( \text{Var}( \varepsilon_{1} ) \).

\begin{remark}[Estimating \( \| \varepsilon \|_{ n }^{2} \) vs. \( \text{Var}( \varepsilon_{1} ) \)]
  \label{rem_EstimatingEmpiricalNoiseVSPopulationNoise}
  In general, it is easier to estimate \( \| \varepsilon \|_{n}^{2} \) than 
  \( \text{Var}( \varepsilon_{1} ) \).
  We illustrate this fact in the simple location-scale model
  \begin{align}
    Y_{i} = \mu + \varepsilon_{i}, \qquad i = 1, \dots, n, 
  \end{align}
  where \( \varepsilon_{i} \sim N( 0, \sigma^{2} ) \) i.i.d.
  Simple calculations yield that the standard noise estimator \( 
      \widehat{ \sigma }^{2}:
    =
      \| Y - \widehat{ \mu } ( 1, \dots, 1 )^{ \top } \|_{n}^{2}
  \) with \(
    \widehat{ \mu } = n^{-1} \sum_{ i = 1 }^{n} Y_{i} 
  \) satisfies 
  \begin{align}
          \mathbb{E}_{ \sigma^{2} }
          | \widehat{ \sigma }^{2} - \| \varepsilon \|_{n}^{2} | 
    \le
          C \frac{ \sigma^{2} }{n}
    \qquad \text{ for all } \sigma^{2} > 0, 
  \end{align}
  where the subscript \( \sigma^{2} \) denotes the expectation with respect to
  \( N( 0, \sigma^{2} ) \).
  Conversely, for \( \text{Var}( \varepsilon_{1} ) = \sigma^{2} \), a
  convergence rate of \( n^{-1} \) can only be reached for the squared risk.
  Indeed, it follows from an application of van-Trees's inequality, see Gill and
  Levit \cite{GillLevit1995vanTrees}, that for any \( \delta^{2} > 0 \),
  \begin{align}
            \inf_{ \widehat{ \sigma }^{2} } 
            \sup_{ \sigma^{2} > \delta^{2} } 
            \mathbb{E}_{ \sigma^{2} } ( \widehat{ \sigma }^{2} - \sigma^{2} )^{2}
    & \ge 
            c \frac{ \delta^{4} }{n} 
  \end{align}
  for \( n \in \mathbb{N} \) sufficiently large, where the infimum is taken over
  all measurable functions \( \widehat{ \sigma }^{2} \).
  This indicates that for the absolute risk we cannot expect a rate faster than
  \( n^{ - 1 / 2 } \).
\end{remark}

\noindent The fact that in general, \( \| \varepsilon \|_{n}^{2} \) can be
estimated with a faster rate than \( \text{Var}( \varepsilon_{1} ) \) together
with the \( \omega \)-pointwise analysis is essential in circumventing a lower
bound restriction as in Corollary 2.5 of Blanchard et al.
\cite{BlanchardEtal2018a}.

We briefly recall the approach in Sun and Zhang \cite{SunZhang2012ScaledLasso}. 
The authors consider the joint minimizer \(
  ( \widehat{ \beta }, \widehat{ \sigma } ) 
\) of the \emph{Scaled Lasso}
objective 
\begin{align}
  \label{eq_4_ScaledLassoObjective}
      L_{ \lambda_{0} }( \beta, \sigma ): 
  = 
      \frac{ \| Y - X \beta \|_{2}^{2} }{ 2 n \sigma } 
    + 
      \frac{ \sigma }{2} 
    + 
      \lambda_{0} \| \beta \|_{1}, 
  \qquad \beta \in \mathbb{R}^{p}, \sigma > 0,
\end{align}
where \( \lambda_{0} \) is a penalty parameter chosen by the user.
Since \( L_{ \lambda_{0} } \) is jointly convex in \( ( \beta, \sigma ) \), the minimizer can be implemented efficiently.
For \( \lambda > 0 \) and \( \xi > 1 \), they set
\begin{align}
      \mu( \lambda, \xi ):
  = 
      ( \xi + 1 )
      \min_{ J \subset \{ 1, \dots, p \} } 
      \inf_{ \nu \in ( 0, 1 ) } 
      \max \Big( 
        \frac{ \| \beta^{*}_{ J^{c} } \|_{1} }{ \nu },
        \frac{ \lambda | J | / ( 2 ( 1 - \nu ) ) }
             { \kappa^{2}( ( \xi + \nu ) /  ( 1 - \nu ), J ) } 
      \Big), 
\end{align}
with the \emph{compatibility factor} 
\begin{align}
  \label{eq_4_CompatibilityFactor}
  \kappa^{2}( \xi, J ): 
  = 
  \min \Big\{ 
    \frac{ \| X \Delta \|_{2}^{2} | J | }
         { n \| \Delta_{J} \|_{1}^{2} }: 
    \| \Delta_{ J^{c}  } \|_{1} \le \xi \| \Delta_{J} \|_{1}
  \Big\},
\end{align}
see Bühlmann and van de Geer \cite{BuehlmannVDGeer2011HDData}. 
In Theorem 2 of \cite{SunZhang2012ScaledLasso}, Sun and Zhang then show that
\begin{align}
        \max \Big( 
          1 - \frac{ \widehat{ \sigma } }{ \| \varepsilon \|_{n} }, 
          1 - \frac{ \| \varepsilon \|_{n} }{ \widehat{ \sigma } }
        \Big) 
  & \le 
        \alpha^{*}: =  \frac{ 
                              \lambda_{0} 
                              \mu( \| \varepsilon \|_{n} \lambda_{0}, \xi)
                            }
                            { \| \varepsilon \|_{n} } 
\end{align}
on the event
\begin{align}
      \Omega_{ \text{Noise} }: 
  = 
      \Big\{ 
      \sup_{ j \le p } \langle g_{j}, \varepsilon \rangle_{n} 
  \le 
      ( 1 - \alpha^{*} ) 
      \frac{ \xi - 1 }{ \xi + 1 } 
      \| \varepsilon \|_{n} \lambda_{0}
      \Big\}.
\end{align}

In our setting, due to Lemma \ref{lem_UniformBoundsInHighProbability} (i), 
\( \Omega_{ \text{Noise} } \) is an event with probability converging to one
when \( \lambda_{0} \) is of order \( \sqrt{ \log(p) / n } \).
Further, it can be shown that in this case,
\begin{align}
        \mu( \| \varepsilon \|_{n} \lambda_{0}, \xi ) 
  & \le 
        C
        \begin{dcases}
          s \sqrt{ \frac{ \| \varepsilon \|_{n}^{2} \log p }{n} },
          & \beta^{*} \ s \text{-sparse}, \\
          \Big( 
            \frac{ \| \varepsilon \|_{n}^{2} \log p }{n}
          \Big)^{ \frac{1}{2} - \frac{1}{ 2 \gamma } },
          & \beta^{*} \ \gamma \text{-sparse}, \\
        \end{dcases}
\end{align}
as long as the compatibility factor from Equation
\eqref{eq_4_CompatibilityFactor}  is strictly positive. 
Usually this can be guaranteed on an event with high probability:
When the rows of design matrix \( \mathbf{X} \) are given by \( 
  ( X_{i} )_{ i \le n } \sim N( 0, \Gamma ) 
\) i.i.d., e.g., Theorem 7.16 in Wainwright \cite{Wainwright2019HDStatistics}
guarantees that the compatibility factor satisfies 
\begin{align}
  \kappa^{2}( \xi, J )
  & \ge 
  \frac{ \lambda_{ \min }( \Gamma ) }{ 16 } 
  \qquad \text{whenever }
  | J | \le 
            \frac{ ( 1 + \xi )^{ - 2 } }{ 800 } 
            \frac{ \lambda_{ \min }( \Gamma )  }{ \max_{ j \le p } \Gamma_{ j j } } 
            \frac{n}{ \log p } 
\end{align}
with probability larger than 
\( 
  1 - e^{ - n / 32 } / ( 1 - e^{ - n / 32 } ) 
\).
The combination of these results allow to formulate Proposition
\ref{prp_FastNoiseEstimation}, which is applicable to the setting in Corollary
\ref{cor_RatesForTheEmpiricalRisk}  and Theorem
\ref{thm_OptimalAdaptationForThePopulationRisk}. 
The proof is given Appendix \ref{sec_ProofsForTheMainResults}. 

\begin{proposition}[Fast noise estimation]
  \label{prp_FastNoiseEstimation}
  Under Assumptions 
  \hyperref[ass_SubGaussianErrors]{\normalfont \textbf{{\color{blue} (SubGE)}}},
  \hyperref[ass_Sparse]{\normalfont \textbf{{\color{blue} (Sparse)}}}
  and 
  \hyperref[ass_CovB]{\normalfont \textbf{{\color{blue} (CovB)}}} with 
  Gaussian design 
  \( 
    ( X_{i} )_{ i \le n } \sim N( 0, \Gamma )
  \) 
  i.i.d., 
  set \( \xi > 1 \) and
  \(
    \lambda_{0} 
    =
    C_{ \lambda_{0} } ( \xi + 1 ) / ( \xi - 1 ) \sqrt{ \log(p) / n } 
  \)
  with 
  \(
    C_{ \lambda_{0} } 
    \ge
    2 C_{ \varepsilon } \overline{ \sigma } / \underline{ \sigma } 
  \).
  Then, with probability converging to one, the Scaled lasso noise estimator 
  \( \widehat{ \sigma }^{2} \) satisfies
  \begin{align*}
          | \widehat{ \sigma }^{2} - \| \varepsilon \|_{n}^{2} | 
    & \le 
          \frac{ 2 \| \varepsilon \|_{n}^{2} \alpha^{*} }
               { ( 1 - \alpha^{*} )^{2} }
    \qquad \text{ with } \qquad 
          \alpha^{*}
    =
          \frac{ 
                 \lambda_{0} 
                 \mu( \| \varepsilon \|_{n} \lambda_{0}, \xi )
               }
               { \| \varepsilon \|_{n} }.
  \end{align*}
  In particular, for any fixed choice \( \xi > 1 \), this implies that 
  with probability converging to one,
  \begin{align*}
            | \widehat{ \sigma }^{2} - \| \varepsilon \|_{n}^{2} | 
    & \le 
            C
            \begin{dcases}
              \frac{ \overline{ \sigma }^{2} s \log p }{n},
              & \beta^{*} \ s \text{-sparse}, \\
              \Big( 
                \frac{ \overline{ \sigma }^{2} \log p }{n}
              \Big)^{ 1 - 1 / ( 2 \gamma ) },
              & \beta^{*} \ s \text{-sparse}.
            \end{dcases}
  \end{align*}
\end{proposition}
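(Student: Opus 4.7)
The plan is to combine three ingredients: (i) verification that the event $\Omega_{\text{Noise}}$ on which the Sun--Zhang bound applies has probability converging to one, (ii) the conversion of the multiplicative bound on $\widehat{\sigma}/\|\varepsilon\|_n$ provided by Sun and Zhang's Theorem 2 into the additive bound on $|\widehat{\sigma}^2 - \|\varepsilon\|_n^2|$, and (iii) explicit bounds on $\mu(\|\varepsilon\|_n \lambda_0, \xi)$, and hence on $\alpha^*$, via the compatibility-factor estimate cited from Wainwright, which yield the sparsity-dependent rates.

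For (i), I would use Lemma \ref{lem_UniformBoundsInHighProbability} (ii), giving $\sup_{j\le p} |\langle g_j, \varepsilon\rangle_n| \le C_\varepsilon \sqrt{\overline{\sigma}^2 \log p/n}$ with probability tending to one, together with $\|\varepsilon\|_n^2 \to \underline{\sigma}^2$ almost surely by the law of large numbers. With the chosen $\lambda_0 = C_{\lambda_0}\tfrac{\xi+1}{\xi-1}\sqrt{\log p/n}$, the threshold defining $\Omega_{\text{Noise}}$ equals $(1-\alpha^*) C_{\lambda_0} \|\varepsilon\|_n \sqrt{\log p/n}$. Since step (iii) will show $\alpha^* \to 0$ in both sparsity regimes, a short bootstrap combined with the assumption $C_{\lambda_0} \ge 2 C_\varepsilon \overline{\sigma}/\underline{\sigma}$ makes the threshold eventually exceed $C_\varepsilon \overline{\sigma}\sqrt{\log p/n}$, giving $\mathbb{P}(\Omega_{\text{Noise}}) \to 1$. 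For (ii), on $\Omega_{\text{Noise}}$, Sun and Zhang's Theorem 2 yields $(1-\alpha^*)\|\varepsilon\|_n \le \widehat{\sigma} \le \|\varepsilon\|_n/(1-\alpha^*)$. Squaring and using $\max\bigl((1-\alpha^*)^{-2} - 1,\; 1 - (1-\alpha^*)^2\bigr) \le 2\alpha^*/(1-\alpha^*)^2$ immediately yields the first claimed bound.

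For (iii), I would invoke the compatibility bound from Wainwright's Theorem 7.16 quoted in the text, which ensures $\kappa^2(\xi', J) \ge c_\lambda/16$ for all subsets $J$ with $|J|$ of order up to $n/\log p$ and any fixed $\xi'$, with probability converging to one. In the $s$-sparse case, taking $J = S$ kills the term $\|\beta^*_{J^c}\|_1/\nu$ in the definition of $\mu$; choosing e.g.\ $\nu = 1/2$ then yields $\mu \le C s \|\varepsilon\|_n \lambda_0$, so that $\alpha^* \le C s \lambda_0^2 \asymp s \log p /n$, which tends to zero under Assumption \hyperref[ass_Sparse]{\textbf{(Sparse)}} (i) since $s = o\bigl((n/\log p)^{1/3}\bigr)$. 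In the $\gamma$-sparse case, I would take $J$ to be the top-$k$ coordinates of $|\beta^*|$ and choose $k$ so as to balance $\|\beta^*_{J^c}\|_1/\nu$ against $\lambda_0 \|\varepsilon\|_n |J|/(1-\nu)$; Assumption \hyperref[ass_Sparse]{\textbf{(Sparse)}} (ii) applied to $J^c$ controls $\|\beta^*_{J^c}\|_1$ in terms of $\bigl(\sum_{j \in J^c}|\beta^*_j|^2\bigr)^{(\gamma-1)/(2\gamma-1)}$, and the balancing calculation yields $\alpha^* \le C (\log p / n)^{1 - 1/(2\gamma)} \to 0$. The main obstacle is precisely this balancing calculation: one must pick the truncation level $k$ sharply and exploit the decay of $\beta^*$ on the tail via Sparse(ii). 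Once $\alpha^*$ has been controlled in each regime, plugging the bound into (ii) and using $\|\varepsilon\|_n^2 \le C \overline{\sigma}^2$ (eventually, by the law of large numbers applied to $\varepsilon_i^2$ under Assumption \hyperref[ass_SubGaussianErrors]{\textbf{(SubGE)}}) delivers the two stated rates and completes the proof.
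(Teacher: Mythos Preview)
Your proposal is correct and follows essentially the same route as the paper: the paper likewise converts Sun--Zhang's multiplicative bound into the additive one, lower-bounds the compatibility factor via Wainwright's restricted-eigenvalue result for Gaussian design, bounds $\mu(\|\varepsilon\|_n\lambda_0,\xi)$ separately in the two sparsity regimes (using $J=S$ with $\nu=1/2$ in the $s$-sparse case, and the threshold set $J_\lambda=\{j:|\beta^*_j|\ge\lambda\}$---equivalent to your top-$k$ choice---in the $\gamma$-sparse case), and finally intersects with the high-probability events on which $\|\varepsilon\|_n\to\underline{\sigma}$ and the compatibility factor is positive to show $\mathbb{P}(\Omega_{\text{Noise}})\to 1$; no genuine bootstrap is needed, only this intersection. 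The one step you rightly flag as the main obstacle---getting a sharp decay of the tail $\sum_{j\notin J}|\beta^*_j|^2$ from Sparse(ii) alone---is resolved in the paper by deriving the recursive inequality $|\beta^*_{(m)}|^2 \ge C_\gamma^{-2}\bigl(\sum_{j>m}|\beta^*_{(j)}|^2\bigr)^{2\gamma/(2\gamma-1)}$ and applying Lemma~1 of Gao et al.\ to obtain $\sum_{j>m}|\beta^*_{(j)}|^2 \le C m^{1-2\gamma}$; once that is in place your balancing calculation goes through exactly as you describe.
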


\noindent The rates in Proposition \ref{prp_FastNoiseEstimation} match
the rates in Corollary \ref{cor_RatesForTheEmpiricalRisk} and Theorem
\ref{thm_OptimalAdaptationForThePopulationRisk}.
Under the respective assumptions, the combination of the stopping time \( \tau
\) from Equation \eqref{eq_1_SequentialEarlyStoppingTimeKappaM} with the
estimator \( \widehat{ \sigma }^{2} \) therefore provides a fully data-driven
sequential procedure which guarantees optimal adaptation to the unknown
sparsity parameters \( s \in \mathbb{N}_{0} \) or \( \gamma \in [ 1, \infty )
\).


\section{Numerical simulations and a two-step procedure}
\label{sec_NumericalSimulationsAndATwoStepProcedure}

In this section, we illustrate our main results by numerical simulations.
Here, we focus on the noise estimation aspects of our results in the regression
setting with uncorrelated design. 
A more extensive simulation study, including correlated design and the
classification setting from Example \ref{expl_RegressionAndClassification} (b)
with heteroscedastic error terms is provided in Appendix
\ref{sec_SimulationStudy}. 
The simulations confirm our theoretical results but also reveal some
shortcomings stemming from the sensitivity of our method with regard to the
noise estimation.
We address this by proposing a two-step procedure that combines early stopping
with an additional model selection step.

\begin{minipage}{0.49\textwidth}
  \centering
  \begin{figure}[H]
    \includegraphics[width=0.99\linewidth]{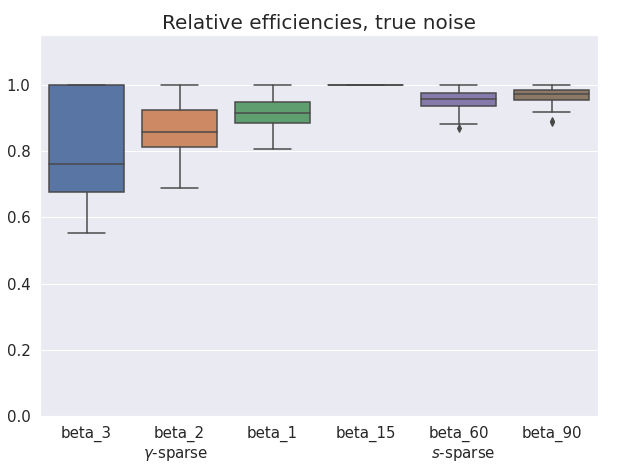}
    \\[-2.0ex]
    \caption{Relative efficiencies for early stopping with the true empirical
             noise level.}
    \label{fig_Uncorrelated_RelativeEfficiencies_TrueNoise}
  \end{figure}
\end{minipage}
\begin{minipage}{0.49\textwidth}
  \centering
  \begin{figure}[H]
    \includegraphics[width=0.99\linewidth]{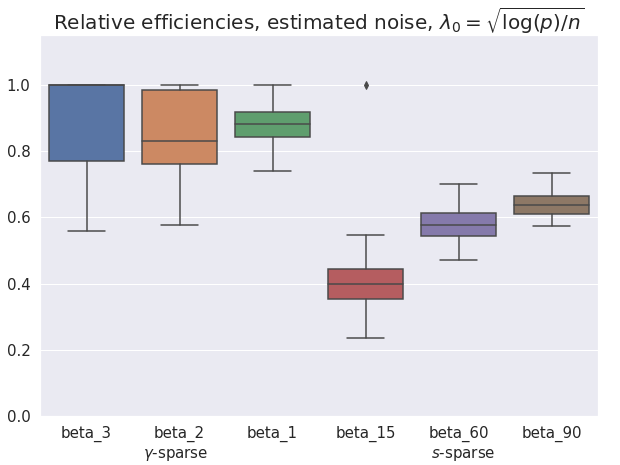}
    \\[-2.0ex]
    \caption{Relative efficiencies for early stopping with the estimated empirical
             noise level for \( \lambda_{0} = \sqrt{ \log(p) / n } \).}
    \label{fig_Uncorrelated_RelativeEfficiencies_lambda0_1}
  \end{figure}
\end{minipage}

\subsection{Numerical simulations for the main results}
\label{ssec_NumericalSimulationsForTheMainResults}

All of our simulations are based on 100 Monte-Carlo runs of a model in which
both sample size \( n \)  and parameter size \( p \) are equal to 1000.
We examine signals \( f \) with coefficients \( \beta \) corresponding to the
two sparsity concepts in Assumption \hyperref[ass_Sparse]{\normalfont
\textbf{{\color{blue} (Sparse)}}}.
We consider the \( s \)-sparse signals
\begin{align}
  \beta^{ (15) }_{j} & = \mathbf{1} \{  1 \le j \le 5 \}  + 0.5 \cdot \mathbf{1} \{  6 \le j \le 10 \} + 0.25 \cdot \mathbf{1} \{ 11 \le j \le 15 \},         \\
  \beta^{ (60) }_{j} & = \mathbf{1} \{  1 \le j \le 20 \} + 0.5 \cdot \mathbf{1} \{ 21 \le j \le 40 \} + 0.25 \cdot \mathbf{1} \{ 41 \le j \le 60 \}, \notag  \\ 
  \beta^{ (90) }_{j} & = \mathbf{1} \{  1 \le j \le 30 \} + 0.5 \cdot \mathbf{1} \{ 31 \le j \le 60 \} + 0.25 \cdot \mathbf{1} \{ 61 \le j \le 90 \} \notag
\end{align}
for \(
  s \in \{ 15, 60, 90 \} 
\) and the \( \gamma \)-sparse signals
\begin{align}
  \beta^{ (3) }_{j}: = j^{ - 3 }, \quad \beta^{ (2) }_{j}: = j^{ - 2 }, \quad \beta^{ (1) }_{j}: = j^{ - 1 }, \qquad j \le p
\end{align}
for \( \gamma \in \{ 3, 2, 1 \} \). 
Note that the definition of Algorithm \ref{alg_OMP} allows to consider
decreasingly ordered coefficients without loss of generality.
In a second step, we normalize all signals to the same \( \ell^{1} \)-norm of
value 10.
Since the Scaled Lasso penalizes the \( \ell^{1} \)-norm, this is necessary to
make the noise estimations comparable between simulations.
For both the covariance structure of the design and the noise terms \(
\varepsilon \), we consider independent standard normal variables.
For the early stopping time \( \tau \) in Equation
\eqref{eq_1_SequentialEarlyStoppingTimeKappaM}, we focus on the noise level
estimate \( \widehat{ \sigma }^{2} \). 
For our theoretical results, \( C_{ \tau } > 0 \) was needed to control the
discretization error \( \Delta( r_{ \tau }^{2} ) \) of the residual norm and to
counteract the fact that Lemma \ref{lem_BoundForTheEmpiricalStochasticError}
does not provide a lower bound of the same size.
Since empirically, both of these aspect do not pose any problems, it seems
warranted to set \( C_{ \tau } = 0 \) and exclude this hyperparameter from our
simulation study.
The simulation in Figure \ref{fig_MethodComparison} of Section
\ref{sec_Introduction} is based on \( \beta^{ (2) } \).
The estimated noise result used a penalty \(
  \lambda_{0} = \sqrt{ \log(p) / n } 
\) for the Scaled Lasso.
The two-step procedure used \(
  \lambda_{0} = \sqrt{ 0.5 \log(p) / n }
\) and \( C_{ \text{AIC} } = 2 \). 
The HDAIC-procedure from Ing \cite{Ing2020ModelSelection} was computed with 
\( C_{ \text{HDAIC} } = 2 \), see also the discussion in Section
\ref{ssec_AnImprovedTwoStepProcedure}.

As a baseline for the potential performance of sequential early stopping, we consider the setting in which we have access to the true empirical noise level and set \( \widehat{ \sigma }^{2} = \| \varepsilon \|_{n}^{2} \). 
As a performance metric for a simulation run, we consider the \emph{relative efficiency}
\begin{align}
  \min_{ m \ge 0 } \| \widehat{F}^{ (m) } - f^{*} \|_{n} / \| \widehat{F}^{ ( \tau ) } - f^{*} \|_{n}, 
\end{align}
\begin{figure}[H]
  \centering
  \includegraphics[width=0.497\textwidth]{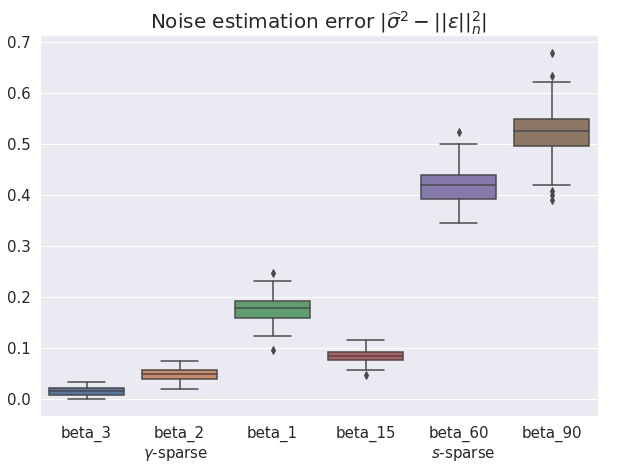}
  \includegraphics[width=0.497\textwidth]{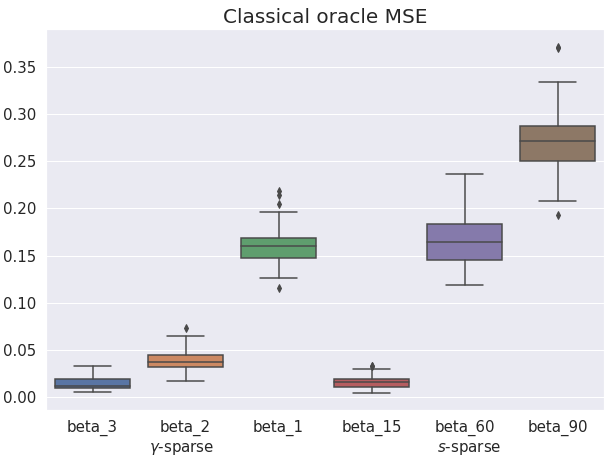}
  \\[-2.0ex]
  \caption{
    Boxplots for the estimation errors for the noise estimation with 
    \( \lambda_{0} = \sqrt{ \log(p) / n } \) together with the classical oracle
    risk.
  }
  \label{fig_Uncorrelated_NoiseEstimation} 
\end{figure}

\noindent which can be interpreted as a proxy for the constant 
\( C_{ \text{Risk} }^{ - 1 / 2 } \) in Theorem
\ref{thm_OptimalAdaptationForThePopulationRisk}. 
We choose this quantity rather than its inverse because it makes for clearer plots.
Values bounded away from zero indicate optimal adaptation up to a constant.
Values closer to one indicate better estimation overall.
We report boxplots of the \emph{relative efficiencies} in Figure
\ref{fig_Uncorrelated_RelativeEfficiencies_TrueNoise}.
The values are clearly bounded away from zero and close to one,
indicating that with access to the true empirical noise level \( \| \varepsilon
\|_{n}^{2} \), the sequential early stopping procedure achieves optimal
adaptation simultaneously over different sparsity levels for both sparsity
concepts from Assumption \hyperref[ass_Sparse]{\normalfont
\textbf{{\color{blue} (Sparse)}}}.
This is expected, from the results in Theorem
\ref{thm_OracleInequalityForTheEmpiricalRisk}, Corollary
\ref{cor_RatesForTheEmpiricalRisk} and Theorem
\ref{thm_OptimalAdaptationForThePopulationRisk}. 
The oracles \( m^{ \mathfrak{o} } \) and \( m^{ \mathfrak{b} } \) vary only
very little over simulation runs. 
Their medians, in the same order as the signals displayed in Figure
\ref{fig_Uncorrelated_RelativeEfficiencies_TrueNoise}, are given by \( (4, 7, 14, 15, 45,
53) \) and \( (5, 10, 31, 15, 51, 66) \) respectively.
This is nearly identically replicated by the median sequential early stopping
times \( (5, 9, 23, 15, 44, 52) \).

In our second simulation, we estimate the empirical noise level using the Scaled
lasso estimator \( \widehat{ \sigma }^{2} \) from Section
\ref{sec_EstimationOfTheEmpiricalNoiseLevel}.
For the penalty parameter, we opt for the choice \(
  \lambda_{0} = \sqrt{ \log(p) / n }, 
\) which tended to have the best performance in the simulation study in Sun
Zhang \cite{SunZhang2012ScaledLasso}.
Note that the choice of \( \lambda_{0} \) in Proposition \ref{prp_FastNoiseEstimation} is scale invariant, see Proposition 1 in Sun and Zhang \cite{SunZhang2012ScaledLasso}.
We report boxplots of the estimation error \( | \widehat{ \sigma }^{2} - \|
\varepsilon \|_{n}^{2} | \) in Figure \ref{fig_Uncorrelated_NoiseEstimation} together with the squared estimation error \( \| \widehat{F}^{ ( m^{ \mathfrak{o} } ) } - f^{*} \|_{n}^{2} \) at the classical oracle.

The results indicate that the two quantities are of the same order, which is the
essential requirement for optimal adaptation in Theorem
\ref{thm_OracleInequalityForTheEmpiricalRisk} and Theorem
\ref{thm_OracleInequalityForTheEmpiricalRisk}. 
This is born out by the relative efficiencies in Figure
\ref{fig_Uncorrelated_RelativeEfficiencies_lambda0_1}, which remain bounded away from zero.
For the signals \( \beta^{ (3) }, \beta^{ (2) }, \beta^{ (1) } \), the quality
of estimation is comparable to that in Figure
\ref{fig_Uncorrelated_RelativeEfficiencies_TrueNoise}.
For the signals \( \beta^{ ( 15 ) }, \beta^{ ( 60 ) }, \beta^{ ( 90 ) } \), the
quality of estimation decreases, which matches the fact that for these signals,
the noise estimation deviates more from the risk at the classical oracle
\( m^{ \mathfrak{o} } \). 
The median stopping times \( (4, 6, 14, 22, 20) \) indicate that for these signals, we tend to stop too early.
Nevertheless, in our simulation, early stopping achieves the optimal estimation
risk up to a constant of at most eight. 

Overall, this confirms the major claim of this paper that it is possible to
achieve optimal adaptation by a fully data-driven, sequential early stopping
procedure. 
The computation times in Table \ref{tab_ComputationTimesForAllSignals} show an
improvement of an order of magnitude in the computational complexity relative
to exhaustive model selection methods as the high-dimensional Akaike criterion
from Ing \cite{Ing2020ModelSelection} or the cross-validated Lasso.

\begin{minipage}{0.49\textwidth}
  \centering
  \begin{figure}[H]
    \includegraphics[width=0.99\linewidth]{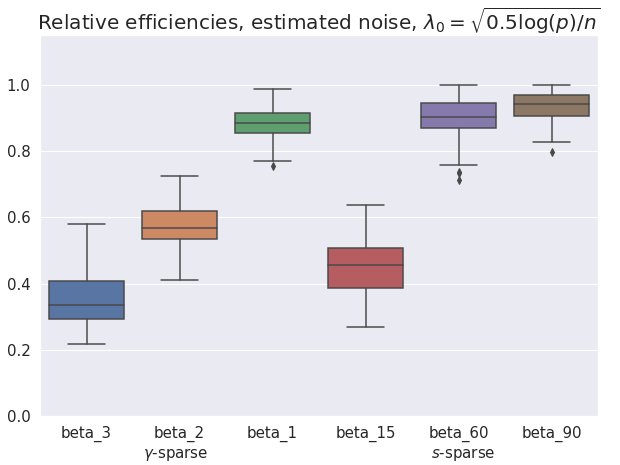}
    \\[-2.0ex]
    \caption{Relative efficiencies for early stopping with estimated empirical
             noise level for \( \lambda_{0} = \sqrt{ 0.5 \log(p) / n } \).}
    \label{fig_Uncorrelated_RelativeEfficiencies_lambda0_05}
  \end{figure}
\end{minipage}
\begin{minipage}{0.49\textwidth}
  \centering
  \begin{figure}[H]
    \includegraphics[width=0.99\linewidth]{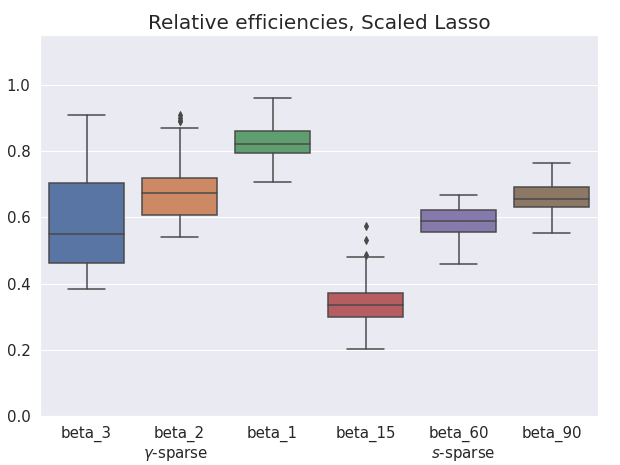}
    \\[-2.0ex]
    \caption{Relative efficiencies for the Scaled Lasso with 
             \( \lambda_{0} = \sqrt{ \log(p) / n } \).}
    \label{fig_Uncorrelated_RelativeEfficiencies_ScaledLasso}
  \end{figure}
\end{minipage}

Experimenting with different simulation setups, however, also reveals some
shortcomings of our methodology.
The performance of the early stopping procedure is fairly sensitive to the
noise estimation. 
This can already be surmised by comparing Figures
\ref{fig_Uncorrelated_RelativeEfficiencies_TrueNoise} and
\ref{fig_Uncorrelated_RelativeEfficiencies_lambda0_1},
and Theorem \ref{thm_OracleInequalityForTheEmpiricalRisk} suggests that the risk
of the estimation method is additive in the estimation error of the empirical
noise level.
In Figure \ref{fig_Uncorrelated_RelativeEfficiencies_lambda0_05}, we
present the relative efficiencies when the empirical noise level is estimated
with the penalty \( \lambda_{0} = \sqrt{ 0.5 \log(p) / n } \). 
The median stopping times \( 
  ( 17, 18, 25, 21, 39, 41 )
\) indicate that the change from \( 1 \) to a factor \( 0.5 \) in 
\( \lambda_{0} \) already makes the difference between stopping slightly too
early and stopping slightly too late.
While the relative efficiencies show that this does not make our
method unusable, ideally this sensitivity should be reduced.

Further, the joint minimization of the Scaled Lasso objective
\eqref{eq_4_ScaledLassoObjective} always includes computing an estimator 
\( \widehat{ \beta } \) of the coefficients. 
In particular, Corollary 1 in Sun and Zhang \cite{SunZhang2012ScaledLasso} also
guarantees optimal adaptation of this estimator at least under 
\( s \)-sparsity.
Ex ante, it is therefore unclear why one should apply our stopped boosting
algorithm on top of the noise estimate rather than just using the Scaled Lasso
estimator of the signal.
In Figure \ref{fig_Uncorrelated_RelativeEfficiencies_ScaledLasso}, we report the
relative efficiencies
\begin{align}
  \min_{ m \ge 0 }
  \| \widehat{F}^{ (m) } - f^{*} \|_{n} /
  \| \mathbf{X} \widehat{ \beta } - f^{*} \|_{n} 
\end{align}
of the Lasso estimator for the same penalty parameters \(
  \lambda_{0} = \sqrt{ \log(p) / n } 
\) which we considered for the initial noise estimation.
Note that this quantity can potentially be larger than one, in case the Lasso
risk is smaller than the risk at the classical oracle boosting iteration 
\( m^{ \mathfrak{o} } \).
Sequential early stopping slightly outperforms the Scaled Lasso estimator,
which we also confirmed in other experiments. 
Naturally, it shares the sensitivity to the choice of \( \lambda_{0} \).
Overall, the stopped boosting algorithm would have to produce results more
stable and closer to the benchmark in Figure
\ref{fig_Uncorrelated_RelativeEfficiencies_TrueNoise} to warrant a clear
preference.
We address these issues in the following section.


\subsection{An improved two step procedure}
\label{ssec_AnImprovedTwoStepProcedure}

We aim to make our methodology more robust to deviations of the estimated
empirical noise level and, at the same time, improve its estimation quality in
order to match the results from Figure
\ref{fig_Uncorrelated_RelativeEfficiencies_TrueNoise} more closely.
Motivated by Blanchard et al. \cite{BlanchardEtal2018a}, we propose a two-step
procedure combining early stopping with an additional model selection

\begin{minipage}{0.5\textwidth}
    \centering
    \begin{figure}[H]
      \includegraphics[width=0.99\linewidth]{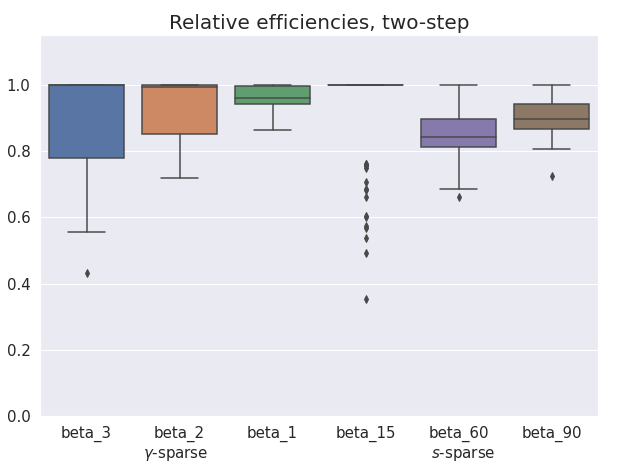}
      \caption{Relative efficiencies for the two-step procedure.}
      \label{fig_Uncorrelated_RelativeEfficiencies_TwoStep} 
    \end{figure}
\end{minipage}
\begin{minipage}{0.5\textwidth}
  \centering
  \begin{table}[H]
    \centering
    \begin{tabular}{lrrr}
      \\
      \\[-0.5ex]
      \\[-1.8ex]\hline
      \\[-1.8ex]\hline\\[-1.8ex]
      \( \beta \)     &    3  &    2  &      1 \\
      \hline\\[-1.8ex]
      True noise      &  12.5 &  19.8 &   47.3 \\
      Est. noise      &  25.3 &  32.0 &   42.8 \\
      Two-step        &  50.5 &  49.6 &   65.3 \\ 
      HDAIC             & 413.7 & 411.6 &  411.9 \\
      Lasso CV        & 133.4 & 164.3 & 1259.5 \\
      \hline\\[-1.8ex]
      \( \beta \)     & 15      &    60  &     90 \\
      \hline\\[-1.8ex]
      True noise      &    28.1 &   79.7 &   90.0 \\ 
      Est. noise      &    40.9 &   57.4 &   57.8 \\
      Two-step        &    56.0 &   90.0 &   92.6 \\ 
      HDAIC             &   410.9 &  412.2 &  407.6 \\
      Lasso CV        &  3741.6 & 3323.7 & 4290.4 \\
      \\[-1.8ex]\hline
      \hline\\[-1.8ex]
      \\
    \end{tabular}
    \caption{Computation times for different methods in seconds.}
    \label{tab_ComputationTimesForAllSignals} 
  \end{table}
\end{minipage}

\begin{minipage}{0.5\textwidth}
  \centering
  \begin{figure}[H]
    \includegraphics[width=0.99\linewidth]{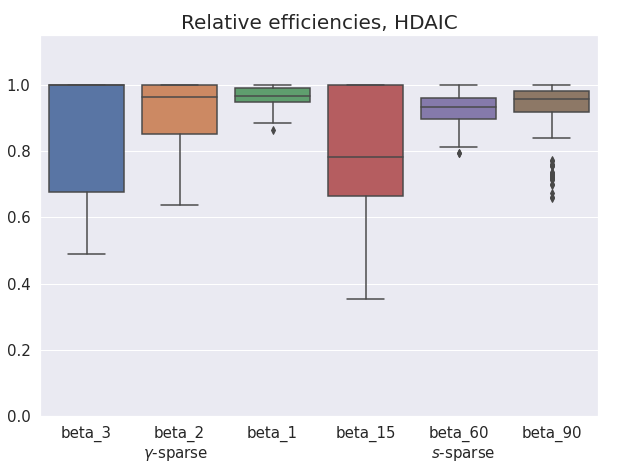}
    \\[-2.0ex]
    \caption{Relative efficiencies for the Akaike criterion from Ing \cite{Ing2020ModelSelection}owith \( C_{ \text{HDAIC} } = 2.25 \).}
    \label{fig_Uncorrelated_RelativeEfficiencies_HDAIC} 
  \end{figure}
\end{minipage}
\begin{minipage}{0.5\textwidth}
  \centering
  \begin{figure}[H]
    \includegraphics[width=0.99\linewidth]{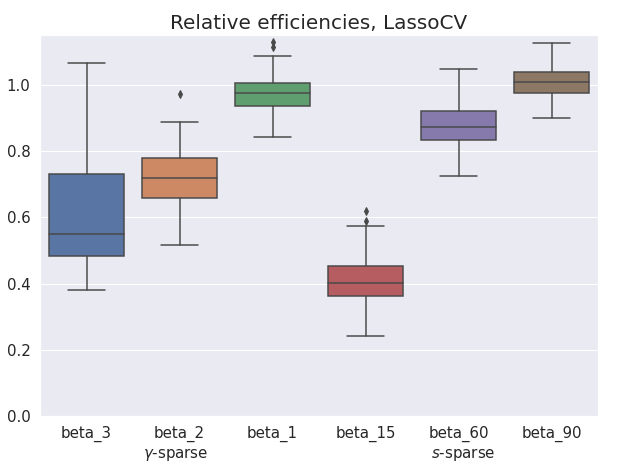}
    \\[-2.0ex]
    \caption{Relative efficiencies for the Lasso based on 5-fold cross-validation.}
    \label{fig_Uncorrelated_RelativeEfficiencies_LassoCV} 
  \end{figure}
\end{minipage}

\noindent step based on the high-dimensional Akaike-information criterion
\begin{align}
  \label{eq_5_NewHDAIC}
  \widehat{m}_{ \text{AIC} }: = \argmin_{ m \ge 0 } \text{AIC}(m) 
  \qquad \text{with} \qquad 
  \text{AIC}(m):
  =
  r_{m}^{2} + \frac{ C_{ \text{AIC} } m \log p }{n}, 
  \quad m \ge 0.
\end{align}
This criterion slightly differs from the one introduced in Ing
\cite{Ing2020ModelSelection}, which is necessary for our setting, see Remark
\ref{rem_TwoStepProcedure}. 
In combination, we select the iteration
\begin{align}
  \tau_{ \text{two-step} }: = \argmin_{ m \le \tau } \text{AIC}(m) 
  \qquad \text{ with } \qquad 
  \tau \text{ from Equation \eqref{eq_1_SequentialEarlyStoppingTimeKappaM}}.
\end{align}
Since this only requires \( \tau \) additional comparisons of 
\( \text{AIC}(m) \) for \( m \le \tau \), the two-step procedure has the same
computational complexity as the estimator \( \widehat{F}^{ ( \tau ) } \).  

The two-step procedure enables us to directly address the sensitivity of 
\( \tau \) to the noise estimation.
Our results for fully sequential early stopping in Theorem
\ref{thm_OracleInequalityForTheEmpiricalRisk}, Corollary
\ref{cor_RatesForTheEmpiricalRisk} and Theorem
\ref{thm_OptimalAdaptationForThePopulationRisk} require estimating the noise
level with the optimal rate \( \mathcal{R}( s, \gamma ) \).
Conversely, assuming that the high-dimensional Akaike criterion selects an
iteration such that its risk is of optimal order among the iterations 
\( m \le \tau \), the two-step procedure only requires an estimate 
\( \widehat{ \sigma }^{2} \) of \( \| \varepsilon \|_{n}^{2} \)
which has a slightly negative bias.
Proposition \ref{prp_NoStoppingTooEarly} then guarantees that \(
  \tau \ge \tilde m_{ s, \gamma, G } 
\) from Equation \eqref{eq_3_TildeMG} for some \( G > 0 \) with probability
converging to one, i.e., the indices \( m \le \tau \) contain an iteration with
risk of order \( \mathcal{R}( s, \gamma ) \). 
Moreover, the second selection step guarantees that as long as this is
satisfied, any imprecision in \( \tau \) only results in a slightly increased or
decreased computation time rather than changes in the estimation risk.
We can establish the following theoretical guarantee: 

\begin{theorem}[Two-step procedure]
  \label{thm_TwoStepProcedure}
  Under Assumptions 
  \hyperref[ass_SubGaussianErrors]{\normalfont \textbf{{\color{blue} (SubGE)}}},
  \hyperref[ass_Sparse]{\normalfont \textbf{{\color{blue} (Sparse)}}},
  \hyperref[ass_SubGaussianDesign]{\normalfont \textbf{{\color{blue} (SubGD)}}}
  and
  \hyperref[ass_CovB]{\normalfont \textbf{{\color{blue} (CovB)}}},
  choose \( \widehat{ \sigma }^{2} \) in Equation
  \eqref{eq_1_SequentialEarlyStoppingTimeKappaM} such that 
  \begin{align*}
        \widehat{ \sigma }^{2}  
    \le 
        \| \varepsilon \|_{n}^{2} + C \mathcal{R}( s, \gamma ) 
  \end{align*}
  with probability converging to one.
  Then, for any choice 
  \(
    C_{ \tau } = c ( \overline{ \sigma }^{2} + \rho^{4} ) 
  \) 
  in \eqref{eq_1_SequentialEarlyStoppingTimeKappaM} with \( c \ge 0 \) and
  \( C_{ \text{AIC} } = C ( \overline{ \sigma }^{2} + \rho^{4} ) \) with \( C > 0 \) large enough,
  the two-step procedure satisfies that with probability converging to one,
  \( 
    \tau_{ \text{two-step} } \ge \tilde m_{ s, \gamma, G }
  \) 
  from Equation \eqref{eq_3_TildeMG} for some \( G > 0 \).
  On the corresponding event,
  \begin{align*}
          \| \widehat{F}^{ ( \tau_{ \text{two-step} } ) } - f^{*} \|_{ L^2 }^{2} 
    & \le 
          C \mathcal{R}( s, \gamma ).
  \end{align*}
\end{theorem}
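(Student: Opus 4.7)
The plan is to exploit that \( \tau_{ \text{two-step} } = \argmin_{ m \le \tau } \text{AIC}(m) \) inherits the one-sided control of Proposition \ref{prp_NoStoppingTooEarly}, while the AIC itself supplies the remaining two-sided balance. Proposition \ref{prp_NoStoppingTooEarly} requires only the upper bound on \( \widehat{ \sigma }^{2} \) postulated here and accommodates \( c \ge 0 \). For \( G > 0 \) sufficiently large it yields \( \tilde m_{ s, \gamma, G } \le \tau \) with probability converging to one. Combining with Proposition \ref{prp_BoundForThePopulationBias} and \eqref{eq_3_TildeMG} also gives \( m^{*}_{ s, \gamma } \le \tau \) on the same event, so that both \( m^{*}_{ s, \gamma } \) and every \( m < \tilde m_{ s, \gamma, G } \) lie in the feasible set of the two-step selector.

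Next I control \( \text{AIC}( m^{*}_{ s, \gamma } ) \) by inserting the decomposition \( r_{m}^{2} = \| \varepsilon \|_{n}^{2} + b_{m}^{2} + 2 c_{m} - s_{m} \) from \eqref{eq_2_DecompositionEmpiricalResiduals} and invoking Lemma \ref{lem_BoundForTheEmpiricalBias} for \( b_{ m^{*} }^{2} \), Lemma \ref{lem_BoundsForTheCrossTermAndTheDiscretizationError} (i) with Young's inequality for \( | c_{ m^{*} } | \), and Lemma \ref{lem_BoundForTheEmpiricalStochasticError} for \( s_{ m^{*} } \). Each contribution is of order \( \mathcal{R}( s, \gamma ) \), yielding \( \text{AIC}( m^{*}_{ s, \gamma } ) \le \| \varepsilon \|_{n}^{2} + C \mathcal{R}( s, \gamma ) \), so by minimality \( \text{AIC}( \tau_{ \text{two-step} } ) \le \text{AIC}( m^{*}_{ s, \gamma } ) \). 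Re-applying the decomposition at \( \tau_{ \text{two-step} } \), using the cross-term bound to absorb \( 2 c_{ \tau_{ \text{two-step} } } \) into \( b_{ \tau_{ \text{two-step} } }^{2} \) and the penalty, and choosing \( C_{ \text{AIC} } \) large enough that \( C_{ \text{AIC} } - C ( \overline{ \sigma }^{2} + \rho^{4} ) \ge c > 0 \) dominates the linear-in-\( m \) stochastic contribution from Lemma \ref{lem_BoundForTheEmpiricalStochasticError}, the AIC inequality rearranges to
\begin{align*}
  c \, \tau_{ \text{two-step} } \log(p) / n
  & \le C \mathcal{R}( s, \gamma ),
\end{align*}
so \( \tau_{ \text{two-step} } \le C m^{*}_{ s, \gamma } \), and Lemma \ref{lem_BoundForThePopulationStochasticError} then delivers \( S_{ \tau_{ \text{two-step} } } \le C \mathcal{R}( s, \gamma ) \).

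For the lower bound \( \tau_{ \text{two-step} } \ge \tilde m_{ s, \gamma, G } \), suppose \( m < \tilde m_{ s, \gamma, G } \). Then \( B_{m}^{2} \) is bounded below by \( c_{ \lambda } \underline{ \beta }^{2} \) in the \( s \)-sparse case and by a constant multiple of \( G \mathcal{R}( s, \gamma ) \) in the \( \gamma \)-sparse case, and Proposition \ref{prp_FastNormChangeForTheBias} transfers this to the same-order lower bound on \( b_{m}^{2} \), since the relative norm-change error vanishes as \( n \to \infty \). Because Lemma \ref{lem_BoundsForTheCrossTermAndTheDiscretizationError} (i) renders \( | c_{m} | \) strictly lower order than \( b_{m}^{2} \) once \( G \) is large, one obtains \( \text{AIC}(m) > \text{AIC}( m^{*}_{ s, \gamma } ) \), contradicting the minimality of \( \tau_{ \text{two-step} } \); hence \( \tau_{ \text{two-step} } \ge \tilde m_{ s, \gamma, G } \), which forces \( B_{ \tau_{ \text{two-step} } }^{2} \le C \mathcal{R}( s, \gamma ) \). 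The bias-variance decomposition \eqref{eq_PopulationBiasVarianceDecomposition} then gives the desired population-risk bound. The main obstacle is precisely this lower bound: the cross term \( c_{m} \) is sign-indefinite and is not penalized by AIC, so making it strictly subdominant to \( b_{m}^{2} \) for \( m < \tilde m_{ s, \gamma, G } \) requires the sharp cross-term estimate together with the fast norm change of Proposition \ref{prp_FastNormChangeForTheBias}. It is this coupling that forces \( C_{ \text{AIC} } \) proportional to \( \overline{ \sigma }^{2} + \rho^{4} \) and the threshold \( G \) to be sufficiently large.
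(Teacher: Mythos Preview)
Your overall strategy---bounding \( \text{AIC}( m^{*}_{ s, \gamma } ) \) from above and using it as a pivot for both directions of the sandwich on \( \tau_{\text{two-step}} \)---is natural, but it rests on a claim that fails under the theorem's hypotheses. You assert that Proposition \ref{prp_NoStoppingTooEarly} combined with Proposition \ref{prp_BoundForThePopulationBias} gives \( m^{*}_{ s, \gamma } \le \tau \). It does not: Proposition \ref{prp_NoStoppingTooEarly} only yields \( \tau \ge \tilde m_{ s, \gamma, G } \), and Proposition \ref{prp_BoundForThePopulationBias} gives \( \tilde m_{ s, \gamma, G } \le m^{*}_{ s, \gamma } \), which is the wrong direction. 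With only the one-sided bound \( \widehat\sigma^{2} \le \| \varepsilon \|_{n}^{2} + C \mathcal{R}( s, \gamma ) \), nothing prevents \( \tau \) from landing strictly between \( \tilde m_{ s, \gamma, G } \) and \( m^{*}_{ s, \gamma } \). Then \( m^{*}_{ s, \gamma } \) is not in the feasible set \( \{ 0, \dots, \tau \} \), and the inequality \( \text{AIC}( \tau_{\text{two-step}} ) \le \text{AIC}( m^{*}_{ s, \gamma } ) \), on which both halves of your argument depend, is unjustified.

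The paper's proof avoids this by comparing with \( \text{AIC}( \tau ) \), which is always feasible, and observing that if \( \tau \ge m^{*}_{ s, \gamma } \) one may substitute \( m^{*}_{ s, \gamma } \) for \( \tau \); in the remaining case \( \tilde m_{ s, \gamma, G } \le \tau < m^{*}_{ s, \gamma } \), the bounds of Lemmas \ref{lem_BoundForTheEmpiricalBias}, \ref{lem_BoundsForTheCrossTermAndTheDiscretizationError} and \ref{lem_BoundForTheEmpiricalStochasticError} apply at \( \tau \) and still give order \( \mathcal{R}( s, \gamma ) \). Your argument is easily repaired by replacing the pivot \( m^{*}_{ s, \gamma } \) with \( \min( \tau, m^{*}_{ s, \gamma } ) \) or with \( \tilde m_{ s, \gamma, G } \), both of which are guaranteed to lie in the feasible set; as written, however, it has a genuine gap.

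A smaller point: the transfer of the lower bound from \( B_{m}^{2} \) to \( b_{m}^{2} \) does not follow from Proposition \ref{prp_FastNormChangeForTheBias} alone. That proposition compares \( \| ( I - \Pi_{m} ) f^{*} \|_{n} \) with \( \| ( I - \Pi_{m} ) f^{*} \|_{ L^{2} } \), whereas \( b_{m}^{2} = \| ( I - \widehat\Pi_{m} ) f^{*} \|_{n}^{2} \) involves the \emph{empirical} projection. One still has to control \( \| ( \widehat\Pi_{m} - \Pi_{m} ) f^{*} \|_{n}^{2} \) separately, as is done in the proof of Proposition \ref{prp_NoStoppingTooEarly}; this is routine but should be made explicit.
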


\noindent Due to our \( \omega \)-pointwise analysis on high probability events, the proof of Theorem \ref{thm_TwoStepProcedure} is simpler than the result for the two-step procedure in Proposition 4.2 of Blanchard et al. \cite{BlanchardEtal2018a}.
In particular, it does not require the analysis of probabilities conditioned on
events \( \{ \tau \le m \} \). 
The details are in Appendix \ref{sec_ProofsForTheMainResults}.
We note two important technical aspects of the two-step procedure:

\begin{remark}[Two-step procedure]
  \label{rem_TwoStepProcedure}
  \
  \begin{enumerate}

    \item[(a)] Under Assumptions 
      \hyperref[ass_SubGaussianErrors]{\normalfont \textbf{{\color{blue} (SubGE)}}},
      \hyperref[ass_Sparse]{\normalfont \textbf{{\color{blue} (Sparse)}}},
      \hyperref[ass_SubGaussianDesign]{\normalfont \textbf{{\color{blue} (SubGD)}}}
      and
      \hyperref[ass_CovB]{\normalfont \textbf{{\color{blue} (CovB)}}}, taken by
      itself, the Akaike criterion in \eqref{eq_5_NewHDAIC} satisfies \( 
            \|
              \widehat{F}^{ ( \widehat{m}_{ \text{AIC} } ) } - f^{*} 
            \|_{ L^{2} }^{2} 
        \le 
            C \mathcal{R}( s, \gamma )
      \).
      The proof of this statement is included in the proof of Theorem
      \ref{thm_TwoStepProcedure}. 
      The criterion in Ing \cite{Ing2020ModelSelection} minimizes
      \begin{align}
        \text{HDAIC}(m): 
        = 
        r_{m}^{2} \Big( 1 + \frac{ C_{ \text{HDAIC} } m \log p  }{n} \Big), 
        \qquad 
        0 \le m \le M_{n}.
      \end{align}
      Including iterations \( m > M_{n} \) potentially makes it unreliable,
      since \( r_{m}^{2} \to 0 \) for \( m \to \infty \).
      In particular, \( \text{HDAIC}(n) = 0  \). 
      The fact that under the assumptions of Theorem \ref{thm_TwoStepProcedure},
      we have no upper bound \( \tau \le M_{n} \) therefore makes it necessary to
      formulate the new criterion in Equation \eqref{eq_5_NewHDAIC}. 

    \item[(b)] Under the assumptions of Theorem \ref{thm_TwoStepProcedure},
      there is no upper bound for \( \tau \).
      For any noise estimate \( \widehat{ \sigma }^{2} \) with \(
            \widehat{ \sigma }^{2} 
        \ge
            \| \varepsilon \|_{n}^{2} - C \mathcal{R}( s, \gamma ) 
      \), however, Proposition \ref{prp_NoStoppingTooLate} guarantees that \(
        \tau \le H m^{*}_{ s, \gamma }  
      \) for some \( H > 0 \) with probability converging to one.
      The two-step procedure therefore retains the computational advantages of
      early stopping.

  \end{enumerate}
  
\end{remark}

For simulations, this suggests choosing the smaller penalty parameter \(
  \lambda_{0} = \sqrt{ 0.5 \log(p) / n } 
\) in the Scaled Lasso objective \eqref{eq_4_ScaledLassoObjective}, which puts a
negative bias on \( \widehat{ \sigma }^{2} \), and then applying the two-step
procedure.
The proof of Theorem \ref{thm_TwoStepProcedure} shows that the penalty term \(
C_{ \text{AIC} } m \log(p) / n \) in Equation \eqref{eq_5_NewHDAIC} essentially
has to dominate the empirical stochastic error \( s_{m} \).
In accordance with Lemma \ref{lem_BoundForTheEmpiricalStochasticError}, we
therefore choose \( C_{ \text{AIC} } = 2 \overline{ \sigma }^{2} = 2 \).
Compared to Figure \ref{fig_Uncorrelated_RelativeEfficiencies_lambda0_05}, the
results in Figure \ref{fig_Uncorrelated_RelativeEfficiencies_TwoStep} show that
the high-dimensional Akaike criterion corrects the instances where \( \tau \)
stops later than the oracle indices.
Empirically, the method attains the risk at the pointwise classical oracle 
\( m^{ ( \mathfrak{o} ) } \) up to a factor \( C_{ \text{Risk} } = 2 \).
The median two-step times are given by \( (4, 7, 12, 15, 37, 37) \).
Overall, performance of the two-step procedure comes very close to the benchmark
results in Figure \ref{fig_Uncorrelated_RelativeEfficiencies_TrueNoise}.
It is much better than that of the Scaled Lasso and at least as good as that of
the full Akaike selection and the default method {\tt LassoCV} from the python
library {\tt scikit-learn} \cite{PedregosaEtal2011ScikitLearn} based on 5-fold
cross-validation, see Figures \ref{fig_Uncorrelated_RelativeEfficiencies_HDAIC}
and \ref{fig_Uncorrelated_RelativeEfficiencies_LassoCV}.
Since we have intentionally biased our noise estimate and iterate slightly
further, the computation times of the two-step procedure are slightly larger
than those of purely sequential early stopping.
Since they are still much lower than those of the full Akaike selection or the
cross-validated Lasso, however, the two-step procedure maintains most of the
advantages from early stopping and yet genuinely achieves the performance of
exhaustive selection criteria.



\bibliographystyle{imsart-number} 
\bibliography{references.bib}       

\begin{thebibliography}{26}

\bibitem{BarronEtal2008Greedy}
\begin{barticle}[author]
\bauthor{\bsnm{Barron},~\bfnm{A.~R.}\binits{A.~R.}},
  \bauthor{\bsnm{Cohen},~\bfnm{A.}\binits{A.}},
  \bauthor{\bsnm{Dahmen},~\bfnm{W.}\binits{W.}} \AND
  \bauthor{\bsnm{DeVore},~\bfnm{R.~A.}\binits{R.~A.}}
(\byear{2008}).
\btitle{Approximation and learning by greedy algorithms}.
\bjournal{The Annals of Statistics}
\bvolume{36}
\bpages{64-94}.
\end{barticle}
\endbibitem

\bibitem{Baxter1962FinitePredictor}
\begin{barticle}[author]
\bauthor{\bsnm{Baxter},~\bfnm{G.}\binits{G.}}
(\byear{1962}).
\btitle{An asymptotic result for the finite predictor}.
\bjournal{Mathematica Scandinavica}
\bvolume{10}
\bpages{137-144}.
\end{barticle}
\endbibitem

\bibitem{BlanchardEtal2018a}
\begin{barticle}[author]
\bauthor{\bsnm{Blanchard},~\bfnm{G.}\binits{G.}},
  \bauthor{\bsnm{Hoffmann},~\bfnm{M.}\binits{M.}} \AND
  \bauthor{\bsnm{Reiß},~\bfnm{M.}\binits{M.}}
(\byear{2018}).
\btitle{Early stopping for statistical inverse problems via truncated {SVD}
  estimation}.
\bjournal{Electronic Journal of Statistics}
\bvolume{12}
\bpages{3204-3231}.
\end{barticle}
\endbibitem

\bibitem{BlanchardEtal2018b}
\begin{barticle}[author]
\bauthor{\bsnm{Blanchard},~\bfnm{G.}\binits{G.}},
  \bauthor{\bsnm{Hoffmann},~\bfnm{M.}\binits{M.}} \AND
  \bauthor{\bsnm{Reiß},~\bfnm{M.}\binits{M.}}
(\byear{2018}).
\btitle{Optimal adaptation for early stopping in statistical inverse problems}.
\bjournal{SIAM/ASA Journal of Uncertainty Quantification}
\bvolume{6}
\bpages{1043-1075}.
\end{barticle}
\endbibitem

\bibitem{BlanchardMathe2012}
\begin{barticle}[author]
\bauthor{\bsnm{Blanchard},~\bfnm{G.}\binits{G.}} \AND
  \bauthor{\bsnm{Mathé},~\bfnm{P.}\binits{P.}}
(\byear{2012}).
\btitle{Discrepancy principle for statistical inverse problems with application
  to conjugate gradient iteration}.
\bjournal{Inverse Problems}
\bvolume{28}
\bpages{115011/1--115011/23}.
\end{barticle}
\endbibitem

\bibitem{Buehlmann2006BoostingForHDLinearModels}
\begin{barticle}[author]
\bauthor{\bsnm{Bühlmann},~\bfnm{P.}\binits{P.}}
(\byear{2006}).
\btitle{Boosting for high-dimensional linear models}.
\bjournal{The annals of statistics}
\bvolume{34}
\bpages{559-583}.
\end{barticle}
\endbibitem

\bibitem{BuehlmannVDGeer2011HDData}
\begin{bbook}[author]
\bauthor{\bsnm{Bühlmann},~\bfnm{P.}\binits{P.}} \AND
  \bauthor{\bparticle{van~de} \bsnm{Geer},~\bfnm{S.}\binits{S.}}
(\byear{2011}).
\btitle{Statistics for High-Dimensional Data}.
\bpublisher{Springer}, \baddress{Heidelberg, Dordrecht, London, New York}.
\end{bbook}
\endbibitem

\bibitem{CelisseWahl2020Discrepancy}
\begin{bmisc}[author]
\bauthor{\bsnm{Celisse},~\bfnm{A.}\binits{A.}} \AND
  \bauthor{\bsnm{Wahl},~\bfnm{M.}\binits{M.}}
(\byear{2020}).
\btitle{Analyzing the discrepancy principle for kernelized spectral filter
  learning algorithms}.
\end{bmisc}
\endbibitem

\bibitem{EnglEtal1996InverseProblems}
\begin{bbook}[author]
\bauthor{\bsnm{Engl},~\bfnm{H.}\binits{H.}},
  \bauthor{\bsnm{Hanke},~\bfnm{M.}\binits{M.}} \AND
  \bauthor{\bsnm{Neubauer},~\bfnm{A.}\binits{A.}}
(\byear{1996}).
\btitle{Regularisation of inverse problems}.
\bseries{Mathematics and its applications}
\bvolume{375}.
\bpublisher{Kluwer Academic Publishers}, \baddress{Dordrecht}.
\end{bbook}
\endbibitem

\bibitem{GaoEtal2013LqHulls}
\begin{barticle}[author]
\bauthor{\bsnm{Gao},~\bfnm{F.}\binits{F.}},
  \bauthor{\bsnm{Ing},~\bfnm{C.}\binits{C.}} \AND
  \bauthor{\bsnm{Yang},~\bfnm{Y.}\binits{Y.}}
(\byear{2013}).
\btitle{Metric entropy and sparse linear approximation of {$\ell^q$}-hulls for
  {$ 0 < q \le 1$}}.
\bjournal{Journal of Approximation Theory}
\bvolume{166}
\bpages{42-55}.
\end{barticle}
\endbibitem

\bibitem{GillLevit1995vanTrees}
\begin{barticle}[author]
\bauthor{\bsnm{Gill},~\bfnm{R.~D.}\binits{R.~D.}} \AND
  \bauthor{\bsnm{B.},~\bfnm{Levit}\binits{L.}}
(\byear{1995}).
\btitle{Application of the van Trees inequality: a Bayesian Cramér-Rao bound}.
\bjournal{Bernoulli}
\bvolume{1}
\bpages{59-79}.
\end{barticle}
\endbibitem

\bibitem{Ing2020ModelSelection}
\begin{barticle}[author]
\bauthor{\bsnm{Ing},~\bfnm{C.}\binits{C.}}
(\byear{2020}).
\btitle{Model selection for high-dimensional linear regression with dependent
  observations}.
\bjournal{The Annals of Statistics}
\bvolume{48}
\bpages{1959--1980}.
\end{barticle}
\endbibitem

\bibitem{IngLai2011ConsistentModelSelection}
\begin{barticle}[author]
\bauthor{\bsnm{Ing},~\bfnm{C.}\binits{C.}} \AND
  \bauthor{\bsnm{Lai},~\bfnm{T.~L.}\binits{T.~L.}}
(\byear{2011}).
\btitle{A stepwise regression method and consistent model selection for
  high-dimensional sparse linear models}.
\bjournal{Statistica Sinica}
\bvolume{21}
\bpages{1473--1513}.
\end{barticle}
\endbibitem

\bibitem{Jahn2021Discrepancy}
\begin{barticle}[author]
\bauthor{\bsnm{Jahn},~\bfnm{T.}\binits{T.}}
(\byear{2022}).
\btitle{Optimal convergence of the discrepancy principle for polynomially and
  exponentially ill-posed operators for statistical inverse problems}.
\bjournal{Numerical Functional Analysis and Optimization}
\bvolume{42}
\bpages{145--167}.
\end{barticle}
\endbibitem

\bibitem{MeyerEtal2015Baxter}
\begin{barticle}[author]
\bauthor{\bsnm{Meyer},~\bfnm{M.}\binits{M.}},
  \bauthor{\bsnm{McMurry},~\bfnm{T.}\binits{T.}} \AND
  \bauthor{\bsnm{Politis},~\bfnm{D.}\binits{D.}}
(\byear{2015}).
\btitle{Baxter's inequality for triangular arrays}.
\bjournal{Mathematical methods of statistics}
\bvolume{24}
\bpages{135-146}.
\end{barticle}
\endbibitem

\bibitem{MikaSzkutnik2021DiscrepancyPrinciple}
\begin{barticle}[author]
\bauthor{\bsnm{Mika},~\bfnm{G.}\binits{G.}} \AND
  \bauthor{\bsnm{Szkutnik},~\bfnm{Z.}\binits{Z.}}
(\byear{2021}).
\btitle{Towards adaptivity via a new discrepancy principle for Poisson inverse
  problems}.
\bjournal{Electronic journal of statistics}
\bvolume{15}
\bpages{2029--2059}.
\end{barticle}
\endbibitem

\bibitem{NeedellVershynin2010RegularizedOMP}
\begin{barticle}[author]
\bauthor{\bsnm{Needell},~\bfnm{D.}\binits{D.}} \AND
  \bauthor{\bsnm{Vershynin},~\bfnm{R.}\binits{R.}}
(\byear{2010}).
\btitle{Signal Recovery From Incomplete and Inaccurate Measurements Via
  Regularized Orthogonal Matching Pursuit}.
\bjournal{Selected Topics in Signal Processing}
\bvolume{4}
\bpages{310-316}.
\end{barticle}
\endbibitem

\bibitem{PedregosaEtal2011ScikitLearn}
\begin{barticle}[author]
\bauthor{\bsnm{Pedregosa},~\bfnm{F.}\binits{F.}} \betal{et~al.}
(\byear{2011}).
\btitle{Scikit-learn: Machine Learning in {P}ython}.
\bjournal{Journal of Machine Learning Research}
\bvolume{12}
\bpages{2825--2830}.
\end{barticle}
\endbibitem

\bibitem{SchapireFreund2012Boosting}
\begin{bbook}[author]
\bauthor{\bsnm{Schapire},~\bfnm{R.~E}\binits{R.~E.}} \AND
  \bauthor{\bsnm{Freund},~\bfnm{Y.}\binits{Y.}}
(\byear{2012}).
\btitle{Boosting: Foundations and algorithms}.
\bpublisher{The MIT Press}, \baddress{Cambridge, Massachusetts}.
\end{bbook}
\endbibitem

\bibitem{Stankewitz2020Smoothed}
\begin{barticle}[author]
\bauthor{\bsnm{Stankewitz},~\bfnm{B.}\binits{B.}}
(\byear{2020}).
\btitle{Smoothed residual stopping for statistical inverse problems via
  truncated SVD estimation}.
\bjournal{Electronic Journal of Statistics}
\bvolume{14}
\bpages{3396-3428}.
\end{barticle}
\endbibitem

\bibitem{SunZhang2012ScaledLasso}
\begin{barticle}[author]
\bauthor{\bsnm{Sun},~\bfnm{T.}\binits{T.}} \AND
  \bauthor{\bsnm{Zhang},~\bfnm{C.~H.}\binits{C.~H.}}
(\byear{2012}).
\btitle{Scaled sparse linear regression}.
\bjournal{Biometrika}
\bvolume{99}
\bpages{879--898}.
\end{barticle}
\endbibitem

\bibitem{Temlyakov2000WeakGreedyAlgorithms}
\begin{barticle}[author]
\bauthor{\bsnm{Temlyakov},~\bfnm{V.~N.}\binits{V.~N.}}
(\byear{2000}).
\btitle{Weak greedy algorithms}.
\bjournal{Advances in Computational Mathematics}
\bvolume{12}
\bpages{213--227}.
\end{barticle}
\endbibitem

\bibitem{Tropp2004Greed}
\begin{barticle}[author]
\bauthor{\bsnm{Tropp},~\bfnm{J.~A.}\binits{J.~A.}}
(\byear{2004}).
\btitle{Greed is good: Algorithmic results for sparse approximation}.
\bjournal{Transactions of information theory}
\bvolume{50}
\bpages{2231-2242}.
\end{barticle}
\endbibitem

\bibitem{TroppGilbert2007OrthogonalMatchingPursuit}
\begin{barticle}[author]
\bauthor{\bsnm{Tropp},~\bfnm{J.~A.}\binits{J.~A.}} \AND
  \bauthor{\bsnm{Gilbert},~\bfnm{A.~C.}\binits{A.~C.}}
(\byear{2007}).
\btitle{Signal Recovery from Random Measurements via Orthogonal Matching
  Pursuit}.
\bjournal{Transactions of Information Theory}
\bvolume{53}
\bpages{4655-4666}.
\end{barticle}
\endbibitem

\bibitem{Vershynin2018HDProbability}
\begin{bbook}[author]
\bauthor{\bsnm{Vershynin},~\bfnm{R.}\binits{R.}}
(\byear{2018}).
\btitle{High-dimensional probability}.
\bseries{Cambridge series in statistical and probabilistic mathematics}.
\bpublisher{Cambridge university press}, \baddress{Cambridge}.
\end{bbook}
\endbibitem

\bibitem{Wainwright2019HDStatistics}
\begin{bbook}[author]
\bauthor{\bsnm{Wainwright},~\bfnm{M.~J.}\binits{M.~J.}}
(\byear{2019}).
\btitle{High-dimensional Statistics: A Non-asymptotic Viewpoint}.
\bpublisher{Cambridge University Press}, \baddress{Cambridge}.
\end{bbook}
\endbibitem

\end{thebibliography}

\newpage

\begin{appendix}

\section{Proofs for the main results}
\label{sec_ProofsForTheMainResults}

  \begin{proof}[Proof of Proposition \ref{prp_NoStoppingTooEarly}(No stopping too early)]
  Proposition \ref{prp_BoundForThePopulationBias} guarantees that 
  \( \tilde m_{ s, \gamma, G} \le m^{*}_{ s, \gamma } \) with probability
  converging to one given that \( G \) is large enough.
  We start by analyzing the left-hand side of the condition
  \eqref{eq_3_StoppingTooEarly}.
  By Lemma \ref{lem_BoundsForTheCrossTermAndTheDiscretizationError} (i), we have
  \begin{align}
    \label{eq_A_NoStoppingTooEarly_LowerBoundLHS_1}
            b_{m}^{2} + 2 c_{m} 
    & \ge 
            b_{m}^{2} 
            \Big( 
              1 
            -
              \sqrt{
                \frac{ 16 \overline{ \sigma }^{2} ( m + 1 ) \log p  }
                     { n b_{m}^{2} } 
              } 
            \Big)
    \qquad \text{ for all } m < \tilde m_{ s, \gamma, G}
  \end{align}
  with probability converging to one.

  We estimate \( b_{m}^{2} \) from below.
  By a standard convexity estimate, we can write
  \begin{align}
    \label{eq_A_NoStoppingTooEarly_LowerBoundEmpiricalBias_1}
            2 b_{m}^{2} 
    & = 
            2 \| ( I - \widehat{ \Pi }_{m} ) f^{*} \|_{n}^{2} 
      \ge 
            \| ( I - \Pi_{m} ) f^{*} \|_{n}^{2} 
          - 
            2 \| ( \widehat{ \Pi }_{m} - \Pi_{m} ) f^{*} \|_{n}^{2}. 
  \end{align}
  For the first term in Equation
  \eqref{eq_A_NoStoppingTooEarly_LowerBoundEmpiricalBias_1},
  we distinguish between the two possible sparsity assumptions.
  Under \( \gamma \)-sparsity, Proposition \ref{prp_FastNormChangeForTheBias}
  and Lemma \ref{lem_UniformBoundsInHighProbability} imply that with
  probability converging to one, for any \(
    m < \tilde m_{ s, \gamma, G} 
  \), 
  \begin{align}
          \| ( I - \Pi_{m} ) f^{*} \|_{n}^{2} 
    & \ge 
          \| ( I - \Pi_{m} ) f^{*} \|_{ L^2 }^{2} 
          \Big( 
            1 
            - 
            C \| ( I - \Pi_{m} ) f^{*} \|_{ L^2 }
                                     ^{ \frac{ - 2 }{ 2 \gamma - 1 } } 
            \sqrt{ \frac{ \rho^{4} \log p }{n} }
          \Big)
    \\
    & \ge 
          \| ( I - \Pi_{m} ) f^{*} \|_{ L^2 }^{2} 
          \Big( 
            1 
            - 
            \frac{C}{ G^{ 1 / ( 2 \gamma - 1 ) } }
            \Big( 
              \frac{ ( \overline{ \sigma }^{2} + \rho^{4} ) \log p }{n} 
            \Big)^{ \frac{1}{2} - \frac{1}{ 2 \gamma } }
          \Big).
    \notag
  \end{align}
  By increasing \( G > 0 \), the term in the outer parentheses becomes 
  larger than \( 1 / 2 \), which yields
  \begin{align}
    \label{eq_A_NoStoppingTooEarly_LowerBoundEmpiricalBias_2}
            \| ( I - \Pi_{m} ) f^{*} \|_{n}^{2} 
    & \ge 
            \frac{G}{2} 
            \Big( 
              \frac{ ( \overline{ \sigma }^{2} + \rho^{4} ) \log p }{n} 
            \Big)^{ 1 - \frac{1}{ 2 \gamma } }.
  \end{align}
  Under \( s \)-sparsity, analogously with probability converging to one, for
  any \(
    m < \tilde m_{ s, \gamma, G} 
  \), 
  \begin{align}
    \label{eq_A_NoStoppingTooEarly_LowerBoundEmpiricalBias_3}
          \| ( I - \Pi_{m} ) f^{*} \|_{n}^{2} 
    & \ge 
          \| ( I - \Pi_{m} ) f^{*} \|_{ L^2 }^{2} 
          \Big( 
            1 
            - 
            C ( s + m ) 
            \sqrt{ \frac{ \rho^{4} \log p }{n} }
          \Big)
    \\
    & \ge 
          \frac{1}{2} \| ( I - \Pi_{m} ) f^{*} \|_{ L^2 }^{2} 
      \ge 
          \frac{ c_{ \lambda } }{2} \underline{ \beta }^{2},
    \notag
  \end{align}
  where we have used that \( s = o( ( n / \log p )^{ 1 / 3 } ) \) and 
  \( m < \tilde m_{ s, \gamma, G} \le m^{*}_{ s, \gamma } \). 

  For the second term in Equation
  \eqref{eq_A_NoStoppingTooEarly_LowerBoundEmpiricalBias_1}, we can write
  \begin{align}
        ( \widehat{ \Pi }_{m} - \Pi_{m} ) f^{*} 
    & = 
        \widehat{ \Pi }_{m} ( I - \Pi_{m} ) f^{*} 
    = 
        ( X^{ ( \widehat{J}_{m} ) } )^{ \top } 
        \widehat{ \Gamma }_{ \widehat{J}_{m} }^{-1} 
        \langle
          ( I - \Pi_{m} ) f^{*}, g_{ \widehat{J}_{m} } 
        \rangle_{n}, 
  \end{align}
  i.e., the coefficients \( 
    \beta ( ( \widehat{ \Pi }_{m} - \Pi_{m} ) f^{*} ) 
  \) are given by \( 
    \widehat{ \Gamma }_{ \widehat{J}_{m} }^{-1} 
    \langle ( I - \Pi_{m} ) f^{*}, g_{ \widehat{J}_{m} } \rangle_{n} 
  \). 
  From Corollary \ref{cor_ReappearingTerms} (i), it then follows that
  \begin{align}
            \| \beta ( ( \widehat{ \Pi }_{m} - \Pi_{m} ) f^{*} ) \|_{2} 
    & \le 
            \| \widehat{ \Gamma }_{ \widehat{J}_{m} }^{-1} \|_{ \text{op} } 
            \sqrt{m} 
            \| 
              \langle ( I - \Pi_{m} ) f^{*}, g_{ \widehat{J}_{m} } \rangle_{n}  
            \|_{1} 
    \\
    & \le 
            C 
            \| \widehat{ \Gamma }_{ \widehat{J}_{m} }^{-1} \|_{ \text{op} } 
            \| \beta^{*} \|_{1}
            \sqrt{m} 
            \sup_{ j, k \le p } 
            | 
              \langle g_{j}, g_{k} \rangle_{n} 
            - 
              \langle g_{j}, g_{k} \rangle_{ L^{2} } 
            |.
    \notag
  \end{align}
  In combination with Lemma \ref{lem_UniformBoundsInHighProbability} we obtain
  that with probability converging to one,
  \begin{align}
    \label{eq_A_NoStoppingTooEarly_LowerBoundEmpiricalBias_4}
        \| ( \widehat{ \Pi }_{m} - \Pi_{m} ) f^{*} \|_{n}^{2} 
    & = 
        \tilde \beta^{ \top } \widehat{ \Gamma }_{ \widehat{J}_{m} } \tilde \beta 
    \le 
        C
        \| \beta^{*} \|_{1}^{2} 
        \frac{ \rho^{4} m \log p }{n} 
    \qquad \text{ for all } m \le m^{*}_{ s, \gamma }.
  \end{align}
  For \( s \)-sparse \( \beta^{*} \), this term converges to zero for 
  \( n \to \infty \). 
  For \( \gamma \)-sparse \( \beta^{*} \),  it is smaller than the rate 
  \( \mathcal{R}( s, \gamma ) \) up to a constant independent of \( G \). 

  By increasing \( G > 0 \) again, Equations
  \eqref{eq_A_NoStoppingTooEarly_LowerBoundEmpiricalBias_1},
  \eqref{eq_A_NoStoppingTooEarly_LowerBoundEmpiricalBias_2},
  \eqref{eq_A_NoStoppingTooEarly_LowerBoundEmpiricalBias_3},
  and
  \eqref{eq_A_NoStoppingTooEarly_LowerBoundEmpiricalBias_4}
  yield
  \begin{align}
    \label{eq_A_NoStoppingTooEarly_LowerBoundEmpiricalBias_5}
            b_{m}^{2} 
    & \ge 
            \begin{dcases}
              \frac{ c_{ \lambda } }{4} \underline{ \beta }^{2},
              & \beta^{*} \ s \text{-sparse}, \\
              \frac{G}{4} 
              \Big( 
                \frac{ ( \overline{ \sigma }^{2} + \rho^{4} ) \log p }{n}
              \Big)^{ 1 - 1 / ( 2 \gamma ) },
              & \beta^{*} \ \gamma \text{-sparse}
            \end{dcases}
            \\
    & \qquad \qquad \qquad \qquad \qquad \qquad
    \text{ for all } m < \tilde m_{ s, \gamma, G }
    \notag 
  \end{align}
  with probability converging to one.
  Plugging this into Equation \eqref{eq_A_NoStoppingTooEarly_LowerBoundLHS_1}, we
  obtain that the left-hand side in condition
  \eqref{eq_3_StoppingTooEarly} satisfies
  \begin{align}
    \label{eq_A_NoStoppingTooEarly_LowerBoundLHS_2}
            b_{m}^{2} + 2 c_{m} 
    & \ge 
            \begin{dcases}
              \frac{ c_{ \lambda } }{8} \underline{ \beta }^{2}, 
              & \beta^{*} \ s \text{-sparse}, \\
              \frac{G}{8} 
              \Big( 
                \frac{ ( \overline{ \sigma }^{2} + \rho^{4} ) \log p }{n}
              \Big)^{ 1 - 1 / ( 2 \gamma ) },
              & \beta^{*} \ \gamma \text{-sparse}
            \end{dcases}
            \\
    & \qquad \qquad \qquad \qquad \qquad \qquad
    \text{ for all } m < \tilde m_{ s, \gamma, G } 
    \notag
  \end{align}
  on an event with probability converging to one for \( G > 0 \) sufficiently
  large.

  At the same time, however, by Lemma \ref{lem_UniformBoundsInHighProbability}
  and our assumption on \( \widehat{ \sigma }^{2} \), the right-hand side in
  condition \eqref{eq_3_StoppingTooEarly} satisfies
  \begin{align}
    \label{eq_A_NoStoppingTooEarly_BoundRHS}
            \widehat{ \sigma }^{2} - \| \varepsilon \|_{n}^{2} 
          +
            C_{ \tau } \frac{ m \log p }{n} 
          + 
            s_{m} 
    & \le 
            C \mathcal{R}( s, \gamma ) 
    \qquad \text{ for all } m < \tilde m_{ s, \gamma, G }
  \end{align}
  with probability converging to one for a constant \( C \) 
  independent of \( G \).
  From Equation \eqref{eq_A_NoStoppingTooEarly_LowerBoundLHS_2} and
  \eqref{eq_A_NoStoppingTooEarly_BoundRHS}, it finally follows that for large 
  \( G > 0 \), condition \eqref{eq_3_StoppingTooEarly} can only be satisfied on
  a event with probability converging to zero.
  This finishes the proof.
\end{proof}

\begin{proof}[Proof of Proposition \ref{prp_NoStoppingTooLate}(No stopping too late)]
  Lemma \ref{lem_BoundsForTheCrossTermAndTheDiscretizationError} (i) yields that
  \begin{align}
            b_{m}^{2} + 2 c_{m} 
    \le 
            b_{m}^{2} 
          + 
            b_{m} 
            \sqrt{ \frac{ 16 \overline{ \sigma }^{2} ( m + 1 ) \log p }{n} } 
    \qquad \text{ for all } m \ge 0 
  \end{align}
  with probability converging to one.
  For \( s \)-sparse \( \beta^{*} \), with probability converging to one, this
  is zero for \( m \ge m^{*}_{ s, \gamma } \) by Proposition
  \ref{prp_BoundForThePopulationBias}. 
  For \( \gamma \)-sparse \( \beta^{*} \), Lemma
  \ref{lem_BoundForTheEmpiricalBias} provides the estimate
  \begin{align}
    & \ \ \ \
    b_{m}^{2} + 2 c_{m} 
    \\
    & \le 
    C \Big( 
      \Big[ 
        \frac{ ( \overline{ \sigma }^{2} + \rho^{4} ) \log p }{n}
      \Big]^{ 1 - \frac{1}{ 2 \gamma } } 
      + 
      \Big[ 
        \frac{ ( \overline{ \sigma }^{2} + \rho^{4} ) \log p  }{n}
      \Big]^{ \frac{1}{2} - \frac{1}{ 4 \gamma } } 
      \sqrt{ \frac{ 16 \overline{ \sigma }^{2} ( m + 1 ) \log p }{n} } 
    \Big) 
    \notag
    \\
    & \qquad \qquad \qquad \qquad \qquad
      \qquad \qquad \qquad \qquad \qquad
      \qquad \qquad 
    \text{ for all } m \ge m^{*}_{ s, \gamma } 
    \notag
  \end{align}
  with probability converging to one.
  For \( m = H m^{*}_{ s, \gamma } \) with \( H > 0 \) large enough, this yields
  \begin{align}
    b_{m}^{2} + 2 c_{m} 
    & \le 
    C \sqrt{H} 
    \Big( 
      \frac{ ( \overline{ \sigma }^{2} + \rho^{4} ) \log p }{n}
    \Big)^{ 1 - \frac{1}{ 2 \gamma } }.
  \end{align}
  At the same time, under the assumption on \( \widehat{ \sigma }^{2} \), the
  right-hand side of condition \eqref{eq_3_ConditionForLateStopping} satisfies
  \begin{align}
            \widehat{ \sigma }^{2} - \| \varepsilon \|_{n}^{2}
          + 
            C_{ \tau } \frac{ m \log p }{n} 
          + 
            s_{m} 
    & \ge 
            c H \mathcal{R}( s, \gamma ).
  \end{align}
  For \( H > 0 \) sufficiently large, condition
  \eqref{eq_3_ConditionForLateStopping} can therefore only be satisfied on an
  event with probability converging to zero.
\end{proof}

\begin{proof}[Proof of Proposition \ref{prp_FastNoiseEstimation} (Fast noise estimation)]
  Theorem 2 in Sun and Zhang \cite{SunZhang2012ScaledLasso} states that on the
  event 
  \begin{align}
      \Omega_{ \text{Lasso} }
    = 
      \Big\{ 
            \sup_{ j \le p } |
            \langle g_{j}, \varepsilon \rangle_{n} |
        \le 
            ( 1 - \alpha^{*} ) 
            \frac{ \xi - 1 }{ \xi + 1 } 
            \| \varepsilon \|_{n} \lambda_{0}
      \Big\},
  \end{align}
  the Scaled Lasso noise estimator \( \widehat{ \sigma } \) satisfies
  \begin{align}
            \max \Big( 
              1 - \frac{ \widehat{ \sigma } }{ \| \varepsilon \|_{n} }, 
              1 - \frac{ \| \varepsilon \|_{n} }{ \widehat{ \sigma } }
            \Big) 
    & = 
            1 - 
            \frac{ \min( \widehat{ \sigma }, \| \varepsilon \|_{n} ) }
                 { \max( \widehat{ \sigma }, \| \varepsilon \|_{n} ) } 
    \le 
            \alpha^{*} 
    = 
            \frac{ 
              \lambda_{0}
              \mu( \| \varepsilon \|_{n} \lambda_{0}, \xi )
            }{ \| \varepsilon \|_{n} }.
  \end{align}
  This implies that on \( \Omega_{ \text{Lasso} } \),
  \begin{align}
    \label{eq_FastNoiseEstimation_AbsoluteErrorBound}
            | \widehat{ \sigma }^{2} - \| \varepsilon \|_{n}^{2} | 
    & = 
            ( \widehat{ \sigma } + \| \varepsilon \|_{n} ) 
            ( 
              \max( \widehat{ \sigma }, \| \varepsilon \|_{n} )
            - \min( \widehat{ \sigma }, \| \varepsilon \|_{n} )
            ) 
    \\
    & \le 
            \Big( 
              1 + \frac{ \max( \widehat{ \sigma }, \| \varepsilon \|_{n} ) }
                       { \min( \widehat{ \sigma }, \| \varepsilon \|_{n} ) }
            \Big) 
            \max( \widehat{ \sigma }, \| \varepsilon \|_{n} )
            \lambda_{0}
            \mu( \| \varepsilon \|_{n} \lambda_{0}, \xi )
    \notag
    \\
    & \le 
            \Big( 1 + \frac{1}{ 1 - \alpha^{*} } \Big) 
            \max( \widehat{ \sigma }, \| \varepsilon \|_{n} )
            \lambda_{0}
            \mu( \| \varepsilon \|_{n} \lambda_{0}, \xi )
    \notag
    \\
    & \le 
            \Big( 
              \frac{1}{   1 - \alpha^{*}       }
            + \frac{1}{ ( 1 - \alpha^{*} )^{2} }
            \Big) 
            \| \varepsilon \|_{n} \lambda_{0}
            \mu( \| \varepsilon \|_{n} \lambda_{0}, \xi )
    \le 
            \frac{ 2 \| \varepsilon \|_{n}^{2} \alpha^{*} }
                 { ( 1 - \alpha^{*} )^{2} },
    \notag
  \end{align}
  where, without loss of generality, we have used that \( \alpha^{*} < 1 \),
  since for \( \alpha^{*} > 1 \), the event \( \Omega_{ \text{Lasso} } \) is
  empty. 
  It remains to be shown that Equation
  \eqref{eq_FastNoiseEstimation_AbsoluteErrorBound} provides a meaningful 
  bound on an event with probability converging to one.

  \textbf{Step 1: Bounding \( \mu( \lambda, \xi ) \).} 
  Set \( \nu = 1 / 2 \).
  For \( s \)-sparse \( \beta^{*} \), the choice \( 
    J = S = \{ j \le p: | \beta^{*}_{j} | \ne 0 \} 
  \) yields the immediate estimate
  \begin{align}
    \label{eq_A_SSparseBoundForMu}
            \mu( \| \varepsilon \|_{n} \lambda_{0}, \xi ) 
    & \le 
            ( \xi + 1 ) 
            \kappa^{ - 2 }( 2 \xi + 1, S ) 
            \| \varepsilon \|_{n} \lambda_{0} s.
  \end{align}

  For \( \gamma \)-sparse \( \beta^{*} \), the choice \( 
    J = J_{ \lambda } 
      = \{ j: | \beta^{*}_{j} | \ge \lambda \}
  \) yields the estimate
  \begin{align}
    \label{eq_FastNoiseEstimation_SimpleBoundForMu}
            \mu( \lambda, \xi ) 
    & \le 
            ( \xi + 1 ) 
            \max( 2, \kappa^{ - 2 }( 2 \xi + 1, J_{ \lambda } ) ) 
            \sum_{ j = 1 }^{p} 
            \min( \lambda, | \beta^{*}_{j} | ). 
  \end{align}
  Without loss of generality, we can assume that the 
  \( 
    ( \beta^{*}_{j} )_{ j \le p }
  \) 
  are decreasingly ordered. 
  We derive that for any \( m \in \mathbb{N} \), it holds that
  \( 
        \sum_{ j > m } | \beta^{*}_{j} |^{2} 
    \le 
        C m^{ 1 - 2 \gamma }.
  \) 
  By the \( \gamma \)-sparsity of \( \beta^{*} \), we have
  \begin{align}
                                \sum_{ j > m } | \beta^{*}_{j} |^{2} 
    & \le 
            | \beta^{*}_{m} |   \sum_{ j > m } | \beta^{*}_{j} |
    \le 
            | \beta^{*}_{m} | C_{ \gamma } 
                              \Big( 
                                \sum_{ j > m } | \beta^{*}_{j} |^{2}
                              \Big)^{ \frac{ \gamma - 1 }{ 2 \gamma - 1 } }.
  \end{align}
  Rearranging yields 
  \( 
    | \beta^{*}_{m} |^{2} \ge C_{ \gamma }^{ - 2 } 
                          (
                            \sum_{ j > m } | \beta^{*}_{j} |^{2}
                          )^{ 2 \gamma / ( 2 \gamma - 1 ) }
  \),
  which implies 
  \begin{align}
            \sum_{ j > m + 1 } | \beta^{*}_{j} |^{2} 
    & = 
            \sum_{ j > m     } | \beta^{*}_{j} |^{2} 
          - 
            | \beta^{*}_{m} |^{2} 
    \le 
            \sum_{ j > m } | \beta^{*}_{j} |^{2} 
            \Big( 
              1 
            - 
              C_{ \gamma }^{ - 2 } 
              \Big( 
                \sum_{ j > m } | \beta^{*}_{j} |^{2}
              \Big)^{ \frac{1}{ 2 \gamma - 1 } } 
            \Big).
  \end{align}
  As in the proof of Proposition \ref{prp_BoundForThePopulationBias}, the
  intermediate  claim now follows from Lemma 1 in Gao et al.
  \cite{GaoEtal2013LqHulls} by setting \( 
    a_{m}: = \sum_{ j > m } | \beta^{*}_{j} |^{2} 
  \). 
  From the above, we obtain that for any \( m \in \mathbb{N} \), 
  \begin{align}
    \label{eq_FastNoiseEstimation_BoundForTheSum}
            \sum_{ j = 1 }^{p} 
            \min( \lambda, | \beta^{*}_{j} | ) 
    & \le 
            m \lambda 
          + 
            \sum_{ j > m } | \beta^{*}_{j} | 
    \le 
            m \lambda 
          + 
            C_{ \gamma }
            \Big( 
              \sum_{ j > m } | \beta^{*}_{j} |^{2}
            \Big)^{ \frac{ \gamma - 1 }{ 2 \gamma - 1 } } 
    \\
    & \le 
            m \lambda + C m^{ 1 - \gamma }.
    \notag
  \end{align}
  For 
  \(
    \lambda = \| \varepsilon \|_{n} \lambda_{0} 
  \)
  and a choice \( m \) of order
  \( 
    ( n / ( \| \varepsilon \|_{n}^{2} \log p )  )^{ 1 / ( 2 \gamma ) }
  \), 
  Equations \eqref{eq_FastNoiseEstimation_SimpleBoundForMu} and
  \eqref{eq_FastNoiseEstimation_BoundForTheSum} translate to the estimate
  \begin{align}
    \label{eq_FastNoiseEstimate_FinalBoundForMu}
            \mu( \lambda, \xi ) 
    & \le 
            C ( \xi + 1 ) 
            \max( 2, \kappa^{ - 2 }( 2 \xi + 1, J_{ \lambda } ) ) 
            \Big( 
              \frac{ \| \varepsilon \|_{n}^{2} \log p }{n}
            \Big)^{ \frac{1}{2} - \frac{1}{ 2 \gamma } }.
  \end{align}

  \textbf{Step 2: Positive compatibility factor.} For the bounds in Equations
  \eqref{eq_A_SSparseBoundForMu} and
  \eqref{eq_FastNoiseEstimate_FinalBoundForMu} to be meaningful, we have to
  guarantee that the compatibility factor is strictly positive.

  For rows \(
    ( X_{i} )_{ i \le n }  \sim N( 0, \Gamma ) 
  \) i.i.d.  of the design matrix \( \mathbf{X} \), Theorem 7.16 in Wainwright
  \cite{Wainwright2019HDStatistics} states that 
  \begin{align}
    \label{eq_FastNoiseEstimation_CompatibilityFactorWainwright}
            \| \mathbf{X} \beta \|_{n}^{2} 
    & \ge 
            \frac{1}{8} 
            \| \sqrt{ \Gamma } \beta \|_{2}^{2} 
          - 
            50 \max_{ j \le p } \Gamma_{ j j } 
            \frac{ \log p }{n} 
            \| \beta \|_{1}^{2} 
    \qquad \text{ for all } \beta \in \mathbb{R}^{p}
  \end{align}
  on an event \( \Omega_{ \text{Comp} } \) with probability at least 
  \( 
    1 - e^{ - n / 32 } / ( 1 - e^{ - n / 32 } )
  \). 
  Since we assume unit variance design, the bound in Equation
  \eqref{eq_FastNoiseEstimation_CompatibilityFactorWainwright} implies 
  \begin{align}
            \| \mathbf{X} \beta \|_{n}^{2} 
    & \ge 
            \frac{\lambda_{ \min }( \Gamma ) }{8} \| \beta \|_{2}^{2} 
          - 
            \frac{ 50 \log p }{n} \| \beta \|_{1}^{2} 
      \ge 
            \frac{ c_{ \lambda } }{ 16 } \| \beta \|_{2}^{2} 
  \end{align}
  for all \( \beta \in \mathbb{R}^{p} \) such that
  \begin{align}
    \label{eq_FastNoiseEstimation_REPrecondition}
        \frac{ 50 \log p }{n} \| \beta \|_{1}^{2} 
    \le 
        \frac{ c_{ \lambda } }{ 16 } \| \beta \|_{2}^{2}. 
  \end{align}
  If 
  \( 
    \| \beta_{ J^{c} } \|_{1} \le \xi \| \beta_{J} \|_{1} 
  \) 
  for some \( \xi > 1 \) and \( J \subset \{ 1, \dots, p \} \), then
  \begin{align}
            \| \beta \|_{1}^{2} 
    & = 
            ( \| \beta_{J} \|_{1} + \| \beta_{ J^{c} }  \|_{1} )^{2} 
    \le 
            ( 1 + \xi )^{2} \| \beta_{J} \|_{1}^{2} 
    \le 
            ( 1 + \xi )^{2} | J | \| \beta \|_{2}^{2}. 
  \end{align}
  Plugging this estimate into the left-hand side of condition
  \eqref{eq_FastNoiseEstimation_REPrecondition} yields that on 
  \( \Omega_{ \text{Comp} } \), \( \mathbf{X} \) satisfies the \emph{restricted
  eigenvalue condition}
  \begin{align}
    \label{eq_FastNoiseEstimation_RECondition}
            \| \mathbf{X} \beta \|_{n}^{2} 
    & \ge 
            \frac{ c_{ \lambda } }{ 16 } 
            \| \beta \|_{2}^{2}, 
    \qquad \text{ for all } \beta \in \mathbb{R}^{p}: 
                            \| \beta_{ J^{c} } \|_{1} \le \xi \| \beta_{J} \|_{1}
  \end{align}
  and all sets \( J \subset \{ 1, \dots, p \} \) with 
  \(
    | J | \le c_{ \lambda } / 800 ( 1 + \xi )^{ - 2 } n / \log p
  \). 
  However, 
  due to the estimate
  \begin{align}
            \| \beta_{J} \|_{1}^{2}
    & \le 
            | J | \| \beta \|_{2}^{2} 
    \le 
            \frac{ 16 | J | }{ c_{ \lambda } } 
            \| \mathbf{X} \beta \|_{n}^{2} 
    \qquad \text{ for all } \beta \in \mathbb{R}^{p}: 
                            \| \beta_{ J^{c} } \|_{1} \le \xi \| \beta_{J} \|_{1},
  \end{align}
  this implies that the compatibility factor 
  \( \kappa^{2}( \xi, J ) \) is larger than \( c_{ \lambda } / 16 \). 

  Under \( s \)-sparsity, the set \(
    S = \{ j \le p: | \beta^{*}_{j} | \ne 0 \}
  \) immediately satisfies the assumption on \( J \) above for \( n \) large
  enough. 
  Under \( \gamma \)-sparsity, let \( \Omega_{ \text{Conv} } \) be the event of
  probability one on which \( \| \varepsilon \|_{n}^{2} \) converges to the
  (unconditional) variance \(
    \text{Var}( \varepsilon_{1} ) = \underline{ \sigma }^{2} > 0 
  \). 
  Since \( \| \beta^{*} \|_{1} \le C \) with some constant \( C > 0 \), for 
  \( \lambda = \| \varepsilon \|_{n} \lambda_{0} \), 
  \begin{align}
            | J_{ \lambda } | 
    & \le 
            \frac{C}{ C_{ \lambda_{0} } } 
            \sqrt{ \frac{n}{ \| \varepsilon \|_{n}^{2} \log p } }
      \le 
            \frac{ c_{ \lambda } }{ 800 ( 1 + \xi )^{2} } 
            \frac{n}{ \log p } 
  \end{align}
  on \( \Omega_{ \text{Conv} } \) for \( n \) sufficiently large.
  We conclude that on 
  \(
    \Omega_{ \text{Conv} } \cap \Omega_{ \text{Comp} } 
  \),
  for any \( \xi > 1 \), the compatibility factor
  \( \kappa^{2}( 2 \xi + 1, J_{ \lambda } )  \) is larger than 
  \( c_{ \lambda } / 16 \) for \( n \) large enough.

  \textbf{Step 3: Bound on the combined event.} 
  Finally, for 
  \(
    \lambda_{0} = C_{ \lambda_{0} } ( \xi + 1 ) / ( \xi - 1 )  
                  \sqrt{ \log(p) / n }, 
  \) 
  we have from Step 2 and Lemma \ref{lem_UniformBoundsInHighProbability} (ii)
  that
  \begin{align}
            \mathbb{P}( \Omega_{ \text{Lasso} }^{c} ) 
    & = 
            \mathbb{P} \Big( 
              \Big\{ 
                \sup_{ j \le p } | \langle g_{j}, \varepsilon \rangle_{n} |
              > 
                ( 1 - \alpha^{*} ) 
                \frac{ \xi - 1 }{ \xi + 1 } 
                \| \varepsilon \|_{n} \lambda_{0}
              \Big\}
              \cap \Omega_{ \text{Conv} } 
              \cap \Omega_{ \text{Comp} } 
            \Big) 
          + 
            o(1) 
    \\
    & \le 
    \notag
            \mathbb{P} \Big( 
              \Big\{ 
                \sup_{ j \le p } | \langle g_{j}, \varepsilon \rangle_{n} |
              > 
                \frac{ \xi - 1 }{ \xi + 1 } 
                \frac{ \underline{ \sigma } }{2} \lambda_{0} 
              \Big\}
              \cap \Omega_{ \text{Conv} } 
              \cap \Omega_{ \text{comp} } 
            \Big) 
          + 
            o(1) 
    \\
    & \le 
            \mathbb{P} \Big( 
              \Big\{ 
                \sup_{ j \le p } | \langle g_{j}, \varepsilon \rangle_{n} |
              > 
                C_{ \varepsilon } 
                \sqrt{ \frac{ \overline{ \sigma }^{2} \log p }{n} } 
              \Big\}
            \Big) 
          + 
            o(1) 
    \xrightarrow[ ]{ n \to \infty } 0. 
    \notag
  \end{align}
  We conclude that
  \( 
         \Omega_{ \text{Lasso} } 
    \cap \Omega_{ \text{Conv} } 
    \cap \Omega_{ \text{Comp} } 
  \) 
  is an event with probability converging to one on which 
  by Equations \eqref{eq_FastNoiseEstimation_AbsoluteErrorBound},
  \eqref{eq_A_SSparseBoundForMu},
  \eqref{eq_FastNoiseEstimate_FinalBoundForMu} and Step 2,
  \begin{align}
            | \widehat{ \sigma }^{2} - \| \varepsilon \|_{n}^{2} | 
    & \le 
            \frac{ 2 \| \varepsilon \|_{n}^{2} \alpha^{*} }
                 { ( 1 - \alpha^{*} )^{2} }
      \le 
            C \| \varepsilon \|_{n}\lambda_{0}
              \mu( \| \varepsilon \|_{n} \lambda_{0}, \xi ) 
    \\
    & \le   
            C
            \begin{dcases}
              \Big( 
                \frac{ \overline{ \sigma }^{2} \log p }{n}
              \Big)^{ 1 - 1 / ( 2 \gamma ) },
              & \beta^{*} \ s \text{-sparse}, \\
              \frac{ \overline{ \sigma }^{2} s \log p }{n},
              & \beta^{*} \ \gamma \text{-sparse}
            \end{dcases}
  \end{align}
  for \( n \) sufficiently large. 
  This finishes the proof.
\end{proof}

\begin{proof}[Proof of Theorem \ref{thm_TwoStepProcedure} (Two-step procedure)]

  The proof follows along the same arguments that we have applied in the
  derivation of Proposition \ref{prp_NoStoppingTooEarly} and Proposition
  \ref{prp_NoStoppingTooLate}.

  From Proposition \ref{prp_NoStoppingTooEarly}, we already know that for some
  \( G > 0 \), the sequential stopping time satisfies \(
    \tau \ge \tilde m_{ s, \gamma, G} 
  \).
  For \( G' > G \) sufficiently large, we now show that \(
    \tau_{ \text{two-step} } \ge \tilde m_{ s, \gamma, G' }
  \) with probability converging to one.
  Assuming that \( \tau_{ \text{two-step} } < \tilde m_{ s, \gamma, G' } \), we
  obtain that
  \begin{align}
    \exists m < \tilde m_{ s, \gamma, G' }: 
        r_{m}^{2} + \frac{ C_{ \text{AIC} } m \log p }{n} 
    \le 
        r_{ \tau }^{2} + \frac{ C_{ \text{AIC} } \tau \log p }{n},
  \end{align}
  which is equivalent to 
  \begin{align}
    \label{eq_A_AICConditionTooEarly}
    \exists m < \tilde m_{ s, \gamma, G' }: 
            b_{m}^{2} + 2 c_{m} - s_{m} 
          + 
            \frac{ C_{ \text{AIC} } m \log p }{n} 
    \le 
            b_{ \tau }^{2} + 2 c_{ \tau } - s_{ \tau } 
          + 
            \frac{ C_{ \text{AIC} } \tau \log p }{n}.
  \end{align}
  Combining the reasoning from the proof of Proposition
  \ref{prp_NoStoppingTooEarly} with the bound from Lemma
  \ref{lem_BoundForTheEmpiricalStochasticError}, the left-hand side of
  condition \eqref{eq_A_AICConditionTooEarly} is larger than
  \begin{align}
            \begin{dcases}
              \frac{ c_{ \lambda } }{ 16 } \underline{ \beta }^{2}, 
              & \beta^{*} \ s \text{-sparse}, \\
              \frac{ G' }{16} 
              \Big( 
                \frac{ ( \overline{ \sigma }^{2} + \rho^{4} ) \log p }{n}
              \Big)^{ 1 - 1 / ( 2 \gamma ) },
              & \beta^{*} \ \gamma \text{-sparse}
            \end{dcases}
  \end{align}
  with probability converging to one and \( G' \) sufficiently large.

  For the right-hand side of condition \eqref{eq_A_AICConditionTooEarly}, we
  can assume that \( \tau < m^{*}_{ s, \gamma } \) from Equation
  \eqref{eq_1_BalancedBoundOracleIndex}. 
  Otherwise, we may replace \( \tau \) with \( m^{*}_{ s, \gamma } \).
  Note that in the setting of Remark \ref{rem_TwoStepProcedure}, we can replace
  \( \tau \) with \( m^{*}_{ s, \gamma } \) from the start, which yields the 
  result stated there.

  Using that \(
    \tilde m_{ s, \gamma, G } 
    \le
    \tau
    <
    \tilde m_{ s, \gamma, G' } 
    \le 
    m_{ s, \gamma }^{*} 
  \) with probability converging to one, together with Lemmas
  \ref{lem_BoundsForTheCrossTermAndTheDiscretizationError}  and
  \ref{lem_BoundForTheEmpiricalStochasticError}, the right-hand side converges
  to zero under \( s \)-sparsity with probability converging to one and is
  smaller than \( C \mathcal{R}( s, \gamma ) \) with probability converging to
  one and \( C \) independent of \( G' \) under \( \gamma \)-sparsity.
  Therefore, \( \tau_{ \text{two-step} } < \tilde m_{ s, \gamma, G' } \) can
  only be true on an event with probability converging to zero.

  Similar to Proposition \ref{prp_NoStoppingTooLate}, we can also show that \(
    \tau_{ \text{two-step} } \le H m^{*}_{ s, \gamma } 
  \) with probability converging to one for \( H > 0 \) large enough.
  If \( \tau_{ \text{two-step} } > H m^{*}_{ s, \gamma } \), analogously to
  condition \eqref{eq_3_ConditionForLateStopping}, we have
  \begin{align}
    \label{eq_A_AICConditionTooLate}
    & 
    \exists m > H m^{*}_{ s, \gamma }:
    \\
    & 
          b_{m}^{2} + 2 c_{m} - s_{m} 
        + 
          \frac{ C_{ \text{AIC} } m \log p }{n} 
    \le 
          b_{ m^{*}_{ s, \gamma } }^{2} + 2 c_{ m^{*}_{ s, \gamma } } 
        -
          s_{ m^{*}_{ s, \gamma } } 
        + 
          \frac{ C_{ \text{AIC} } m^{*}_{ s, \gamma } \log p }{n}.
    \notag
  \end{align}
  Using the bounds from Lemmas \ref{lem_BoundForTheEmpiricalBias},
  \ref{lem_BoundsForTheCrossTermAndTheDiscretizationError} and
  \ref{lem_BoundForTheEmpiricalStochasticError}, on an event with probability
  converging to one, the left-hand side of condition
  \eqref{eq_A_AICConditionTooLate} is larger than \(
    H / 2 \tilde{ \mathcal{R} }( s, \gamma ) 
  \) for \( H \) large enough with 
  \begin{align}
        \tilde{ \mathcal{R} }( s, \gamma ):
    = 
        \begin{dcases}
          \frac{ ( \overline{ \sigma }^{2} + \rho^{4} ) s \log p }{n},
          & \beta^{*} \ s \text{-sparse}, \\
          \Big(
            \frac{ ( \overline{ \sigma }^{2} + \rho^{4} ) \log p }{n}
          \Big)^{ 1 - \frac{1}{ 2 \gamma } },
          & \beta^{*} \ \gamma \text{-sparse}, \\
        \end{dcases}
  \end{align}
  whereas the right-hand side is smaller than \( 
    C \tilde{ \mathcal{R} }( s, \gamma )
  \) with \( C \) independent of \( H \).
  Therefore, \(
    \tau_{ \text{two-step} } > H m^{*}_{ s, \gamma } 
  \) can only be satisfied on an event with probability converging to zero.
  This finishes the proof.
\end{proof}


\section{Proofs for auxiliary results}
\label{sec_ProofsForAuxiliaryResults}

\begin{proof}[Proof of Lemma \ref{lem_BoundsForTheCrossTermAndTheDiscretizationError} (Bounds for the cross term)]
  For (i), without loss of generality, \( b_{m}^{2} > 0 \) for all 
  \( m \ge 0 \).
  We proceed via a supremum-out argument:
  We have
  \begin{align}
          | c_{m} | 
    & =   
          | \langle ( I - \widehat{ \Pi }_{m} ) f^{*}, \varepsilon \rangle_{n} | 
    \le 
          b_{m} \sup_{ h \in \mathcal{H}_{m} } 
                | \langle h, \varepsilon \rangle_{n} | 
  \end{align}
  with 
  \( 
      \mathcal{H}_{m}: 
    =
      \{ 
        \Pi f^{*} / ( \| \Pi f^{*} \|_{n} ): 
        \Pi \text{ is a projection orthogonal to } m \text{ of the } 
        g_{j}, j \le p
      \}
  \).
  Since $ | \mathcal{H}_{m} | \le p^{m} $, we obtain 
  \begin{align}
    & \ \ \ \
            \mathbb{P} \Big\{ 
              \sup_{ m \ge 0 } 
              \frac{ | c_{m} | }{ \sqrt{ ( m + 1 ) b_{m}^{2} } }
              \ge 
              \sqrt{ \frac{ 4 \overline{ \sigma }^{2} \log p }{n} } 
            \Big\} 
    \le 
            \sum_{ m = 0 }^{ \infty }  
            \sum_{ h \in \mathcal{H}_{m} } 
            \mathbb{P} \Big\{ 
              | \langle h, \varepsilon \rangle_{n} | 
              \ge
              \sqrt{ \frac{ 4 \overline{ \sigma }^{2} ( m + 1 ) \log p }{n} } 
            \Big\} 
    \\
    & \le 
            2 \sum_{ m = 0 }^{ \infty }
            p^{m} 
            \exp \Big( 
              \frac{ - 2 n \overline{ \sigma }^{2} ( m + 1 ) \log p }
                   { n \overline{ \sigma }^{2} }
            \Big) 
    \le 
            2 \sum_{ m \ge 0 } 
            p^{ - ( m + 1 ) } 
    =       
            \frac{2}{ 1 - p^{-1} } - 2
            \xrightarrow[ ]{ n \to \infty } 0,
    \notag
  \end{align}
  using a union bound and
  \hyperref[ass_SubGaussianErrors]{\normalfont \textbf{{\color{blue} (SubGE)}}}.

  For (ii), we argue analogously.
  From the definition of Algorithm \ref{alg_OMP} and the Gram-Schmidt
  orthogonalization, we have
  \begin{align}
        r_{ m - 1 }^{2} - r_{m}^{2} 
    & = 
        \Big\langle 
          ( I - \widehat{ \Pi }_{ m - 1 } ) Y,
          \frac{    
            ( I - \widehat{ \Pi }_{ m - 1 } )
            g_{ \widehat{j}_{m} }
          }{ 
            \| 
              ( I - \widehat{ \Pi }_{ m - 1 } ) 
              g_{ \widehat{j}_{m} }
            \|_{n}
          }
        \Big\rangle_{n}^{2}
        \qquad \text{ for all } m \ge 1. 
  \end{align}
  This yields \( 
    \Delta( r_{m}^{2} )  \le 2 b_{ m - 1 } + 2 Z_{ m - 1 }^{2}
  \), where
  \begin{align}
            Z_{m}: 
    & = 
            \Big| \Big\langle 
              \varepsilon, 
              \frac{ 
                ( I - \widehat{ \Pi }_{m} ) g_{ \widehat{j}_{ m + 1 } } 
              }{ 
                \| 
                  ( I - \widehat{ \Pi }_{m} ) g_{ \widehat{j}_{ m + 1 } } 
                \|_{n} 
              }
            \Big\rangle_{n} \Big| 
    \le 
            \sup_{ h \in \tilde{ \mathcal{H} }_{m} } 
            | \langle \varepsilon, h \rangle_{n} |, 
    \qquad m \ge 0 
  \end{align}
  with \( 
    \tilde{ \mathcal{H} }_{m}: = \{ 
      \Pi g_{k} / ( \| \Pi g_{k} \|_{n} ): 
      k \le p,
      \Pi \text{ is a projection orthogonal to } m \text{ of the } 
      g_{j}, j \le p
    \}
  \).
  Since \( | \tilde{ \mathcal{H} }_{m} | \le p^{ m + 1 } \), 
  \begin{align}
    & \ \ \ \
            \mathbb{P} \Big\{ 
              \sup_{ m \ge 1 } 
              \frac{ Z_{m} }{ \sqrt{ m + 1 } } 
              \ge 
              \sqrt{ \frac{ 4 \overline{ \sigma }^{2} \log p }{n} }
            \Big\} 
    \le 
            \sum_{ m = 0 }^{ \infty }  
            \sum_{ h \in \mathcal{H}_{m} } 
            \mathbb{P} \Big\{ 
              | \langle \varepsilon, h \rangle_{n} |
              \ge 
              \sqrt{ \frac{ 4 \overline{ \sigma }^{2} ( m + 1 ) \log p }{n} }
            \Big\} 
    \\
    & \le 
            2
            \sum_{ m = 0 }^{ \infty } 
            p^{ m + 1 } 
            \exp \Big( 
                      \frac{ - 2 n \overline{ \sigma }^{2} ( m + 1 ) \log p }
                           {     n \overline{ \sigma }^{2}                  } 
                 \Big) 
    \le 
            2
            \sum_{ m = 0 }^{ \infty } 
            p^{ - ( m + 1 ) } 
    =     
            \frac{2}{ 1 - p^{-1} } - 2
    \xrightarrow[ ]{ n \to \infty } 0
    \notag
  \end{align}
  as in (i).
  This finishes the proof.
\end{proof}

\begin{proof}[Proof of Lemma \ref{lem_UniformBoundsInHighProbability} (Uniform bounds in high probability)]

\begin{enumerate}

  \item[(i)] From assumption 
    \hyperref[ass_SubGaussianDesign]{\textbf{{\color{blue} (SubGD)}}},
    it is immediate that the \(
      X_{1}^{ (j) }, j \le p
    \) are subgaussian with parameter \( \rho^{2} \).
    Therefore,
    \begin{align}
            \langle g_{j}, g_{k} \rangle_{n} - \langle g_{j}, g_{k} \rangle_{ L^{2} }  
      & = 
            \frac{1}{n} \sum_{ i = 1 }^{n} 
            \big( 
                           X_{i}^{ (j) } X_{i}^{ (k) }
            -
              \mathbb{E} ( X_{i}^{ (j) } X_{i}^{ (k) } )
            \big) 
      \qquad j, k \in \mathbb{N}
    \end{align}
    is an average of centered subexponential variables with parameters 
    \( ( C \rho^{4}, C \rho^{2} ) \), i.e., for \( 
        Z: 
      =
                       X_{i}^{ (j) } X_{i}^{ (k) }
          - \mathbb{E} X_{i}^{ (j) } X_{i}^{ (k) }
    \), 
    \begin{align}
            \mathbb{E} e^{ u Z } 
      & \le 
            e^{ u^{2} C \rho^{4} / 2 } 
      \qquad \text{ for all } | u | \le \frac{1}{ C \rho^{2} }. 
    \end{align}
    From Bernstein's inequality, see Theorem 2.8.1 in Vershynin
    \cite{Vershynin2018HDProbability}, we obtain that for $ t > 0 $, 
    \begin{align}
      & \ \ \ \
            \mathbb{P} \Big\{ 
              \sup_{ j, k \le p } 
              | 
                \langle g_{j}, g_{k} \rangle_{n} 
              - \langle g_{j}, g_{k} \rangle_{ L^{2} } 
              | 
              \ge t
            \Big\} 
      \le 
            \sum_{ j, k \le p } 
            \mathbb{P} \Big\{ 
              | 
                \langle g_{j}, g_{k} \rangle_{n} 
              - \langle g_{j}, g_{k} \rangle_{ L^{2} }  
              | 
              \ge t
            \Big\} 
      \\ \notag 
      & \le 
            2 p^{2} 
            \exp \Big( 
              - c n
              \min \Big[
                \frac{ t^{2} }{ \rho^{4} }, 
                \frac{ t     }{ \rho^{2} }
              \Big]
            \Big) 
      =  
            2 \exp \Big( 
              2 \log p  
            -
              c n \min \Big[
                \frac{ t^{2} }{ \rho^{4} }, 
                \frac{ t     }{ \rho^{2} }
              \Big]
            \Big).
      \notag
    \end{align}
    Setting \( 
      t = C_{g} \sqrt{ \rho^{4} \log(p) / n } 
    \) with \( C_{g} > 0 \) sufficiently large yields the statement in (i),
    since we have assumed that \( \log p = o(n) \). 

  \item[(ii)] By (i), we have that via a union bound,
    \begin{align}
      & \ \ \ \
                \mathbb{P} \Big\{ 
                  \sup_{ j \le p } 
                  | \langle \varepsilon, g_{j} \rangle_{n} | 
                  \ge t
                \Big\} 
      \\ 
      & \le 
                \mathbb{P} \Big\{ 
                  \sup_{ j \le p } 
                  | \langle \varepsilon, g_{j} \rangle_{n} | 
                  \ge t,
                  \sup_{ j \le p } \| g_{j} \|_{n} < \frac{3}{2} 
                \Big\} 
              + 
                \mathbb{P} \Big\{ 
                  \sup_{ j \le p } \| g_{j} \|_{n} \ge \frac{3}{2} 
                \Big\} 
      \notag 
      \\
      & \le 
                \sum_{ j = 1 }^{p} 
                \mathbb{P} \Big\{ 
                  | \langle \varepsilon, g_{j} \rangle_{n} | 
                  \ge t,
                  \sup_{ j \le p } \| g_{j} \|_{n} < \frac{3}{2} 
                \Big\} 
              + 
                o(1) 
      \notag
      \\
      & \le 
                2 p \exp \Big( \frac{ - c n t^{2} }{ \overline{ \sigma }^{2} } \Big) 
              + 
                o(1) 
          = 
                2 
                \exp \Big( 
                  \log p - \frac{ c n t^{2} }{ \overline{ \sigma }^{2} }
                \Big) 
              + 
                o(1), 
      \notag
    \end{align}
    where the last inequality follows from
    \hyperref[ass_SubGaussianErrors]{\textbf{{\color{blue} (SubGE)}}} 
    by conditioning on the design, applying Hoeffding's
    inequality, see Theorem 2.6.2 in Vershynin
    \cite{Vershynin2018HDProbability}, and estimating \( 
      \| g_{j} \|_{n} < 3 / 2
    \) in the denominator of the exponential.
    By choosing \( 
      t = C_{ \varepsilon } \sqrt{ \overline{ \sigma }^{2} \log(p) / n } 
    \) with \( C_{ \varepsilon } > 0 \) large enough, we then obtain the
    statement in (ii).

  \item[(iii)]
    For \( t > 0 \), a union bound yields
    \begin{align}
              \mathbb{P} \Big\{ 
                \sup_{ | J | \le c_{ \text{iter} } n / \log p } 
                \frac{
                  \| \widehat{ \Gamma }_{J} - \Gamma_{J} \|_{ \text{op}}  
                }{
                  \rho^{2}
                } 
                \ge t
              \Big\} 
      & \le 
              \sum_{ | J | \le c_{ \text{iter} } n / \log p } 
              \mathbb{P} \Big\{ 
                \frac{
                  \| \widehat{ \Gamma }_{J} - \Gamma_{J} \|_{ \text{op}}  
                }{
                  \rho^{2}
                } 
                \ge t
              \Big\}. 
    \end{align}
    For any fixed \( J \) with \(
      | J | =  m \le c_{ \text{iter} } n / \log p, 
    \) we can choose a \( 1 / 4 \)-net \( \mathcal{N} \) of the unit ball in \(
    \mathbb{R}^{m} \) with \( | \mathcal{N} | \le 9^{m} \), see Corollary 4.2.13
    in Vershynin \cite{Vershynin2018HDProbability}.
    By an approximation argument, 
    \begin{align}
              \| \widehat{ \Gamma }_{J} - \Gamma_{J} \|_{ \text{op} } 
      & \le 
              2 \max_{ v \in \mathcal{N} } 
              |
                \langle 
                  ( \widehat{ \Gamma }_{J} - \Gamma_{J} ) v, v
                \rangle 
              |
      \\
      & = 
              2 \max_{ v \in \mathcal{N} } 
              | 
                \frac{1}{n} 
                \sum_{ i = 1 }^{n} 
                \langle X_{i}^{ (J) }, v  \rangle^{2} 
              - 
                \mathbb{E} \langle X_{i}^{ (J) }, v  \rangle^{2} 
              |, 
      \notag
    \end{align}
    with \(
      X_{i}^{ (J) } =   ( X_{i}^{ (j) } )_{ j \in J } 
                        \in \mathbb{R}^{ | J | }.
    \) 
    As in (i), the \( 
                   \langle X_{i}^{ (J) }, v \rangle^{2} 
      - 
        \mathbb{E} \langle X_{i}^{ (J) }, v \rangle^{2} 
    \), \( i = 1, \dots, n \), are independent subexponential random variables
    with
    parameters \( ( C \rho^{4}, C \rho^{2} ) \), i.e., by a union bound and
    Bernstein's inequality,
    \begin{align}
              \mathbb{P} \Big\{ 
                \frac{
                  \| \widehat{ \Gamma }_{J} - \Gamma_{J} \|_{ \text{op}}  
                }{
                  \rho^{2}
                } 
                \ge t
              \Big\}
      & \le 
              \sum_{ v \in \mathcal{N} } 
              \mathbb{P} \Big\{ 
                | 
                  \frac{1}{n} 
                  \sum_{ i = 1 }^{n} 
                             \langle X_{i}^{ (J) }, v \rangle 
                  - 
                  \mathbb{E} \langle X_{i}^{ (J) }, v \rangle 
                | 
                \ge \frac{ t^{2} \rho^{2} }{2}
              \Big\} 
      \\
      & \le 
              2 \cdot 9^{m} 
              \exp \big( - c n \min( t^{2}, t ) \big).
      \notag
    \end{align}
    Together, this yields
    \begin{align}
              \mathbb{P} \Big\{ 
                \sup_{ | J | \le c_{ \text{iter} } n / \log p } 
                \frac{
                  \| \widehat{ \Gamma }_{J} - \Gamma_{J} \|_{ \text{op}}  
                }{
                  \rho^{2}
                } 
                \ge t
              \Big\} 
      & \le 
              \sum_{ m = 1 }^{ \lfloor c_{ \text{iter} } n / \log p \rfloor } 
              \sum_{ J: | J | = m } 
              \mathbb{P} \Big\{ 
                \frac{
                  \| \widehat{ \Gamma }_{J} - \Gamma_{J} \|_{ \text{op}}  
                }{
                  \rho^{2}
                } 
                \ge t
              \Big\} 
      \\
      & \le 
              \sum_{ m = 1 }^{ \lfloor c_{ \text{iter} } n / \log p \rfloor } 
              \binom{p}{m} 
              2 \cdot 9^{m} 
              \exp \big( - c n \min( t^{2}, t ) \big) 
      \notag 
      \\
      & \le 
              \frac{ 2 c_{ \text{iter} } n }{ \log p } 
              \exp \Big( 
                \frac{ c_{ \text{iter} } n }{ \log p }
                \log( 9 p )
                -
                  c n \min( t^{2}, t )
              \Big). 
      \notag
    \end{align}
    Setting \(
      t = c_{ \text{iter} } C_{ \Gamma } 
    \) with \( C_{ \Gamma } > 0 \) large enough yields the result.

  \item[(iv)] 
    Set \(
      c_{ \text{iter} } < c_{ \lambda } / ( C_{ \Gamma } \rho^{2} ) 
    \), with \( c_{ \lambda } \) from Assumption 
    \hyperref[ass_CovB]{\normalfont \textbf{{\color{blue} (CovB)}}},
    and consider
    $ 
      Q_{n}: = \big\{ 
                 \forall | J | \le c_{ \text{iter} } n / \log p: 
                 \widehat{ \Gamma }_{J}^{-1} \text{ exists} 
               \big\}.
    $
    For $ n $ large enough, we have
    \begin{align}
              \mathbb{P}( Q_{n} ) 
      & = 
              \mathbb{P} \Big\{ 
                \inf_{ | J | \le c_{ \text{iter} } n / \log p } 
                \lambda_{ \min }( \widehat{ \Gamma }_{J} ) 
                > 0
              \Big\} 
      \\
      & \ge  
              \mathbb{P} \Big\{ 
                \inf_{ | J | \le c_{ \text{iter} } n / \log p } 
                \Big( 
                  \lambda_{ \min }( \Gamma_{J} ) 
                - \| \widehat{ \Gamma }_{J} - \Gamma_{J} \|_{ \text{op} }
                \Big)
                > 0 
              \Big\} 
      \notag
      \\
      & \ge 
              \mathbb{P} \Big\{ 
                \inf_{ | J | \le c_{ \text{iter} } n / \log p } 
                \Big( 
                  c_{ \text{iter} } C_{ \Gamma } \rho^{2} 
                - \| \widehat{ \Gamma }_{J} - \Gamma_{J} \|_{ \text{op} } 
                \Big)
                > 0 
              \Big\} 
              \xrightarrow[ ]{ n \to \infty } 1,
      \notag
    \end{align}
    where we have used (iii) and
    \begin{align}
      \lambda_{ \min }( \widehat{ \Gamma }_{J} ) 
      & \ge 
      \lambda_{ \min }( \Gamma_{J} )
      - 
      \| \widehat{ \Gamma }_{J} - \Gamma_{J} \|_{ \text{op} } 
    \end{align}
    by Weyl's inequality.  Now, let 
    $ 
      F_{n}: 
      = 
      \{ 
        \sup_{ | J | \le c_{ \text{iter} } n / \log p }  
        \| \widehat{ \Gamma }_{J} - \Gamma_{J} \|_{ \text{op} } 
        \le c_{ \text{iter} } C_{ \Gamma } \rho^{2} 
      \}
    $
    be the event from (iii).
    For $ n $ large enough and
    \( 
      C_{ \Gamma^{-1} } > 1 / ( 
                                c_{ \lambda } 
                              - c_{ \text{iter} } C_{ \Gamma } \rho^{2}
                              ),
    \)
    we then have
    \begin{align}
      & \ \ \ \
      \mathbb{P} \Big( 
        \Big\{ 
          \sup_{ | J | \le c_{ \text{iter} } n / \log p } 
          \| \widehat{ \Gamma }_{J}^{-1} \| \le C_{ \Gamma^{-1} }
        \Big\} 
        \cap F_{n} \cap Q_{n}
      \Big) 
      \\
      & \ge 
      \mathbb{P} \Big( 
        \Big\{ 
          \Big( 
            1 
          - 
            \frac{ c_{ \text{iter} } C_{ \Gamma } \rho^{2}  }{ c_{ \lambda } }
          \Big) 
          \sup_{ | J | \le c_{ \text{iter} } n / \log p } 
          \| \widehat{ \Gamma }_{J}^{-1} \|_{ \text{op} } 
          \le 
          \frac{1}{ c_{ \lambda } }
        \Big\}
        \cap F_{n} \cap Q_{n}
      \Big) 
      \notag
      \\
      & \ge 
      \mathbb{P} \Big( 
        \Big\{ 
          \forall | J | \le c_{ \text{iter} } n / \log p:
          \big( 
            1 
          - 
            \| \widehat{ \Gamma }_{J} - \Gamma_{J} \|_{ \text{op} } 
            \| \Gamma_{J}^{-1} \|_{ \text{op} }
          \big) 
          \| \widehat{ \Gamma }_{J}^{-1} \|_{ \text{op} } 
          \le 
          \| \Gamma_{J}^{-1} \|_{ \text{op} }  
        \Big\}
        \cap F_{n} \cap Q_{n}
      \Big) 
      \notag
      \\
      & \ge 
      \mathbb{P}( F_{n} \cap Q_{n} ) 
      \xrightarrow[ ]{ n \to \infty } 1,
      \notag
    \end{align}
    where we have used Banach's Lemma for the inverse in the last inequality,
    which yields that for fixed \( J \), 
    \begin{align}
      \| \widehat{ \Gamma }_{J}^{-1}  \|_{ \text{op} } 
      & = 
      \| 
        \widehat{ \Gamma }_{J}^{-1}
        - 
        \Gamma_{J}^{-1} + \Gamma_{J}^{-1}
      \|_{ \text{op} } 
      \le 
      \frac{ 
        \| \Gamma_{J}^{-1} \|_{ \text{op} }
      }{ 
        1 - \| 
              \Gamma_{J}^{-1} 
              ( \widehat{ \Gamma }_{J} - \Gamma_{J} ) 
            \|_{ \text{op} }  
      } 
    \end{align}
    as long as 
    \( 
      \|
        \Gamma_{J}^{-1} ( \widehat{ \Gamma }_{J} - \Gamma_{J} )
      \|_{ \text{op} }
      < 
      1.
    \)
    Otherwise, the inequality
    \begin{align}
        \big( 
          1 
        - 
          \| \widehat{ \Gamma }_{J} - \Gamma_{J} \|_{ \text{op} } 
          \| \Gamma_{J}^{-1} \|_{ \text{op} }
        \big) 
        \| \widehat{ \Gamma }_{J}^{-1} \|_{ \text{op} } 
        \le 
        \| \Gamma_{J}^{-1} \|_{ \text{op} }  
    \end{align}
    is trivially true, since the left-hand side is negative.

\end{enumerate}

\end{proof}

\begin{corollary}[Reappearing terms]
  \label{cor_ReappearingTerms}
  Under Assumptions
  \hyperref[ass_SubGaussianErrors]{\textbf{{\color{blue} (SubGE)}}},
  \hyperref[ass_SubGaussianDesign]{\textbf{{\color{blue} (SubGD)}}}
  and \hyperref[ass_CovB]{\textbf{{\color{blue} (CovB)}}},
  the following statements hold:
  \begin{enumerate}

    \item[(i)]
      For any $ J \subset \{ 1, \dots, p \} $, $ j' \in J $, $ k \not \in J $, 
      we have 
      \begin{align*}
                | \langle ( I - \Pi_{J} ) g_{k}, g_{ j' } \rangle_{n} | 
        & \le 
                ( 1 + C_{ \text{Cov} } ) \sup_{ j, k \le p } 
                | 
                  \langle g_{j}, g_{k} \rangle_{n} 
                - \langle g_{j}, g_{k} \rangle_{ L^{2} }
                |. 
      \end{align*}
      For \( k \in J \), the left-hand side vanishes.

    \item[(ii)]
      With probability converging to one, we have
      \begin{align*}
            \sup_{ | J | \le M_{n}, k \not \in J } 
            \langle \varepsilon, ( I - \widehat{ \Pi }_{J} ) g_{k} \rangle_{n} 
        \le 
            C
            \sqrt{ \frac{ ( \overline{ \sigma }^{2} + \rho^{4} ) \log p }{n} }.
      \end{align*}

  \end{enumerate}
\end{corollary}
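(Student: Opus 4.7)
For part (i), my plan is to exploit the defining property of the population projection: since $g_{j'} = \Pi_J g_{j'}$ for $j' \in J$, we have $\langle (I - \Pi_J) g_k, g_{j'} \rangle_{L^2} = 0$. I would therefore rewrite
\[
\langle (I - \Pi_J) g_k, g_{j'} \rangle_n = \langle (I - \Pi_J) g_k, g_{j'} \rangle_n - \langle (I - \Pi_J) g_k, g_{j'} \rangle_{L^2}
\]
and expand $(I - \Pi_J) g_k = g_k - g_J^\top \alpha$ with $\alpha := \Gamma_J^{-1} \langle g_k, g_J \rangle_{L^2}$. The right-hand side then becomes a linear combination of at most $1 + |J|$ quantities of the form $\langle g_j, g_{j'} \rangle_n - \langle g_j, g_{j'} \rangle_{L^2}$, with coefficients $1$ and $-\alpha_j$. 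Pulling out the supremum, the bound follows from $\|\alpha\|_1 \le C_{\text{Cov}}$, which is exactly Assumption \hyperref[ass_CovB]{\textbf{{\color{blue} (CovB)}}}. The case $k \in J$ is trivial since $(I - \Pi_J) g_k = 0$.

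For part (ii), I would split
\[
\langle \varepsilon, (I - \widehat{\Pi}_J) g_k \rangle_n = \langle \varepsilon, (I - \Pi_J) g_k \rangle_n + \langle \varepsilon, (\Pi_J - \widehat{\Pi}_J) g_k \rangle_n.
\]
The first summand expands as $\langle \varepsilon, g_k \rangle_n - \sum_{j \in J} \alpha_j \langle \varepsilon, g_j \rangle_n$ and is therefore bounded by $(1 + C_{\text{Cov}}) \sup_{j \le p} |\langle \varepsilon, g_j \rangle_n|$, which is of order $\sqrt{\overline{\sigma}^2 \log p / n}$ uniformly by Lemma \ref{lem_UniformBoundsInHighProbability} (ii). For the second summand, I write $(\widehat{\Pi}_J - \Pi_J) g_k = g_J^\top (\widehat{\alpha} - \alpha)$ with $\widehat{\alpha} := \widehat{\Gamma}_J^{-1} \langle g_k, g_J \rangle_n$, so that it equals $(\widehat{\alpha} - \alpha)^\top \langle \varepsilon, g_J \rangle_n$ and is bounded by $\|\widehat{\alpha} - \alpha\|_1 \sup_{j \le p} |\langle \varepsilon, g_j \rangle_n|$. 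It therefore suffices to show that $\|\widehat{\alpha} - \alpha\|_1$ is bounded uniformly over $|J| \le M_n$ and $k \notin J$.

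The key identity is
\[
\widehat{\alpha} - \alpha = \widehat{\Gamma}_J^{-1} \bigl[ (\langle g_k, g_J \rangle_n - \langle g_k, g_J \rangle_{L^2}) - (\widehat{\Gamma}_J - \Gamma_J) \alpha \bigr].
\]
Calling the bracket $v \in \mathbb{R}^{|J|}$, I would bound the coordinates of $v$ by $(1 + \|\alpha\|_1) \sup_{j,k \le p} |\langle g_j, g_k \rangle_n - \langle g_j, g_k \rangle_{L^2}|$, so that Lemma \ref{lem_UniformBoundsInHighProbability} (i) yields $\|v\|_\infty = O(\sqrt{\rho^4 \log p / n})$ with probability tending to one. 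Passing through $\ell^2$ gives
\[
\|\widehat{\alpha} - \alpha\|_1 \le \sqrt{|J|} \, \|\widehat{\Gamma}_J^{-1}\|_{\text{op}} \, \|v\|_2 \le |J| \, \|\widehat{\Gamma}_J^{-1}\|_{\text{op}} \, \|v\|_\infty \le C \, |J| \sqrt{\rho^4 \log p / n},
\]
where Lemma \ref{lem_UniformBoundsInHighProbability} (iv) applies because $M_n \le c_{\text{iter}} n / \log p$ for $n$ large. For $|J| \le M_n = \sqrt{n / ((\overline{\sigma}^2 + \rho^4) \log p)}$, the right-hand side reduces to $O(\sqrt{\rho^4 / (\overline{\sigma}^2 + \rho^4)}) = O(1)$, uniformly in $J$ and $k$. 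Intersecting the high-probability events from parts (i), (ii), (iv) of Lemma \ref{lem_UniformBoundsInHighProbability} then yields the claimed bound.

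The main obstacle is the $\ell^1$–$\ell^2$–$\ell^\infty$ chain of norm inequalities needed to control $\|\widehat{\alpha} - \alpha\|_1$: each passage costs a factor $\sqrt{|J|}$, and what makes the estimate succeed is that the calibration of $M_n$ in Assumption \hyperref[ass_CovB]{\textbf{{\color{blue} (CovB)}}} is tuned precisely so that $M_n \sqrt{\rho^4 \log p / n} \le 1$, canceling the resulting factor of $|J|$ and leaving a genuine constant bound.
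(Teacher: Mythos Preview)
Your proof is correct and follows essentially the same approach as the paper. For part (i) the arguments are identical; for part (ii) the paper uses the same two-term split and handles the cross term $(\widehat{\Pi}_J - \Pi_J) g_k = \widehat{\Pi}_J(I-\Pi_J)g_k$ via the bilinear form $\langle \varepsilon, g_J\rangle_n^\top \widehat{\Gamma}_J^{-1}\langle (I-\Pi_J)g_k, g_J\rangle_n$ and Cauchy--Schwarz in $\ell^2$, whereas you bound $\|\widehat{\alpha}-\alpha\|_1$ through the $\ell^1$--$\ell^2$--$\ell^\infty$ chain---but your bracket $v$ is precisely $\langle (I-\Pi_J)g_k, g_J\rangle_n$ (so your $\|v\|_\infty$ bound is just part (i) again), and the resulting factor of $|J|$ that gets absorbed by $M_n\sqrt{\rho^4\log p/n}\le 1$ is the same in both arguments.
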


\begin{proof}[Proof]
  \begin{enumerate}

    \item[(i)] Note that for 
      $ 
        j' \in J, k \not \in J, 
      $ 
      by the characterization of the projections in Equation
      \eqref{eq_1_Projections} and Assumption
      \hyperref[ass_CovB]{\textbf{{\color{blue} (CovB)}}}, we have that
      \begin{align}
        & \ \ \ \
                | \langle g_{k} - \Pi_{J} g_{k}, g_{ j' } \rangle_{n} | 
        = 
                \Big| 
                  \Big\langle 
                    g_{k} 
                  - \sum_{ j \in J }
                    \big( 
                      \Gamma_{J}^{-1}
                      \langle g_{k}, g_{J} \rangle_{ L^{2} } 
                    \big)_{j} 
                    g_{j}, 
                    g_{ j' }
                  \Big\rangle_{n} 
                \Big| 
                \\
        & = 
                \Big| 
                  \Big\langle 
                    g_{k} 
                  - \sum_{ j \in J }
                    \big( 
                      \Gamma_{J}^{-1} 
                      \langle g_{k}, g_{J} \rangle_{ L^{2} } 
                    \big)_{j} 
                    g_{j}, 
                    g_{ j' }
                  \Big\rangle_{n} 
        \notag
        \\
        & - 
                  \underbrace{
                  \Big(
                  \langle g_{k}, g_{ j' } \rangle_{ L^{2} } 
                  - \sum_{ j \in J }
                    \big( 
                      \Gamma_{J}^{-1} 
                      \langle g_{k}, g_{J} \rangle_{ L^{2} } 
                    \big)_{j} 
                  \langle g_{j}, g_{ j' } \rangle_{ L^{2} }  
                  \Big)
                  }_{
                    = \langle 
                        ( I - \Pi_{J} ) g_{k}, g_{ j' } 
                      \rangle_{ L^{2} } 
                    = 0 
                  }
                \Big| 
        \notag
        \\
        & \le 
                \| 
                  ( 
                    1,
                    \Gamma_{J}^{-1} 
                    \langle g_{k}, g_{J} \rangle_{ L^{2} } 
                  )^{ \top }
                \|_{1} 
                \sup_{ j, k \le p } 
                | 
                  \langle g_{j}, g_{k} \rangle_{n} 
                - \langle g_{j}, g_{k} \rangle_{ L^{2} }
                | 
        \notag
        \\
        & \le 
                ( 1 + C_{ \text{Cov} } ) 
                \sup_{ j, k \le p } 
                | 
                  \langle g_{j}, g_{k} \rangle_{n} 
                - \langle g_{j}, g_{k} \rangle_{ L^{2} }
                |, 
        \notag 
      \end{align}
      where the second equality follows by the properties of the projection.

    \item[(ii)]
      For $ k \not \in J $, we have
      \begin{align}
                | 
                  \langle
                    \varepsilon, ( I - \widehat{ \Pi }_{J} ) g_{k}  
                  \rangle_{n}
                |
        & \le 
                | \langle \varepsilon, g_{k} \rangle_{n} |
              + 
                | \langle 
                    \varepsilon, ( \widehat{ \Pi }_{J} - \Pi_{J} ) g_{k} 
                \rangle_{n} |
              + 
                | \langle \varepsilon, \Pi_{J} g_{k} \rangle_{n} | 
        \notag 
        \\
        \label{eq_ReappearingTerms_SplitUp}
        & = 
                | \langle \varepsilon, g_{k} \rangle_{n} |
              + 
                |
                \langle 
                    \varepsilon, \widehat{ \Pi }_{J} ( I - \Pi_{J} ) g_{k} 
                \rangle_{n}
                |
              + 
                | \langle \varepsilon, \Pi_{J} g_{k} \rangle_{n} |. 
      \end{align}
      The supremum over the first term in Equation
      \eqref{eq_ReappearingTerms_SplitUp} can be treated immediately by Lemma
      \ref{lem_UniformBoundsInHighProbability} (ii).
      The same is true for the supremum over the last term, since
      \begin{align}
                \sup_{ | J | \le M_{n}, k \not \in J } 
                | \langle \varepsilon, \Pi_{J} g_{k} \rangle_{n} |
        & = 
                \sup_{ | J | \le M_{n}, k \not \in J } 
                |
                  \langle
                    \varepsilon,
                    g_{J}^{ \top } 
                    \Gamma_{J}^{-1}
                    \langle g_{k} , g_{J} \rangle_{ L^{2} } 
                  \rangle_{n}
                |
        \\
        & \le 
                \sup_{ | J | \le M_{n}, k \not \in J } 
                \| \Gamma_{J}^{-1} \langle g_{k}, g_{J} \rangle \|_{1} 
                \sup_{ j \le p } 
                | \langle \varepsilon, g_{j} \rangle_{n} |
                \notag \\
        & \le 
                C_{ \text{Cov} } \sup_{ j \le p } 
                | \langle \varepsilon, g_{j} \rangle_{n} |
        \notag 
      \end{align}
      by the characterization of $ \Pi_{J} $ in Equation \eqref{eq_1_Projections}
      and Assumption \hyperref[ass_CovB]{\textbf{{\color{blue} CovB}}}.
      Finally, the supremum over the middle term in Equation
      \eqref{eq_ReappearingTerms_SplitUp} can be written as
      \begin{align}
        & \ \ \ \
                \sup_{ | J | \le M_{n}, k \not \in J } 
                \langle \varepsilon, g_{J} \rangle_{n}^{ \top } 
                \widehat{ \Gamma }_{J}^{-1} 
                \langle ( I - \Pi ) g_{k}, g_{J} \rangle_{n} 
                \\
        & \le 
                \sup_{ | J | \le M_{n} } 
                \| \widehat{ \Gamma }_{J}^{-1} \|_{ \text{op} } 
                \sup_{ | J | \le M_{n}, k \not \in J } 
                \| \langle ( I - \Pi_{J} ) g_{k},  g_{J} \rangle_{n} \|_{2} 
                \sup_{ j, k \le p } 
                \| \langle \varepsilon, g_{J} \rangle_{n} \|_{2} 
        \notag 
        \\
        & \le 
                \sup_{ | J | \le M_{n} } 
                \| \widehat{ \Gamma }_{J}^{-1} \|_{ \text{op} } 
                M_{n} ( 1 + C_{ \text{Cov} } ) 
                \sup_{ j, k \le p } 
                | 
                  \langle g_{j}, g_{k} \rangle_{n}
                - \langle g_{j}, g_{k} \rangle_{ L^{2} }
                | 
                \sup_{ j \le p } 
                | \langle \varepsilon, g_{j} \rangle_{n} |,
        \notag 
      \end{align}
      where we have used (i) for the second inequality. This term can now also
      be treated by Lemma \ref{lem_UniformBoundsInHighProbability} (i), (ii)
      and (iv).

  \end{enumerate}
\end{proof}

\begin{lemma}[Lower bound for order statistics]
  \label{lem_LowerBoundForOrderStatistics}
  Let \( Z_{1}, \dots, Z_{p} \sim N( 0, \sigma^{2} ) \) i.i.d.
  Then, the order statistic \( Z_{ ( p - m + 1 ) } \) satisfies \( 
        Z_{ ( p - m + 1 ) }
    \ge 
        c \sqrt{ \sigma^{2} \log p }
  \) with probability converging to one for \( p \to \infty \), as long as 
  \( m \le c p^{ \varrho } \) for some \( \varrho \in ( 0, 1 ) \). 
\end{lemma}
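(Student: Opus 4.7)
The plan is to reduce the statement to a Chernoff bound on a suitable binomial count. After rescaling, we may assume $\sigma^{2} = 1$, so that we need to show $Z_{(p-m+1)} \ge c\sqrt{\log p}$ with probability tending to one, where $c > 0$ will be chosen small depending on $\varrho$. The key observation is the equivalence of events
\begin{align*}
  \{ Z_{(p-m+1)} \ge t \}
  & =
  \Big\{ \sum_{i=1}^{p} \mathbf{1}\{ Z_{i} \ge t \} \ge m \Big\},
\end{align*}
so that, setting $N_{t} := \sum_{i=1}^{p} \mathbf{1}\{ Z_{i} \ge t \}$, we have $N_{t} \sim \mathrm{Bin}(p, q_{t})$ with $q_{t} = \bar{\Phi}(t)$.

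Next, I would fix $t = c\sqrt{\log p}$ and use the standard lower bound on the Gaussian tail, $\bar{\Phi}(x) \ge (2\pi)^{-1/2} x/(x^{2}+1)\, e^{-x^{2}/2}$, to obtain
\begin{align*}
  p q_{t}
  & \ge
  \frac{c'}{\sqrt{\log p}} \, p^{1 - c^{2}/2}
\end{align*}
for $p$ large enough. Choosing $c$ so small that $c^{2}/2 < 1 - \varrho$, the right-hand side grows polynomially in $p$ and dominates any $m \le c p^{\varrho}$; in particular, $m \le p q_{t}/2$ for $p$ sufficiently large.

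Finally, I would close the argument with a one-sided Chernoff bound for the binomial distribution: under $m \le p q_{t}/2$,
\begin{align*}
  \mathbb{P}( Z_{(p-m+1)} < t )
  & =
  \mathbb{P}( N_{t} < m )
  \le
  \mathbb{P}( N_{t} \le p q_{t} / 2 )
  \le
  \exp\!\Big( - \tfrac{1}{8} p q_{t} \Big),
\end{align*}
which tends to zero because $p q_{t} \to \infty$ polynomially. This yields the desired high-probability lower bound on $Z_{(p-m+1)}$.

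The only delicate point is the calibration of the constant $c$ against $\varrho$: the exponent $1 - c^{2}/2$ in $p q_{t}$ must be strictly larger than $\varrho$, and so $c$ must depend on $\varrho$ (and shrinks as $\varrho \to 1$). Everything else is a routine combination of the Mills ratio estimate and a binomial Chernoff inequality; no further assumptions on the dependence structure of the order statistics are needed because $N_{t}$ is an exact sum of i.i.d.\ Bernoullis.
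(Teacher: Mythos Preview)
Your proof is correct and takes a different route from the paper. The paper splits the sample into $m$ blocks of size $p/m$, lower-bounds the maximum of each block via the Mills-ratio tail estimate, and then applies a union bound over the $m$ blocks to show that $\min_{k}\max_{j\in\text{block }k} Z_j \ge \sqrt{\sigma^{2}\log(p/m)}$ with high probability; since this minimum is a lower bound for $Z_{(p-m+1)}$, the claim follows. You instead work directly with the exact binomial representation $\{Z_{(p-m+1)}\ge t\}=\{N_t\ge m\}$, $N_t\sim\mathrm{Bin}(p,\bar\Phi(t))$, and close with a one-sided Chernoff bound. Both arguments rest on the same Mills-ratio lower bound for $\bar\Phi$, and both produce a constant $c$ that must depend on $\varrho$ (the paper obtains the explicit factor $\sqrt{1-\varrho}$). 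Your approach is slightly more direct and avoids both the block decomposition and the divisibility assumption $p/m\in\mathbb{N}$; the paper's blocking has the mild advantage that it only needs a union bound rather than a concentration inequality, but either argument is elementary.
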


\begin{proof}[Proof]
  Without loss of generality, let \( p / m \in \mathbb{N} \). 
  Split the sample into \( m \) groups of size \( p / m \) and for a natural number \( k \le m \), let \( Z^{ (k) } \) denote the maximal value of the \( Z_{j}, j \le p \) which belong to the \( k \)-th group. 
  Then, by a union bound,  
  \begin{align}
    & \ \ \ \
    \mathbb{P} \Big\{ 
      Z_{ ( p - m + 1 ) } \ge \sqrt{ \sigma^{2} \log \Big( \frac{p}{m} \Big) }
    \Big\} 
    \ge 
    \mathbb{P} \Big\{ 
      \min_{ k \le m } Z^{ (k) } 
      \ge
      \sqrt{ \sigma^{2} \log \Big( \frac{p}{m} \Big) }
    \Big\} 
    \\
    & \ge
    1 - 
    m
    \mathbb{P} \Big\{ 
      Z^{ (1) } < \sqrt{ \sigma^{2} \log \Big( \frac{p}{m} \Big) }
    \Big\} 
    = 
    1 - 
    m
    \Big( 
      1 - 
      \mathbb{P} \Big\{ 
        Z^{ (1) } \ge \sqrt{ \sigma^{2} \log \Big( \frac{p}{m} \Big) }
      \Big\} 
    \Big).
    \notag
  \end{align}
  By independence, we can further estimate 
  \begin{align}
    & \ \ \ \
    \mathbb{P} \Big\{ 
      Z^{ (1) } \ge \sqrt{ \sigma^{2} \log \Big( \frac{p}{m} \Big) }
    \Big\} 
    = 
    1 - 
    \Big( 
      1 - 
      \mathbb{P} \Big\{ 
        Z_{1} \ge \sqrt{ \sigma^{2} \log \Big( \frac{p}{m} \Big) }
      \Big\} 
    \Big)^{ \frac{p}{m} } 
    \\
    & \ge 
    1 - 
    \Big( 
      1 - \frac{c}{ \sqrt{ ( p / m ) \log( p / m ) } }
    \Big)^{ \frac{p}{m} } 
    \ge 
    1 - e^{ - c \sqrt{ \frac{p}{m} / \log( \frac{p}{m} ) } }, 
    \notag
  \end{align}
  using the lower Gaussian tail bound from, e.g.,  Proposition 2.1.2 in
  Vershynin \cite{Vershynin2018HDProbability}.
  Together, this yields
  \begin{align}
    \mathbb{P} \Big\{ 
      Z_{ ( p - m + 1 ) } \ge \sqrt{ \sigma^{2} \log \Big( \frac{p}{m} \Big) }
    \Big\} 
    & \ge 
    1 - m e^{ - c \sqrt{ \frac{p}{m} / \log( \frac{p}{m} ) } } 
    \xrightarrow[ ]{ p \to \infty } 1.
  \end{align}
  On the event on the left-hand side, the order statistic satisfies \( 
    Z_{ ( p - m + 1 ) } 
    \ge 
    c \sqrt{ ( 1 - \varrho ) \sigma^{2} \log p }
  \). 
\end{proof}


\section{Proofs for supplementary results}
\label{sec_ProofsForSupplementaryResults}

\begin{proof}[Proof of Lemma \ref{lem_BoundForThePopulationStochasticError} (Bound for the population stochastic error)]
  We define
  \begin{align}
    Q_{m} = \langle Y - \Pi_{m} f^{*}, g_{ \widehat{J}_{m} } \rangle_{n} 
          = \langle \varepsilon, g_{ \widehat{J}_{m} } \rangle_{n} 
          + \langle ( I - \Pi_{m} ) f^{*}, g_{ \widehat{J}_{m} } \rangle_{n}.
  \end{align}
  From Corollary \ref{cor_ReappearingTerms} (i), it follows that 
  \begin{align}
    \label{eq_C_BoundForQm}
          \| Q_{m} \|_{2} ^{2} 
    & \le  
          2
          \Big( 
            m \sup_{ j \le p } 
            \langle \varepsilon, g_{j} \rangle_{n}^{2}
          + 
            ( 1 + C_{ \text{Cov} } )^{2} \| \beta^{*} \|_{1}^{2} 
            m
            \sup_{ j, k \le p } 
            | 
              \langle g_{j}, g_{k} \rangle_{n} 
            - \langle g_{j}, g_{k} \rangle_{ L^{2}( \mathbb{P} ) } 
            |^{2} 
          \Big)
      \in \mathbb{R}^{m} 
  \end{align}
  Note that under \( s \)-sparsity, we can ignore the second term in the
  parentheses when \( m > \tilde m \), since then, \( 
    ( I - \Pi_{ m } ) f^{*} = 0
  \).

  We can express the function $ \widehat{F}^{ (m) } - \Pi_{m} f^{*} $ in terms
  of $ Q_{m} $ via
  \begin{align}
      \widehat{F}^{ (m) } - \Pi_{m} f^{*} 
    =
      g_{ \widehat{J}_{m} }^{ \top } 
      \widehat{ \Gamma }_{ \widehat{J}_{m} }^{-1} 
      \langle Y - \Pi_{m} f^{*}, g_{ \widehat{J}_{m} } \rangle_{n} 
    = 
      ( \widehat{ \Gamma }_{ \widehat{J}_{m} }^{-1} Q_{m} )^{ \top } 
      g_{\widehat{J}_{m}}.
  \end{align}
  Due to the mean zero design, we have
  \begin{align}
    \label{eq_C_FinalBoundForTheStochasticError}
            \| \widehat{F}^{ (m) } - \Pi_{m} f^{*} \|^{2}_{ L^{2} }
    & = 
            \text{Cov}( \widehat{F}^{ (m) } - \Pi_{m} f^{*} ) 
      = 
            Q_{m}^{ \top } 
            \widehat{ \Gamma }_{ \widehat{J}_{m} }^{-1}
            \Gamma_{ \widehat{J}_{m} } 
            \widehat{ \Gamma }_{ \widehat{J}_{m} }^{-1}
            Q_{m}  
    \\
    & = 
            Q_{m}^{ \top } 
            \widehat{ \Gamma }_{ \widehat{J}_{m} }^{-1}
            (
              \widehat{ \Gamma } _{ \widehat{J}_{m} }
            - \widehat{ \Gamma } _{ \widehat{J}_{m} }
            + \Gamma_{ \widehat{J}_{m} }
            )
            \widehat{ \Gamma }_{ \widehat{J}_{m} }^{-1}
            Q_{m}  
    \notag 
    \\
    & \le 
            \| Q_{m} \|_{2}^{2} 
            \| \widehat{ \Gamma }_{ \widehat{J}_{m} }^{-1} \|_{ \text{op} } 
          + \| Q_{m} \|_{2}^{2} 
            \| \widehat{ \Gamma }_{ \widehat{J}_{m} }^{-1} \|_{ \text{op} }^{2} 
            \| 
              \widehat{ \Gamma }_{ \widehat{J}_{m} }
            - \Gamma_{ \widehat{J}_{m} }
            \|_{ \text{op} }.
    \notag 
  \end{align}
  Together, Equations \eqref{eq_C_BoundForQm} and
  \eqref{eq_C_FinalBoundForTheStochasticError} yield the desired result on the
  intersection of the events from Lemma
  \ref{lem_UniformBoundsInHighProbability} (i)-(iv), the probability of which
  converges to one.
\end{proof}

\begin{proof}[Proof of Proposition \ref{prp_BoundForThePopulationBias} (Bound for the population bias)]
  We present the proof under Assumption \hyperref[ass_Sparse]{\normalfont
  \textbf{{\color{blue} (Sparse)}}} (ii).
  Under \hyperref[ass_Sparse]{\normalfont \textbf{{\color{blue} (Sparse)}}}
  (i), the reasoning is analogous.
  The details are discussed in Step 5.

  \textbf{Step 1: Sketch of the arguments.}
  For \( 1 \le k \le p \), we consider the two residual dot product terms
  \begin{align}
    \label{eq_BoundForThePopulationBias_DotProductTerms}
            \mu_{ J, k }: 
    & =
            \langle ( I - \Pi_{J} ) f^{*}, g_{k} \rangle_{ L^{2} } 
      = 
            \sum_{ j \not \in J } 
            \beta_{j}
            \langle g_{j}, ( I - \Pi_{J} ) g_{k} \rangle_{ L^{2} }, 
    \\
            \widehat{ \mu }_{ J, k }: 
    & = 
            \Big\langle
              ( I - \widehat{ \Pi }_{J} ) Y, \frac{ g_{k} }{ \| g_{k} \|_{n} }
            \Big\rangle_{n} 
      = 
            \Big\langle 
              \varepsilon, \frac{ ( I - \widehat{ \Pi }_{J} ) g_{k} } 
                                { \| g_{k} \|_{n}                   } 
            \Big\rangle_{n} 
          + 
            \Big\langle 
              f^{*},
              \frac{ ( I - \widehat{ \Pi }_{J} ) g_{k} }
                   { \| g_{k} \|_{n} } 
            \Big\rangle_{n}. 
    \notag 
  \end{align}
  Note that the choice of the next component in Algorithm \ref{alg_OMP} is based on \( \widehat{ \mu }_{ J, k } \) and \( \mu_{ J, k } \) is its population counterpart.

  In Step 4, we show that
  \begin{align}
    \mathbb{P}( A_{n}( M_{n} )^{c} ) & \xrightarrow[ ]{ n \to \infty } 0, 
    \text{ where } 
    \\ 
        A_{n}(m)
    & = 
        \Big\{ 
          \sup_{ | J | \le m, j \le p } 
          | \widehat{ \mu }_{ J, j } - \mu_{ J, j } | 
          \le \tilde C \sqrt{ \frac{ ( \sigma^{2} + \rho^{4} ) \log p }{n} }
        \Big\}
        \notag 
  \end{align}
  for some constant \( C > 0 \). 
  Then, for any $ \xi \in ( 0, 1 ) $, we set
  \begin{align}
        B_{n}(m): 
    = 
        \Big\{ 
          \inf_{ l \le m } \sup_{ j \le p } 
          | \mu_{ \widehat{J}_{l}, j } | 
          > \frac{ 2 \tilde C }{ 1 - \xi }
            \sqrt{ \frac{ ( \sigma^{2} + \rho^{4} ) \log p }{n} } 
        \Big\}. 
  \end{align}
  In Step 2 of the proof, we show that on $ A_{n}(m) \cap B_{n}(m) $,
  \begin{align}
        | \mu_{ \widehat{J}_{l}, \widehat{j}_{ l + 1 } } | 
    \ge 
        \xi \sup_{ j \le p } 
        | \mu_{ \widehat{J}_{l}, j } | 
    \qquad \text{ for all } l \le m,
  \end{align}
  which implies 
  \begin{align}
    \label{eq_BoundForThePopulationBias_RecursiveBoundForTheBias}
          \| ( I - \Pi_{ l + 1 } ) f^{*} \|_{ L^2( \mathbb{P} ) }^{2}
    & \le
          \| ( I - \Pi_{l} ) f^{*} \|_{ L^2( \mathbb{P} ) }^{2}
          \Big( 
            1 
            -
            c \big( 
              \| ( I - \Pi_{l} ) f^{*} \|_{ L^2( \mathbb{P} ) }^{2}
            \big)^{ 1 / ( 2 \gamma - 1 ) } 
          \Big) 
    \\
    &     \qquad \qquad \qquad \qquad \qquad \qquad \qquad \qquad 
          \text{ for all } l \le m - 1.
    \notag
  \end{align}
  Lemma 1 from Gao et al. \cite{GaoEtal2013LqHulls} states that any sequence $ ( a_{m} )_{ m \in \mathbb{N}_{0} } $ for which there exist $ A, c > 0 $ 
  and $ \alpha \in ( 0, 1 ] $ with 
  \begin{align}
    a_{0} \le A 
    \qquad \text{ and } \qquad 
    a_{ m + 1 } \le a_{m} ( 1 - c a_{m}^{ \alpha } )
    \qquad \text{ for all } m = 0, 1, 2, \dots 
  \end{align}
  satisfies 
  \begin{align}
            a_{m} 
    & \le 
            \max \{ 2^{ 1 / \alpha^{2} } ( c \alpha )^{ - 1 / \alpha }, A \} 
            m^{ - 1 / \alpha }
    \qquad \text{ for all } m = 0, 1, 2, \dots 
  \end{align}
  Applying this to 
  \(
    ( \| ( I - \Pi_{m} ) f \|_{ L^{2} }^{2} )_{ m \in \mathbb{N}_{0} } 
  \)
  then yields 
  \begin{align}
        \| ( I - \Pi_{m} ) f^{*} \|_{ L^2( \mathbb{P} ) }^{2}
    \le 
        C m^{ 1 - 2 \gamma }
    \qquad \text{ on } A_{n}(m) \cap B_{n}(m). 
  \end{align}
  In Step 3, we independently show that
  \begin{align}
          \| ( I - \Pi_{m} ) f^{*} \|_{ L^2( \mathbb{P} ) }^{2} 
    & \le 
          C 
          \Big(
            \frac{ ( \overline{ \sigma }^{2} + \rho^{4}  ) \log p }{n} 
          \Big)^{ 1 - 1 / ( 2 \gamma ) }
          \qquad \text{on } B_{n}(m)^{c}. 
  \end{align}
  This establishes that on \( A_{n}( M_{n} ) \), 
  \begin{align}
    \label{eq_BoundForThePopulationBias_ResultForMLeSqrtKn}
          \| ( I - \Pi_{m} ) f^{*} \|_{ L^2( \mathbb{P} ) }^{2}
    & \le 
          C
          \Big( 
            m^{ 1 - 2 \gamma } 
          + 
            \Big(
              \frac{ ( \overline{ \sigma }^{2} + \rho^{4}  ) \log p }{n} 
            \Big)^{ 1 - 1 / ( 2 \gamma ) }
          \Big)
          \qquad \text{ for all } m \le M_{n}. 
  \end{align}
  Finally, the monotonicity of 
  \( 
    m \mapsto \| ( I - \Pi_{m} ) f^{*} \|_{ L^2( \mathbb{P} ) }^{2}
  \) 
  yields that the estimate in Equation
  \eqref{eq_BoundForThePopulationBias_ResultForMLeSqrtKn} also holds for 
  \( m > M_{n} \) as \( m^{ 1 - 2 \gamma } \) then becomes a lower
  order term.

  \textbf{Step 2: Analysis on $ A_{n}(m) \cap B_{n}(m) $.} 
  On $ A_{n}(m) \cap B_{n}( m  ) $, for any $ l \le m $, we have
  \begin{align}
    \label{eq_BoundForThePopulationBias_WOGACondition}
            | \mu_{ \widehat{J}_{l}, \widehat{j}_{l + 1} } | 
    & \ge 
            | \widehat{ \mu }_{ \widehat{J}_{l}, \widehat{j}_{l + 1} } | 
          - 
            | 
              \widehat{ \mu }_{ \widehat{J}_{l}, \widehat{j}_{l + 1} }
            - \mu_{ \widehat{J}_{l}, \widehat{j}_{l + 1} }
            | 
    \ge 
            | \widehat{ \mu }_{ \widehat{J}_{l}, \widehat{j}_{l + 1} } | 
          - 
            \sup_{ | J | \le m, j \le p } 
            | \widehat{ \mu }_{ J, j } - \mu_{ J, j } | 
            \\
    & \ge 
            \sup_{ j \le p } 
            | \widehat{ \mu }_{ \widehat{J}_{j}, j } | 
          - \tilde C \sqrt{ \frac{ ( \sigma^{2} + \rho^{4} ) \log p }{n} }  
    \ge 
            \sup_{ j \le p } 
            | \mu_{ \widehat{J}_{j}, j } | 
          - 2 \tilde C \sqrt{ \frac{ ( \sigma^{2} + \rho^{4} ) \log p }{n} }  
    \notag 
    \\
    & \ge   
          \xi \sup_{ j \le p } 
          | \mu_{ \widehat{J}_{j}, j } |, 
    \notag 
  \end{align}
  where for the third inequality, we have used the definition of Algorithm
  \ref{alg_OMP} and $ A_{n}(m) $.
  For the final inequality, we have used the definition of $ B_{n}(m) $. 

  Using the orthogonality of the projection \( \Pi_{m} \), we can always estimate
  \begin{align}
    \label{eq_BoundForThePopulationBias_OrthogonalBiasEstimate}
        \| ( I - \Pi_{l} ) f^{*} \|_{ L^{2} }^{2} 
    & = 
        \Big\langle 
          ( I - \Pi_{l} ) f^{*}, 
          \sum_{ j = 1 }^{p} \beta^{*}_{j} g_{j}
        \Big\rangle_{ L^{2} } 
    = 
        \sum_{ j \not \in \widehat{J}_{l} } 
        \beta^{*}_{j} 
        \langle ( I - \Pi_{l} ) f^{*}, g_{j} \rangle_{ L^{2} } 
    \\
    & \le 
        \sup_{ j \le p } 
        \langle ( I - \Pi_{l} ) f^{*}, g_{j} \rangle_{ L^{2} } 
        \sum_{ j \not \in \widehat{J}_{l} } 
        | \beta^{*}_{j} | 
    =
        \sup_{ j \le p } | \mu_{ \widehat{J}_{l}, j } | 
        \sum_{ j \not \in \widehat{J}_{l} } 
        | \beta^{*}_{j} |. 
    \notag
  \end{align}
  Further,
  \begin{align}
    \label{eq_BoundForThePopulationBias_LowerBiasBound}
          \| ( I - \Pi_{l} ) f^{*} \|_{ L^{2} }^{2} 
    & = 
          ( \beta^{*} - \beta( \Pi_{l} f^{*} ) )^{ \top } 
          \Gamma 
          ( \beta^{*} - \beta( \Pi_{l} f^{*} ) ) 
    \\
    & \ge 
          \lambda_{ \min }( \Gamma ) 
          \| ( \beta^{*} - \beta( \Pi_{l} f^{*} ) ) \|_{2}^{2} 
      \ge 
          c_{ \lambda }
          \sum_{ j \not \in \widehat{J}_{l} } 
          | \beta^{*}_{j} |^{2}, 
    \notag
  \end{align}
  where in the last step, we have used that 
  \( 
    \beta( \Pi_{l} f^{*} )_{l} = 0
  \) 
  for all \( l \not \in \widehat{J}_{l} \).
  Together with the \( \gamma \)-sparsity Assumption, Equations
  \eqref{eq_BoundForThePopulationBias_OrthogonalBiasEstimate} and
  \eqref{eq_BoundForThePopulationBias_LowerBiasBound} yield 
  \begin{align}
    \label{eq_BoundForThePopulationBias_CombinationBound}
          \| ( I - \Pi_{l} ) f^{*} \|_{ L^{2} }^{2} 
    & \le 
          \sup_{ j \le p } | \mu_{ \widehat{J}_{l}, j } | 
          \sum_{ j \not \in \widehat{J}_{l} } 
          | \beta_{j}^{*} | 
    \le 
          \sup_{ j \le p } | \mu_{ \widehat{J}_{l}, j } | 
          \Big( 
            \sum_{ j \not \in \widehat{J}_{l} } 
            | \beta_{j}^{*} |^{2}
          \Big)^{ ( \gamma - 1 ) / ( 2 \gamma - 1 ) } 
    \\
    & \le 
          C \sup_{ j \le p } | \mu_{ \widehat{J}_{l}, j } | 
          \big( 
            \| ( I - \Pi_{l} ) f^{*} \|_{ L^{2} }^{2} 
          \big)^{ ( \gamma - 1 ) / ( 2 \gamma - 1 ) }.
    \notag
  \end{align}

  Since the inequality in \eqref{eq_BoundForThePopulationBias_WOGACondition} holds on 
  \( A_{n}(m) \cap B_{n}(m) \), we have that for any \( l \le m - 1 \), 
  \begin{align}
          \| ( I - \Pi_{ l + 1 } ) f^{*} \|_{ L^{2} }^{2} 
    & \le 
          \| 
            ( I - \Pi_{l} ) f^{*} 
          - 
            \mu_{ \widehat{J}_{m}, \widehat{j}_{ l + 1 } } 
            g_{ \widehat{j}_{ l + 1 } } 
          \|_{ L^{2} }^{2} 
          =  
          \| ( I - \Pi_{l} ) f^{*} \|_{ L^{2} }^{2} 
          - 
          \mu_{ \widehat{J}_{l}, \widehat{j}_{ l + 1 } }^{2} 
    \notag
    \\
    & \le 
          \| ( I - \Pi_{l} ) f^{*} \|_{ L^{2} }^{2} 
          - 
          \xi^{2}
          \sup_{ j \le p } \mu_{ \widehat{J}_{m}, j }^{2} 
    \\
    & \le 
          \| ( I - \Pi_{l} ) f^{*} \|_{ L^{2} }^{2} 
          - 
          c \big( 
            \| ( I - \Pi_{l} ) f^{*} \|_{ L^{2} }^{2} 
          \big)^{ 2 \gamma / ( 2 \gamma - 1 ) } 
    \notag 
    \\
    & \le 
          \| ( I - \Pi_{l} ) f^{*} \|_{ L^{2} }^{2} 
          \Big( 
            1 
            - 
            c \big( 
              \| ( I - \Pi_{l} ) f^{*} \|_{ L^{2} }^{2}
            \big)^{ 1 / ( 2 \gamma - 1 ) }
          \Big). 
    \notag
  \end{align}

  \textbf{Step 3: Analysis on \( A_{n}(m) \cap B_{n}(m)^{c} \).} 
  Equation \eqref{eq_BoundForThePopulationBias_CombinationBound} implies that
  \begin{align}
          \| ( I - \Pi_{l} ) f^{*} \|_{ L^{2} }^{2} 
    & \le 
          C \Big( 
            \sup_{ j \le p } | \mu_{ \widehat{J}_{l}, j } |
          \Big)^{ 2 - 1 / \gamma }. 
  \end{align}
  On $ B_{n}(m)^{c} $, we therefore obtain by the monotonicity of 
  \( 
    m \mapsto \| ( I - \Pi_{m} ) f^{*} \|_{ L^{2} }^{2}
  \) 
  that
  \begin{align}
    & \ \ \ \
            \| ( I - \Pi_{m} ) f^{*} \|_{ L^{2} }^{2} 
      = 
            \inf_{ l \le m } 
            \| ( I - \Pi_{l} ) f^{*} \|_{ L^{2} }^{2} 
    \\
    & \le 
            C \Big( 
              \min_{ l \le m } 
              \sup_{ j \le p } 
              | \mu_{ \widehat{J}_{l}, j } |
            \Big)^{ 2 - 1 / \gamma } 
    \le     
            C 
            \Big( 
              \frac{ ( \sigma^{2} + \rho^{4} ) \log p }{n}
            \Big)^{ 1 - 1 / ( 2 \gamma ) }.
    \notag
  \end{align}

  \textbf{Step 4: $ \mathbb{P}( A_{n}( M_{n} )^{c} ) \to 0 $.} 
  Since for \( k \in J \), \( \widehat{ \mu }_{ J, k } = \mu_{ J, k } = 0, \) we only need to consider the case \( k \not \in J \).
  For \( | J | \le M_{n} \), we may write
  \begin{align}
    \label{eq_BoundForThePopulationBias_DecompositionOfTheDotProductDifference}
          \widehat{ \mu }_{ J, k } - \mu_{ J, k }
    & = 
          \Big\langle 
            \varepsilon, 
            \frac{ ( I - \widehat{ \Pi }_{J} ) g_{k} }
                 { \| g_{k} \|_{n} } 
          \Big\rangle_{n} 
          \\
    & + 
          \sum_{ j \not \in J } 
          \beta_{j} 
          \Big[ 
            \frac{ 
                   \langle
                     g_{j}, ( I - \widehat{ \Pi }_{J} ) g_{k} 
                   \rangle_{n}
                 }
                 { \| g_{k} \|_{n} }
          - \langle g_{j}, ( I - \Pi_{J} ) g_{k} \rangle_{ L^{2} } 
          \Big]. 
    \notag 
  \end{align}

  From Lemma \ref{lem_UniformBoundsInHighProbability} (i) and Corollary
  \ref{cor_ReappearingTerms} (ii), we obtain an event with probability
  converging to one on which
  \begin{align}
            \Big\langle 
              \varepsilon, 
              \frac{ ( I - \widehat{ \Pi }_{J} ) g_{k} }
                   { \| g_{k} \| } 
            \Big\rangle_{n} 
    & \le 
            2 
            \langle 
              \varepsilon, ( I - \widehat{ \Pi }_{J} ) g_{k}
            \rangle_{n} 
      \le 
            C \sqrt{ \frac{ ( \sigma^{2} + \rho^{4} ) \log p }{n} }. 
  \end{align}

  Further, for \( j, k \not \in J \), we can estimate
  \begin{align}
    & \ \ \ \
            \frac{ 
                   \langle
                     g_{j}, ( I - \widehat{ \Pi }_{J} ) g_{k} 
                   \rangle_{n}
                 }
                 { \| g_{k} \|_{n} }
          - 
            \langle g_{j}, ( I - \Pi_{J} ) g_{k} \rangle_{ L^{2} } 
    \\
    & \le 
            \Big| \frac{1}{ \| g_{k} \|_{n} } - 1 \Big| 
            \| g_{j} \|_{n} \| g_{k} \|_{n} 
          + 
            | 
              \langle g_{j}, ( I - \widehat{ \Pi }_{J} ) g_{k} \rangle_{n}
              - 
              \langle
                g_{j}, ( I - \Pi_{J} ) g_{k}
              \rangle_{ L^{2} } 
            |. 
    \notag
  \end{align}
  From Lemma \ref{lem_UniformBoundsInHighProbability} (i), we obtain an event with probability converging to one on which
  \begin{align}
    & \ \ \ \
            \Big| \frac{1}{ \| g_{k} \|_{n} } - 1 \Big| 
            \| g_{j} \|_{n} \| g_{k} \|_{n} 
      \le 
            | \| g_{k} \|_{n} - 1  | \| g_{j} \|_{n}
    \\
    & \le 
            | \| g_{k} \|_{n}^{2} - 1  | \| g_{j} \|_{n}
    \le 
            C \sqrt{ \frac{ \rho^{4} \log p }{n} }. 
    \notag
  \end{align}
  Additionally, 
  \begin{align}
    & \ \ \ \
            | 
              \langle g_{j}, ( I - \widehat{ \Pi }_{J}  ) g_{k} \rangle_{n}
            - \langle g_{j}, ( I - \Pi_{J}              ) g_{k} \rangle_{ L^2 } 
            | 
    \\
    & \le 
            | 
              \langle g_{j}, g_{k} \rangle_{n} 
            - \langle g_{j}, g_{k} \rangle_{ L^2 } 
            | 
          + 
            | \langle 
              g_{j}, ( \widehat{ \Pi }_{J} - \Pi_{J} ) g_{k} 
            \rangle_{n} | 
          + 
            | 
              \langle g_{j}, \Pi_{J} g_{k} \rangle_{n} 
            - \langle g_{j}, \Pi_{J} g_{k} \rangle_{ L^2 } 
            | 
    \notag 
    \\
    & \le 
            ( C_{ \text{Cov} } + 1 ) 
            \sup_{ j, k \le p } 
            | 
              \langle g_{j}, g_{k} \rangle_{n} 
            - \langle g_{j}, g_{k} \rangle
            | 
          +  
            | \langle
              g_{j}, \widehat{ \Pi }_{J} ( I - \Pi_{J} ) g_{k} 
            \rangle_{n} | 
    \notag
  \end{align}
  by Assumption \hyperref[ass_CovB]{\textbf{{\color{blue} (CovB)}}}.
  The first term can be treated by Lemma
  \ref{lem_UniformBoundsInHighProbability} (i) again.
  Using the representation of \( \widehat{ \Pi }_{J} \) in Equation
  \eqref{eq_1_Projections}, the second term can be estimated against
  \begin{align}
    & \ \ \ \
            | \langle
              ( I - \Pi_{J} ) g_{j},
              \widehat{ \Pi }_{J} ( I - \Pi_{J} ) g_{k} 
            \rangle_{n} | 
          + 
            | \langle 
              \Pi_{J} g_{j}, 
              \widehat{ \Pi }_{J} ( I - \Pi_{J} ) g_{k} 
            \rangle_{n} | 
    \\
    & = 
            | 
              \langle ( I - \Pi_{J} ) g_{j}, g_{J} \rangle_{n}^{ \top } 
              \widehat{ \Gamma }_{J}^{-1} 
              \langle ( I - \Pi_{J} ) g_{k}, g_{J} \rangle_{n}
            | 
          + 
            | \langle 
              \Pi_{J} g_{j}, ( I - \Pi_{J} ) g_{k} 
            \rangle_{n} | 
    \notag
    \\
    & \le 
            \| \widehat{ \Gamma }_{J}^{-1} \|_{ \text{op} } 
            \| 
              \langle ( I - \Pi_{J} ) g_{j}, g_{J} \rangle_{n}^{ \top } 
            \|_{2}^{2} 
          + 
            | 
              \langle 
                g_{J}^{ \top } \Gamma_{J}^{-1} \langle g_{j}, g_{J}\rangle, 
                ( I - \Pi_{J} ) g_{k} 
              \rangle_{n} 
            | 
    \notag
    \\
    & \le 
            \| \widehat{ \Gamma }_{J}^{-1} \|_{ \text{op} } M_{n}
            \sup_{ j \in J, k \not \in J } 
            | \langle ( I - \Pi_{J} ) g_{k}, g_{j} \rangle_{n} |^{2} 
          + 
            C_{ \text{Cov} } 
            \sup_{ j \in J, k \not \in J } 
            | \langle ( I - \Pi_{J} ) g_{k}, g_{j} \rangle_{n} |.
    \notag
  \end{align}
  Analogously to before, the remaining terms can now be treated by Lemma
  \ref{lem_UniformBoundsInHighProbability} (iv) and Corollary
  \ref{cor_ReappearingTerms} (i).  The result now follows by intersecting all
  the events and taking the supremum in Equation
  \eqref{eq_BoundForThePopulationBias_DecompositionOfTheDotProductDifference}. 

  \textbf{Step 5: \( s \)-sparse setting.}
  Under Assumption \hyperref[ass_Sparse]{\normalfont \textbf{{\color{blue}
  (Sparse)}}} (i), the general argument from Step 1 is the same. 
  However, we need to consider the events
  \begin{align}
        A_{n}(m): 
    & = 
        \Big\{ 
          \sup_{ | J | \le m, k \le p } 
          | \widehat{ \mu }_{ J, k } - \mu_{ J, k }  | 
          \le 
          C \sqrt{ 
            \frac{
              ( \overline{ \sigma }^{2} + \| \beta^{*} \|_{1} \rho^{4} ) 
              s \log p
            }{n}
          }
        \Big\},
    \\
        B_{n}(m): 
    & = 
        \Big\{ 
          \inf_{ l \le m } \sup_{ l \le p } 
          | \mu_{ \widehat{J}_{l}, j } | 
          > 
          \frac{ 2 C }{ 1 - \xi } 
          \sqrt{ 
            \frac{
              ( \overline{ \sigma }^{2} + \| \beta^{*} \|_{1} \rho^{4} ) 
              s \log p
            }{n}
          }
        \Big\}.
    \notag
  \end{align}
  On \( A_{n}(m) \cap B_{n}(m)^{c} \), the analysis is exactly the same as before.
  In Step 4, we can also argue the same. 
  We merely have to account for the coefficients of \( f^{*} \) by a factor
  \( \| \beta^{*} \|_{1} \), instead of shifting them into the constant.

  On \( A_{n}(m) \cap B_{n}(m) \), we obtain that for any \( l \le m \),
  \begin{align}
            \| ( I - \Pi_{l} ) f^{*} \|_{ L^{2} }^{2} 
    & \le 
            \sup_{ | J | \le l, j \le p } | \mu_{ J, j } |
            \sum_{ j \not \in \widehat{J}_{l} } 
            | \beta^{*}_{j} | 
    \le 
            \sup_{ | J | \le l, j \le p } | \mu_{ J, j } |
            \Big( 
              s
              \sum_{ j \not \in \widehat{J}_{l} } 
              | \beta^{*}_{j} |^{2}
            \Big)^{ \frac{1}{2} }. 
  \end{align}
  Since additionally, \( 
        \sum_{ j \not \in \widehat{J}_{l} } 
        | \beta^{*}_{j} |^{2}
    \le 
        c \| ( I - \Pi_{l} ) f^{*} \|_{ L^{2} }^{2}
  \), we have
  \begin{align}
          \sup_{ | J | \le l, j \le p } \mu_{ J, j }^{2}
    & \ge
          \frac{c}{s} 
          \| ( I - \Pi_{l} ) f^{*} \|_{ L^{2} }^{2}.
  \end{align}
  Recursively, this yields
  \begin{align}
          \| ( I - \Pi_{m} ) f^{*} \|_{ L^{2} }^{2} 
    & \le 
          \| 
            ( I - \Pi_{ m - 1 } ) f^{*} 
          - 
            \mu_{ \widehat{J}_{ m - 1 }, \widehat{j}_{m} } 
            g_{ \widehat{j}_{m} } 
          \|_{ L^{2} }^{2} 
        =
          \| ( I - \Pi_{ m - 1 } ) f^{*} \|_{ L^{2} }^{2} 
          - 
          \mu_{ \widehat{J}_{ m - 1 }, \widehat{j}_{m} }^{2} 
    \notag
    \\
    & \le 
          \| ( I - \Pi_{ m - 1 } ) f^{*} \|_{ L^{2} }^{2} 
          - 
          \xi^{2}
          \sup_{ j \le p } \mu_{ \widehat{J}_{ m - 1 }, j }^{2} 
      \le 
          \| ( I - \Pi_{ m - 1 } ) f^{*} \|_{ L^{2} }^{2} 
          \Big( 1 - \frac{c}{s} \Big) 
    \\
    & \le 
          \| f^{*} \|_{ L^{2} }^{2} 
          \exp \Big( \frac{ - c m }{s} \Big).
    \notag
  \end{align}

  Finally, as long as \( S \not \subset \widehat{J}_{m} \), 
  \begin{align}
            \| ( I - \Pi_{m} ) f^{*} \|_{ L^{2} }^{2} 
    & \ge 
            \beta( ( I - \Pi_{m} ) f^{*} )^{ \top }
            \Gamma 
            \beta( ( I - \Pi_{m} ) f^{*} )
      \ge 
            c_{ \lambda } \underline{ \beta }^{2}.
  \end{align}
  However, for \( m = C s \) with some \( C > 0 \) and \( n \in \mathbb{N} \)
  large enough, 
  \begin{align}
            \| ( I - \Pi_{m} ) f^{*} \|_{ L^{2} }^{2} 
    & \le 
            \frac{ c_{ \lambda } }{4} \underline{ \beta }^{2} 
          + 
            \sqrt{ 
              \frac{
                ( \overline{ \sigma }^{2} + \| \beta^{*} \|_{1} \rho^{4} ) 
                s \log p
              }{n}
            }
    \le 
            \frac{ c_{ \lambda } }{2} \underline{ \beta }^{2} 
  \end{align}
  with probability converging to one.
  This yields the last claim of Proposition \ref{prp_BoundForThePopulationBias}. 
\end{proof}

\begin{proof}[Proof of Proposition \ref{prp_FastNormChangeForTheBias} (Fast norm change for the bias)]
  For a fixed \( m \le M_{n} \), let \( \tilde \beta \) be the coefficients of
  \( ( I - \Pi_{m} ) f^{*} \). 
  For \( \gamma \)-sparse \( \beta^{*} \), we then have
  \begin{align}
    \label{eq_FastNormChangeForTheBias1}
    & \ \ \ \
            | 
              \| ( I - \Pi_{m} ) f^{*} \|_{n}^{2} 
              - 
              \| ( I - \Pi_{m} ) f^{*} \|_{ L^2 }^{2}
            | 
    = 
            | 
              \sum_{ j, k = 1 }^{p} 
              \tilde \beta_{j} 
              \tilde \beta_{k} 
              ( 
                \langle g_{j}, g_{k} \rangle_{n} 
                - 
                \langle g_{j}, g_{k} \rangle_{ L^2 }
              )
            | 
    \\
    & \le 
            \| \tilde \beta \|_{1}^{2}
            \sup_{ j, k \le p } 
            | 
              \langle g_{j}, g_{k} \rangle_{n} 
              - 
              \langle g_{j}, g_{k} \rangle_{ L^2 }
            | 
    \notag
    \\
    & \le 
            ( C_{ \text{Cov} } + 1 )
            \Big( 
              \sum_{ j \not \in \widehat{J}_{m} } | \beta^{*}_{j} | 
            \Big)^{2} 
            \sup_{ j, k \le p } 
            | 
              \langle g_{j}, g_{k} \rangle_{n} 
            - 
              \langle g_{j}, g_{k} \rangle_{ L^2 }
            | 
    \notag
    \\
    & \le 
            ( C_{ \text{Cov} } + 1 )
            C_{ \gamma } 
            \Big( 
              \sum_{ j \not \in \widehat{J}_{m} } | \beta^{*}_{j} |^{2} 
            \Big)^{ \frac{ 2 \gamma - 2 }{ 2 \gamma - 1 } }
            \sup_{ j, k \le p } 
            | 
              \langle g_{j}, g_{k} \rangle_{n} 
            - 
              \langle g_{j}, g_{k} \rangle_{ L^2 }
            |, 
    \notag 
  \end{align}
  where the second inequality follows from the uniform Baxter inequality in
  \eqref{eq_1_UniformBaxtersInequality}.
  Additionally, we have
  \begin{align}
            \| ( I - \Pi_{m} ) f^{*} \|_{ L^2 }^{2} 
    & = 
            \tilde \beta^{ \top } \Gamma \tilde \beta 
    \ge 
            \lambda_{ \min }( \Gamma ) 
            \| \tilde \beta \|_{2}^{2} 
    \ge 
            c_{ \lambda } 
            \sum_{ j \not \in \widehat{J}_{m} } 
            | \beta^{*}_{j} |^{2}.
  \end{align}
  Plugging this inequality into Equation \eqref{eq_FastNormChangeForTheBias1}
  yields the result.
  For \( s \)-sparse \( \beta^{*} \), the statement in Proposition
  \ref{prp_FastNormChangeForTheBias} is obtained analogously by using the
  inequality
  \( 
    \| \tilde \beta \|_{1}^{2} 
    \le 
    ( s + m ) \| \tilde \beta \|_{2}^{2}
  \). 
  It follows from the fact that the projection \( ( I - \Pi_{m} ) \) adds at
  most \( m \) components to the support of \( \beta^{*} \).
\end{proof}

\begin{lemma}[Uniform Baxter's inequality, Ing \cite{Ing2020ModelSelection}]
  \label{lem_UniformBaxtersInequality}
  Under Assumption \hyperref[ass_CovB]{\textbf{{\color{blue} (CovB)}}},
  for any $ J \subset \{ 1, \dots, p \} $ with 
  \( | J | \le M_{n} \), we have
  \begin{align*}
            \| \beta( ( I - \Pi_{J} ) f^{*} )  \|_{1}
    & \le 
            ( C_{ \text{Cov} } + 1 ) 
            \sum_{ j \not \in J } | \beta^{*}_{j} |,
  \end{align*}
  where \( \tilde \beta_{J} \) denotes the coefficients of the population 
  residual term \( ( I - \Pi_{J} ) f^{*} \).
\end{lemma}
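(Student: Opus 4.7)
The plan is to compute the coefficients of $(I - \Pi_J) f^*$ explicitly using the representation of $\Pi_J$ from Equation \eqref{eq_1_Projections} and then read off the $\ell^1$-bound directly from Assumption \hyperref[ass_CovB]{\normalfont \textbf{{\color{blue} (CovB)}}}. The key observation is that $\Pi_J f^*$ has support in $J$, so the coefficients $\tilde\beta := \beta((I - \Pi_J)f^*)$ split cleanly into a piece on $J$ that measures the adjustment made by the projection and a piece off $J$ that coincides with $\beta^*_{J^c}$. This immediately gives
\begin{align*}
  \|\tilde\beta\|_1 = \|\tilde\beta_J\|_1 + \sum_{j \notin J} |\beta_j^*|,
\end{align*}
so the entire task reduces to bounding $\|\tilde\beta_J\|_1$ by $C_{\text{Cov}} \sum_{j \notin J} |\beta_j^*|$.

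First, I would use the representation $\Pi_J f^* = g_J^\top \Gamma_J^{-1} \langle f^*, g_J \rangle_{L^2}$, so that the coefficients of $\Pi_J f^*$ on $J$ are $\Gamma_J^{-1} \langle f^*, g_J \rangle_{L^2}$. Expanding $f^* = \sum_k \beta_k^* g_k$ and splitting the inner product according to whether $k \in J$ or $k \notin J$ gives
\begin{align*}
  \Gamma_J^{-1} \langle f^*, g_J \rangle_{L^2}
  = \Gamma_J^{-1} \Gamma_J \beta^*_J + \sum_{k \notin J} \beta_k^* \Gamma_J^{-1} v_k
  = \beta^*_J + \sum_{k \notin J} \beta_k^* \Gamma_J^{-1} v_k,
\end{align*}
where $v_k = \langle g_k, g_J \rangle_{L^2}$ is exactly the vector from Assumption \hyperref[ass_CovB]{\normalfont \textbf{{\color{blue} (CovB)}}}. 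Consequently, $\tilde\beta_J = \beta^*_J - \Gamma_J^{-1} \langle f^*, g_J \rangle_{L^2} = -\sum_{k \notin J} \beta_k^* \Gamma_J^{-1} v_k$.

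Second, I would apply the triangle inequality in $\ell^1$:
\begin{align*}
  \|\tilde\beta_J\|_1 \le \sum_{k \notin J} |\beta_k^*| \, \|\Gamma_J^{-1} v_k\|_1 \le C_{\text{Cov}} \sum_{k \notin J} |\beta_k^*|,
\end{align*}
where the second inequality uses precisely the condition \eqref{eq_1_CovB_2} under the assumption $|J| \le M_n$. Combining this with the initial decomposition gives $\|\tilde\beta\|_1 \le (C_{\text{Cov}} + 1) \sum_{j \notin J} |\beta_j^*|$, which is the claim.

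There is essentially no main obstacle here: the proof is a direct algebraic identification of coefficients followed by the application of Assumption \hyperref[ass_CovB]{\normalfont \textbf{{\color{blue} (CovB)}}}. The only care needed is to ensure the assumption is invoked correctly, i.e.\ to recognize that $\Gamma_J^{-1} v_k$ for $k \notin J$ is exactly the object bounded in \eqref{eq_1_CovB_2}, which requires $|J| \le M_n$; this is the sole role played by the cardinality hypothesis in the statement.
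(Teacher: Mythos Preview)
Your proposal is correct and takes essentially the same approach as the paper: both split the residual coefficients into their $J$- and $J^{c}$-parts, identify $\tilde\beta_{J} = -\sum_{k\notin J}\beta_{k}^{*}\,\Gamma_{J}^{-1}v_{k}$ via the explicit form of $\Pi_{J}$, and then apply the $\ell^{1}$-bound from Assumption~\hyperref[ass_CovB]{\textbf{{\color{blue} (CovB)}}}.
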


\begin{proof}[Proof]
  For any \( J \subset \{ 1, \dots, p \} \) as above, we have
  \begin{align}
            \| \beta( ( I - \Pi_{J} ) f^{*} ) \|_{1}
    & = 
            \| \beta^{*} - \beta( \Pi_{J} f^{*} ) \|_{1} 
    \le 
            \| \beta^{*}_{J} - \beta( \Pi_{J} f^{*} ) \|_{1} 
          + 
            \sum_{ j \not \in J } | \beta^{*}_{j} | 
    \\
    & \le 
            ( C_{ \text{Cov} } + 1 ) 
            \sum_{ j \not \in J } | \beta^{*}_{j} |, 
    \notag
  \end{align}
  where for the last inequality, we have used that
  \begin{align}
    & \ \ \ \
            \beta^{*}_{J} - \beta( \Pi_{J} f^{*} )
    = 
            \Gamma_{J}^{-1} \Gamma_{J} 
            ( 
              \beta^{*}_{J} 
            -
              \Gamma_{J}^{-1} \langle f^{*}, g_{J} \rangle_{ L^{2} } 
            ) 
    \\
    & = 
            \Gamma_{J}^{-1} 
            \Big( 
              \Gamma_{J} \beta^{*}_{J} 
            -
              \sum_{ j = 1 }^{p} 
              \beta^{*}_{j} 
              \langle g_{j}, g_{J} \rangle_{ L^{2} } 
            \Big) 
    = 
          - \sum_{ j \not \in J } 
            \beta^{*}_{j} 
            \Gamma_{J}^{-1} 
            \langle g_{j}, g_{J} \rangle_{ L^{2} } 
    \notag
  \end{align}
  and condition \eqref{eq_1_CovB_2} in its formulation from Section
  \ref{ssec_FurtherNotation}. 
\end{proof}


\section{Simulation study}
\label{sec_SimulationStudy}

In this section, we provide additional simulation results.
We begin by displaying the boxplots of the stopping times for the different
scenarios from Section \ref{sec_NumericalSimulationsAndATwoStepProcedure}. 
In order to indicate whether stopping happened before or after the classical
oracle \( 
  m^{ \mathfrak{o} } 
  = 
  \argmin_{ m \ge 0 } \| \widehat{F}^{ (m) } - f^{*} \|_{n}^{2}
\), we report the difference \( 
  \tau - m^{ \mathfrak{o} }
\) or \( 
  \tau_{ \text{two-step} } - m^{ \mathfrak{o} }
\).
Figures
\ref{fig_Uncorrelated_StoppingTimes_TrueNoise}-\ref{fig_Uncorrelated_StoppingTimes_TwoStep}
correspond to the Figures \ref{fig_Uncorrelated_RelativeEfficiencies_TrueNoise},
\ref{fig_Uncorrelated_RelativeEfficiencies_lambda0_1},
\ref{fig_Uncorrelated_RelativeEfficiencies_lambda0_05} and
\ref{fig_Uncorrelated_RelativeEfficiencies_TwoStep} respectively.
The results clearly indicate that for the true empirical noise level, the
sequential early stopping time matches the classical oracle very closely.
For the estimated noise level with \( 
  \lambda_{0} = \sqrt{ \log(p) / n }
\), this is still true for the very sparse signals.
For the estimated noise level with \( 
  \lambda_{0} = \sqrt{ 0.5 \log(p) / n }
\), the stopping times systematically overestimate the classical oracle in the 
sparse signals, which is then corrected by the two-step procedure.

\begin{minipage}{0.5\textwidth}
  \centering
  \begin{figure}[H]
    \includegraphics[width=0.99\linewidth]{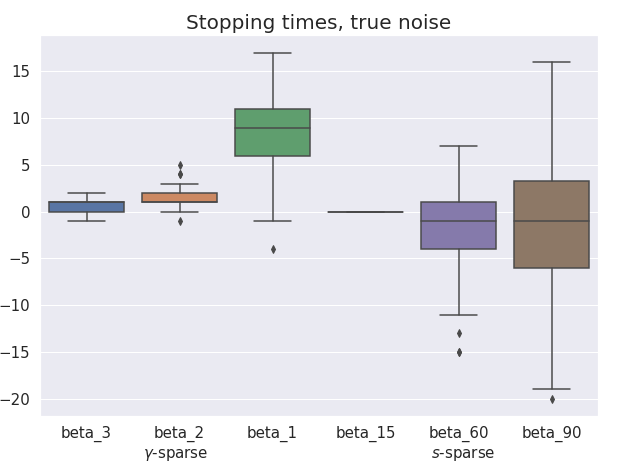}
    \\[-1.0ex]
    \caption{
      Stopping times for early stopping with the true empirical noise level.
    }
    \label{fig_Uncorrelated_StoppingTimes_TrueNoise} 
  \end{figure}
\end{minipage}
\begin{minipage}{0.5\textwidth}
  \centering
  \begin{figure}[H]
    \includegraphics[width=0.99\linewidth]{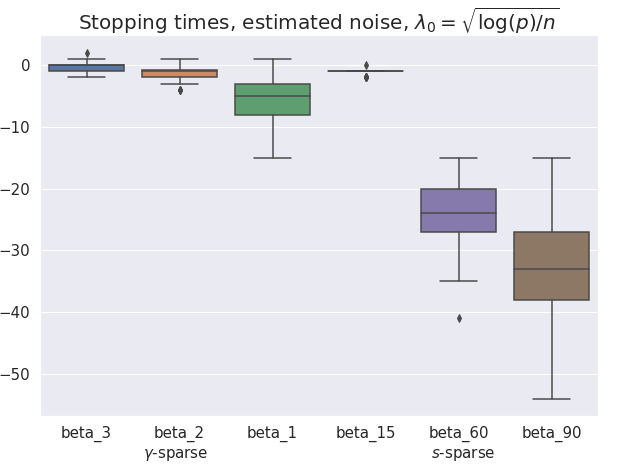}
    \\[-1.0ex]
    \caption{
      Stopping times for early stopping with the estimated empirical noise level
      for \( \lambda_{0} = \sqrt{ \log(p) / n } \).
    }
    \label{fig_Uncorrelated_StoppingTimes_lambda0_1} 
  \end{figure}
\end{minipage}

\begin{minipage}{0.5\textwidth}
  \centering
  \begin{figure}[H]
    \includegraphics[width=0.99\linewidth]{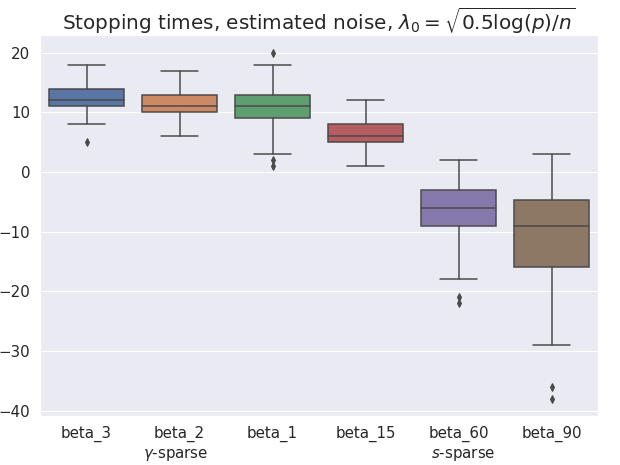}
    \\[-1.0ex]
    \caption{
      Stopping times for early stopping with the estimated empirical noise level
      for \( \lambda_{0} = \sqrt{ 0.5 \log(p) / n } \).
    }
    \label{fig_Uncorrelated_StoppingTimes_lambda_05} 
  \end{figure}
\end{minipage}
\begin{minipage}{0.5\textwidth}
  \centering
  \begin{figure}[H]
    \includegraphics[width=0.99\linewidth]{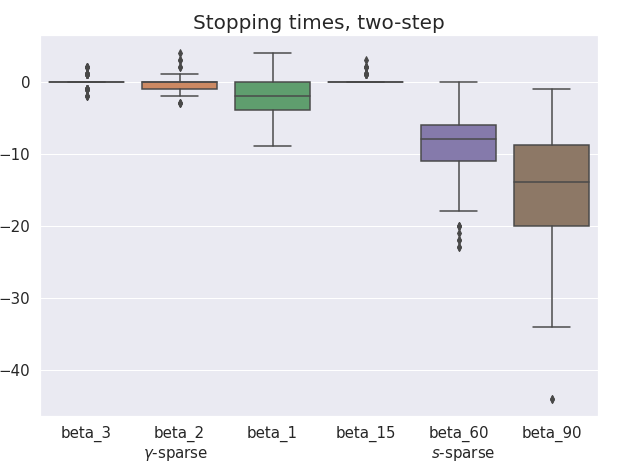}
    \\[-1.0ex]
    \caption{
      Stopping times for the two-step procedure.
    }
    \hspace{1cm}
    \label{fig_Uncorrelated_StoppingTimes_TwoStep} 
  \end{figure}
\end{minipage}

\begin{figure}[H]
  \centering
  \includegraphics[width=0.497\textwidth]{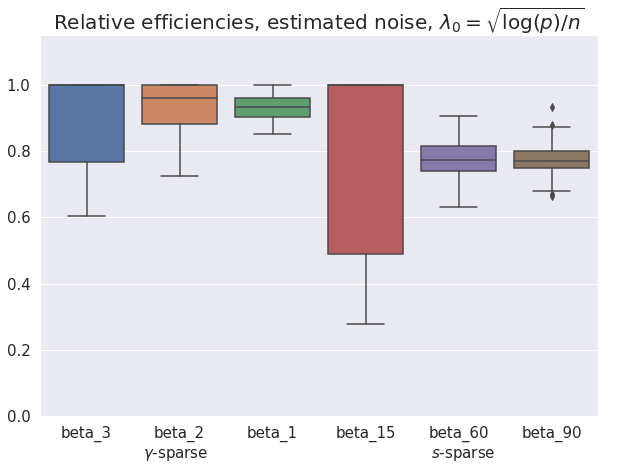}
  \includegraphics[width=0.497\textwidth]{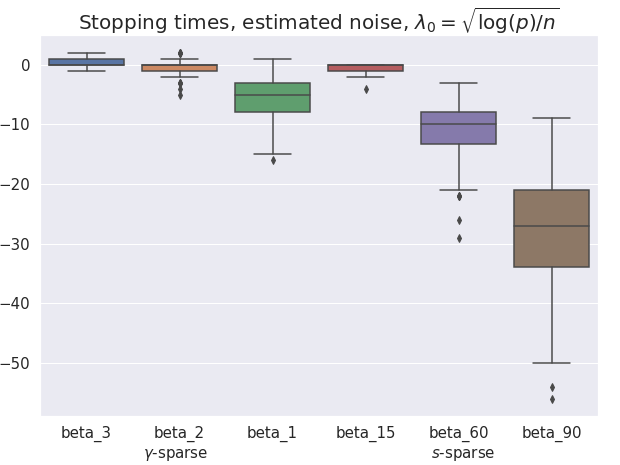}
  \\[-1.0ex]
  \caption{
    Boxplots for the relative efficiencies and the deviation \(
      \tau - m^{ \mathfrak{o} } 
    \) of the stopping time from the classical oracle for the estimated noise
    level with \( 
      \lambda_{0} = \sqrt{ \log(p) / n }
    \) in the correlated design setting.
  }
  \label{fig_Correlated_lambda0_1} 
\end{figure}

\begin{figure}[H]
  \centering
  \includegraphics[width=0.497\textwidth]{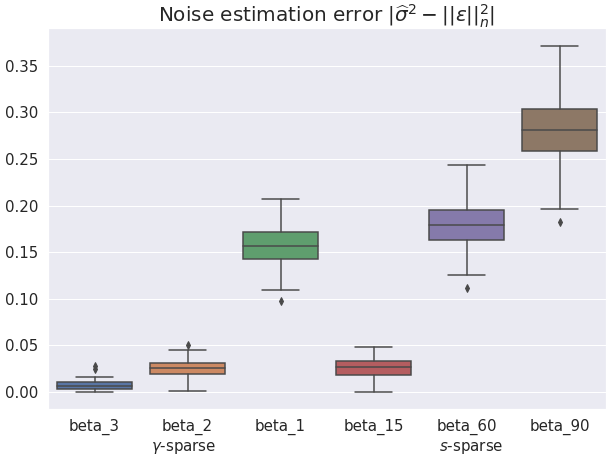}
  \includegraphics[width=0.497\textwidth]{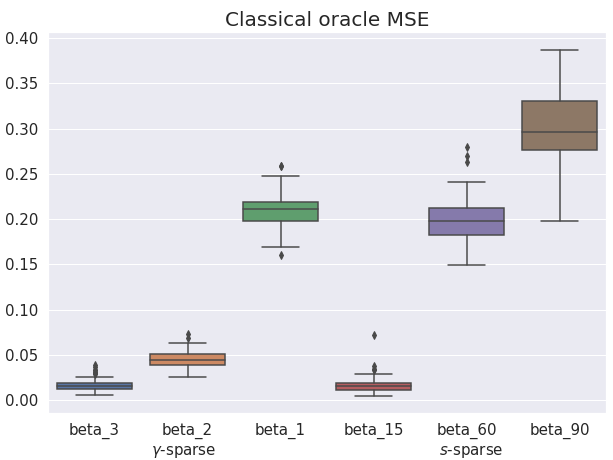}
  \\[-2.0ex]
  \caption{
    Boxplots for the noise estimation errors with \( 
      \lambda_{0} = \sqrt{ \log(p) / n }
    \) together with the classical oracle risk in the correlated design setting.
  }
\end{figure}

For a correlated design simulation, we use the same setting as in Section
\ref{sec_NumericalSimulationsAndATwoStepProcedure} but instead of \( 
  \Gamma = I_{p}
\), we consider the covariance matrix
\begin{align}
  \Gamma: 
  = 
  \begin{pNiceMatrix}[columns-width=.5cm]
    1 & a       & b      &        &   \\
    a & \Ddots  & \Ddots & \Ddots &   \\
    b & \Ddots  &        &        & b \\
      & \Ddots  &        &        & a \\
      &         & b      & a      & 1 \\
  \end{pNiceMatrix}
  \in \mathbb{R}^{ p \times p } 
  \qquad \text{ for } a = 0.4, b = 0.1.
\end{align}

This allows for substantial serial correlation over the whole set of covariates.
Since \( a + b \le 1 / 2 \), the estimate
\begin{align}
      v^{ \top } \Gamma v
  & = 
      \sum_{ j = 1 }^{p} v_{j}^{2} 
      + 
      2 a \sum_{ j = 1 }^{ p - 1 } v_{j} v_{ j + 1 } 
      + 
      2 b \sum_{ j = 1 }^{ p - 2 } v_{j} v_{ j + 2 } 
      > 
      ( 1 - 2 ( a + b ) ) \| v \|_{2}^{2}
\end{align}
for all \( v \in \mathbb{R}^{p} \) guarantees that \( \Gamma \) is a well 
defined positive definite covariance matrix.
Coincidentally, this also guarantees that the cumulative coherence satisfies \( 
  \mu(m) \le 1 / 2
\) for all \( m \ge 1 \). 
By Example \ref{expl_BoundedCovariance} (b), we can therefore assume that
Assumption \hyperref[ass_CovB]{\textbf{{\color{blue} (CovB)}}} is satisfied.
The medians of the classical oracles \( m^{ \mathfrak{o} } \) are given by \( 
  ( 5, 10, 28, 15, 53, 72 )
\) in the same order as the signals are displayed in Figure
\ref{fig_Correlated_lambda0_1}.
The medians of the balanced oracle \( m^{ \mathfrak{b} } \) are given by \( 
  ( 6, 13, 54, 15, 55, 80 )
\). 
Here, both the early stopping procedure with the noise estimate for \( 
  \lambda_{0} = \sqrt{ \log(p) / n }
\) and the two-step procedure match the benchmark results for the full Akaike
selection from Ing \cite{Ing2020ModelSelection} and the 5-fold cross-validated
{\tt LassoCV} from {\tt Scikit-learn} \cite{PedregosaEtal2011ScikitLearn}.

\begin{figure}[H]
  \centering
  \includegraphics[width=0.497\textwidth]{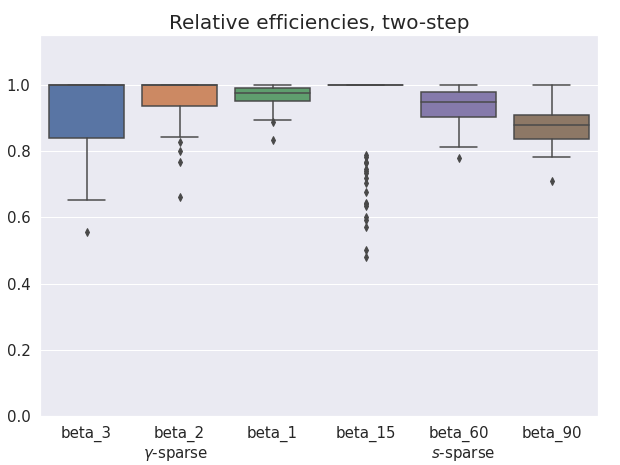}
  \includegraphics[width=0.497\textwidth]{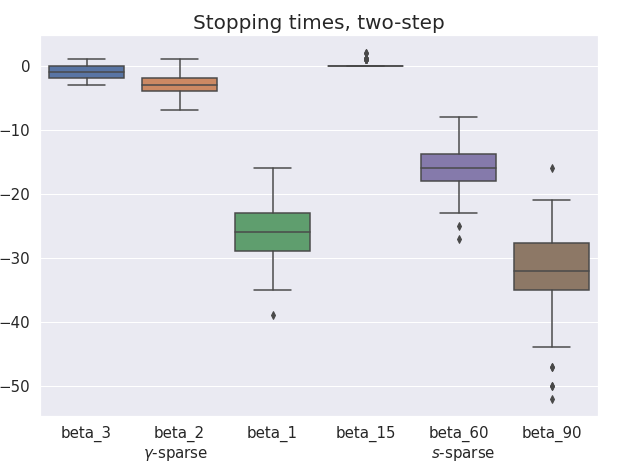}
  \\[-1.0ex]
  \caption{
    Boxplots for the relative efficiencies and the deviation \( 
      \tau_{ \text{two-step} }- m^{ \mathfrak{o} }
    \) of the two-step procedure from the classical oracle for an estimated
    noise level with \(
      \lambda_{0} = \sqrt{ 0.5 \log(p) / n }
    \) in the correlated design setting.
  }
\end{figure}

\begin{figure}[H]
  \centering
  \includegraphics[width=0.497\textwidth]{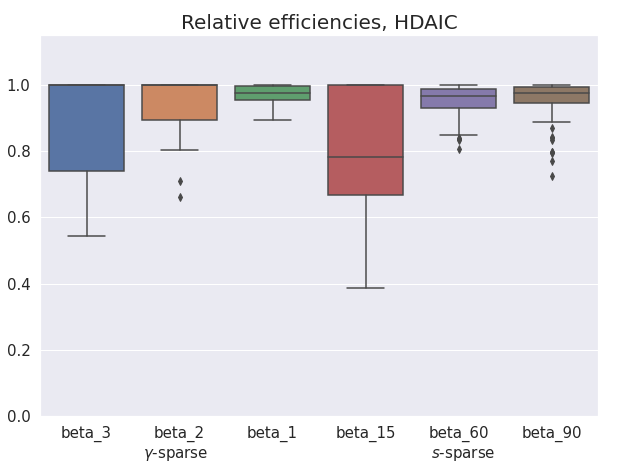}
  \includegraphics[width=0.497\textwidth]{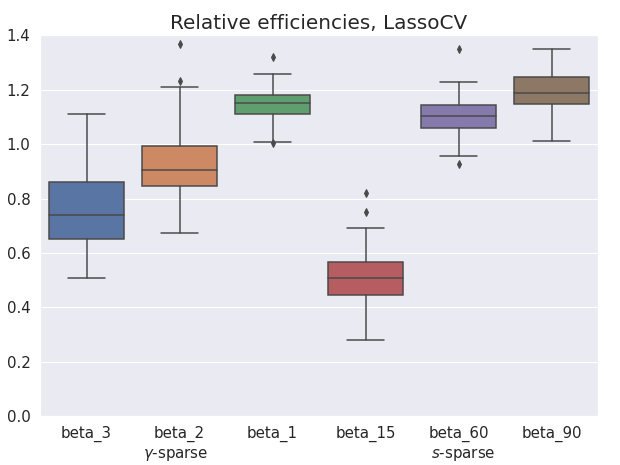}
  \\[-1.0ex]
  \caption{
    Boxplots for the relative efficiencies for the Akaike criterion from
    Ing \cite{Ing2020ModelSelection} with \( 
      C_{ \text{HDAIC} } = 2
    \) and the Lasso based on 5-fold cross-validation in the correlated design
    setting.
  }
\end{figure}

\begin{figure}[H]
  \centering
  \includegraphics[width=0.497\textwidth]{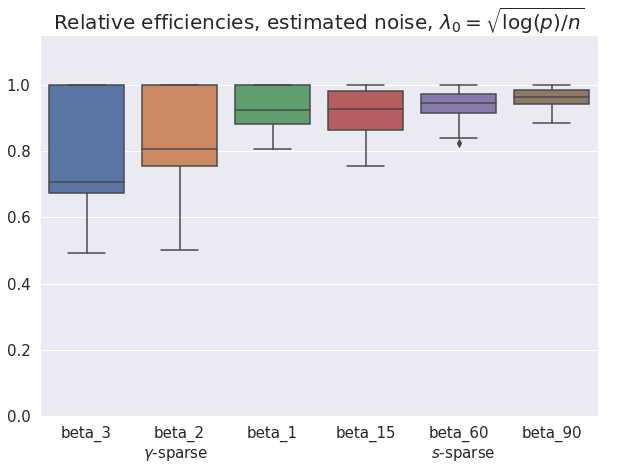}
  \includegraphics[width=0.497\textwidth]{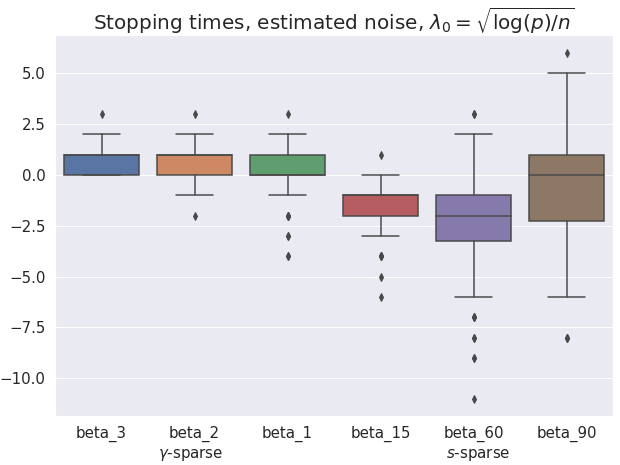}
  \\[-1.0ex]
  \caption{
    Boxplots for the relative efficiencies and the deviation \(
      \tau - m^{ \mathfrak{o} } 
    \) of the stopping time from the classical oracle for the estimated noise
    level with \( 
      \lambda_{0} = \sqrt{ \log(p) / n }
    \) in the classification setting.
  }
\end{figure}

\begin{figure}[H]
  \centering
  \includegraphics[width=0.497\textwidth]{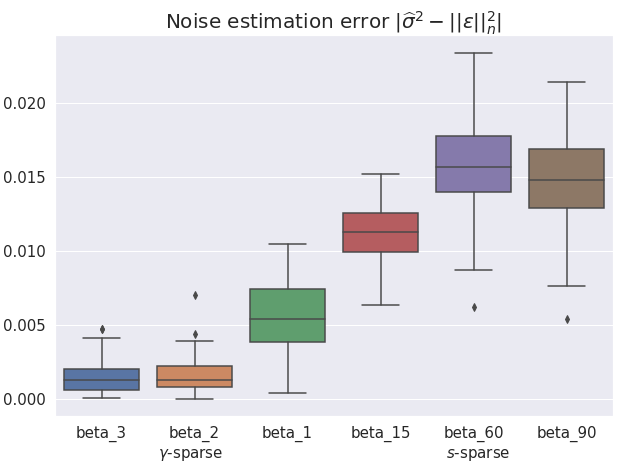}
  \includegraphics[width=0.497\textwidth]{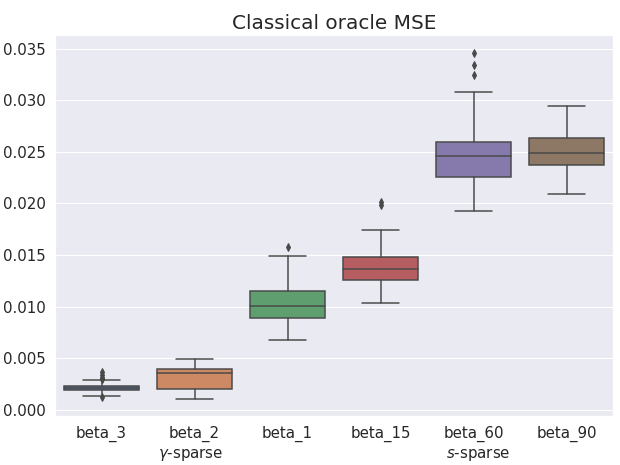}
  \\[-1.0ex]
  \caption{
    Boxplots for the noise estimation errors with \( 
      \lambda_{0} = \sqrt{ \log(p) / n }
    \) together with the classical oracle risk in the classification setting.
  }
\end{figure}

For the classification setting in Example \ref{expl_RegressionAndClassification}
(b), we maintain the correlated design and sample Bernoulli distributed labels
\( Y_{i}, i = 1, \dots, n \), with 
\begin{align}
  \mathbb{P} \{ Y_{i} = 1 \} 
  = 
  \max \Big( 
    \min \Big( 
        0.5 
      +
        \sum_{ j = 1 }^{p} 
        \tilde \beta^{ (r) }_{j} X_{i}^{ (j) }, 
      1 
    \Big),
    0
  \Big), 
  \qquad r \in \{ 3, 2, 1, 15, 60, 90 \},
\end{align}
where the \( \tilde \beta^{ (r) } \) are rescaled versions of the
coefficients \( \beta^{ (r) } \) from Section
\ref{sec_NumericalSimulationsAndATwoStepProcedure}. 
In particular,
\begin{align}
  \tilde \beta^{ (3) } = 0.03 \beta^{ (3) }, \qquad 
  \tilde \beta^{ (2) } = 0.03 \beta^{ (2) }, \qquad 
  \tilde \beta^{ (1) } = 0.1  \beta^{ (1) }, 
  \\
  \tilde \beta^{ ( 15 ) } = 0.1  \beta^{ ( 15 ) }, \qquad 
  \tilde \beta^{ ( 60 ) } = 0.1  \beta^{ ( 60 ) }, \qquad 
  \tilde \beta^{ ( 90 ) } = 0.1  \beta^{ ( 90 ) }. 
  \notag 
\end{align}
The rescaling guarantees that with high probability, the values of the linear
signals are in between \( [ 0, 1 ] \) and the linear model is indeed a good 
approximation for the simulated data.
In Algorithm \ref{alg_OMP}, we add an intercept column 
\( X^{0} = 1 \in \mathbb{R}^{n} \) to the design.
In this setting, estimating the noise level becomes essential, since it depends
on the coefficients themselves. 
The medians of the empirical noise levels \( \| \varepsilon \|_{n}^{2} \) are 
given by \( 
  ( 0.15, 0.18, 0.28, 0.12, 0.19, 0.21 )
\) \\ 

\begin{figure}[H]
  \centering
  \includegraphics[width=0.497\textwidth]{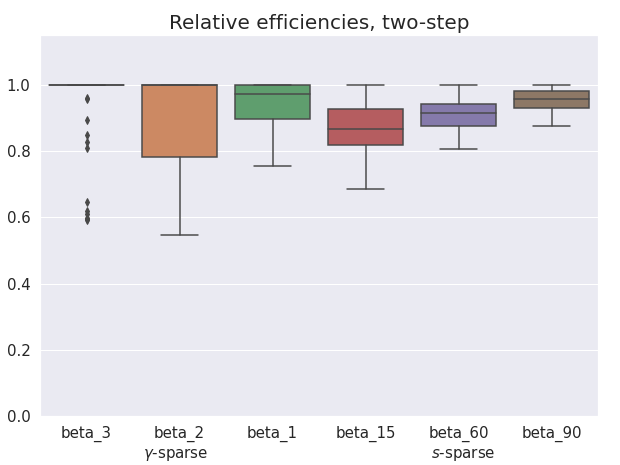}
  \includegraphics[width=0.497\textwidth]{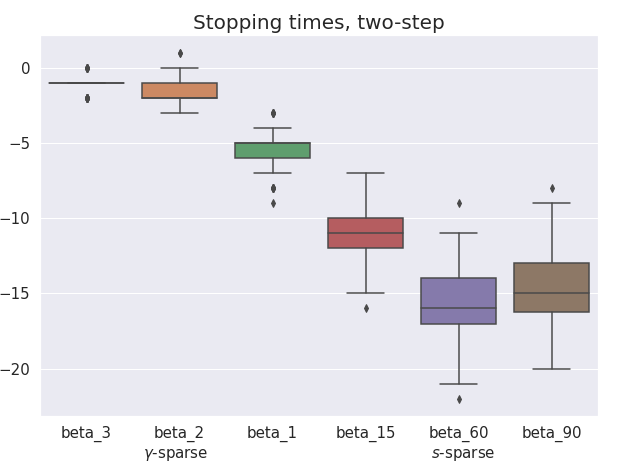}
  \\[-2.0ex]
  \caption{
    Boxplots for the relative efficiencies and the deviation \( 
      \tau_{ \text{two-step} } - m^{ \mathfrak{o} } 
    \) of the two-step procedure from the classical oracle for an estimated
    noise level with \(
      \lambda_{0} = \sqrt{ 0.5 \log(p) / n }
    \) in the classification setting.
  }
\end{figure}

\begin{figure}[H]
  \centering
  \includegraphics[width=0.497\textwidth]{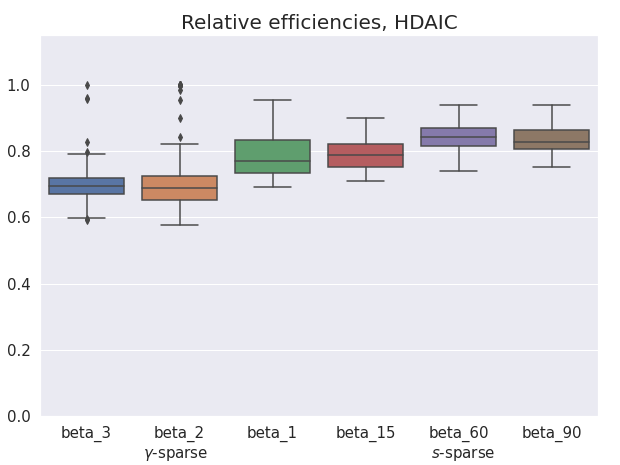}
  \includegraphics[width=0.497\textwidth]{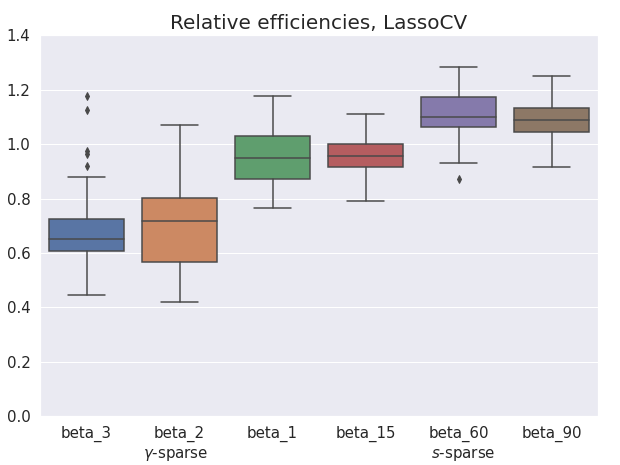}
  \\[-2.0ex]
  \caption{
    Boxplots for the relative efficiencies for the Akaike criterion from
    Ing \cite{Ing2020ModelSelection} with \( 
      C_{ \text{HDAIC} } = 0.5
    \) and the Lasso based on 5-fold cross-validation in the classification
    setting.
  }
\end{figure}

\noindent in the order in which the signals are displayed. 
We present the same plots as in the correlated regression setting.
The median oracles \( m^{ ( \mathfrak{o} ) } \) and \( m^{ ( \mathfrak{b} )
} \) are given by \( 
  ( 2, 3, 6, 13, 14, 8 )
\) and 
\( 
  ( 7, 5, 12, 29, 32, 28 )
\) respectively. 
For both the two-step procedure and the full Akaike selection, we use a constant
\( C_{ \text{AIC} } = C_{ HDAIC } = 0.5 \), which is two times the maximal
variance of a Bernoulli variable.


\end{appendix}

\begin{acks}[Acknowledgments]
  The author is very grateful for the discussions with Markus Reiß and Martin
  Wahl that were indispensable during the preparation of this paper.
\end{acks}
\begin{funding}
  The research of the author has been partially funded by the Deutsche
  Forschungsgemeinschaft (DFG) -- Project-ID 318763901-SFB1294.
%
\end{funding}




\end{document}